\DeclareMathAlphabet{\mathpzc}{OT1}{pzc}{m}{it}
\newcommand{\marginextend}[1]{ \addtolength{\oddsidemargin}{-#1}  \addtolength{\evensidemargin}{-#1}
  \addtolength{\textwidth}{#1}\addtolength{\textwidth}{#1}}
\newcommand{\updownextend}[1]{ \addtolength{\topmargin}{-#1}  \addtolength{\textheight}{#1}
\addtolength{\textheight}{#1}}
\DeclareFontFamily{OT1}{pzc}{}
\DeclareFontShape{OT1}{pzc}{m}{it}{<-> s * [1.10] pzcmi7t}{}
\DeclareMathAlphabet{\mathpzc}{OT1}{pzc}{m}{it}
\DeclareSymbolFont{SY}{U}{psy}{m}{n}
\DeclareMathSymbol{\emptyset}{\mathord}{SY}{'306}
\theoremstyle{plain}
\newtheorem{thm}{Theorem}[section]
\newtheorem*{thm*}{Theorem}
\newtheorem{cor}[thm]{Corollary}
\newtheorem{lem}[thm]{Lemma}
\newtheorem{prop}[thm]{Proposition}
\newtheorem{defn}[thm]{Definition}
\newtheorem{rem}[thm]{Remark}
\newtheorem{ex}[thm]{Example}
\newtheoremstyle{named}{}{}{\itshape}{}{\bfseries}{.}{.5em}{#1 \thmnote{#3}}
\theoremstyle{named}
\numberwithin{equation}{section}
\def\C{{\mathbb C}}
\def\norm#1{\left\|{#1}\right\|}
\def\R{\mathbb{R}}
\def\v{\varphi}
\def\l{\lambda}
\def\ra{\rightarrow}
\def\ov{\overline}
\def\m{\mathcal}
\def\mb{\mathbb}
\def\mr{\mathrm}
\def\a{\alpha}
\def\b{\beta}
\def\wi{\widetilde}
\def\w{\widehat }
\def\beq{\begin{eqnarray}}
\def\eeq{\end{eqnarray}}
\def\beqa{\begin{eqnarray*}}
\def\eeqa{\end{eqnarray*}}
\def\del{\partial}
\def\ov{\overline}
\def\bl{\boldsymbol}
\def\i{\prime}
\def\bl{\boldsymbol}
\newcommand{\be}{\begin{equation}}
\newcommand{\ee}{\end{equation}}
\newcommand{\bea}{\begin{eqnarray}}
\newcommand{\eea}{\end{eqnarray}}
\newcommand{\Bea}{\begin{eqnarray*}}
\newcommand{\Eea}{\end{eqnarray*}}
\newcommand{\inner}[2]{\langle #1,#2 \rangle }%
\newcounter{cnt1}
\newcounter{cnt2}
\newcounter{cnt3}
\newcommand{\blr}{\begin{list}{$($\roman{cnt1}$)$}
 {\usecounter{cnt1} \setlength{\topsep}{0pt}
 \setlength{\itemsep}{0pt}}}
\newcommand{\bla}{\begin{list}{$($\alph{cnt2}$)$}
 {\usecounter{cnt2} \setlength{\topsep}{0pt}
 \setlength{\itemsep}{0pt}}}
\newcommand{\bln}{\begin{list}{$($\arabic{cnt3}$)$}
 {\usecounter{cnt3} \setlength{\topsep}{0pt}
 \setlength{\itemsep}{0pt}}}
\newcommand{\el}{\end{list}}
\newcommand {\Poincare} {Poincar\'e\ }
\DeclareMathOperator  {\Aut} {Aut}
\DeclareMathOperator  {\tr} {tr}
\DeclareMathOperator  {\adj}{adj}
\begin{document}
\title[Reducing submodules and CST theorem] {Reducing submodules of Hilbert Modules and Chevalley-Shephard-Todd Theorem}
\author[Biswas]{Shibananda Biswas}
\author[Datta]{Swarnendu Datta}
\author[Ghosh]{Gargi Ghosh}
\author[Shyam Roy]{Subrata Shyam Roy}
\address[Biswas, Datta, Ghosh and Shyam Roy]{Department of Mathematics and Statistics, Indian Institute of Science Education and Research Kolkata, Mohanpur 741246, Nadia, West Bengal, India}
\email[Biswas]{shibananda@iiserkol.ac.in}
\email[Datta]{swarnendu.datta@iiserkol.ac.in}
\email[Ghosh]{gg13ip034@iiserkol.ac.in}
\email[Shyam Roy]{ssroy@iiserkol.ac.in}
\thanks{The work of S. Biswas is partially supported by Inspire Faculty Fellowship (IFA-11MA-06)
funded by DST, India at IISER Kolkata. The work of G. Ghosh is supported by Senior Research Fellowship funded by CSIR}

\subjclass[2010]{47A13, 47B32, 32H35, 13A50} \keywords{Reducing submodules, reproducing kernel Hilbert space, proper holomorphic maps, psuedoreflection groups}
\date  {}

\begin{abstract}

Let $G$ be a finite pseudoreflection group, $\Omega\subseteq \mathbb C^n$ be a bounded domain which is a $G$-space and $\mathcal H\subseteq\mathcal O(\Omega)$ be an analytic Hilbert module possessing a $G$-invariant reproducing kernel. We study the structure of joint reducing subspaces of the multiplication operator $\mathbf M_{\boldsymbol\theta}$ on $\mathcal H,$ where $\{\theta_i\}_{i=1}^n$ is a homogeneous system of parameters associated to $G$ and $\boldsymbol\theta = (\theta_1, \ldots, \theta_n)$ is a polynomial 
map of $\mathbb C^n$.  We show that it admits a family $\{\mb P_\varrho\m H:\varrho\in\w G\}$
of non-trivial joint reducing subspaces, where $\w G$ is the set of all equivalence classes of irreducible representations of $G.$ We prove a generalization of Chevalley-Shephard-Todd theorem for the algebra $\mathcal O(\Omega)$ of holomorphic functions on $\Omega$. As a consequence, we show that for each $\varrho\in \w G,$ the multiplication operator $\mathbf M_{\bl\theta}$ on the reducing subspace $\mb P_\varrho \m H$
can be realized as multiplication by the coordinate functions on a reproducing kernel Hilbert space of $\mb C^{({\deg}\,\varrho)^2}$-valued holomorphic functions on $\bl\theta(\Omega)$.  This, in turn, provides a description of the structure of joint reducing subspaces of the multiplication operator induced by a representative of a proper holomorphic map from a domain $\Omega$ in $\C^n$ which is factored by automorphisms $G\subseteq {\rm Aut}(\Omega).$  
\end{abstract}

\maketitle


\section {Introduction} \label{intro}
For $n\geq 1,$ suppose that $\Omega$ is a bounded domain in $\mb C^n$ and $\m H$ is a reproducing kernel Hilbert space consisting of holomorphic functions on $\Omega$. We study joint reducing subspaces of (bounded) multiplication operators by holomorphic functions on $\m H$. A {\it joint reducing subspace} for a  tuple 
$\mathbf T=(T_1,\ldots,T_n)$ of operators on a Hilbert space $\m H$ is a closed subspace $\m M$ of $\m H$ such that 
\Bea
T_i\m M\subseteq\m M \text{~ and~} T_i^*\m M\subseteq\m M \text{~for~ all~} i=1,\ldots,n.
\Eea 
It is easy to see that $\mathbf T$ admits a  joint reducing subspace of  if and only if there exists an orthogonal projection $P$ ($P:\m H\to\m H, P^2=P=P^*$) such that 
\Bea
PT_i=T_iP \text{~for~all ~} i=1,\ldots, n.
\Eea 
Here $\m M=P\m H$ is the corresponding joint reducing subspace of $\mathbf T.$ A joint reducing subspace $\m M$ of $\mathbf T$  is called {\it minimal} or {\it irreducible} if only joint reducing subspaces contained in $\m M$ are $\m M$ and $\{0\}.$ In particular, the operator tuple $\mathbf T$ is called ${\it irreducible}$ if only  joint reducing subspaces contained in $\m H$ are $\m H$ and $\{0\}.$ For instance, suppose that $\Omega$ is a bounded domain in $\mb C^n$ and $\mb A^2(\Omega)$ denotes the Bergman space on $\Omega,$ that is, the subspace of  holomorphic functions in $L^2(\Omega)$ with respect to the Lebesgue measure on $\Omega.$ We put $\mathbf M:=(M_1,\ldots, M_n),$ where $M_i$ denotes the multiplication by the $i$-th coordinate function of $\Omega$ on $\mb A^2(\Omega)$ for $i=1,\ldots, n$. $\mathbf M$ is called the {\it Bergman operator} on $\mb A^2(\Omega)$ or the Bergman operator {\it associated to} a domain $\Omega.$ It is well-known that $\mathbf M$ is irreducible.

It is natural to start with a biholomorphic multiplier $\bl f$ on $\mb A^2(\Omega).$   It is easy to see that $\mathbf M_{\bl f}$ is irreducible as follows. Let $\Omega_1,\,\Omega_2$ be bounded domains in $\mb C^n$ and $\bl f=(f_1,\ldots, f_n):\Omega_1\to\Omega_2$ be a biholomorphism. Let $J_{\bl f}=\det\big(\!\!\big(\frac{\partial f_i}{\partial z_j}\big)\!\!\big)_{i,j=1}^n$ denote the jacobian of $\bl f$. The linear map $U_{\bl f}:\mb A^2(\Omega_2)\to\mb A^2(\Omega_1)$ defined by 
\Bea
U_{\bl f}\v=(\v\circ \bl f)J_{\bl f} \text{~for~} \v\in \mb A^2(\Omega_2). 
\Eea
is a unitary satisfying $U_{\bl f}M_i=M_{f_i}U_{\bl f}$ for $i=1,\ldots,n$. Therefore, $\mathbf M_{\bl f}$ is unitarily equivalent to the Bergman operator on $\mb A^2(\Omega_2).$ Since the Bergman operator associated to any domain is irreducible, it follows that $\mathbf M_{\bl f}$ is irreducible.

This is the motivation for considering the well behaved class of bounded multipliers induced by proper holomorphic maps. It is shown in \cite{G, HZ} that for a bounded domain $\Omega$ in $\mb C^n$ and a bounded proper holomorphic multiplier, $\bl f:\Omega\to \bl f(\Omega)$   on the Bergman space  $\mb A^2(\Omega)$  always admits a non-trivial joint reducing subspace. The problem of understanding the nature of joint reducing subspaces of proper holomorphic multipliers and parametrizing the number of minimal joint reducing subspaces by a topological invariant (for example, the number of connected components of some Riemann surface) has been studied extensively in the literature for one and several variables. On the Hardy space of the unit disc $\mb D$ in the complex plane $\mb C,$ for the multiplication operator by a finite Blaschke  product (the class of all proper holomorphic self-maps of $\mb D$) the aforementioned problem was studied in \cite{C, JT, T}. Analogous problem was investigated in the context of Bergman space $\mb A^2(\mb D)$ of the unit disc $\mb D$ by several authors, see \cite{DPW,GHu, DSZ, Z} and references therein. For a bounded domain $\Omega$ in $\mb C^n,$ similar problem was studied in \cite{G, HL, HZ}. In this paper, we study the structure of joint reducing subspaces of the multiplication operator induced by a representative (cf. \cite{BD}) of a proper holomorphic map $\bl f$ from a domain $\Omega$ in $\C^n$ which is factored by automorphisms $G\subseteq {\rm Aut}(\Omega).$ A representative determines the proper holomorphic map up to a biholomorphism.

Following \cite{BD, DS}, we say that a proper holomorphic map $ \bl f:\Omega_1\to \Omega_2$ is {\it factored by automorphisms} if there exists a finite subgroup $G\subseteq {\rm Aut}(\Omega_1)$ such that
\Bea
\bl f^{-1}\bl f(\bl z)=\bigcup_{\rho\in G}\{\rho(\bl z)\} \,\, \text{~for~} \bl z\in \Omega_1.
\Eea
Equivalently,  $\bl f$ is  a (branched) Galois covering  with $G$ as the group of deck transformations of $\bl f.$ We know from \cite[Theorem 2.1, Theorem 2.2]{BD}, \cite[Lemma 2.2, Theorem 2.5]{DS}, \cite[Theorem 1.6]{R}, \cite[p.506]{BB} that such a group $G$ is either a group generated by pseudoreflections or conjugate to a pseudoreflection group. In  Section \ref{last}, we provide a list of known sufficient conditions for a large class of domains $\Omega_1$ and $\Omega_2$ so that every proper holomorphic map $\bl f:\Omega_1\to\Omega_2$ is factored by automorphisms. These observations naturally lead to the study of finite groups generated by pseudoreflections on $\mb C^n$ and joint reducing subspaces of proper holomorphic multipliers induced by them.

A {\it pseudorelfection} on $\mb C^n$ is a linear mapping $\rho:\mb C^n\to\mb C^n$ such that $\rho$ has finite order in $GL(n,\C)$ and the rank of $I_n-\rho$ is $1,$ that is, $\rho$ is not the identity map and fixes a hyperplane pointwise. A group $G$ generated by pseudoreflections is called a {\it pseudoreflection group}. Let $G$ act on a set of functions on $\mb C^n$ by 
\Bea
\rho(f)(\bl z)= f(\rho^{-1}\cdot\bl z) \,\, \text{~for~} \bl z\in\mb C^n \text{~and~} \rho\in G.
\Eea
A function $f$ is called $G$-{\it invariant} if $\rho(f)=f$ for all $\rho\in G.$ There is a system of $G$-invariant  algebraically independent homogeneous polynomials $\{\theta_i\}_{i=1}^n$ associated to a pseudoreflection group $G,$ called a homogeneous system of  parameters (hsop) or basic polynomials associated to $G.$ 
A finite pseudoreflection group $G$ is characterized by the fact that the ring of invariants of $G$ is a polynomial ring generated by some hsop $\{\theta_i\}_{i=1}^n$ associated to $G$ \cite[p.282]{ST}. 

Let $\Omega$ be a bounded domain in $\C^n$ and assume that it is a $G$-space by the (right) group action $(\rho, \bl z)\mapsto \rho^{-1}\bl z,\, \bl z\in\Omega,\,\rho\in G.$ We call such a $\Omega$ a $G$-invariant domain. It follows from \cite[Proposition 2.2]{R} and \cite[Proposition 1, p.556]{Try} that the mapping
$
\bl\theta=(\theta_1,\ldots,\theta_n):\Omega\to\bl\theta(\Omega),
$
 is a proper holomorphic mapping, $\bl\theta(\Omega)$ is a domain and the quotient topological space $\Omega/G$ is a complex analytic space biholomorphic to $\bl\theta(\Omega).$ Equivalently, $\bl\theta:\Omega\to\bl\theta(\Omega)$ is a (branched) Galois covering and $G$ is the group of deck transformations of this covering.

We proceed by fixing an appropriate functional Hilbert space consisting of holomorphic functions defined on a domain $\Omega$ in $\mb C^n.$ 
Such a Hilbert space $\m H$ may be thought of as a {\it Hilbert module} over a algebra $\m A\subseteq \mathcal O(\Omega)$ if the action of $\m A$ on $\m H$ defines an algebra homomorphism $f\mapsto T_f$ of $\m A$ into $\m L(\m H),$ where $T_f$ is the bounded operator on $\m H$ defined by $T_fh=f \cdot h.$  A Hilbert module $\m H$ consisting of $\C^r$-valued holomorphic functions is said to be an \emph{analytic Hilbert module} over the polynomial ring $\C[z_1,\ldots, z_n] (= \C[\bl z])$ if it possesses a reproducing kernel, the action of $\mb C[\bl z]$ is given by pointwise multiplication in each component and $\mb C[\bl z]\otimes\C^r$ is dense in $\m H.$  In this paper, we consider an analytic Hilbert module $\m H$ consisting of scalar valued holomorphic functions  on a bounded $G$-invariant domain $\Omega$ in $\mb C^n$ possessing a $G$-invariant reproducing kernel $K,$ that is,
\Bea
K(\rho\cdot\bl z \,,\,\rho\cdot\bl w)=K(\bl z,\bl w) \,\,\text{~for~}\, \rho\in G \,\text{~and ~}\, \bl z,\,\bl w\in\Omega.
\Eea
Let $\mb C[\bl z]^G = \{p\in\C[\bl z]: \rho(p) = p \mbox{~for~all~}\rho\in G\}$ be the ring of $G$-invariant polynomials. In order to study reducing subspaces of $\mathbf M_{\bl\theta}$ on such a $\m H,$ it is convenient to view an analytic Hilbert module $\m H$ over $\mb C[\bl z]$ as a module over the subalgebra $\mb C[\bl z]^G$ by inheriting the module action.


Let $\w G$ denote the equivalence classes of all irreducible representations of $G.$ It is well-known that these are finite dimensional. Let $\bl\pi_\varrho$ be a unitary representation of $G$ in the equivalence class of $\varrho\in\w G,$ that is, ${\bl\pi}_\varrho(\sigma)=
\big(\!\!\big(\bl\pi_\varrho^{ij}(\sigma)\big)\!\!\big)_{i,j=1}^m\in\mb C^{m\times m}, \sigma\in G,$ where $m$ denotes the degree of the representation $\varrho$ which we abbreviate as ${\rm deg}\,\varrho.$ Let  $\chi_\varrho (\sigma)={\rm trace}\big(\bl\pi_\varrho(\sigma)\big)$ denote the character of the representation $\varrho.$
These finite dimensional representations of  $G$ induce the linear operators $\mb P_\varrho$ and $\mb P_\varrho^{ij}$ on the Hilbert space $\m H:$
\Bea && \mb P_\varrho f=\frac{{\rm deg}\,\varrho}{\lvert G\rvert}\sum_{\sigma\in G}\chi_\varrho(\sigma^{-1})(f\circ\sigma^{-1}),\,\, f\in\m H;\\
&& \mb P_\varrho^{ij} f=\frac{{\rm deg}\,\varrho}{\lvert G\rvert}\sum_{\sigma\in G}\bl\pi_\varrho^{ji}(\sigma^{-1})(f\circ\sigma^{-1}),\,\, f\in\m H.
\Eea
We note that $ \mb P_\varrho$ and $\mb P_\varrho^{ii}$ are  orthogonal projections on $\m H$ for $\varrho\in\w G,\, 1\leq i\leq {\rm deg}\,\varrho,$ as it was in the case for $G=\mathfrak S_n$, where $\mathfrak S_n$ is the permutation group on $n$ symbols \cite{BGMS}.
Consequently, the Hilbert space $\m H$ admits the orthogonal decomposition
\Bea
\m H=\bigoplus_{\varrho\in\w G}\mb P_\varrho\m H=\bigoplus_{\varrho\in\w G}\bigoplus_{i=1}^{{\rm deg}\,\varrho}\mb P_\varrho^{ii}\m H.
\Eea
We now briefly describe the results we have proved in this paper.

\begin{enumerate}[leftmargin=*]
    \item  [1.] {\sf Existence of non-trivial reducing subspaces}. For any finite pseudoreflection group $G$ and analytic Hilbert module $\m H\subseteq \mathcal O(\Omega)$ with $G$-invariant reproducing kernel, we show that $\mb P_\varrho\m H$  and $\mb P_\varrho^{ii}\m H$ are non-trivial reducing submodules of $\m H$ over $\mb C[\bl z]^G$ for $\varrho\in\w G$ and $1\leq i \leq {\rm deg}\,\varrho.$ This generalizes the results obtained for the case $G = \mathfrak S_n$ in \cite{BGMS}.
   \vspace{.1 cm}
  
     \vspace{.1 cm}
    \item[2.] {\sf Analytic Chevalley-Shephard-Todd Theorem}. We prove a generalization of the well-known Chevalley-Shephard-Todd theorem to the algebra of holomorphic functions on $\C^n,$ see Theorem \ref{theoremb} and Theorem \ref{theoremc}. We extend this results for holomorphic functions on $G$-invariant domains. Apart from being interesting in their own rights, Theorem \ref{theoremc} plays a crucial role in realizing the multiplication operator $\mathbf M_{\bl\theta}$ as multiplication operator by the coordinate functions on a suitable functional Hilbert space. This is an important step to obtain geometric and analytic unitary invariants (see \cite{CD, CS}) to classify the family of submodules $\{\mb P_\varrho\m H\}_{\varrho\in\w G}$ completely, up to unitary equivalence. A successful implementation of this strategy for a special case is given in \cite[Section 4]{BGMS}.
   \vspace{.1 cm}
   \item[3.]{\sf Realization as analytic Hilbert modules}. 
  We show that an analytic Hilbert module $\m H$ over $\mb C[\bl z]$ on a $G$-invariant domain $\Omega$ in $\mb C^n,$  viewed as a Hilbert module over $\mb C[\bl z]^G$ can be realized as an analytic Hilbert module $\wi{\m H}$ over $\mb C[\bl z]$ consisting of $\mb C^{\lvert G\rvert}$-valued holomorphic functions on $\bl\theta(\Omega)$, see Lemma \ref{cons}. It then follows that the submodules $\mb P_\varrho\m H$ has a realization as an analytic Hilbert module of rank $({\rm deg}\,\varrho)^2$ over $\mb C[\bl z]$ consisting of holomorphic functions on $\bl\theta(\Omega)$. Indeed, this is a generalization of the Chevalley-Shephard-Todd Theorem in the context of  analytic Hilbert modules. As an application towards classification of the family of reducing submodules
   \Bea
    \{\mb P_\varrho^{ii}\m H:\varrho\in\w G, 1\leq i\leq {\rm deg}\,\varrho\}
    \Eea
    over $\mb C[\bl z]^G$ under unitary equivalence, we prove that if $\varrho,\,\varrho^\i\in\w G$ are such that ${\rm deg}\,\varrho\neq {\rm deg}\,\varrho^\i,$ then, $\mb P_\varrho^{ii}\m H$ and $\mb P_{\varrho^\i}^{jj}\m H$ are not unitarily equivalent for any $i, j$ with $1\leq i\leq {\rm deg}\,\varrho$ and $1\leq j\leq {\rm deg}\,\varrho^\i,$ see Theorem \ref{inequihilmod}. 
  \item[4.]{\sf Examples}.
  We exhibit a large class of examples of analytic Hilbert modules with $G$-invariant reproducing kernels for different choices of finite pseudoreflection groups $G$ and $G$-invariant domains $\Omega$ to which the results described so far apply. 
 In the last section, we prove several results using  techniques developed in this paper regarding reducing subspaces for a class of weighted Bergman space on the unit disc,  which was  obtained earlier in \cite{Z} for the Bergman space. We further obtain new results similarly in case of unit polydisc. Building on the work of Rudin in \cite{R}, we also show that the study of reducing subspaces in the case of the unit ball relates to homogeneity \cite{BM} of associated multiplication operator with respect to the automorphism groups of an inhomogeneous domains that arises naturally in this context.
 \end{enumerate}
 
Realization of the submodule $\mb P_\varrho\m H$ as an analytic Hilbert module of rank $({\rm deg}\,\varrho)^2$ on $\bl \theta(\Omega)$ is a significant improvement of the result obtained  in  \cite[Corollary 3.12]{BGMS} for $\mathfrak S_n$ showing that for any partition $\bl p$ of $n, \mb P_{\bl p}\m H$ is a locally free Hilbert module of rank $({\rm deg}\,\bl p)^2$ on an open subset of $\bl s(\Omega) \,\setminus\, \bl s(\m Z),$  $\bl s:\mb C^n\to\mb C^n$ being the symmetrization map whose $i$-th component is the $i$-th elementary symmetric polynomial in $n$ variables  and $\m Z$ being the set of critical points of $\bl s.$
We also obtain a lower bound on the number of minimal reducing submodules of $\m H$ over $\mb C[\bl z]^G,$ namely, $\sum_{\varrho\in\w G}{\rm deg}\,\varrho,$ see Theorem \ref{fd1}. Moreover, if $\mb P_\varrho^{ii}\m H$ is minimal for all $\varrho\in\w G$ and $1\leq i\leq {\rm deg}\,\varrho,$ then $\lvert\w G\rvert$ is an upper bound for the number of inequivalent reducing submodules of $\m H$ over $\mb C[\bl z]^G,$ see Corollary \ref{ub}.

Before closing this section, we would like to elaborate a little on the analytic Chevalley-Shephard-Todd Theorem. One might consider proving the theorem by first proving it for formal power series and then addressing the question of convergence. A similar approach was made in \cite[Theorem 1.2]{A} where it was proved that analytic solution to an arbitrary system of analytic equation exists provided a solution in formal power series without constant term exists. In our case, it is indeed true that the ring $\C[[\bl z]]$ of formal power series is a free module over $\C[[\bl z]]^{G}$, the ring of invariants under the action of $G$, of rank $\lvert G\rvert.$  Using $\mb C[[\bl z]]^G=\mb C[[\theta_1,\ldots,\theta_n]]$ (following \cite[Theorem 2.a, A.IV.68]{Bourbaki1}) and the fact that $\C[[\bl z]]$ (respectively $\C[[\bl z]]^{G}$) is the completion of the polynomial ring $\C[\bl z]$ (respectively $\C[\bl z]^G$) in $\mathfrak M$-adic topology, where $\mathfrak M$ is the maximal ideal generated by the indeterminates \cite[Example 1, p. 260]{ZS} and a commutative algebra technique \cite[Proposition 10.13]{AM}, the result for formal power series follows. However, it is not clear, a priori, whether in the decomposition of a holomorphic function thought of as a formal power series with respect to a basis of $\mb C[[\bl z]]$ over $\mb C[[\bl z]]^G,$ the formal power series arising as coefficients with respect to that basis are necessarily holomorphic, that is, whether they converge on $\C^n.$ Instead we approach Theorem \ref{theoremb} independently by obtaining the formulae for the coefficients with respect to a basis of $\C[\bl z]$ over $\C[\bl z]^G$.

\section{Preliminaries}
We begin by  recalling a number of useful definitions and standard results. We provide more details than usual because of the diverse nature of these.
 
\subsection{Pseudoreflection Groups}
\begin{defn}
A pseudoreflection on $\C^n$ is a linear homomorphism $\rho: \C^n \rightarrow \C^n$ such that $\rho$ has finite order in $GL(n,\mb C)$ and the rank of $I_n - \rho$ is 1, that is, $\rho$ is not the identity map and fixes a hyperplane pointwise. A group generated by pseudoreflections is called a pseudoreflection group.
\end{defn}

Recall that the group action of $G$ on $\C^n$ is defined by $(\rho, \bl z)\mapsto \rho\cdot\bl z = \rho^{-1}\bl z,\, \bl z\in\C^n,\,\rho\in G.$ For a pseudoreflection $\rho,$ define $H_{\rho} := \ker(I_n - \rho).$ By definition, the subspace $H_{\rho}$ has dimension $n-1$ and $\rho$ fixes the hyperplane $H_{\rho}$ pointwise. We call such hyperplanes \emph{reflecting.}
\begin{defn}
A hyperplane $H$ in $\C^n$ is called reflecting if there exists a pseudoreflection in $G$ acting trivially
on $H$.
\end{defn}
By definition, $H$ is the zero set of a non-zero homogeneous linear function on $\C^n$, determined up to a non-zero constant multiplier. 
\begin{ex}\label{reflex}
\begin{enumerate}
\item Let $G= \mb Z_n = \langle \sigma \rangle$ be the cyclic group of order $n.$ The action of $\mb Z_n$ on $\C$ is given by 
\Bea
\pi : G \to GL(1,\C) : \sigma \mapsto \zeta_n,
\Eea
where $\zeta_n$ denotes a primitive $n$-th root of unity. Each of $\sigma^j, j=1,\ldots,n-1,$ is a pseudoreflection. 
\vspace{.1 cm}
\item Let $G_1$ and $G_2$ be two finite groups generated by pseudoreflections acting on the vector spaces $V_1$ and $V_2,$ respectively. Then the product $G_1 \times G_2$ is also a pesudoreflection group acting $V_1 \oplus V_2$. Suppose $G$ is a finite abelian group. It is well-known that every non-trivial finite abelian group $G$ is isomorphic to a direct product of cyclic groups. More precisely,
\Bea
G \cong \mb Z_{m_1} \times \cdots \times \mb Z_{m_k},
\Eea
where $m_1|m_2|\ldots|m_k$ and $m_1>1$ \cite[p. 20]{Con}.
Then by the previous example, we conclude that $G$ is isomorphic to a finite pseudoreflection group.  Suppose $\mb Z_{m_j} = \langle \sigma_j \rangle.$ The action of $G$ on $\C \oplus \cdots \oplus \C$ ($k$ times) is given by 
\Bea
\pi : G \to GL(k,\C) :(1,\ldots, \sigma_j,\ldots,1) \mapsto {\rm diag}(1, \ldots,\zeta_j,\ldots, 1),
\Eea
where $\zeta_j$ denotes a $m_j$-th primitive root of unity and it is placed at $j$-th diagonal position for each $\pi(1,\ldots, \sigma_j,\ldots,1), j=1,\ldots,k.$
\vspace{.1 cm}
\item Let $G =\mathfrak S_n,$ the symmetric group on $n$ elements, acting on $\C^n$ by permuting  the coordinates. It is generated by transpositions $\{(i\,j)\}_{n \geq i>j \geq 1}$. It is easy to see that they are actually reflections, that is, pseudoreflections of order 2. Consider the following representation of $\mathfrak S_n,$ 
\Bea
\pi: \mathfrak S_n \to GL(n, \C): (i\,j) \mapsto A_{(i\,j)},
\Eea
where $A_{(i\,j)}$ is the permutation matrix obtained by interchanging the $i$-th and the $j$-th columns of the identity matrix.
Moreover, each $A_{(i\,j)}$ fixes the hyperplane $H_{ij} = \{\bl z \in \C^n : z_i = z_j\}$ pointwise and acts on the complementary line to $H_{ij}$ by $-1.$ 
\vspace{.1 cm}
\item Let $G = D_{2k} = \langle \delta, \sigma : \delta^k=\sigma^2 ={\rm Identity, \sigma \delta \sigma^{-1} = \delta^{-1}} \rangle$ be the dihedral group of order $2k.$ We define its action on $\C^2$ via the faithful representation $\pi$ defined by
\Bea
     \pi : G\to GL(2,\mb C): \delta\mapsto \begin{bmatrix}
    \zeta_k & 0\\
    0 & {\zeta_k}^{-1}
    \end{bmatrix}, \sigma\mapsto \begin{bmatrix}
    0 & 1\\
    1 & 0
    \end{bmatrix},
    \Eea
    where $\zeta_k$ denotes a primitive $k$-th root of unity. We write matrix representation of the group action with respect to the standard basis of $\C^2.$
 We have $G=\{\delta^j,\sigma\delta^j:j\in\{0,\ldots,k-1\}\},$ where $\delta^j$ is a rotation having the eigenvalues ${\zeta_k}^{\pm j}$ and $\sigma\delta^j$ is a reflection having the eigenvalues $\pm 1.$  Note that the hyperplane $H_j = \{\bl z \in \C^2 : \zeta_k^j z_1= z_2\}$ gets fixed by $\pi(\sigma\delta^j)$ for each $j \in \{0,\ldots,k-1\}.$
    
\end{enumerate}
\end{ex}
\subsection{Chevalley-Shephard-Todd Theorem}

Let $G$ be a finite pseudoreflection group that acts on the set of functions on  $\C^n$ by $\rho (f)(\bl z)=f({\rho}^{-1}\cdot \bl z).$ 
We recall the well-known theorems due to Chevalley-Shephard-Todd.
\begin{thm}\cite[Theorem 3, p.112]{bourbaki}\label{A}
The invariant ring $\C[\bl z]^G$ is equal to $\C[\theta_1,\ldots,\theta_n]$, where $\theta_i$ are algebraically independent
homogeneous polynomials. In particular, $\C[\bl z]^G$ is itself a polynomial algebra in $n$ variables.
\end{thm}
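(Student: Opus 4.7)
The plan is to follow Chevalley's classical approach in two stages: first produce $n$ algebraically independent homogeneous invariants, and then use the pseudoreflection hypothesis to show they generate $\C[\bl z]^G$ as a $\C$-algebra. For Stage~1, each coordinate $z_i$ is a root of the $G$-invariant monic polynomial $\prod_{g\in G}(T - g(z_i))$, so the extension $\C[\bl z]^G \hookrightarrow \C[\bl z]$ is integral; combined with Hilbert--Noether finite generation, $\C[\bl z]^G$ becomes a finitely generated graded $\C$-algebra of Krull dimension $n$, and graded Noether normalization supplies positive-degree homogeneous elements $\theta_1,\ldots,\theta_n \in \C[\bl z]^G$ that are algebraically independent and over which $\C[\bl z]$ is a finite module. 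This stage does not use the pseudoreflection hypothesis.

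Stage~2 rests on Chevalley's lemma, which is the one place where pseudoreflections enter essentially: \emph{if $f_1,\ldots,f_r \in \C[\bl z]^G$ are homogeneous with $f_1 \notin (f_2,\ldots,f_r)\C[\bl z]^G$, then every homogeneous syzygy $\sum_{i=1}^r p_i f_i = 0$ with $p_i \in \C[\bl z]$ satisfies $p_1 \in \mathfrak{m}\C[\bl z]$, where $\mathfrak{m} = (z_1,\ldots,z_n)$.} I would prove the lemma by induction on $\deg p_1$. In the base case $\deg p_1 = 0$, applying the Reynolds operator $R = |G|^{-1}\sum_{g\in G} g$ to the syzygy, using $R(f_i) = f_i$ and $R(p_1) = p_1$, forces $p_1 f_1 \in (f_2,\ldots,f_r)\C[\bl z]^G$, hence $p_1 = 0$. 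For the inductive step, for any pseudoreflection $\rho \in G$ with reflecting hyperplane $\{L_\rho = 0\}$, one has $\rho(p_i) - p_i = L_\rho q_i$ with $\deg q_i < \deg p_i$; applying $\rho$ to the syzygy, subtracting, and cancelling $L_\rho$ in the UFD $\C[\bl z]$ yields the lower-degree syzygy $\sum q_i f_i = 0$, so by induction $q_1 \in \mathfrak{m}\C[\bl z]$ and hence $\rho(p_1) \equiv p_1 \pmod{\mathfrak{m}\C[\bl z]}$. Since $G$ is generated by such $\rho$'s and $\mathfrak{m}\C[\bl z]$ is $G$-stable, Reynolds-averaging gives $R(p_1) \equiv p_1 \pmod{\mathfrak{m}\C[\bl z]}$; as $R(p_1)$ is homogeneous of positive degree (or zero), it already lies in $\mathfrak{m}\C[\bl z]$, concluding $p_1 \in \mathfrak{m}\C[\bl z]$.

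Granting the lemma, I would finish by a standard graded Nakayama/freeness argument: Chevalley's lemma implies that any minimal homogeneous set of $\C[\bl z]^G$-module generators of $\C[\bl z]$ is $\C[\bl z]^G$-linearly independent, so $\C[\bl z]$ is a free $\C[\bl z]^G$-module, necessarily of rank $|G|$ by the Galois structure of $\mathrm{Frac}(\C[\bl z])/\mathrm{Frac}(\C[\bl z]^G)$. Combining this with the finite free extension of $\C[\bl z]$ over $\C[\theta_1,\ldots,\theta_n]$ from Stage~1 and a Hilbert-series comparison identifies $\C[\bl z]^G$ with the polynomial subring $\C[\theta_1,\ldots,\theta_n]$, yielding the theorem.

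The main obstacle is Chevalley's lemma itself, since the pseudoreflection hypothesis is used essentially only there. The delicate point is the inductive descent: tracking the factorization $\rho(p_i) - p_i = L_\rho q_i$, cancelling $L_\rho$ in the UFD $\C[\bl z]$, and combining the Reynolds-averaging of $p_1$ with the observation that positive-degree homogeneous polynomials already sit in $\mathfrak{m}\C[\bl z]$ so as to land the descent inside $(z_1,\ldots,z_n)\C[\bl z]$ rather than in some larger ideal.
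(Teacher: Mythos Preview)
The paper does not give its own proof of this theorem; it is quoted as the classical Chevalley--Shephard--Todd result with a citation to Bourbaki, so there is no in-paper argument to compare against. I will therefore only assess the correctness of your sketch.

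There is a genuine gap in your Stage~2. Your Chevalley lemma concludes that $p_1 \in \mathfrak{m}\C[\bl z]$ where $\mathfrak{m}=(z_1,\dots,z_n)$. But a homogeneous polynomial $p_1$ of positive degree lies in $\mathfrak{m}$ automatically, so for $\deg p_1>0$ there is nothing to prove and your ``inductive step'' is vacuous; the pseudoreflection hypothesis is never actually used. With that target ideal the lemma holds for \emph{every} finite group acting linearly, so it cannot be the key input to a statement that characterises pseudoreflection groups. The correct conclusion is $p_1 \in I_G$, the Hilbert ideal of $\C[\bl z]$ generated by all homogeneous invariants of positive degree. With $I_G$ in place of $\mathfrak{m}$, your inductive mechanism does work: from $\sum q_i f_i=0$ and induction one gets $q_1\in I_G$, hence $\rho(p_1)-p_1=L_\rho q_1\in I_G$, so $g\cdot p_1\equiv p_1\pmod{I_G}$ for all $g\in G$, and averaging gives $p_1\equiv R(p_1)\pmod{I_G}$; now $R(p_1)$ is a positive-degree \emph{invariant}, hence $R(p_1)\in I_G$ by definition, which forces $p_1\in I_G$. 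That last step is exactly what fails with $\mathfrak{m}$: you only get $R(p_1)\in\mathfrak{m}$, which is no more than you started with.

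A second, smaller gap: the passage ``Chevalley's lemma implies that any minimal homogeneous set of $\C[\bl z]^G$-module generators of $\C[\bl z]$ is $\C[\bl z]^G$-linearly independent'' swaps the roles in the lemma. The lemma controls relations $\sum p_i f_i=0$ with the $f_i$ \emph{invariant} and the $p_i$ arbitrary; freeness of $\C[\bl z]$ over $\C[\bl z]^G$ concerns relations $\sum c_i q_i=0$ with the $c_i$ invariant and the $q_i$ arbitrary. The standard route is rather: use the (corrected) lemma to show that a minimal set of homogeneous invariant generators $\theta_1,\dots,\theta_m$ of the Hilbert ideal $I_G$ is algebraically independent, deduce $m=n$ by Krull dimension, and then obtain $\C[\bl z]^G=\C[\theta_1,\dots,\theta_n]$ by a degree induction using the Reynolds operator; freeness of $\C[\bl z]$ over $\C[\bl z]^G$ then follows, not the other way round. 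Your Noether-normalization $\theta_i$'s from Stage~1 are only an hsop and need not generate $I_G$ minimally, so they must be re-chosen for this argument.
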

Let ${\bl\theta}: \C^n \rightarrow \C^n$ be the function defined by
\bea\label{theta}
{\bl\theta}(\bl z) = \big(\theta_1(\bl z),\ldots,\theta_n(\bl z)\big),\,\,\bl z\in\C^n.
\eea
Then Theorem \ref{A} can be restated as : for any $G$-invariant polynomial $p$ in $n$ variables there exists a unique polynomial $q$ in $n$ variables such that $p = q \circ \bl \theta.$
\begin{thm}\cite[Theorem 1, p.110]{bourbaki}\label{B}
The polynomial ring $\C[\bl z]$ is a free $\C[\bl z]^G$ module of rank $|G|$. Further, one can choose a  basis of $\C[\bl z]$
consisting of homogeneous polynomials.
\end{thm}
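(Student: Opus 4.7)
The plan is to deduce this from the previous theorem via standard graded commutative algebra. Write $R = \C[\bl z]$ and $S = \C[\theta_1,\ldots,\theta_n] = \C[\bl z]^G$, both viewed as $\mb N$-graded $\C$-algebras with $\deg\theta_i = d_i$. The target is a homogeneous $S$-basis of $R$ of cardinality $\lvert G\rvert$.

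First I would verify that $R$ is a finitely generated $S$-module. Each coordinate $z_i$ satisfies the monic polynomial $\prod_{g\in G}\bigl(T - g\cdot z_i\bigr) \in S[T]$, whose coefficients are the elementary symmetric functions of the $G$-orbit of $z_i$ and hence lie in $S$. Consequently $S\hookrightarrow R$ is integral, so $R$ is a finitely generated $S$-module. Next I would observe that $\theta_1,\ldots,\theta_n$ form a regular sequence in $R$: being algebraically independent, they cut out a subscheme of codimension $n$ in $\C^n$ and thus form a homogeneous system of parameters for the Cohen--Macaulay ring $R$, and every homogeneous system of parameters in a graded Cohen--Macaulay ring is regular. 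Letting $\mathfrak{m}_S = (\theta_1,\ldots,\theta_n) \subset S$ denote the irrelevant ideal, the Koszul resolution yields the Hilbert series identity
$$H_{R/\mathfrak{m}_S R}(t) = H_R(t)\prod_{i=1}^n(1-t^{d_i}) = \prod_{i=1}^n \frac{1-t^{d_i}}{1-t},$$
which at $t=1$ evaluates to $\dim_\C R/\mathfrak{m}_S R = \prod_i d_i$.

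I would then choose homogeneous representatives $f_1,\ldots,f_N \in R$ of a $\C$-basis of $R/\mathfrak{m}_S R$. By the graded Nakayama lemma $\{f_1,\ldots,f_N\}$ generates $R$ as an $S$-module, and by construction each $f_j$ is homogeneous, which gives the second assertion. To upgrade the generating set to a basis I would invoke the graded Auslander--Buchsbaum formula: since $\theta_1,\ldots,\theta_n$ is a regular sequence on $R$, one has $\mathrm{depth}_S(R) = n = \mathrm{depth}(S)$, forcing $\mathrm{pd}_S(R) = 0$, i.e.\ $R$ is free over $S$. The rank is then identified by tensoring with $\mathrm{Frac}(S)$: the resulting isomorphism $\mathrm{Frac}(S)\otimes_S R \cong \C(\bl z)$ shows the rank equals the Galois degree $[\C(\bl z) : \C(\bl z)^G] = \lvert G\rvert$ of the faithful action of $G$. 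Matching this against the Nakayama count forces $N = \lvert G\rvert$, and as a by-product one recovers $\prod_i d_i = \lvert G\rvert$.

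The main technical hurdle is the freeness step; a self-contained alternative would be to first establish Molien's formula together with the classical identity $\prod_i d_i = \lvert G\rvert$ for pseudoreflection groups, thereby matching the Nakayama generator count directly against the generic rank $\lvert G\rvert$ and concluding freeness without invoking Auslander--Buchsbaum.
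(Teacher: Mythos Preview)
The paper does not give its own proof of this statement; it is quoted as a classical result with a citation to Bourbaki, and the paper only remarks afterward that Theorems~\ref{A} and~\ref{B} are known to be equivalent. So there is no ``paper's proof'' to compare against.

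That said, your argument is correct and is essentially the standard graded commutative algebra proof. The steps are all sound: integrality of $R$ over $S$ via the orbit polynomial, the observation that $\theta_1,\ldots,\theta_n$ form a homogeneous system of parameters and hence a regular sequence on the Cohen--Macaulay ring $R$, graded Nakayama to produce homogeneous generators, and Auslander--Buchsbaum to force projective dimension zero. The rank identification via $[\C(\bl z):\C(\bl z)^G]=\lvert G\rvert$ is also fine, using the standard fact that $\mathrm{Frac}(\C[\bl z]^G)=\C(\bl z)^G$ for a finite group acting on an integral domain. Your Hilbert series identity $H_{R/\mathfrak m_S R}(t)=\prod_i(1-t^{d_i})/(1-t)$ is in fact the same computation the paper carries out in its Poincar\'e series section (Equation~\eqref{Poincare}), though there it is derived as a \emph{consequence} of Theorem~\ref{B} rather than as an ingredient in its proof.

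One minor remark: your alternative route at the end (Molien's formula plus $\prod_i d_i=\lvert G\rvert$) is not really more elementary, since the identity $\prod_i d_i=\lvert G\rvert$ is itself usually proved by comparing Molien's formula with the Hilbert series of $\C[\theta_1,\ldots,\theta_n]$, which presupposes Theorem~\ref{A}. Your main argument is cleaner and self-contained once Theorem~\ref{A} is granted.
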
 

It is known that the theorems above are equivalent  \cite[Theorem 4, p.120]{bourbaki}. The collection $\{\theta_i\}_{i=1}^n$ is called a homogeneous system of parameters (hsop) or basic polynomials associated to the pseudoreflection group $G$. Although a hsop is not unique but the degrees $d_i$'s of $\theta_i$'s are unique for $G$ up to order and they satisfy $d_1\cdots d_n=\lvert G\rvert$ and $\sum_{i=1}^n(d_i-1)=r,$ where $r$ is the number of pseudoreflections  in $G,$ see \cite[Corollary 4.3, p.487]{S}.
\begin{ex}
\begin{enumerate}
\item The ring of $\mb Z_n$-invariant complex polynomials is given by $\C[z^n].$ In other words, $\C[z]^{\mb Z_n} = \C[z^n].$ Moreover, the polynomials $1,z,\ldots, z^{n-1}$ form a basis of $\C[z]$ over $\C[z^n],$ see \cite[p.284]{ST}.
\vspace{.1 cm}
    \item Consider the ring of symmetric polynomials $\C[\bl z]^{\mathfrak S_n}$  and one
has $\C[\bl z]^{\mathfrak S_n} = \C[s_1,\ldots,s_n],$ where $s_i$ is the $i$-th elementary symmetric polynomial in $n$ variables. It is not difficult to see that the $n!$ elements
\Bea
z_1^{k_1}\cdots z_{n-1}^{k_{n-1}}, \quad 0 \leq k_i \leq i
\Eea
form a basis of $\C[\bl z]$ as a $\C[\bl z]^{\mathfrak S_n}$ module \cite[p.94]{serre}.
\vspace{.1 cm}
\item For dihedral group $D_{2n}$ of order $2n,$ $\C[z_1,z_2]^{D_{2n}} = \C[z_1z_2,z_1^n+z_2^n].$ 
\end{enumerate}

\end{ex}
 Explicit description of basic polynomials (hsop) associated to various pseudoreflection groups can be found in \cite[Section VI]{R}.

\subsection{Poincar\'e series}
\begin{defn}
Let $V$ be a (complex) vector space. A grading of $V$
is a direct sum decomposition
$$
V = \oplus_{k\geq 0} V_k
$$
such that $\dim (V_i)$ is finite for each $i$. The \Poincare series of the graded vector space $V$, (cf. \cite[p.108]{bourbaki})
denoted by $P_V$, is defined by
$$
P_V(t) = \sum_{k \geq 0} \dim(V_k) t^k.
$$
\end{defn}

If $W = \oplus_{k\geq 0} W_k$ is another graded vector space, define a grading of $V \oplus W$ and $V \otimes W$ by
\begin {align*}
 (V \oplus W)_k &= V_k \oplus W_k,\\
 (V \otimes W)_k &= \sum_{p+q = k}V_p \otimes W_q,
\end {align*}
and thus
\begin {align*}
P_{V \oplus W} &= P_V + P_W,\\
P_{V \otimes W} &= P_V\cdot P_W.
\end {align*}

The following computations derive the \Poincare series of $\C[\bl z]$ and $\C[\bl z]^G$.
\begin{enumerate}
\item[(a)] Define a grading of $\C[\bl z]$ by letting $\C[\bl z]_k$ be the space of homogeneous polynomials of degree $k.$ One
has $\C[\bl z] \cong \C[z_1] \otimes \cdots \otimes \C[z_n]$ and since
\Bea
P_{\C[z]}(t) = \sum_{k\geq 0}t^k  = \frac{1}{1-t},
\Eea
a repeated application of the formula for tensor product shows that
\Bea
P_{\C[\bl z]}(t) = \frac{1}{(1-t)^n}.
\Eea

\item[(b)] Note that 
if ${\C[\bl z]^G_k} = {\C[\bl z]^G} \cap {\C[\bl z]}_k,$ then ${\C[\bl z]^G} = \oplus {\C[\bl z]^G_k}$. Let
$d_1,\ldots, d_n$ be the degrees of $\theta_1,\ldots,\theta_n,$ respectively. Then one has
\Bea
P_{\C[\bl z]^G}(t) = \frac{1}{1-t^{d_1}}\cdots \frac{1}{1-t^{d_n}}.
\Eea
This follows from the equality $P_{\C[\theta_i]}(t) = \sum_{k\geq 0}t^{kd_i}  = (1-t^{d_i})^{-1}$ together
with the isomorphism ${\C[\bl z]^G} \cong \C[\theta_1] \otimes \cdots \otimes \C[\theta_n]$.
\end{enumerate}

\subsection{Proper holomorphic maps}
Let $\Omega_1$ and $\Omega_2$ be domains (connected open sets) in $\mb C^n$. A holomorphic map $\bl f:\Omega_1\to\Omega_2$ is said to be {\it proper} if $\bl  f^{-1}(K)$  is compact in $\Omega_1$ whenever $K\subseteq\Omega_2$ is compact. Clearly, a biholomorphic map is proper. Basic properties of proper holomorphic maps are discussed in \cite[Chapter 15]{R1}.

For two domains $D_1$ and $D_2$ in $\mb C^n$ and an analytic covering map $\bl p : D_1\to D_2,$ a {\it deck transformation} of $\bl p$  is a biholomorphism $\bl h : D_1 \to D_1$ such that $\bl p \circ \bl h =\bl  p.$ The set of all deck transformations of $\bl p$ forms a group. We call it the group of deck transformations of $\bl p$  and denote it by ${\rm Deck}(\bl p).$ It is well-known that a proper holomorphic mapping $\bl f:\Omega_1\to\Omega_2$ is a (unbranched) analytic covering 
\Bea
\bl f:\Omega_1\setminus \bl f^{-1}f( V_{\bl f})\to\Omega_2\setminus \bl f(V_{\bl f}),
\Eea
where $V_{\bl f}:=\{\bl z\in\Omega_1:J_{\bl f}(\bl z)=0\}$ is the {\it branch locus} of $\bl f$ \cite[Subsection 15.1.10]{R1}.
The group ${\rm Deck}(\bl f)$ of this (unbranched) analytic covering is called the group of deck transformations of the proper map $\bl f:\Omega_1\to\Omega_2.$
An analytic covering $\bl p$ is called {\it Galois} if ${\rm Deck}(\bl p)$ acts transitively on the fibre set  $\bl p^{-1}(\{\bl  w\})$ of $\bl  w\in D_2$ for some (and thus for all) $\bl  w \in D_2.$ Equivalently, cardinality of the set $\bl p^{-1}(\{\bl  w\})$ equals to the order of ${\rm Deck}(\bl p).$ In other words, for every $\bl w\in D_2,$ the set $\bl p^{-1}(\{\bl w\})$ is a ${\rm Deck} (\bl p)$-orbit, and vice versa. Briefly, $D_2=D_1/ {\rm Deck} (\bl p).$ Hence an analytic cover $\bl p$ is Galois if and only if $D_1/{\rm Deck}(\bl p) \cong D_2.$ 

\subsection{Hilbert modules}
We  recall several useful definitions following \cite{DP} and \cite{ChenGuo}.
\begin{defn}
A Hilbert space $\mathcal H$ is said to be a {\rm Hilbert module} over an algebra $\m A$ if the map $(f,h) \to f\cdot h,$ $f\in \m A, h\in \mathcal H,$ defines an algebra homomorphism $f \mapsto T_f$ of $\m A$ into $\m L(\m H),$ where $T_f$ is the bounded operator defined by  $T_f h = f \cdot h.$

A closed subspace $\m M$ of $\m H$ is said to be a {\rm submodule} of the Hilbert module $\m H$ if $\m M$ is closed under the action of $\m A.$ If in addition, $T_f^*\m M\subseteq\m M$ for all $f\in\m A,$ then $\m M$ is called a {\rm reducing submodule} of $\m H,$ where $T_f^*$ denotes the adjoint of $T_f.$ A reducing submodule $\m M$ is called \emph{minimal} or \emph{irreducible} if only reducing submodules of $\m M$ are  $\{0\}$ and $\m M$ itself.
\end{defn}
It is easy to see that $\m H$ admits a reducing  submodule if and only if there exists an orthogonal projection $P $ on $\m H$ such that $PT_f=T_fP$ for all $f\in\m A.$ In this case, $\m K=P\m H$ is the corresponding reducing submodule.
\begin{defn}\label{rank}
The rank of a Hilbert module $\mathcal H$ over the algebra $\mathcal A$ is $\inf  |\mathcal F|,$ where $\mathcal F \subseteq \mathcal H$ is any subset with the property that the set 
\Bea
\{r_1 f_1 + \cdots + r_k f_k: f_1, \ldots , f_k \in \mathcal F; r_1, \ldots , r_k \in \mathcal A\}
\Eea
is dense in $\mathcal H$ and $|\mathcal F|$ denotes the cardinality of $\mathcal F$ (cf. \cite[Section 2.3]{ChenGuo}). Such a set $\mathcal F$ is said to be a generating set of the Hilbert module $\mathcal H$ over $\mathcal A$.
\end{defn}
\noindent It is evident that the rank is a unitary invariant for Hilbert modules. Fix $\bl w\in\C^n$. Let $\C_{\bl w}$ denote the one dimensional module over the polynomial ring $\mathbb C[\bl{z}]$ where the module action is given by $p\cdot\lambda = p(\bl w)\lambda, p\in\C[\bl z],\lambda\in\C$ \cite[p.10]{DP}. We state \cite[Lemma 2.10]{BGMS} in the context of a pseudoreflection group $G$ and omit its proof as it is analogous to the case of $\mathfrak S_n.$

\begin{lem}\label{rankcomp}
If $\m H$ is a Hilbert module over $\mb C[\bl z]$ consisting of holomorphic functions defined on some bounded domain $\Omega \subseteq \mb C^n,$ then we have
\begin{enumerate}
\item $\m H \otimes_{\mb C[\bl z]}\mb C_{\bl w}\cong\cap_{p\in\mb C[\bl z]}\ker M^*_{p - p(\bl w)} = \cap_{i=1}^n \ker \big(M_{i}^*-\bar w_i\big);$
\item $\m H\otimes_{\mb C[\bl z]^{G}}\mb C_{\bl w}\cong\cap_{i=1}^n \ker \big(M_{\theta_i}^*-\ov{\theta_i(\bl w)}\big);$
\item for any set of generators $p_1,\ldots,p_t$ of $\m H$ over $\C[\bl z]$ (or $\mb C[\bl z]^{G}$), the vectors 
\Bea
p_1\otimes_{\mb C[\bl z]^G} 1_{\bl w},\ldots, p_t\otimes_{\mb C[\bl z]^G}1_{\bl w}
\Eea
span $\m H \otimes_{\mb C[\bl z]}\mb C_{\bl w}$ (or $\m H\otimes_{\mb C[\bl z]^G}  \mb C_{\bl w}$).
\end{enumerate}
\end{lem}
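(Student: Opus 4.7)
The plan is to deduce all three parts from the standard realization of the Hilbert-module tensor product $\m H \otimes_{\m A} \mb C_{\bl w}$ as the quotient of $\m H$ by the norm closure of the submodule $\m I_{\bl w}\cdot\m H$, where $\m I_{\bl w}\subseteq \m A$ is the maximal ideal of functions vanishing at $\bl w$. For any closed submodule $\m N\subseteq\m H$, the quotient $\m H/\m N$ is canonically isometrically isomorphic to $\m N^{\perp}$, so the task reduces to computing $\overline{\m I_{\bl w}\cdot\m H}^{\perp}$ in each of the two module structures.

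For part (1), I would first observe that $f\in\overline{\m I_{\bl w}\cdot\m H}^{\perp}$ iff $\langle f,\,(p-p(\bl w))h\rangle=0$ for every $p\in\mb C[\bl z]$ and $h\in\m H$, which is equivalent to $M_{p-p(\bl w)}^{*}f=0$ for every $p$. This gives the equality $\m H\otimes_{\mb C[\bl z]}\mb C_{\bl w}\cong \bigcap_{p\in\mb C[\bl z]}\ker M^{*}_{p-p(\bl w)}$. Since $\m I_{\bl w}$ is generated as an ideal by the coordinate polynomials $z_{i}-w_{i}$, the larger intersection collapses to $\bigcap_{i=1}^{n}\ker(M_{i}^{*}-\bar w_{i})$: the nontrivial inclusion follows because any $p-p(\bl w)=\sum_{i}(z_{i}-w_{i})q_{i}(\bl z)$ gives $M^{*}_{p-p(\bl w)}=\sum_{i}M^{*}_{q_{i}}(M_{i}^{*}-\bar w_{i})$, and any $f$ killed by every $M_{i}^{*}-\bar w_{i}$ is therefore killed by each $M^{*}_{p-p(\bl w)}$ after observing that $M^{*}_{q_{i}}$ commutes appropriately with the tuple action. (Alternatively one just checks the reverse inclusion on the dual side: an orthogonal element to the closure of $(z_{i}-w_{i})\m H$ for each $i$ is orthogonal to the closure of $\m I_{\bl w}\m H$.)

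For part (2), I would repeat the same scheme with $\m A=\mb C[\bl z]^{G}$. Here the Chevalley--Shephard--Todd theorem (Theorem~\ref{A}) is doing the real work: it gives $\mb C[\bl z]^{G}=\mb C[\theta_{1},\ldots,\theta_{n}]$, so the maximal ideal of $\mb C[\bl z]^{G}$ at $\bl w$ is generated by $\theta_{i}-\theta_{i}(\bl w)$, $i=1,\ldots,n$. Consequently the same collapse argument as above gives
\[
\m H\otimes_{\mb C[\bl z]^{G}}\mb C_{\bl w}\;\cong\;\overline{\textstyle\sum_{i=1}^{n}(M_{\theta_{i}}-\theta_{i}(\bl w))\m H}^{\,\perp}\;=\;\bigcap_{i=1}^{n}\ker\!\big(M_{\theta_{i}}^{*}-\overline{\theta_{i}(\bl w)}\big).
\]
For part (3), if $\{p_{1},\ldots,p_{t}\}$ generates $\m H$ over $\mb C[\bl z]$ (or over $\mb C[\bl z]^{G}$), then vectors of the form $\sum_{j}r_{j}\cdot p_{j}$ with $r_{j}$ in the corresponding algebra are dense in $\m H$. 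Under the quotient map onto the tensor product one has $(r_{j}\cdot p_{j})\otimes 1_{\bl w}=r_{j}(\bl w)(p_{j}\otimes 1_{\bl w})$, so the continuous image of this dense set sits in the finite-dimensional span of $\{p_{j}\otimes 1_{\bl w}\}_{j=1}^{t}$, which is therefore all of the tensor product.

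The main (essentially only) technical point is the justification of the isomorphism $\m H\otimes_{\m A}\mb C_{\bl w}\cong\overline{\m I_{\bl w}\cdot\m H}^{\perp}$ in the Hilbert-module category: one must check that the natural map from the algebraic tensor product $\m H\otimes_{\m A}\mb C_{\bl w}$ descends to an isometric isomorphism onto the orthocomplement, i.e., that the relation $p\cdot h\otimes \lambda=h\otimes p(\bl w)\lambda$ exactly matches the quotient by $\overline{\m I_{\bl w}\cdot\m H}$. Once this general fact is in hand, everything else is a direct unwinding of definitions together with Theorem~\ref{A} in the invariant setting, exactly as in \cite[Lemma 2.10]{BGMS}.
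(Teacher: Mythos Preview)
Your proposal is correct and follows exactly the standard route; the paper itself omits the proof, stating only that it is analogous to \cite[Lemma~2.10]{BGMS}, and your argument is precisely that analogue (identify $\m H\otimes_{\m A}\mb C_{\bl w}$ with $(\overline{\m I_{\bl w}\m H})^\perp$, then collapse to the generators of $\m I_{\bl w}$, invoking Theorem~\ref{A} for part~(2)). One cosmetic point: in the collapse step you do not need any commutation remark about $M_{q_i}^*$---from $p-p(\bl w)=\sum_i q_i(z_i-w_i)$ one has $M_{p-p(\bl w)}^*=\sum_i M_{q_i}^*(M_i^*-\bar w_i)$ directly, so any $f\in\bigcap_i\ker(M_i^*-\bar w_i)$ is killed outright.
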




\begin{defn}
Let $\Omega$ be a domain in $\mb C^n$ and $\m H$ be a Hilbert space consisting of holomorphic functions on $\Omega$. A function $K:\Omega \times \Omega \to \m L(\mb C^{m})$ is called a reproducing kernel of $\m H$ if
\begin{enumerate}
\item it is holomorphic in the first variable and anti-holomorphic in the
second,
\item $K(\cdot,\bl w) \bl \xi$ is in $\mathcal H$ for $\bl w\in \Omega$ and $\bl \xi
\in \mathbb C^m$ and
\item $K$ has the reproducing property:
\Bea
\inner{\bl f}{K(\cdot, \bl w) \bl \xi} = \inner{\bl f(\bl w)}{\bl \xi} \mbox{~for~}\bl f \in \m H,\:\bl w\in \Omega,\: \bl \xi\in
\mathbb C^m.
\Eea
\end{enumerate}
A Hilbert space possessing a reproducing kernel is said to be a reproducing kernel Hilbert space.
\end{defn}

\begin{defn}
A Hilbert module $\m H$ over $\mb C[\bl z]$ of $\mb C^m$-valued holomorphic functions on $\Omega\subseteq\mb C^n,$ is said to be an {\it analytic Hilbert module}  if
\begin{enumerate}
\item [(1)] $\mb C[\bl z]\otimes\mb C^m$  is dense in $\m H$ and
\item[(2)] $\m H$ possesses a $\m L(\mb C^m)$-valued reproducing kernel on $\Omega.$
\end{enumerate}
The module action in an analytic Hilbert module is given by pointwise multiplication, that is, ${\mathfrak m}_p(\bl h)(\bl z) = p(\bl z) {\bl h}(\bl z),\, \bl z\in \Omega.$ Note that if the $i$-th component of $\bl h\in \m H$ is $h_i,$ then the $i$-th component of ${\mathfrak m}_p(\bl h)(\bl z)$ is $p(\bl z)h_i(\bl z)$ for $1\leq i\leq m.$  
\end{defn}



\section{Analytic Chevalley-Shephard-Todd Theorem}
Theorem \ref{theorema} and Theorem \ref{theoremb} below are generalizations of Chevalley-Shephard-Todd Theorem described in Theorem \ref{A} and Theorem \ref{B}, respectively, to the algebra of holomorphic functions on $\mb C^n.$  We call them \emph{analytic Chevalley-Shephard-Todd Theorem} and abbreviate as analytic CST. This is the main result of this section.

\begin {thm}\label{theorema}
Let $G$ be a finite group generated by pseudoreflections on $\mb C^n$ and $\{\theta_i\}_{i=1}^n$ be a homogeneous system of parameters associated to it. Then, for a $G$-invariant holomorphic function $f$ on $\C^n,$ there exists a unique holomorphic function $g$ on $\C^n$ such that $f = g\circ\bl{\theta},$  where $\bl{\theta} = (\theta_1,\ldots,\theta_n)$. 
\end {thm}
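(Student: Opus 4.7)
I would prove this by descending $f$ through the branched Galois covering $\bl\theta : \C^n \to \C^n$ rather than manipulating formal power series. By the discussion preceding the theorem, $\bl\theta$ is proper (the $\theta_i$ are homogeneous with $\bl\theta^{-1}(\{0\}) = \{0\}$) and off the branch locus $V_{\bl\theta} = \{\bl z : J_{\bl\theta}(\bl z) = 0\}$ it is an unbranched Galois cover with deck group $G$, whose fibres are precisely the $G$-orbits. The map $\bl\theta$ is moreover surjective onto $\C^n$: its fibres are finite because $\C[\bl z]$ is a free $\C[\bl z]^G$-module of finite rank by Theorem \ref{B}, so the image is both closed (by properness) and open (by the open mapping theorem for finite holomorphic maps), hence equals the connected space $\C^n$.

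On $W := \C^n \setminus \bl\theta(V_{\bl\theta})$ I would define $g(\bl w) = f(\bl z)$ for any $\bl z \in \bl\theta^{-1}(\{\bl w\})$, which is well defined by the $G$-invariance of $f$ together with the fact that the fibres of $\bl\theta$ over $W$ are $G$-orbits. Since $\bl\theta$ is a local biholomorphism on $\bl\theta^{-1}(W)$, composing $f$ with any local inverse shows that $g$ is holomorphic on $W$. To extend $g$ across the branch image, note that by Remmert's proper mapping theorem $\bl\theta(V_{\bl\theta})$ is an analytic subset of $\C^n$ of pure codimension one (since $V_{\bl\theta}$ is a hypersurface and $\bl\theta$ is finite-to-one), and $g$ is locally bounded on all of $\C^n$: for any bounded neighbourhood $U$ of a point, $\bl\theta^{-1}(\overline{U})$ is compact by properness, so $|g| \le \sup_{\bl\theta^{-1}(\overline{U})} |f|$ on $U \cap W$. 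Riemann's removable singularity theorem then extends $g$ uniquely to a holomorphic function on $\C^n,$ and the identity $g \circ \bl\theta = f$ holds on $\C^n \setminus V_{\bl\theta}$ and therefore everywhere by continuity. Uniqueness is immediate: if $g_1 \circ \bl\theta = g_2 \circ \bl\theta$, then $g_1 = g_2$ on $\bl\theta(\C^n) = \C^n$.

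The main technical obstacle is the extension step across $\bl\theta(V_{\bl\theta})$, which is precisely where the naive formal-power-series argument described at the end of the introduction runs into convergence difficulties; invoking the proper mapping theorem together with Riemann's extension theorem sidesteps that issue cleanly and produces a genuinely holomorphic $g$ without ever comparing formal expansions.
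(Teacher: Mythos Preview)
Your argument is correct and takes a genuinely different route from the paper. The paper proves the theorem by passing to the quotient analytic space: it shows that the induced map $\bl\theta' : \C^n/G \to \C^n$ is a bijection (surjectivity via the Nullstellensatz, injectivity by separating orbits with invariant polynomials), equips $\C^n/G$ with its Cartan analytic-space structure, and then invokes the general fact that a bijective holomorphic map from a reduced equidimensional analytic space to a complex manifold of the same dimension is biholomorphic. The descent of $f$ is then immediate from the biholomorphism $\bl\theta'$.

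You instead stay on $\C^n$ itself and never introduce $\C^n/G$ as an analytic space: you define $g$ directly on the complement of the branch image using the $G$-orbit description of the fibres, and then extend across $\bl\theta(V_{\bl\theta})$ via Remmert's proper mapping theorem and Riemann's removable singularity theorem, with properness supplying the needed local bound. This is more elementary in that it avoids the theory of complex analytic spaces and the bijective-implies-biholomorphic result from \cite{kaup}, trading them for standard several-complex-variables tools. The paper's approach, on the other hand, yields the stronger structural statement that $\bl\theta'$ is a biholomorphism, which it reuses verbatim in the generalization to arbitrary $G$-invariant domains (Subsection~\ref{pseudo}); your argument would also carry over to that setting, but the quotient picture makes the transition essentially automatic. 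One minor imprecision: the identity $g\circ\bl\theta = f$ is established by your construction on $\bl\theta^{-1}(W)=\C^n\setminus\bl\theta^{-1}\bl\theta(V_{\bl\theta})$, which may be strictly smaller than $\C^n\setminus V_{\bl\theta}$; this does not affect the conclusion, since the complement is still a proper analytic set and density plus continuity finish the argument.
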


\begin {thm}
 \label{theoremb}
 Let $G$ be a finite group generated by pseudoreflections on $\mb C^n,\, f$ be a holomorphic function on $\C^n$ and $p_1,\ldots,p_d$ be a basis of $\C[\bl z]$ as a $\C[\bl z]^G$ module, where $d=\lvert G\rvert.$ Then there exist unique $G$-invariant holomorphic functions $f_1,\ldots,f_d$ such that
 \Bea
 f = p_1f_1 + \cdots + p_df_d.
 \Eea
\end {thm}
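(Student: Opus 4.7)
My plan is to set up the Cramer's rule system suggested by $G$-invariance of the putative coefficients $f_j$, verify uniqueness directly from the system, and prove existence by approximating $f$ by its Taylor polynomials and invoking Theorem \ref{B}.

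Concretely, enumerate $G = \{\sigma_1, \ldots, \sigma_d\}$ and define the $d \times d$ matrix $M(\bl z) = \big( p_j(\sigma_i^{-1}\cdot\bl z) \big)_{i,j}$ together with the column vectors $\bl F(\bl z) = \big( f(\sigma_i^{-1}\cdot\bl z) \big)_i$ and $\bl f(\bl z) = \big( f_j(\bl z) \big)_j$. If $f = \sum_j p_j f_j$ with each $f_j$ being $G$-invariant, applying $\sigma_i^{-1}$ to both sides yields the system $M(\bl z) \bl f(\bl z) = \bl F(\bl z)$. I would first observe that $\{p_j\}$ remains a basis of $\C(\bl z)$ over $\C(\bl z)^G$ (a Galois extension of degree $d$), which forces $\det M$ to be a nonzero polynomial. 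Uniqueness of the decomposition then follows at once: if $\sum p_j h_j = 0$ with each $h_j$ being $G$-invariant holomorphic, then $M \bl h \equiv 0$, so $\bl h \equiv 0$ on the open dense set $\{\det M \neq 0\}$ and thus everywhere.

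For existence I would use polynomial approximation. Let $f^{(k)}$ be the $k$-th partial Taylor polynomial of $f$ about the origin. Theorem \ref{B} supplies unique $f_j^{(k)} \in \C[\bl z]^G$ with $f^{(k)} = \sum_j p_j f_j^{(k)}$, and by Cramer's rule $f_j^{(k)} = \det M_j^{(k)}/\det M$, where $M_j^{(k)}$ is $M$ with its $j$-th column replaced by $\bl F^{(k)}$. Since $f^{(k)} \to f$ uniformly on compact subsets of $\C^n$, it follows that $f_j^{(k)} \to f_j := \det M_j / \det M$ uniformly on compact subsets of $U := \{\det M \neq 0\}$.

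The main obstacle is to promote this to uniform convergence on compacts of all of $\C^n$, which would make the limit $f_j$ entire. My plan is to invoke the Cauchy integral formula. At any $a \in \C^n$ I would choose multiradii $r = (r_1, \ldots, r_n)$ so that the distinguished boundary $T(a, r) = \{\bl z : |z_i - a_i| = r_i\}$, a real $n$-torus, misses the codimension-one analytic set $V(\det M)$; such $r$ exist because $V(\det M)$ has real dimension $2n - 2$. On the compact set $T(a, r) \subset U$ the sequence $\{f_j^{(k)}\}$ is uniformly Cauchy, so the Cauchy integral formula yields uniform Cauchy convergence on compact subsets of the polydisc with distinguished boundary $T(a,r)$. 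Covering $\C^n$ by such polydiscs, $\{f_j^{(k)}\}$ converges uniformly on compact subsets of $\C^n$ to an entire function $f_j$. Since each $f_j^{(k)}$ is $G$-invariant and $f^{(k)} = \sum_j p_j f_j^{(k)}$, passing to the limit produces $G$-invariant entire $f_j$ with $f = \sum_j p_j f_j$, as desired.
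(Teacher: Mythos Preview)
Your strategy is sound and genuinely different from the paper's. The paper first proves that $\det M$ (their $\Lambda$) factors explicitly as a product of homogeneous linear forms (Proposition~\ref{determinantformula}), and then shows directly that a product of linear polynomials dividing every homogeneous component of an entire function must divide the function. Your Cauchy-integral route bypasses this algebra, needing only that $\det M$ is a nonzero polynomial; on the other hand, the paper's explicit factorisation is what later drives the extension to arbitrary $G$-invariant domains (Theorem~\ref{theoremc}), where polydisc arguments are not available.

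There is, however, a genuine gap in your justification that for each $a\in\C^n$ one can choose $r\in(0,\infty)^n$ with $T(a,r)\cap V(\det M)=\emptyset$. The dimension count you give is insufficient: under $\bl z\mapsto\big(|z_1-a_1|,\ldots,|z_n-a_n|\big)$ the $(2n-2)$-real-dimensional set $V(\det M)$ maps into an $n$-dimensional target, and since $2n-2\ge n$ for $n\ge 2$ the image may well have nonempty interior (already for a single affine hyperplane it does). The assertion is nevertheless true --- it is the statement that the amoeba of a nonzero polynomial has nonempty complement --- and admits a short proof by induction on $n$: write $\det M=\sum_{i=0}^{m}q_i(z')(z_n-a_n)^i$ with $q_m\not\equiv 0$, choose by induction $r'$ so that $q_m$ is nowhere zero on the $(n-1)$-torus $T(a',r')$, note that the roots in $z_n-a_n$ are then uniformly bounded as $z'$ ranges over that compact torus, and take $r_n$ larger than this bound. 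With this step supplied, your argument is complete.
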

In other words, $\mathcal O(\C^n)$ is a free module of rank $\lvert G \rvert$ over the ring $\mathcal O(\C^n)^G.$ An intriguing part of this theorem is that even in holomorphic framework, a basis of $\C[\bl z]$ as a module over $\C[\bl z]^G$ continues to work as a basis of $\mathcal O(\C^n)$ as a module over $\mathcal O(\C^n)^G.$
\begin{rem}\label{basis}
A choice of different basis of polynomial ring is related to the chosen basis by a matrix whose determinant is invertible in $\C[\bl z]^G$ and consequently is a non-zero complex number. Thus the result is independent of the basis chosen.
\end{rem}


Note that ${\bl\theta}$ is $G$-invariant, by definition, and thus factors through the quotient topological space $\C^n/G$. Let ${\bl\theta}^\prime:\C^n/G\rightarrow \C^n$ be the resulting map. If $\mathfrak q:\C^n\ra \C^n/G$ is the quotient map, then one has $\bl\theta = \bl\theta^\prime\circ \mathfrak q.$ The proof of the next proposition can be found in the literature, see \cite[Theorem 2.1]{BD}, \cite{LT}. Nevertheless, we include it here for the sake of completeness.

\begin {prop}
 \label {bijective}
 ${\bl\theta}'$ is bijective.
\end {prop}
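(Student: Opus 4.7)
The plan is to establish injectivity and surjectivity of the induced map $\bl\theta'$ separately. Writing $\bl\theta = \bl\theta' \circ \mathfrak q$ with $\mathfrak q$ surjective, injectivity of $\bl\theta'$ amounts to the statement that $\bl\theta(\bl z_1) = \bl\theta(\bl z_2)$ forces $\bl z_1$ and $\bl z_2$ to lie in a common $G$-orbit, whereas surjectivity of $\bl\theta'$ is just surjectivity of the polynomial map $\bl\theta : \C^n \to \C^n$.

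For injectivity, I would first reduce the hypothesis $\bl\theta(\bl z_1) = \bl\theta(\bl z_2)$ to the stronger-looking statement that $p(\bl z_1) = p(\bl z_2)$ for every $G$-invariant polynomial $p$; this is immediate from Theorem \ref{A}, which writes any such $p$ as $q \circ \bl\theta$. It then suffices to verify that $G$-invariant polynomials separate $G$-orbits. Since $G$ is finite, whenever $\bl z_1 \notin G \cdot \bl z_2$ the two orbits $G \cdot \bl z_1$ and $G \cdot \bl z_2$ are disjoint finite sets, and a Lagrange-interpolation-type argument produces $f \in \C[\bl z]$ with $f \equiv 1$ on $G \cdot \bl z_1$ and $f \equiv 0$ on $G \cdot \bl z_2$. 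The Reynolds average $\tilde f := \tfrac{1}{\lvert G\rvert}\sum_{\rho \in G} \rho(f)$ is then $G$-invariant with $\tilde f(\bl z_1) = 1$ and $\tilde f(\bl z_2) = 0$, contradicting the reduced hypothesis.

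For surjectivity, I would appeal to Theorem \ref{B}, which makes $\C[\bl z]$ a finite, and hence integral, extension of $\C[\bl z]^G = \C[\theta_1,\ldots,\theta_n]$. Fix $\bl w = (w_1,\ldots,w_n) \in \C^n$ and consider the ideal $\mathfrak m = \langle \theta_1 - w_1, \ldots, \theta_n - w_n \rangle$ of $\C[\bl z]^G$; algebraic independence of the $\theta_i$ exhibits $\C[\bl z]^G$ as a polynomial ring in $n$ indeterminates, so $\mathfrak m$ is maximal. The lying-over theorem for integral extensions then furnishes a maximal ideal $\mathfrak M \subset \C[\bl z]$ contracting to $\mathfrak m$, and the Hilbert Nullstellensatz forces $\mathfrak M = \langle z_1 - a_1, \ldots, z_n - a_n \rangle$ for some $\bl a \in \C^n$. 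Evaluating the relations $\theta_i - w_i \in \mathfrak m \subseteq \mathfrak M$ at $\bl a$ gives $\bl\theta(\bl a) = \bl w$, as required.

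No step here constitutes a real obstacle; the only subtle point is the orbit-separation step in the injectivity argument, which depends essentially on the finiteness of $G$. As a purely analytic alternative to the commutative-algebra route for surjectivity, one can instead observe that finiteness of $\C[\bl z]$ over $\C[\bl\theta]$ forces $\bl\theta$ to be proper (its zero fibre is $\{\bl 0\}$ by homogeneity, and properness of homogeneous polynomial maps with discrete zero fibre is standard) while algebraic independence of the $\theta_i$ makes $\bl\theta$ dominant; its image is then both closed and Zariski-dense in the irreducible variety $\C^n$, hence all of $\C^n$.
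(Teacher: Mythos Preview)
Your proposal is correct and follows essentially the same strategy as the paper: separate $G$-orbits by a $G$-invariant polynomial for injectivity, and use a maximal-ideal argument together with the Nullstellensatz for surjectivity. The only cosmetic differences are that the paper symmetrizes by the product $\hat p = \prod_{\rho\in G}\rho(p)$ rather than your Reynolds average, and it shows $\mathfrak m\,\C[\bl z]\neq \C[\bl z]$ via the rank statement of Theorem~\ref{B} (so that $\dim_{\C}\C[\bl z]/\mathfrak m\,\C[\bl z]=\lvert G\rvert$) in place of invoking lying-over; both routes land on the same Nullstellensatz conclusion.
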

\begin {proof}
 Equivalently, one needs to show that:
\begin{enumerate}
 \item[(i)] $\bl\theta$ is surjective;
 \item[(ii)] The fibers of $\bl\theta$ are precisely the $G$-orbits in $\C^n$.
\end{enumerate}

 \noindent For (i), let $\bl{\mu} = (\mu_1, \ldots, \mu_n) \in \C^n$ and let $\mathfrak{m}$ be the maximal ideal of $\C[\theta_1,\ldots,\theta_n]$ generated by $\theta_1-\mu_1,\ldots,\theta_n-\mu_n$. Note that $\C[\theta_1,\ldots,\theta_n]/\mathfrak{m} \cong \C$ and thus $\C[\bl z]/\mathfrak{m}\C[\bl z]$ is a complex vector space of dimension $\lvert G\rvert,$ in view of Theorem \ref{B}. Thus $\mathfrak{m}\C[\bl z] \neq \C[\bl z]$ and the ideal $\mathfrak{m}\C[\bl z]$ of $\C[\bl z]$ is contained
 in a maximal ideal $\mathfrak{n}$ of $\C[\bl z].$ By the Nullstellensatz, $\mathfrak{n}$  is generated by $ z_1-\lambda_1,\ldots, z_n-\lambda_n$
 for suitable $\lambda_i \in \C.$ Let $\bl{\lambda}= (\lambda_1,\ldots,\lambda_n).$ If $p \in \mathfrak{n},$ then $p(\bl{\lambda}) = 0.$
By taking $p = \theta_i - \mu_i$, we have $\theta_i(\bl{\lambda}) = \mu_i$ for each $i,$ whence ${\bl\theta}(\bl{\lambda}) = \bl{\mu}.$

 For (ii), let $O$ and $O^\prime$ be two distinct orbits of $G$ and $p$ be a polynomial taking value
 0 in $O$ and 1 in $O^\prime$. Then the polynomial
 $$
 \hat p = \prod_{\rho\in G} \rho(p)
 $$
 has the same property and is $G$-invariant. Thus, by Theorem \ref{A}, it is a polynomial in the $\theta_i$'s. Then it follows that there
 exists a $j$ such that $\theta_j({O}) \neq \theta_j({O}^\prime)$. Thus, ${\bl\theta}$ takes $O$ and
 $O^\prime$ to different orbits, whence (ii).
\end {proof}
It follows from a Theorem of H. Cartan \cite[Theorem 7.2]{OD}. that the quotient space $\C^n/G$ can be given the structure of a complex analytic space and that the map
$\bl\theta^\prime$ is holomorphic.
Further note that $\dim (\C^n/G) = n$ (cf. \cite[E. 49p.]{kaup}). Now we complete the proof of the Theorem \ref{theorema}.

\begin{proof}[\textbf {Proof of Theorem  \ref{theorema}}]\label{a}
It is enough to show that  ${\bl\theta}^\prime$ is biholomorhic. It is well-known (cf. \cite[Proposition 46.A.1]{kaup}) that a bijective holomorphic mapping from an equidimensional reduced complex analytic space to a complex manifold of the same dimension is biholomorphic. Since ${\bl\theta}^\prime$ is bijective and $\C^n/G$ is reduced (cf. \cite[p.246]{CH}) of pure dimension $n$, the result follows.
\end{proof}

Let $G = \{\rho_1,\ldots,\rho_d\}$ and $\{p_1,\ldots,p_d\}$ be a basis of $\C[\bl z]$ over $\C[\bl z]^G.$ We set
 \bea\label{l}
 \Lambda=\big(\!\!\big(\rho_i(p_j)\big)\!\!\big)_{i,j=1}^d.
 \eea
 We prove several lemmas to facilitate the proof of Theorem \ref{theoremb}.  
 \begin {lem}\label{nonzero}
 $\det \Lambda\not\equiv 0.$ 
\end {lem}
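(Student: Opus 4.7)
The plan is to evaluate $\det\Lambda$ at a single carefully chosen point $\bl z_0\in\C^n$ and argue that the resulting scalar is nonzero, whence $\det\Lambda$ is a nonzero polynomial. I pick $\bl z_0$ to be any point of $\C^n$ whose $G$-stabilizer is trivial; such points exist, because for each non-identity $\rho\in G$ the fixed-point set $\ker(I_n-\rho)$ is a proper linear subspace of $\C^n$, so the Zariski-open subset $\C^n\setminus\bigcup_{\rho\neq e}\ker(I_n-\rho)$ is non-empty. For such a $\bl z_0$ the $d$ points $\bl w_i:=\rho_i^{-1}\!\cdot\bl z_0$, $i=1,\ldots,d$, constitute the full $G$-orbit of $\bl z_0$ and are pairwise distinct, and the definition of the $G$-action on functions yields
\[
\det\Lambda(\bl z_0)=\det\!\big(\!\!\big(p_j(\rho_i^{-1}\!\cdot\bl z_0)\big)\!\!\big)_{i,j=1}^d=\det\!\big(\!\!\big(p_j(\bl w_i)\big)\!\!\big)_{i,j=1}^d.
\]

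Suppose for contradiction that this determinant vanishes. Then there exist $c_1,\ldots,c_d\in\C$, not all zero, with $\sum_{i=1}^d c_i\,p_j(\bl w_i)=0$ for every $j=1,\ldots,d$. The key observation is that, thanks to Theorem~\ref{B}, this single scalar relation on the basis polynomials already promotes to a relation holding for \emph{every} polynomial in $\C[\bl z]$: given $p\in\C[\bl z]$, write $p=\sum_{j=1}^d a_j p_j$ with $a_j\in\C[\bl z]^G$; since each $a_j$ is $G$-invariant it is constant on $G$-orbits, and in particular $a_j(\bl w_i)=a_j(\bl z_0)$ for every $i$. Therefore
\[
\sum_{i=1}^d c_i\,p(\bl w_i)=\sum_{j=1}^d a_j(\bl z_0)\sum_{i=1}^d c_i\,p_j(\bl w_i)=0.
\]

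Since $\bl w_1,\ldots,\bl w_d$ are distinct points of $\C^n$, Lagrange-type polynomial interpolation supplies, for each $k\in\{1,\ldots,d\}$, a polynomial $q_k\in\C[\bl z]$ with $q_k(\bl w_i)=\delta_{ik}$; substituting $p=q_k$ into the displayed identity gives $c_k=0$ for every $k$, contradicting the non-triviality of $(c_1,\ldots,c_d)$. Hence $\det\Lambda(\bl z_0)\neq 0$ and $\det\Lambda\not\equiv 0$ in $\C[\bl z]$. The only step requiring any real care is the first, namely the existence of a point with trivial $G$-stabilizer, which is a routine consequence of $G$ acting faithfully by linear maps on $\C^n$; the remainder is essentially formal, with the $G$-invariance of the coefficients $a_j$ doing the real work of promoting a hypothetical scalar dependence on the basis $\{p_j\}$ to a dependence on every polynomial evaluated at the orbit, and polynomial interpolation then finishing the job.
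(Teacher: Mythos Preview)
Your proof is correct and takes a genuinely different route from the paper's. The paper argues via Galois theory: it observes that the fraction field $K$ of $\C[\bl z]$ is Galois over the fraction field $L$ of $\C[\bl z]^G$ with group $G$, so $\Lambda^{\tr}\Lambda$ is the matrix of the trace form $T_{K/L}$ on the basis $p_1,\ldots,p_d$, and non-degeneracy of that form (since $T_{K/L}(f\cdot 1/f)=[K:L]\neq 0$) gives $(\det\Lambda)^2\neq 0$.

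Your argument is more elementary and geometric: you evaluate at a point with trivial stabilizer and reduce the claim to the linear independence of evaluation functionals at the $d$ distinct orbit points, with the freeness of $\C[\bl z]$ over $\C[\bl z]^G$ doing the work of lifting a hypothetical dependence among the $p_j$'s to all polynomials. This avoids fraction fields, traces, and discriminants entirely, and is arguably more transparent for the purpose at hand. The paper's approach has the advantage of identifying $(\det\Lambda)^2$ as the discriminant of the extension $K/L$, which is conceptually useful background for the degree computation in the next lemma; your approach, on the other hand, generalizes more readily to settings where one has only a module structure rather than a field extension.
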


\begin {proof}
 Let $K$ (resp. $L$) be the fraction field of $\C[\bl z]$ (resp. $\C[\bl z]^G$). From \cite[p.110]{bourbaki}, we note that $K$ is a Galois extension of $L$ with Galois group $G$ and that $p_1,\ldots,p_d$ is a basis of $K$ over $L.$ The $(i,j)$-th entry of the matrix $\Lambda^{\tr}\Lambda$ is 
\Bea
\rho_1(p_ip_j) + \cdots + \rho_d(p_ip_j) = T_{K/L}(p_ip_j),
\Eea
where $T_{K/L}:K \rightarrow L$
 denotes the trace function  (cf. \cite[Corollary 8.13]{P}). Thus, it is the matrix
 of the bilinear form $F: K \times K \rightarrow L$, $(f,f^\prime) \mapsto T_{K/L}(ff^\prime)$. This is non-degenerate since for non-zero
 $f$, one has $F(f,1/f) = T_{K/L}(1) = [K:L] \neq 0$. Thus $(\det\Lambda)^2 = \det(\Lambda^{\tr}\Lambda)$ (it is the \emph{discriminant} of the extension $K/L$) is non-zero.
\end {proof}
 
\begin {lem}
 \label {degreedeterminant}
Let $m$ denote the number of pseudoreflections in $G$. Then $\det\Lambda$ is a homogeneous polynomial of degree $\frac{dm}{2}.$
\end {lem}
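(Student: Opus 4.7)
The plan is to exploit two features: (i) the fact that a homogeneous basis of $\C[\bl z]$ over $\C[\bl z]^G$ is available by Theorem \ref{B}, and (ii) a Poincar\'e series computation to pin down the sum of the degrees of such a basis.

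First I would note that by Remark \ref{basis} and the preceding discussion, the change of basis between two bases of $\C[\bl z]$ over $\C[\bl z]^G$ is implemented by a matrix over $\C[\bl z]^G$ with nonzero constant determinant; consequently $\det\Lambda$ changes only by a nonzero scalar. So without loss of generality I may (and will) assume $\{p_1,\ldots,p_d\}$ consists of homogeneous polynomials, say with $e_j := \deg p_j$. Since each $\rho_i\in G$ acts linearly on $\C^n$, the polynomial $\rho_i(p_j)(\bl z) = p_j(\rho_i^{-1}\bl z)$ is again homogeneous of degree $e_j$. Hence every term in the Leibniz expansion
\[
\det\Lambda \;=\; \sum_{\sigma\in\mathfrak S_d}\operatorname{sgn}(\sigma)\prod_{i=1}^d \rho_i(p_{\sigma(i)})
\]
is homogeneous of total degree $\sum_{i=1}^d e_{\sigma(i)} = \sum_{j=1}^d e_j =: E$. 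By Lemma \ref{nonzero}, $\det\Lambda\neq 0$, and therefore $\det\Lambda$ is homogeneous of degree $E$.

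It remains to identify $E$ with $dm/2$. Here I would use the graded isomorphism $\C[\bl z] \cong \bigoplus_{j=1}^d \C[\bl z]^G\,[\text{shift by }e_j]$ provided by Theorem \ref{B}. This gives the Poincar\'e series identity
\[
P_{\C[\bl z]}(t) \;=\; P_{\C[\bl z]^G}(t)\cdot\sum_{j=1}^d t^{e_j},
\]
whence, using the formulas recorded at the end of Section~2.3,
\[
\sum_{j=1}^d t^{e_j} \;=\; \frac{\prod_{i=1}^n (1-t^{d_i})}{(1-t)^n} \;=\; \prod_{i=1}^n Q_i(t), \quad\text{where } Q_i(t) := 1+t+\cdots+t^{d_i-1}.
\]
Differentiating and evaluating at $t=1$, the left hand side becomes $\sum_j e_j = E$, while on the right hand side $Q_i(1) = d_i$ and $Q_i'(1) = d_i(d_i-1)/2$, so the product rule yields
\[
E \;=\; \sum_{i=1}^n Q_i'(1)\prod_{k\neq i}Q_k(1) \;=\; \sum_{i=1}^n \frac{d_i(d_i-1)}{2}\cdot\frac{d}{d_i} \;=\; \frac{d}{2}\sum_{i=1}^n (d_i-1) \;=\; \frac{dm}{2},
\]
where the last equality invokes the standard identity $\sum_{i=1}^n(d_i-1) = m$ already recorded after Theorem \ref{B}. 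This completes the proof.

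The only non-routine step is the evaluation $E = dm/2$; once one knows the relation $\sum_i(d_i-1) = m$, the Poincar\'e series argument is entirely mechanical. The conceptual reduction to computing a single sum of degrees of basis elements (rather than directly manipulating $\det\Lambda$) is what makes the argument short.
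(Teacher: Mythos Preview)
Your proof is correct and follows essentially the same approach as the paper: reduce to a homogeneous basis via Remark~\ref{basis}, observe that $\det\Lambda$ is homogeneous of degree $\sum_j e_j$, and then compute this sum by differentiating the Poincar\'e series identity $\sum_j t^{e_j}=\prod_i(1+t+\cdots+t^{d_i-1})$ at $t=1$, invoking $\prod_i d_i=d$ and $\sum_i(d_i-1)=m$. Your explicit appeal to Lemma~\ref{nonzero} to rule out the possibility $\det\Lambda=0$ is a small clarification the paper leaves implicit.
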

\begin {proof}
In view of Remark \ref{basis}, choosing a different basis of $\C[\bl z]$ (as a $\C[\bl z]^G$ module) changes $\det\Lambda$ only by a constant and thus we may assume that $p_1,\ldots, p_d$ are homogeneous (cf. Theorem \ref{B}). Let $e_j$ be the degree of $p_j.$ It is clear that $\det\Lambda$ is homogeneous and is of degree $\sum_j e_j.$ Now one has
\Bea
\C[\bl z]^G =\C[\bl \theta] {~~\rm and~~} \C[\bl z] = \oplus_{j=1}^d p_j \C[\bl z]^G,
\Eea
and by considering \Poincare series, we get
\Bea
\frac{1}{(1-t)^n} = \frac{1}{(1-t^{d_1})\cdots(1-t^{d_n})}\cdot (t^{e_1} + \cdots + t^{e_d})
\Eea
where $\deg \theta_i = d_i$ and thus
\begin {equation}
 \label {Poincare}
\sum_j t^{e_j} = \prod_i \frac{1-t^{d_i}}{1-t} = \prod_i ( 1+t + \cdots + t^{d_i-1}).
\end {equation}
We now take derivative with respect to $t$ and evaluate at $t = 1.$ This gives $\sum e_j$ on the left side. Evaluating the
derivative of $1+ t + \cdots + t^{d_i-1}$ at $t = 1$
gives $\frac{d_i(d_i-1)}{2}.$ Thus, the right side becomes
\Bea
\sum_i d_1\cdots\widehat{d_i}\cdots d_n  \frac{d_i(d_i-1)}{2},
\Eea
where $\ \widehat{}\ $ denotes omission. This is equal to
\Bea
d_1\cdots d_n\biggl(\frac{1}{2}\sum_i (d_i-1)\biggr).
\Eea
But it is well-known that $d_1\cdots d_n = d$ \cite[Chapter V, p.115]{bourbaki} and $\sum (d_i-1) = m$ \cite[Chapter V, p.116]{bourbaki}, \cite[Corollary 4.3]{S}
whence the result follows.
\end {proof}

\begin{rem}
It follows from the Equation \eqref{Poincare} that the degrees $e_j$ are independent of the chosen homogeneous basis. Moreover,  if $G$ is a reflection group, then Equation \eqref{Poincare} shows that $e_1,\ldots, e_d$ are the lengths of the elements of $G$, see \cite[Section 3.15]{humphreys}.
\end{rem}

Let $H$ be a reflecting hyperplane associated to the finite pseudoreflection group $G$. The set of elements of $G$ acting trivially on $H$ forms a cyclic subgroup. To see this, choose a positive hermitian form invariant
under the action of $G$. Then the line $\ell$ orthogonal to $H$ is invariant under the subgroup, thus $H$ embeds in
$\Aut(\ell) = \C\setminus \{0\},$ and hence being a subgroup of the group of $d$-th roots of unity, it is cyclic. If its order is $k$, then the subgroup is of the form $\{1,\rho,\ldots,
\rho^{k-1}\}$ where each $\rho^i$, $1 \leq i < k$ is a pseudoreflection. Let $L$ denote a nonzero homogeneous linear function on $\C^n$ vanishing on $H$ such that $H = \{\bl z\in\C^n: L(\bl z) = 0\}$. It is determined up to a non-zero constant multiplier. 

\begin {lem}\label{auxi}
  Let $N$ be a square matrix with entries in $\C[\bl z]$ and let $\bl v$ be a row of $N$. If $\bl v, \rho(\bl v),\ldots, \rho^{j}(\bl v)$ are
  distinct rows of $N$, then $\det N$ is divisible by $L^{\frac{j(j+1)}{2}}.$
 \end {lem}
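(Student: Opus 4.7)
The plan is to perform row operations on $N$ that replace each row $\rho^i(\bl v)$ by a linear combination $P_i(\rho)\bl v$ divisible by $L^i$, for $i = 0, 1, \ldots, j$, and then read off the required divisibility from multilinearity of the determinant. The crucial input is that $\rho$ fixes the hyperplane $H = \{L = 0\}$ pointwise, so $\rho(f) - f$ vanishes on $H$ for every polynomial $f$ and is therefore divisible by $L$. After a linear change of coordinates in which $L = z_1$, the pseudoreflection acts diagonally with $\rho(z_1) = \eta z_1$ and $\rho(z_r) = z_r$ for $r \geq 2$; the scalar $\eta$ is the eigenvalue of $\rho$ on the line spanned by $L$, which, since $\rho$ has order $k$ and its eigenvalues are $(1,\ldots,1,\eta)$, is necessarily a primitive $k$-th root of unity.

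In these coordinates every polynomial admits an expansion $f = \sum_{r\geq 0} z_1^r f_r(z_2,\ldots,z_n)$, so $\rho(f) = \sum_{r\geq 0} \eta^r z_1^r f_r$. Defining the monic polynomial $P_i(t) := \prod_{s=0}^{i-1}(t - \eta^s)$, I compute
\[
P_i(\rho)(f) = \sum_{r\geq 0} \Bigl(\prod_{s=0}^{i-1}(\eta^r - \eta^s)\Bigr) z_1^r f_r;
\]
the terms with $r < i$ vanish because the product contains the factor $\eta^r - \eta^r = 0$, and the surviving terms are divisible by $z_1^i = L^i$. Consequently every entry of the row vector $P_i(\rho)\bl v$ is divisible by $L^i$.

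The distinctness of the rows $\bl v, \rho(\bl v), \ldots, \rho^j(\bl v)$ forces $j+1 \leq k$ (otherwise $\rho^k\bl v = \bl v$ would reappear in the list), so the scalars $\eta^0, \eta^1, \ldots, \eta^j$ are pairwise distinct and the polynomials $P_0, \ldots, P_j$ are well defined and monic of degrees $0, 1, \ldots, j$ respectively. Since $P_i(\rho) = \rho^i + \sum_{s<i} c_{i,s}\rho^s$ with scalar coefficients, the simultaneous replacement of $\rho^i(\bl v)$ by $P_i(\rho)\bl v$ is effected by left-multiplying the $(j+1)$ affected rows of $N$ by a unit lower-triangular scalar matrix, which leaves $\det N$ unchanged. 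The new matrix has $j+1$ selected rows divisible by $L^0, L^1, \ldots, L^j$ respectively, so multilinearity of the determinant in these rows yields $L^{0+1+\cdots+j} = L^{j(j+1)/2}\mid \det N$. I expect the main (though mild) obstacle to be the bookkeeping of the row operations and confirming that they do not disturb the other rows of $N$; this is handled by the lower-triangular description above, which only mixes rows that are themselves already in the list $\bl v, \rho(\bl v), \ldots, \rho^j(\bl v)$.
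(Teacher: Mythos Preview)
Your proof is correct. It differs in organization from the paper's argument: the paper proceeds by induction on $j$, replacing the rows by consecutive differences $\rho^{s}(\bl v)-\rho^{s+1}(\bl v)$, factoring out one copy of $L$ from each (since $p-\rho(p)$ vanishes on $H$), and then invoking the inductive hypothesis on the quotient vector $\bl u=(\bl v-\rho(\bl v))/L$. You instead diagonalise $\rho$ in adapted coordinates and write down, in one stroke, the monic polynomials $P_i(t)=\prod_{s<i}(t-\eta^s)$ whose application to $\bl v$ produces a row divisible by $L^i$; a single unit lower-triangular row operation then yields the divisibility $L^{0+1+\cdots+j}\mid\det N$. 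The paper's approach is coordinate-free and uses only the qualitative fact that $p-\rho(p)$ is divisible by $L$, while yours trades a small amount of setup (choosing coordinates with $L=z_1$ and identifying the eigenvalue $\eta$) for a cleaner non-inductive argument. Your observation that distinctness forces $j+1\le k$ (else two listed rows coincide and $\det N=0$) is the point the paper leaves implicit; it is needed in your version to ensure the $\eta^s$ are pairwise distinct so that the factors $P_i(\eta^r)$ genuinely vanish for $r<i$.
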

 \begin {proof}
 We proceed by induction on $j$. The case $j = 0$ is trivial. Using row reduction, we may replace the rows by
 \Bea
 \bl v-\rho(\bl v), \rho(\bl v) - \rho^2(\bl v), \ldots, \rho^{j-1}(\bl v) - \rho^j(\bl v), \rho^j(\bl v).
 \Eea
 Note that if $p$ is a polynomial, then
 $p-\rho(p)$ is identically zero on $H$ and thus is divisible by $L$ (cf. \cite[p.52]{humphreys}). Let $\bl u = (\bl v-\rho(\bl v))/L$. The rows
 become $L\bl u, L\rho(\bl u), \ldots, L\rho^{j-1}(\bl u), \rho^j(\bl v)$. By induction, the determinant of a matrix
 with rows $\bl u, \rho(\bl u), \ldots, \rho^{j-1}(\bl u)$ is divisible by $L^{\frac{(j-1)j}{2}}$. Thus, $\det N$ is divisible
 by $L^j\cdot L^{\frac{(j-1)j}{2}} = L^{\frac{j(j+1)}{2}}.$
\end {proof}

Let $H_1,\ldots, H_r$ denote the distinct reflecting hyperplanes associated to the group $G$ and let $m_1,\ldots, m_r$ be the orders of the corresponding cyclic subgroups $K_1,\ldots, K_r,$ respectively. Note that $\sum_{i=1}^n (d_i-1) = \sum_{i=1}^r (m_i-1),$ see \cite[p.617]{St}. Thus, one has
\bea\label{power}
m = \sum_{i=1}^r (m_i-1),
\eea
where $m$ is the number of pseudoreflections in $G.$ Let $L_i$ be a homogeneous linear function defining $H_i$, that is, $H_i=\{\bl z \in \C^n: L_i(\bl z) = 0\}$. Recall $\Lambda$ from the Equation \eqref{l}. We establish an explicit formula (up to a constant) for $\det \Lambda$.
\begin {prop}
 \label {determinantformula}
 $$\det \Lambda = c\prod_{i=1}^r L_i^{\frac{d(m_i-1)}{2}},$$
 where $c$ is a non-zero constant.
\end {prop}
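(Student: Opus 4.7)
The plan is a degree-and-divisibility count. By Lemma \ref{degreedeterminant}, $\det \Lambda$ is a homogeneous polynomial of degree $dm/2$, and using Equation \eqref{power}, the proposed right-hand side $\prod_{i=1}^r L_i^{d(m_i-1)/2}$ has degree $\sum_{i=1}^r d(m_i-1)/2 = dm/2$, so the degrees already agree. It therefore suffices to show (i) that each $L_i^{d(m_i-1)/2}$ divides $\det \Lambda$ in $\C[\bl z]$, and (ii) that the resulting constant $c$ is non-zero.

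For (i), fix $i$ and let $\rho$ denote a generator of the cyclic subgroup $K_i \subseteq G$ of order $m_i$ which fixes $H_i$ pointwise. I would partition $G$ into the $d/m_i$ left cosets $K_i g_1, \ldots, K_i g_{d/m_i}$. Within each coset $K_i g_l = \{g_l, \rho g_l, \ldots, \rho^{m_i-1}g_l\}$, the corresponding $m_i$ rows of $\Lambda$ take the form $\bl v_l, \rho(\bl v_l), \ldots, \rho^{m_i-1}(\bl v_l)$, where $\bl v_l = (g_l(p_1), \ldots, g_l(p_d))$ and $\rho$ is applied entrywise; this uses the identity $(\rho g_l)(p_j) = \rho(g_l(p_j))$, a direct consequence of the definition $(\sigma \cdot f)(\bl z) = f(\sigma^{-1}\bl z)$ giving a left action. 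Applying Lemma \ref{auxi} to such a coset extracts a factor of $L_i^{m_i(m_i-1)/2}$ from $\det \Lambda$. The crucial point is that the row operations used in the proof of Lemma \ref{auxi}, namely $\rho^j(\bl v_l) \mapsto \rho^j(\bl v_l) - \rho^{j+1}(\bl v_l)$, only modify rows indexed by elements of the coset $K_i g_l$, so the operations applied across different cosets are independent and their contributions multiply. Summing over the $d/m_i$ cosets gives divisibility of $\det \Lambda$ by $L_i^{(d/m_i) \cdot m_i(m_i-1)/2} = L_i^{d(m_i-1)/2}$.

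Since $H_1, \ldots, H_r$ are distinct reflecting hyperplanes, the defining linear forms $L_1, \ldots, L_r$ are pairwise non-associate irreducibles in the UFD $\C[\bl z]$, so the divisibilities combine multiplicatively: $\prod_{i=1}^r L_i^{d(m_i-1)/2}$ divides $\det \Lambda$. The matching of degrees then forces the quotient to be a complex constant $c$, and Lemma \ref{nonzero} (which asserts $\det \Lambda \not\equiv 0$) yields $c \neq 0$, completing the proof.

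The main obstacle I anticipate is the bookkeeping in step (i): one must verify that the contributions from distinct cosets of $K_i$ (and, subsequently, from distinct hyperplanes $H_i$) really accumulate and do not overlap. For the first aspect, this is transparent from the fact that each element of $G$ lies in exactly one coset of $K_i$, so the row operations of Lemma \ref{auxi} act on disjoint sets of rows; for the second aspect, it is the unique factorization in $\C[\bl z]$ together with the pairwise coprimality of the linear forms $L_i$ that allows the powers to coexist in the product.
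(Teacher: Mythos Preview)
Your argument is correct and follows essentially the same route as the paper: partition $G$ into cosets of the cyclic stabilizer $K_i$, apply Lemma \ref{auxi} within each coset to extract a factor $L_i^{m_i(m_i-1)/2}$, multiply over the $d/m_i$ cosets, and finish by matching degrees together with Lemma \ref{nonzero}. One minor terminological slip: the sets $K_i g_l = \{g_l,\rho g_l,\ldots,\rho^{m_i-1}g_l\}$ that you use are \emph{right} cosets of $K_i$, not left cosets, but this does not affect the argument (and indeed coincides with the paper's choice).
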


\begin {proof}
From Lemma \ref{degreedeterminant} and Equation \eqref{power}, it follows that
the degrees of both sides of the equation above is $\frac{dm}{2}.$ Thus, by Lemma \ref{nonzero}, it suffices to check that $\det\Lambda$ is divisible by  $L_i^{\frac{d(m_i-1)}{2}}.$ Let $K_i$ be the subgroup of $G$ fixing $H_i$ and let $\rho_i$ be a generator of
 $K_i.$ Partition $G$ into right cosets of $K_i.$ Fix a right coset $K_i\rho$ for some $\rho \in G.$ Let $\bl u = (p_1,\ldots,p_d)$ and let $\bl v = \rho(\bl u)$. The rows of $\Lambda$ corresponding to $K_i\rho$ are $\bl v, \rho_i(\bl v), \ldots, \rho_i^{m_i-1}(\bl v).$ Thus, by the
Lemma \ref{auxi}, $\det\Lambda$ is divisible by $L_i^{\frac{(m_i-1)m_i}{2}}$. But there are $\frac{d}{m_i}$ right cosets and repeating the argument in the Lemma \ref{auxi} again for each of these, one deduces that $\det\Lambda$ is divisible by $\frac{d}{m_i}$-th power of
$L_i^{\frac{(m_i-1)m_i}{2}}.$
\end {proof}

\begin{rem}\leavevmode\label{reflec}
\begin{enumerate}
\item[(i)] In particular, when $m_i = 2$ and $r = m$, the group $G$ is known as reflection group. In this case, the order $d$ of $G$ is even and
the product of the linear factors reduces to $(L_1\cdots L_m)^{\frac{d}{2}}.$

\item[(ii)] It is a result due to Steinberg \cite[p.118]{bourbaki}, \cite[Lemma, p.616]{St} that the jacobian
$J_{\bl\theta} = \det\big(\!\!\big(\frac{\partial \theta_i}{\partial z_j}\big)\!\!\big)_{i,j=1}^n$ is
a constant multiple of $\prod L_i^{m_i-1}.$ Thus $\det\Lambda$ is a constant multiple of $J_{\bl\theta}^{{d}/{2}}.$ Note that if $d$ is odd, then each $m_i-1$ is even and $J_{\bl\theta}$ is already a square in $\C[\bl z].$
\end{enumerate}
\end{rem}

\begin{proof}[\textbf{Proof of Theorem \ref{theoremb}}]
First we prove the uniqueness. Assume that for a holomorphic function $f$ on $\C^n$, there exist $G$-invariant holomorphic functions $f_1, \ldots, f_d$ such that $f = p_1f_1 + \cdots + p_df_d$. Let $G = \{\rho_1,\ldots,\rho_d\}$. Applying $\rho_i$ to $f$, we have
\Bea
\rho_i(f) = \rho_i(p_1)f_1 + \cdots + \rho_i(p_d)f_d, \mbox{~for~} 1\leq i\leq d.
\Eea
Let $\bl y$ be the column vector $(f_1,\ldots,f_d)^{\tr}$ and let $\bl x = \big(\rho_1(f),\ldots,\rho_d(f)\big)^{\tr}.$ Note that $\Lambda\bl y = \bl x$. Let $\bl y^\prime$ be another
vector such that $\Lambda{\bl y}^\prime = \bl x.$ Then $\Lambda(\bl y - {\bl y}^\prime) = 0$. Left multiplication by the adjoint of $\Lambda$ yields
$\det\Lambda(\bl y - {\bl y}^\prime) = 0.$ By Lemma \ref{nonzero}, it follows that $\bl y = {\bl y}^\prime$ since the ring of holomorphic functions on $\C^n$ is an integral domain. 

To prove existence, we first obtain a formula for the $f_i$, when $f$ is a polynomial. Recall that $\bl x = \big(\rho_1(f),\ldots, \rho_d(f)\big)^{\tr}.$ From Theorem \ref{B}, it follows that there exists $\bl y = (g_1,\ldots,g_d)^{\tr},$ with each $g_i$ $G$-invariant, such that $\Lambda\bl y = \bl x$. Suppose $(\wi{f_1},\ldots, \wi{f_d})^{\tr} = (\adj \Lambda)\bl x$, then $(\det\Lambda) f_i = \wi{f_i},$ that is, $\det\Lambda$ divides $\wi{f_i}$ for each $i.$ Further, if $f$ is homogeneous of degree $e,$ then $f_i$ is homogeneous of degree $e - e_i$ (by comparing the homogeneous part of degree $e$ of the equation $f = \sum_{i = 1}^dp_if_i$, where $\deg p_i = e_i, 1\leq i\leq d$) and thus, $\wi{f_i}$ is homogeneous  polynomials of degree $\frac{dm}{2} + e - e_i$.
Suppose now that $f$ is a holomorphic function on $\C^n$ and let $f_{(k)}$ denote the homogeneous component of $f$ of degree $k$ in the power series expansion of $f.$ It follows from the above that:
\begin{enumerate}
\item the $i$-th component $\wi{f_i}$ of $(\adj\Lambda)\bl x$ is an entire function whose homogeneous components are $\wi{f_{(k), i}},\, k\geq 0,$ and

\item\label{inv} $\det \Lambda$ divides $\wi{f_{(k), i}}$ and  $f_{(k), i} = \frac{\wi{f_{(k), i}}}{\det\Lambda}$ is a $G$-invariant polynomial for each $i$ and $k.$
\end{enumerate}
Since $\wi{f_i} = \sum_{k\geq 0}\wi{f_{(k), i}},$ the proof of existence will be complete if we prove that the observations above imply that $\det \Lambda$ divides each $\wi{f_i}$ and the quotients are $G$-invariant.

We show that if $\mathfrak p$ is a product of linear polynomials that divides the homogeneous components of the power series of a holomorphic function $g$ on $\C^n,$ then $\mathfrak p$ divides $g.$ From Proposition \ref{determinantformula}, it then follows that $\det \Lambda$ divides each $\wi{f_i}$. Let $\ell$ be a linear factor of $\mathfrak p$. Using a linear change of coordinates, we may assume $\ell = z_1,$ in which case $g$ will have the power series expansion as $\sum_{i_1\geq 1, i_2,\ldots,i_n\geq 0}a_{i_1\ldots i_n}z_1^{i_1}\ldots z_n^{i_n}.$
 Since the absolute sum of the terms of the power series
 \Bea
 \sum_{i_1\geq 1, i_2,\ldots,i_n\geq 0}a_{i_1\ldots i_n}z_1^{i_1 - 1}\ldots z_n^{i_n}
 \Eea
is dominated by the absolute sum of the terms of those of $\frac{\partial g}{\partial z_1}$, it defines a holomorphic function, say $h$ on $\C^n$, and hence $g(\bl z) = z_1h(\bl z)$.  The homogeneous components of $g/z_1$ are divisible by $\mathfrak p/z_1$ and one
 proceeds by induction on degree of $\mathfrak p$ to see that $g$ is divisible by $\mathfrak p.$ Finally, as $f_i = \sum_{k\geq 0}{f_{(k), i}}$ where the sum converges absolutely and uniformly on compact subsets of $\C^n$, by the virtue of (\ref{inv}) above each $g_i$ is $G$-invariant.
\end {proof}
\subsection{Generalization to the case of $G$-invariant domains}\label{pseudo}

Let $\Omega$ be an arbitrary $G$-invariant domain in $\C^n$. Note that $\bl\theta(\Omega)$ is a domain in $\C^n$ \cite[Proposition 1, p.556]{Try}.

\subsubsection{\textbf{Generalization of Theorem \ref{theorema}}}\label{g3.1}
Here, we want to show that for a $G$-invariant  holomorphic functions $f$ on $\Omega$, there exists a unique holomorphic function $g$ on $\bl\theta(\Omega)$ such that $f = g\circ\bl\theta$. Defining $\bl\theta'$ as in Section \ref{a}, the same argument shows that $\bl\theta: \Omega/G\ra \bl\theta(\Omega)$ is biholomorphic, whence the result follows as before.

\subsubsection{\textbf{Generalization of Theorem \ref{theoremb}}}
Since an arbitrary holomorphic function may not be represented by a power series globally on an arbitrary $G$-invariant domain $\Omega$, we shall adopt a different approach here. We proceed by proving the following lemma.

\begin{lem}\label{gen1}
Let $f$ be a function on $G$-invariant domain $\Omega$ and $(\tilde f_1, \ldots, \tilde f_d)^{{\tr}}$ be  the column vector $(\adj\Lambda)\bl x,$ where  $\bl x = \big(\rho_1(f),\ldots, \rho_d(f)\big)^{{\tr}}.$ Suppose that there exist functions $h_j:\Omega\ra\mathbb C,\, 1\leq j\leq d,$ such that $\tilde f_j = \big(\det\Lambda\big)h_j$ on $\Omega$. Then, $h_j$ is $G$-invariant for each $j,\,1\leq j\leq d.$
\end{lem}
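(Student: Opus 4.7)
The strategy is to transfer the invariance question for $h_j$ into a transformation law for $\tilde{\mathbf{f}}$ and $\det \Lambda$ under $G$, and then to read off invariance by comparing the two sides of $\tilde f_j=(\det\Lambda)h_j$. Fix $\sigma\in G$. Right multiplication by $\sigma$ permutes $G=\{\rho_1,\ldots,\rho_d\}$: write $\rho_i\sigma=\rho_{\pi_\sigma(i)}$ and let $P_\sigma$ denote the corresponding permutation matrix. Because the prescribed action on functions is contravariant, $\sigma(\rho(g))=(\rho\sigma)(g)$, a direct entry-by-entry check should give
\[
\sigma(\Lambda)=P_\sigma\Lambda,\qquad \sigma(\mathbf{x})=P_\sigma\mathbf{x},
\]
where $\sigma$ acts entrywise.

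Combining this with the polynomial identities $\mathrm{adj}(AB)=\mathrm{adj}(B)\,\mathrm{adj}(A)$ and $\mathrm{adj}(P_\sigma)=(\det P_\sigma)P_\sigma^{-1}$ yields the two key formulas
\[
\sigma(\tilde{\mathbf{f}})=\mathrm{adj}(P_\sigma\Lambda)(P_\sigma\mathbf{x})=(\det P_\sigma)\mathrm{adj}(\Lambda)\mathbf{x}=(\det P_\sigma)\tilde{\mathbf{f}},\qquad \sigma(\det\Lambda)=(\det P_\sigma)(\det\Lambda).
\]
Substituting these into the image under $\sigma$ of the hypothesis $\tilde f_j=(\det\Lambda)h_j$ and cancelling the nonzero scalar $\det P_\sigma$, one obtains the pointwise identity
\[
(\det\Lambda)\bigl(h_j-\sigma(h_j)\bigr)\equiv 0 \quad\text{on }\Omega.
\]

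By Lemma \ref{nonzero}, $\det\Lambda\not\equiv 0$, so its zero set $Z$ is nowhere dense in $\Omega$; moreover $\sigma(Z)=Z$ since $\sigma(\det\Lambda)$ is a nonzero constant multiple of $\det\Lambda$. Therefore $h_j=\sigma(h_j)$ on the open dense, $G$-invariant subset $\Omega\setminus Z$, and since $\sigma\in G$ is arbitrary, each $h_j$ is $G$-invariant there. In the intended applications of the lemma, $f$ (and hence $\tilde f_j$, and hence $h_j$) will be holomorphic on $\Omega$, so $G$-invariance propagates to all of $\Omega$ by the identity principle.

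The main subtlety is the bookkeeping for $P_\sigma$: the contravariance of the $G$-action on functions forces the permutation to come from $\rho_i\sigma$ rather than $\sigma\rho_i$, and one must therefore be careful that $P_\sigma$ multiplies $\Lambda$ and $\mathbf{x}$ on the left. Once the correct transformation law is in hand, the conclusion is essentially a one-line cancellation together with a density argument across the exceptional locus $Z$.
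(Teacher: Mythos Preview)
Your argument is correct and is essentially the paper's own proof in matrix language: the paper writes $\tilde f_j=\det\Lambda_j$ via Cramer's rule and observes that applying $\rho_k\in G$ permutes the rows of both $\Lambda_j$ and $\Lambda$ by the same permutation, so $\tilde f_j$ and $\det\Lambda$ acquire the same sign and the ratio $h_j$ is invariant. Your packaging with $\sigma(\Lambda)=P_\sigma\Lambda$, $\sigma(\mathbf{x})=P_\sigma\mathbf{x}$ and the adjugate identity is just a compact reformulation of exactly that row-permutation observation; your explicit density caveat across $Z=\{\det\Lambda=0\}$ is a point the paper glosses over but which is handled identically in the holomorphic applications.
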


\begin{proof}
Note that $\tilde f_j = \det \Lambda_j$, where $\Lambda_j$ is a $d\times d$ matrix obtained by replacing the $j$-th column of the matrix $\Lambda$ by the column vector $\bl x,$ that is,
\begin{equation}\label{matrix}
\Lambda_j = \left(
        \begin{array}{ccccccc}
          \rho_1(p_1) & \ldots & \rho_1(p_{j-1}) & \rho_1(f) & \rho_1(p_{j+1}) & \ldots & \rho_1(p_d) \\
          \rho_2(p_1) & \ldots & \rho_2(p_{j-1}) & \rho_2(f) & \rho_2(p_{j+1}) & \ldots & \rho_2(p_d) \\
          \vdots & {} & \vdots & \vdots & \vdots & {} & \vdots \\
          \rho_d(p_1) & \ldots & \rho_d(p_{j-1}) & \rho_d(f) & \rho_d(p_{j+1}) & \ldots & \rho_d(p_d) \\
        \end{array}
      \right).
\end{equation}
Clearly if $f = p_j$, then $\Lambda_j = \Lambda.$ Fix a $k$ such that $1\leq k\leq d$ and let $\tilde\rho_i = \rho_k\rho_i$ for $1\leq i\leq d.$ Then $\tilde f_j(\rho_k^{-1}\cdot\bl z) = \det \Lambda_j(\rho_k^{-1}\cdot\bl z)= \det {\widetilde \Lambda}_j(\bl z)$, where
\Bea
\widetilde \Lambda_j = \left(
        \begin{array}{ccccccc}
          \tilde\rho_1(p_1) & \ldots & \tilde\rho_1(p_{j-1}) & \tilde\rho_1(f) & \tilde\rho_1(p_{j+1}) & \ldots & \tilde\rho_1(p_d) \\
          \tilde\rho_2(p_1) & \ldots & \tilde\rho_2(p_{j-1}) & \tilde\rho_2(f) & \tilde\rho_2(p_{j+1}) & \ldots & \tilde\rho_2(p_d) \\
          \vdots & {} & \vdots & \vdots & \vdots & {} & \vdots \\
          \tilde\rho_d(p_1) & \ldots & \tilde\rho_d(p_{j-1}) & \tilde\rho_d(f) & \tilde\rho_d(p_{j+1}) & \ldots & \tilde\rho_d(p_d) \\
        \end{array}
      \right).
      \Eea
 is obtained by permuting the rows of the matrix $\Lambda_j$ as $\{\tilde\rho_1,\ldots,\tilde\rho_d\}$ is a permutation of the elements $\rho_1,\ldots,\rho_d$ of the group $G.$ Therefore $\det \widetilde \Lambda_j$ and $\det \Lambda_j$ are equal except a sign, that is, $\tilde f_j(\rho_k^{-1}\cdot\bl z)$ differs from $\tilde f_j(\bl z)$ by the same sign for all $j,\, 1\leq j\leq d$ and for all functions $f$ on $\Omega$. Thus $(\det \Lambda)(\rho_k^{-1}\cdot\bl z)$ also differs from $\det \Lambda(\bl z)$ by the same sign. Hence $h_j(\rho_k^{-1}\cdot\bl z) = h_j(\bl z)$ for all $\bl z\in \Omega$ and for each $j,\,1\leq j\leq d.$ This completes the proof as $k$ was chosen arbitrarily.
\end{proof}

Let $\mathcal O(\Omega)$ denote the ring of all holomorphic functions from a domain $\Omega\subset \C^n$ to $\C$ and $\mathcal O(\Omega)^G$ denote the ring of $G$-invariant holomorphic functions. We restate the generalisation of Theorem \ref{theoremb} to $G$-invariant domains as follows.
\begin{thm}\label{theoremc}

If $\Omega$ is a $G$-invariant domain in $\C^n$, then $\mathcal O(\Omega)$ is a finitely generated free module over $\mathcal O(\Omega)^G$ of rank $\lvert G\rvert.$
\end{thm}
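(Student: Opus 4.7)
The plan is to carry out the algebraic strategy behind Theorem \ref{theoremb} in the holomorphic setting on $\Omega$, using Lemma \ref{gen1} to bypass the global power-series argument that is no longer available. Given $f\in\mathcal O(\Omega)$, I would form $\bl x=(\rho_1(f),\ldots,\rho_d(f))^{\tr}$ and set $(\tilde f_1,\ldots,\tilde f_d)^{\tr}=(\adj\Lambda)\bl x$; each $\tilde f_j=\det\Lambda_j$ is then holomorphic on $\Omega$ as a polynomial combination of $G$-translates of $f$. It would suffice to show that $\det\Lambda$ divides every $\tilde f_j$ in $\mathcal O(\Omega)$: the quotient $h_j=\tilde f_j/\det\Lambda\in\mathcal O(\Omega)$ is automatically $G$-invariant by Lemma \ref{gen1}, and the identity $\Lambda(h_1,\ldots,h_d)^{\tr}=\bl x$ (taking the component where $\rho_i$ is the identity) yields $f=p_1h_1+\cdots+p_dh_d$. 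Uniqueness follows from Lemma \ref{nonzero}: the difference of two such decompositions lies in $\ker\Lambda$ and $\det\Lambda\not\equiv 0$, so left multiplication by $\adj\Lambda$ forces the difference to vanish.

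The core step is therefore the holomorphic divisibility $\det\Lambda\mid\tilde f_j$. By Proposition \ref{determinantformula} one has $\det\Lambda=c\prod_{i=1}^{r}L_i^{d(m_i-1)/2}$, so it is enough to show that each factor $L_i^{d(m_i-1)/2}$ divides $\tilde f_j$ in $\mathcal O(\Omega)$ and then to invoke pairwise coprimality of the distinct linear forms $L_i$ in the regular local ring $\mathcal O_{\bl z_0}$ at every $\bl z_0\in\Omega$. The key observation that makes the polynomial argument of Lemma \ref{auxi} transplant to $\mathcal O(\Omega)$ is that for any pseudoreflection $\rho\in G$ with reflecting hyperplane $H=\{L=0\}$ and any $g\in\mathcal O(\Omega)$, the function $g-\rho(g)\in\mathcal O(\Omega)$ (well defined because $\Omega$ is $G$-invariant) vanishes on $H\cap\Omega$, and hence is divisible by $L$ in $\mathcal O(\Omega)$; this is a local statement verified by choosing linear coordinates in which $L$ becomes a coordinate function. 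With this building block I would replay the proof of Lemma \ref{auxi} for $\Lambda_j$: partitioning $G$ into right cosets of $K_i=\langle\rho_i\rangle$, one observes that the rows of $\Lambda_j$ corresponding to a coset $K_i\sigma$ take the form $\bl v,\rho_i(\bl v),\ldots,\rho_i^{m_i-1}(\bl v)$ with $\bl v$ a vector in $\mathcal O(\Omega)$, and iterating the row-reduction extracts a factor $L_i^{(m_i-1)m_i/2}$ per coset, for a total exponent of $d(m_i-1)/2$.

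The principal obstacle is precisely justifying that each intermediate quotient by $L_i$ arising in the row-reduction remains holomorphic on all of $\Omega$ rather than only away from the reflecting hyperplanes. This is handled by the holomorphic division-by-a-coordinate observation above, together with the fact that at every $\bl z_0\in\Omega$ the local ring $\mathcal O_{\bl z_0}$ is a UFD, so that coprimality of the distinct germs $L_i$ combines the separate divisibilities into divisibility by their product $\det\Lambda/c$ in $\mathcal O(\Omega)$, thereby completing the existence part of the decomposition and establishing that $\{p_1,\ldots,p_d\}$ is a free basis of $\mathcal O(\Omega)$ over $\mathcal O(\Omega)^G$.
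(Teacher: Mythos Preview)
Your proposal is correct and follows essentially the same route as the paper: form $(\adj\Lambda)\bl x$, show that $g-\rho(g)$ is divisible by the defining linear form of the reflecting hyperplane in $\mathcal O(\Omega)$, and then replay the row-reduction argument of Lemma \ref{auxi}/Proposition \ref{determinantformula} to conclude $\det\Lambda\mid\det\Lambda_j$. Your explicit appeal to the UFD property of the local rings $\mathcal O_{\bl z_0}$ to combine the separate divisibilities by the $L_i^{d(m_i-1)/2}$ is a point the paper leaves implicit, and your local division-by-a-coordinate argument is equivalent to the paper's invocation of Weierstrass division for linear $\ell$.
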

\begin{proof}
If $\{p_1, \ldots, p_d\}$ is a basis of $\C[\bl z]$ as a $\C[\bl z]^G$ module, as in Theorem \ref{theoremb}, we show that for each $f\in\mathcal O(\Omega)$, there exist unique $G$-invariant holomorphic functions $f_1,\ldots,f_d$ on $\Omega$ such that $f= p_1f_1 + \cdots + p_df_d.$ The proof remains the same as the proof of Theorem \ref{theoremb} except that now we need to prove that the $f_i$'s exist as $G$-invariant holomorphic function on $\Omega$. Lemma \ref{gen1} shows that it is enough to find a global solution to the system of equations $(\det\Lambda)\bl y = (\adj\Lambda)\bl x,$ where $\bl x = \big(\rho_1(f),\ldots, \rho_d(f)\big)^{{\tr}}.$ First we make the following elementary observation. Let $H$ be the reflecting hyperplane fixed by a pseudoreflection $\rho\in G.$ Let $g\in\mathcal O(\Omega)$ and let $\ell$ be the linear factor of $\det\Lambda$ corresponding to $H.$ Then we claim the $\ell$ divides $g - \rho(g)$ in $\mathcal O(\Omega)$. If $\bl z\notin H\cap \Omega$, then $\ell(\bl z)\neq 0$ and this is true in a neighbourhood of $\bl z.$ On the other hand, if $H\cap \Omega$ is non-empty and $\bl z\in H\cap \Omega,$ then it follows from  Weierstrass Division Theorem \cite[p.11]{GH} that $\ell$ divides $g - \rho(g)$ in a neighbourhood of $\bl z$ since $\ell$ is irreducible. Therefore the claim follows as the holomorphic functions which are obtained locally patch to give a global holomorphic function, since both $g - \rho(g)$ and $\ell$ are defined on all of $\Omega$. Using this result, one can apply the proof of Proposition \ref{determinantformula} to $\det \Lambda_i$ to deduce that $\det\Lambda$ divides $\det \Lambda_i.$ As is observed in the proof of Lemma \ref{gen1} that $(\adj\Lambda)\bl x = \big(\det\Lambda_1, \ldots, \det\Lambda_d\big)^{{\tr}},$ it follows that $(\adj\Lambda)\bl x$ is divisible by $\det\Lambda$ which completes the proof.
\end{proof}


\section{Decomposition of analytic Hilbert module}\label{decanahil}
Let $G$ be a finite group, $\m H$ be a Hilbert space and  $U: G \ra \mathcal U(\mathcal H)$ be a unitary representation, where $\m U(\m H)$ denotes the group of unitary operators on $\m H.$ It is known that for a function $\chi:G \ra \C$ satisfying $\chi(\sigma^{-1}) = \ov{\chi(\sigma)}$,  the linear map $T^\chi$ on $\mathcal H$ defined by
\Bea
T^\chi := \sum_{\sigma\in G} \ov{\chi(\sigma)}U(\sigma)
\Eea
is self-adjoint \cite[Lemma 3.1]{BGMS}. 
Since, for  $\varrho \in \w{G},$ the character $\chi_{\varrho}$ of $\varrho$ satisfies ${\chi_{\varrho}(\sigma^{-1})} = \overline{{\chi_{\varrho}(\sigma)}}$ for $\sigma \in G$, the linear map $\mathbb P_{\varrho} : \m H\rightarrow \m H$ defined by the formula
 \begin{eqnarray*}
   \mb P_{\varrho}=\frac{\deg\varrho}{\lvert G \rvert}\sum_{\sigma\in G}{\chi_{\varrho}(\sigma^{-1})}U(\sigma),
   \end{eqnarray*}
    is self-adjoint.

If $\deg\varrho> 1$ for some $\varrho \in \widehat{G},$ we can further split $\mathbb{P}_{\varrho}.$ Fix $\deg\varrho = m >1.$ Suppose $\bl \pi_{\varrho}$ is a unitary representation of $G$ that belongs to the equivalence class $\varrho$ in $\widehat{G}.$ Then, for some choice of an orthonormal basis of $\C^{m},$ the matrix representation of $\bl\pi_{\varrho}$ is given by 
\Bea
\bl\pi_{\varrho}(\sigma) = \big (\!\!\big (\bl\pi_{\varrho}^{ij}(\sigma)\big )\!\!\big )_{i,j = 1}^{m} \in \C^{m\times m}.
\Eea
We define linear maps  $\mathbb{P}_{\varrho}^{ij}:\m H \to \m H \textnormal{~for~} i,j = 1,2,\ldots,m, $ by 
   \bea\label{finer}
   \mathbb{P}_{\varrho}^{ij} = \frac{\deg\varrho}{\lvert G \rvert}\sum_{\sigma\in G} \bl\pi_{\varrho}^{ji}(\sigma^{-1})U(\sigma).
   \eea
Each $\mathbb{P}_{\varrho}^{ii}$ is self-adjoint as $\bl \pi_{\varrho}$ is a unitary representation implies ${\bl \pi}^{ii}_{\varrho}(\sigma^{-1}) = \overline{{\bl \pi}^{ii}_{\varrho}(\sigma)}$. Note that the definition $\mb P_{\varrho}^{ii}$ is basis dependent while the same of $\mb P_{\varrho}$ is canonical. Since for all $\sigma \in G,$ $\chi_{\varrho}(\sigma^{-1}) = \textnormal{trace}\big({\bl \pi}_{\varrho}(\sigma^{-1})\big) = \sum_{i=1}^{m} \bl\pi_{\varrho}^{ii}(\sigma^{-1}),$ it follows that 
   \bea\label{rhoiisum}
   \mathbb{P}_{\varrho}=\frac{\deg\varrho}{\lvert G \rvert}\sum_{\sigma\in G}\Big(\sum_{i=1}^{m} \bl\pi_{\varrho}^{ii}(\sigma^{-1})\Big) U(\sigma) = \frac{\deg\varrho}{\lvert G \rvert}\sum_{i=1}^{m}\sum_{\sigma\in G}\bl\pi_{\varrho}^{ii}(\sigma^{-1})U(\sigma)=\sum_{i=1}^{\deg\varrho} \mathbb P_{\varrho}^{ii}. 
   \eea
 The following orthogonality relations \cite[Proposition 2.9]{KS} play a central role in this section.
\bea\label{or}
\sum_{\sigma\in G}\bl\pi_{\varrho}^{ij}(\sigma^{-1})\bl\pi_{\varrho^\i}^{\ell m}(\sigma) =
\frac{\lvert G \rvert} {{\rm deg}\,\varrho}\delta_{\varrho\varrho^\i}\delta_{im}\delta_{j\ell},
\eea
where $\delta$ is the Kronecker symbol.
The following Proposition and its Corollary can be proved similarly as in \cite[Section 3]{BGMS} by adapting the properties of projection maps given in \cite[p.162]{KS}. Thus, we omit the proofs here.
\begin{prop}\label{proj1}
For $\varrho, \varrho^\i \in \widehat{G},\, 1\leq i, j\leq {\rm deg}\,\varrho$ and $1\leq \ell,m\leq {\rm deg}\,\varrho^\i,$ 
\Bea
\mathbb{P}_{\varrho}^{ij}\mathbb{P}_{\varrho^\i}^{\ell m} = \delta_{\varrho\varrho^\i}\delta_{j\ell}\mathbb{P}_{\varrho}^{im}.
\Eea
\end{prop}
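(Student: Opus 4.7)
The plan is a direct calculation that combines the defining formula \eqref{finer} for $\mb P_\varrho^{ij}$ with the orthogonality relations \eqref{or} for matrix coefficients of irreducible representations. The key algebraic step is to reduce the double sum arising in the product $\mb P_\varrho^{ij}\mb P_{\varrho^\i}^{\ell m}$ to a single sum over $G$, using that $U$ is a group homomorphism.

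Concretely, I would first write
$$\mb P_\varrho^{ij}\mb P_{\varrho^\i}^{\ell m} = \frac{\deg\varrho\cdot\deg\varrho^\i}{\lvert G\rvert^{2}}\sum_{\sigma,\tau\in G}\bl\pi_\varrho^{ji}(\sigma^{-1})\bl\pi_{\varrho^\i}^{m\ell}(\tau^{-1})U(\sigma\tau),$$
and then change variables $\eta = \sigma\tau$, so that $\tau^{-1} = \eta^{-1}\sigma$. Using the representation property of $\bl\pi_{\varrho^\i}$ to expand
$$\bl\pi_{\varrho^\i}^{m\ell}(\eta^{-1}\sigma) = \sum_{k=1}^{\deg\varrho^\i}\bl\pi_{\varrho^\i}^{mk}(\eta^{-1})\bl\pi_{\varrho^\i}^{k\ell}(\sigma),$$
and interchanging the order of summation isolates the inner sum $\sum_{\sigma\in G}\bl\pi_\varrho^{ji}(\sigma^{-1})\bl\pi_{\varrho^\i}^{k\ell}(\sigma)$, which by \eqref{or} equals $\frac{\lvert G\rvert}{\deg\varrho}\delta_{\varrho\varrho^\i}\delta_{j\ell}\delta_{ik}$.

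Substituting back, the $\delta_{ik}$ collapses the sum over $k$ and the prefactor simplifies (using $\deg\varrho^\i = \deg\varrho$ on the support of $\delta_{\varrho\varrho^\i}$) to yield
$$\delta_{\varrho\varrho^\i}\delta_{j\ell}\cdot\frac{\deg\varrho}{\lvert G\rvert}\sum_{\eta\in G}\bl\pi_\varrho^{mi}(\eta^{-1})U(\eta) = \delta_{\varrho\varrho^\i}\delta_{j\ell}\mb P_\varrho^{im},$$
where the last equality is the definition \eqref{finer} of $\mb P_\varrho^{im}$, noting that the operator carries indices $(i,m)$ while the matrix coefficient inside has the indices transposed to $(m,i)$. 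There is no real obstacle here; the only step requiring care is the bookkeeping of these transposed indices, so that the Kronecker delta that survives from \eqref{or} correctly produces $\delta_{j\ell}$ rather than some other index match.
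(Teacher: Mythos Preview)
Your proof is correct and is exactly the standard computation the paper has in mind: the paper omits the argument, pointing to \cite[p.162]{KS} and \cite[Section 3]{BGMS}, where precisely this substitution $\eta=\sigma\tau$ followed by the orthogonality relations \eqref{or} is carried out. Your index bookkeeping is accurate throughout.
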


\begin{cor}\label{rhoortho}
\leavevmode
\begin{enumerate}
\item[1.] $\mb P_{\varrho} \mb P_{\varrho^\i} = \delta_{\varrho \varrho^\i} \mb P_{\varrho}$ for all $\varrho, \varrho^\i \in \widehat{G}.$
\item[2.] $\mb P_{\varrho}$ and $\mb P_\varrho^{ii}$ are orthogonal projections for $\varrho \in \widehat{G}$ and $1\leq i \leq \deg \varrho.$ 
\item[3.] $\displaystyle\sum_{\varrho \in \widehat{G}}\sum_{i = 1}^{\deg\varrho} \mathbb{P}_{\varrho}^{ii} = \sum_{\varrho\in \widehat{G}} \mathbb{P}_{\varrho} = {\mr {id}}_{\m H}.$
\end{enumerate}

\end{cor}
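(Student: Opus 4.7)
The plan is to derive all three assertions directly from Proposition \ref{proj1} together with the decomposition $\mb P_\varrho = \sum_{i=1}^{\deg\varrho} \mb P_\varrho^{ii}$ recorded in \eqref{rhoiisum} and the self-adjointness of $\mb P_\varrho$ and $\mb P_\varrho^{ii}$ observed immediately after \eqref{finer}. Since Proposition \ref{proj1} already encodes the relevant multiplicative structure, I do not anticipate a genuine obstacle: the corollary is essentially a bookkeeping consequence of what has been set up, with the only external input being the standard column orthogonality relation for characters.

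For statement 1, I would first expand
\[
\mb P_\varrho \mb P_{\varrho^\i} \;=\; \sum_{i=1}^{\deg\varrho}\sum_{j=1}^{\deg\varrho^\i} \mb P_\varrho^{ii}\, \mb P_{\varrho^\i}^{jj},
\]
and then apply Proposition \ref{proj1} to each term. Each summand equals $\delta_{\varrho\varrho^\i}\delta_{ij}\mb P_\varrho^{ij}$, so only the diagonal pieces $i=j$ contribute, and the double sum collapses to $\delta_{\varrho\varrho^\i}\sum_{i} \mb P_\varrho^{ii} = \delta_{\varrho\varrho^\i}\mb P_\varrho$.

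For statement 2, the self-adjointness of $\mb P_\varrho$ and $\mb P_\varrho^{ii}$ has already been established. The idempotency of $\mb P_\varrho$ is then the special case $\varrho = \varrho^\i$ of statement 1, while the idempotency of $\mb P_\varrho^{ii}$ is the special case $\varrho = \varrho^\i$, $j = i = \ell = m$ of Proposition \ref{proj1}. Hence both operators are self-adjoint idempotents, and therefore orthogonal projections.

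For statement 3, the first equality is immediate from \eqref{rhoiisum}. For the second, I would interchange the order of summation to obtain
\[
\sum_{\varrho \in \w G} \mb P_\varrho \;=\; \frac{1}{|G|}\sum_{\sigma \in G}\left(\sum_{\varrho \in \w G} (\deg\varrho)\,\chi_\varrho(\sigma^{-1})\right) U(\sigma),
\]
and then invoke the column orthogonality relation $\sum_{\varrho \in \w G}(\deg\varrho)\,\chi_\varrho(\sigma^{-1}) = |G|\,\delta_{\sigma, e}$ (a standard consequence of \eqref{or}). The inner sum then reduces each term to a Kronecker delta supported at $\sigma = e$, leaving $U(e) = \mr{id}_{\m H}$, as required.
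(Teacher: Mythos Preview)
Your argument is correct. The paper itself omits the proof of this corollary, referring instead to \cite[Section 3]{BGMS} and \cite[p.~162]{KS}; the route you take---deriving part 1 from Proposition \ref{proj1} via the decomposition \eqref{rhoiisum}, reading off part 2 from self-adjointness plus idempotency, and obtaining part 3 from the character identity $\sum_{\varrho}(\deg\varrho)\chi_\varrho(\sigma^{-1}) = |G|\,\delta_{\sigma,e}$---is exactly the standard one found in those references. One small terminological remark: the identity you invoke in part 3 is more precisely the character of the regular representation (equivalently a special case of column orthogonality), rather than a direct instance of \eqref{or}, which is a row-type relation; but it is indeed a standard consequence and your use of it is fine.
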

In view of the corollary above, we obtain  the following orthogonal direct sum decomposition of the Hilbert space $\m H:$ 
\bea\label{decomp}
\m H = \bigoplus_{\varrho\in \widehat{  G}}\mathbb{P}_{\varrho}\m H
=\bigoplus_{\varrho\in \widehat{  G}}\bigoplus_{i = 1}^{\deg \varrho}\mathbb{P}_{\varrho}^{ii}\m H. 
\eea


\begin{lem}\label{adjoint}
For $\varrho\in \hat G$ and $1\leq i, j\leq \deg\varrho$,
$
{\mathbb{P}_{\varrho}^{ij}}^* \mathbb{P}_{\varrho}^{ij} = \mathbb{P}_{\varrho}^{jj}.
$
\end{lem}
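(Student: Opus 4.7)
The plan is to establish the identity by first computing the adjoint $(\mathbb{P}_\varrho^{ij})^*$ explicitly, and then invoking the composition rule from Proposition \ref{proj1}. From the defining formula
\[
\mathbb{P}_\varrho^{ij}=\frac{\deg\varrho}{|G|}\sum_{\sigma\in G}\bl\pi_\varrho^{ji}(\sigma^{-1})U(\sigma),
\]
taking Hilbert-space adjoints and using $U(\sigma)^*=U(\sigma^{-1})$ (since $U$ is unitary) gives
\[
(\mathbb{P}_\varrho^{ij})^*=\frac{\deg\varrho}{|G|}\sum_{\sigma\in G}\overline{\bl\pi_\varrho^{ji}(\sigma^{-1})}\,U(\sigma^{-1}).
\]
Unitarity of the representation $\bl\pi_\varrho$ yields $\bl\pi_\varrho(\sigma^{-1})=\bl\pi_\varrho(\sigma)^*$, so entry-wise $\overline{\bl\pi_\varrho^{ji}(\sigma^{-1})}=\bl\pi_\varrho^{ij}(\sigma)$.

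Substituting this into the previous expression and then re-indexing via $\tau=\sigma^{-1}$ gives
\[
(\mathbb{P}_\varrho^{ij})^*=\frac{\deg\varrho}{|G|}\sum_{\tau\in G}\bl\pi_\varrho^{ij}(\tau^{-1})U(\tau)=\mathbb{P}_\varrho^{ji},
\]
where the last equality is just the definition of $\mathbb{P}_\varrho^{ji}$ (exchanging the roles of the upper indices in \eqref{finer}).

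With this identification in hand, the lemma reduces to a single application of Proposition \ref{proj1}. Namely,
\[
(\mathbb{P}_\varrho^{ij})^*\mathbb{P}_\varrho^{ij}=\mathbb{P}_\varrho^{ji}\mathbb{P}_\varrho^{ij}=\delta_{\varrho\varrho}\,\delta_{ii}\,\mathbb{P}_\varrho^{jj}=\mathbb{P}_\varrho^{jj},
\]
as claimed. The only delicate point is the book-keeping for the transpose/conjugate when moving from $\bl\pi_\varrho^{ji}(\sigma^{-1})$ to $\bl\pi_\varrho^{ij}(\sigma)$; everything else is formal. In particular, the identity $(\mathbb{P}_\varrho^{ij})^*=\mathbb{P}_\varrho^{ji}$ obtained along the way also confirms the self-adjointness of $\mathbb{P}_\varrho^{ii}$ noted after \eqref{finer}, providing a consistency check on the computation.
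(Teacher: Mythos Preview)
Your proof is correct, and it takes a genuinely different route from the paper. The paper argues by a direct kernel computation: it expands $\langle(\mathbb{P}_\varrho^{ij})^*\mathbb{P}_\varrho^{ij}K_{\bl z},K_{\bl w}\rangle$ as a double sum over $\sigma,\tau\in G$, makes the change of variable $\gamma=\tau^{-1}\sigma$, and then applies the orthogonality relations \eqref{or} to collapse the sum to $\langle\mathbb{P}_\varrho^{jj}K_{\bl z},K_{\bl w}\rangle$. Your approach instead isolates the single identity $(\mathbb{P}_\varrho^{ij})^*=\mathbb{P}_\varrho^{ji}$ and then defers everything else to Proposition~\ref{proj1}. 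This is cleaner, avoids the change of summation variable, and has the added benefit that it works for an arbitrary unitary representation $U:G\to\mathcal U(\mathcal H)$ on an abstract Hilbert space, whereas the paper's argument tacitly uses the reproducing kernel vectors $K_{\bl z}$ as a total set. The paper's computation, on the other hand, is self-contained in the sense that it re-derives the needed orthogonality directly rather than appealing to Proposition~\ref{proj1} (whose proof the paper omits).
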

\begin{proof}
For $\bl z,\bl w\in\Omega,$ we have
\bea
\nonumber\langle{\mb{P}_{\varrho}^{ij}}^*\mb{P}_{\varrho}^{ij}K_{\bl z},K_{\bl w}\rangle  &=&\langle \mb{P}_{\varrho}^{ij}K_{\bl z},{\mb{P}_{\varrho}^{ij}}K_{\bl w}\rangle\\
&=&\nonumber\frac{({\rm deg}\,\varrho)^2}{\lvert G \rvert^2}\sum_{\sigma\in G} \sum_{\tau \in G} \bl\pi_{\varrho}^{ji}(\sigma^{-1}) \overline{\bl\pi_{\varrho}^{ji}(\tau^{-1})} \langle U(\sigma) K_{\bl z}, U(\tau) K_{\bl w} \rangle \\
&=&\label{equa}\frac{({\rm deg}\,\varrho)^2}{\lvert G \rvert^2}\sum_{\sigma\in G} \sum_{\tau \in G} \bl\pi_{\varrho}^{ji}(\sigma^{-1}) \bl\pi_{\varrho}^{ij}(\tau) \langle U(\tau^{-1}\sigma) K_{\bl z}, K_{\bl w} \rangle.
\eea
Let $\gamma =  \tau^{-1}{\sigma}$. Then $\tau = \sigma \gamma^{-1}$ and
$$
\bl\pi_{\varrho}^{ij}(\sigma\gamma^{-1}) = (\bl\pi_{\varrho}(\sigma\gamma^{-1}) )_{ij} = (\bl\pi_{\varrho}(\sigma)\bl\pi_{\varrho}(\gamma^{-1}) )_{ij} = \sum_{k = 1}^{\deg \rho} \bl\pi_{\varrho}^{ik}(\sigma)\bl\pi_{\varrho}^{kj}(\gamma^{-1}).
$$
Moreover,
\bea \nonumber \sum_{\sigma\in G} \bl\pi_{\varrho}^{ji}(\sigma^{-1}) \bl\pi_{\varrho}^{ij}(\tau) &=& \sum_{\sigma\in G} \bl\pi_{\varrho}^{ji}(\sigma^{-1}) \bl\pi_{\varrho}^{ij}(\sigma\gamma^{-1})\\
&=&\nonumber \sum_{\sigma\in G} \bl\pi_{\varrho}^{ji}(\sigma^{-1}) \sum_{k = 1}^{{\rm deg}\,\varrho} \bl\pi_{\varrho}^{ik}(\sigma)\bl\pi_{\varrho}^{kj}(\gamma^{-1}) \\
&=& \nonumber\sum_{k = 1}^{{\rm deg}\,\varrho} \bl\pi_{\varrho}^{kj}(\gamma^{-1}) \sum_{\sigma\in G} \bl\pi_{\varrho}^{ji}(\sigma^{-1})\bl\pi_{\varrho}^{ik}(\sigma)\\
&=&\label{equa1} \sum_{k = 1}^{\deg\varrho} \bl\pi_{\varrho}^{kj}(\gamma^{-1}) \frac{\lvert G \rvert}{\deg\varrho} \delta_{jk} = \frac{\lvert G \rvert}{\deg\varrho} \bl\pi_{\varrho}^{jj}(\gamma^{-1}),
\eea
where the penultimate equality follows from Equation \eqref{or}. Thus from Equation \eqref{equa} we have
\Bea
\langle{\mb{P}_{\varrho}^{ij}}^*\mb{P}_{\varrho}^{ij}K_{\bl z},K_{\bl w}\rangle
&=&\frac{({\rm deg}\,\varrho)^2}{\lvert G \rvert^2}\sum_{\sigma\in G} \sum_{\tau \in G} \bl\pi_{\varrho}^{ji}(\sigma^{-1}) \bl\pi_{\varrho}^{ij}(\tau) \langle U(\tau^{-1}\sigma) K_{\bl z}, K_{\bl w} \rangle\\
&=&\frac{({\rm deg}\,\varrho)^2}{\lvert G \rvert^2} \sum_{\gamma \in G}\sum_{\sigma\in G} \bl\pi_{\varrho}^{ji}(\sigma^{-1}) \bl\pi_{\varrho}^{ij}(\sigma \gamma^{-1}) \langle U(\gamma) K_{\bl z}, K_{\bl w} \rangle\\
&=&\frac{({\rm deg}\,\varrho)}{\lvert G \rvert} \sum_{\gamma \in G} \bl\pi_{\varrho}^{jj}(\gamma^{-1}) \langle U(\gamma) K_{\bl z}, K_{\bl w} \rangle\\
&=&\langle \mb{P}_{\varrho}^{jj}K_{\bl z},K_{\bl w}\rangle,
\Eea
where the penultimate equality follows from Equation \eqref{equa1} and the last equality is immediate from the Equation \eqref{finer}. Since $\{K_{\bl z} : \bl z \in \Omega \}$ forms a total set of $\mathcal H,$ the proof is complete.
\end{proof}

\begin{lem}\label{uni}
The restriction of $\mb{P}_{\varrho}^{ij }$ to $ \mb{P}_{\varrho}^{jj}\mathcal H$ is an isometry with range $\mb{P}_{\varrho}^{ii } \mathcal H.$ That is, $\mb{P}_{\varrho}^{ij }: \mb{P}_{\varrho}^{jj}\mathcal H\to  \mb{P}_{\varrho}^{ii } \mathcal H $ is a unitary operator.
\end{lem}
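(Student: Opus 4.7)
The plan is to extract the statement directly from Proposition \ref{proj1} and Lemma \ref{adjoint}, with no extra input. I would split the verification into three short steps: (a) $\mb P_\varrho^{ij}$ carries $\mb P_\varrho^{jj}\m H$ into $\mb P_\varrho^{ii}\m H$, (b) the restriction is isometric, and (c) the restriction is surjective onto $\mb P_\varrho^{ii}\m H$.

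For step (a), Proposition \ref{proj1} applied to $\mb P_\varrho^{ii}\,\mb P_\varrho^{ij}$ (matching middle index $i$) gives $\mb P_\varrho^{ii}\,\mb P_\varrho^{ij} = \delta_{ii}\,\mb P_\varrho^{ij} = \mb P_\varrho^{ij}$, so every vector in the image of $\mb P_\varrho^{ij}$ is fixed by the projection $\mb P_\varrho^{ii}$. For step (b), given $f \in \mb P_\varrho^{jj}\m H$ (so $f = \mb P_\varrho^{jj} f$), Lemma \ref{adjoint} yields
\[
\|\mb P_\varrho^{ij} f\|^2 = \langle {\mb P_\varrho^{ij}}^*\mb P_\varrho^{ij} f,\, f\rangle = \langle \mb P_\varrho^{jj} f,\, f\rangle = \|f\|^2.
\]

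For step (c), the natural candidate preimage of $g \in \mb P_\varrho^{ii}\m H$ is $f := \mb P_\varrho^{ji} g$. Two applications of Proposition \ref{proj1} settle the issue: first, $\mb P_\varrho^{jj}\mb P_\varrho^{ji} = \mb P_\varrho^{ji}$ shows $\mb P_\varrho^{jj} f = f$, hence $f \in \mb P_\varrho^{jj}\m H$; second, $\mb P_\varrho^{ij}\mb P_\varrho^{ji} = \mb P_\varrho^{ii}$ gives $\mb P_\varrho^{ij} f = \mb P_\varrho^{ii} g = g$. Together with (a) and (b), this produces an isometry from $\mb P_\varrho^{jj}\m H$ onto $\mb P_\varrho^{ii}\m H$, that is, a unitary. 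No step presents a real obstacle; alternatively one can observe that ${\mb P_\varrho^{ij}}^* = \mb P_\varrho^{ji}$ (by taking adjoints in the defining sum \eqref{finer} and substituting $\sigma \mapsto \sigma^{-1}$) and then invoke the standard characterization of a partial isometry with initial space $\mb P_\varrho^{jj}\m H$ and final space $\mb P_\varrho^{ii}\m H$.
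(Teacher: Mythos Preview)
Your proof is correct and follows essentially the same route as the paper: both use Proposition~\ref{proj1} to show that the range lies in $\mb P_\varrho^{ii}\m H$ and to exhibit a preimage via $\mb P_\varrho^{ji}$, and both invoke Lemma~\ref{adjoint} for the isometry. Your organization into three labeled steps and the closing remark on ${\mb P_\varrho^{ij}}^* = \mb P_\varrho^{ji}$ are cosmetic additions, not a different argument.
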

\begin{proof}

We prove the lemma by showing that ${\mb{P}_{\varrho}^{ij}} \lvert_{\mb{P}_{\varrho}^{jj}\mathcal H}$ is both isometry and surjective. Note that range $\mb{P}_{\varrho}^{ii } \mathcal H$ is a closed subspace and by the Lemma above, it is an isometry on $\mb{P}_{\varrho}^{jj} \mathcal H$. From Proposition \ref{proj1}, we have
$$
\mathbb{P}_\varrho^{ij}\mathbb{P}_\varrho^{jj} = \mathbb{P}_\varrho^{ij} = \mathbb{P}_\varrho^{ii}\mathbb{P}_\varrho^{ij},
$$ and it then follows that $\mathbb{P}_\varrho^{ij}\mathbb{P}_\varrho^{jj}\m  H \subseteq \mathbb{P}_\varrho^{ii}\m H$. Also for $f\in \m H,$
$$
\mathbb{P}_\varrho^{ii}f = \mathbb{P}_\varrho^{ij}\mathbb{P}_\varrho^{ji}f =  \mathbb{P}_\varrho^{ij}\mathbb{P}_\varrho^{jj}\mathbb{P}_\varrho^{ji}f
$$
and thus $\mathbb{P}_\varrho^{ii}\m H\subseteq \mathbb{P}_\varrho^{ij}\mathbb{P}_\varrho^{jj}\m H .$ Consequently, $\mathbb{P}_\varrho^{ij}$ is a surjective map from $\mathbb{P}_\varrho^{jj}\m H$ onto $\mathbb{P}_\varrho^{ii}\m H$.

\end{proof}

\section{Reducing submodules of multiplication operator by proper holomorphic mappings}\label{redsub}
Let $G$ be a finite group, $\Omega \subseteq \mb{C}^n$ be a $G$-invariant domain and $\mathcal H$ be an analytic Hilbert module with $G$-invariant reproducing kernel $K,$ that is, $K$ satisfies
\Bea
K(\sigma \cdot \bl z, \sigma \cdot \bl w) = K(\bl z, \bl w) \mbox{~for ~all~}\bl z,\,\bl w\in\Omega \text{~and~} \sigma \in G.
\Eea
In the next lemma, we show that there is an equivalent way of formulating the $G$-invariance of the reproducing kernel in terms of the map $R:\sigma \mapsto R_\sigma$ defined by $\big(R_\sigma f \big)(\bl z) = f(\sigma^{-1} \cdot \bl z)$ for all $\sigma \in G,\, f\in \m H$.

\begin{lem}\label{iso}
Let $\Omega\subseteq\mb C^n$ be a bounded $G$-invariant domain and $\m H$ be an analytic Hilbert module with reproducing kernel $K.$ Then $K$ is $G$-invariant if and only if
\begin{enumerate}
    \item[1.] $R_\sigma : \mathcal H \to \mathcal H$ is well defined and,
    \item[2.] the map $R: \sigma\mapsto R_\sigma$ is a unitary map on $\mathcal H.$
\end{enumerate}
  \end{lem}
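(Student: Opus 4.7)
The plan is to leverage the reproducing property together with the fact that the linear span of kernel functions $\{K(\cdot,\bl w) : \bl w\in\Omega\}$ is dense in $\m H$. For the forward direction, assuming the $G$-invariance of $K$, I would first evaluate $R_\sigma$ on kernel functions: by the hypothesis,
\[
\bigl(R_\sigma K(\cdot,\bl w)\bigr)(\bl z) = K(\sigma^{-1}\cdot\bl z,\bl w) = K(\bl z,\sigma\cdot\bl w),
\]
where the second equality uses $G$-invariance applied with $\bl z$ replaced by $\sigma^{-1}\cdot\bl z$. Hence $R_\sigma K(\cdot,\bl w) = K(\cdot,\sigma\cdot\bl w)\in\m H$, and a direct computation gives
\[
\langle R_\sigma K(\cdot,\bl w),\,R_\sigma K(\cdot,\bl w')\rangle = K(\sigma\cdot\bl w',\sigma\cdot\bl w) = K(\bl w',\bl w) = \langle K(\cdot,\bl w),K(\cdot,\bl w')\rangle.
\]
Thus $R_\sigma$ acts isometrically on the dense linear span of kernel functions and extends uniquely to an isometry $\widetilde R_\sigma$ on $\m H$. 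I would then verify that this Hilbert-space extension coincides with the pointwise formula $f\mapsto f\circ\sigma^{-1}$ on all of $\m H$: if a sequence $f_n$ in the span of kernels converges to $f$ in $\m H$, then by continuity of point evaluation, $f_n(\sigma^{-1}\cdot\bl z)\to f(\sigma^{-1}\cdot\bl z)$ while $(\widetilde R_\sigma f_n)(\bl z)\to (\widetilde R_\sigma f)(\bl z)$, and the two limits agree on the prescribed formula at every $\bl z$. Since an immediate pointwise computation gives $R_\sigma R_{\sigma^{-1}} = \mathrm{id} = R_{\sigma^{-1}} R_\sigma$, the isometry $R_\sigma$ is also surjective, hence unitary.

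For the reverse direction, suppose each $R_\sigma$ is unitary. Then $R_\sigma^{*} = R_\sigma^{-1} = R_{\sigma^{-1}}$, the last equality being the same pointwise identity. For any $f\in\m H$ and $\bl w\in\Omega$,
\[
\langle f,\, R_\sigma K(\cdot,\bl w)\rangle = \langle R_{\sigma^{-1}} f,\, K(\cdot,\bl w)\rangle = (R_{\sigma^{-1}} f)(\bl w) = f(\sigma\cdot\bl w) = \langle f,\, K(\cdot,\sigma\cdot\bl w)\rangle.
\]
Since $f$ is arbitrary, $R_\sigma K(\cdot,\bl w) = K(\cdot,\sigma\cdot\bl w)$ in $\m H$. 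Evaluating both sides at $\bl z$ yields $K(\sigma^{-1}\cdot\bl z,\bl w) = K(\bl z,\sigma\cdot\bl w)$, and replacing $\bl z$ by $\sigma\cdot\bl z$ yields the $G$-invariance identity $K(\sigma\cdot\bl z,\sigma\cdot\bl w) = K(\bl z,\bl w)$.

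The main subtle step I anticipate is the identification, in the forward direction, of the abstract Hilbert-space extension of the isometry defined on the dense span of kernels with the concrete pointwise prescription $(R_\sigma f)(\bl z) = f(\sigma^{-1}\cdot\bl z)$ for arbitrary $f\in\m H$. This is precisely what validates condition (1) of the lemma, namely that $R_\sigma$ maps $\m H$ into itself; it relies crucially on continuity of point-evaluation functionals, a feature built into the reproducing kernel framework.
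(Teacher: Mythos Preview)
Your proof is correct and follows essentially the same route as the paper. The paper's proof is terse---it refers to \cite{BGMS} for well-definedness and then records only the key observation that $\langle K_{\sigma\cdot\bl w}, K_{\sigma\cdot\bl w'}\rangle = K(\bl w',\bl w)$ if and only if $\langle R_\sigma K_{\bl w}, R_\sigma K_{\bl w'}\rangle = \langle K_{\bl w}, K_{\bl w'}\rangle$---which is precisely the computation you carry out in full, including the careful check that the abstract isometric extension agrees with the pointwise formula and that surjectivity follows from $R_\sigma R_{\sigma^{-1}} = \mathrm{id}$.
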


\begin{proof}
The proof that $R_\sigma$ is well defined is same as in \cite[p.765]{BGMS}. For the rest of that proof, one could just follow the proof of \cite[Lemma 3.5]{BGMS} for a group $G$ with the observation that:
\Bea
\langle K_{\sigma\cdot\bl w}, K_{\sigma\cdot\bl w'}\rangle  = K(\bl w', \bl w)
\mbox{~if~ and~ only~ if~}
\langle R_\sigma K_{\bl w}, R_\sigma K_{\bl w'}\rangle = \langle K_{\bl w}, K_{\bl w'}\rangle.
\Eea
\end{proof}

We further note that the map $R$ is a unitary representation of $G$ on $\mathcal H$ usually well known as the left regular representation (see \cite[Lemma 3.5]{BGMS}). Taking $U = R$, the formulae for $\mb P_{\varrho}$ and $\mb P_\varrho^{ij}$ turn out to be
\bea\label{od}
(\mb P_{\varrho}f)(\bl z) &=&  \frac{\deg\varrho}{\lvert G \rvert}\sum_{\sigma\in G} \chi_{\varrho}({\sigma}^{-1})f(\sigma^{-1}\cdot\bl z)
\eea
and
\bea\label{ij}
(\mb P_{\varrho}^{ij}f)(\bl z) &=& \frac{\deg\varrho}{\lvert G \rvert}\sum_{\sigma\in G} \bl\pi_{\varrho}^{ji}({\sigma}^{-1})f(\sigma^{-1}\cdot\bl z).
\eea
Let $\Omega$ be a bounded $G$-invariant domain in $\mb C^n$ and $\bl \phi := (\phi_1,\ldots,\phi_n): \Omega \rightarrow \bl \phi(\Omega)$ be a proper holomorphic map. We also assume that it factors through automorphisms 
$G\subseteq \mr{Aut}(\Omega).$ Consequently, $\bl \phi$ is $G$-invariant, that is,  $\bl \phi(\sigma\cdot\bl z) = \bl \phi(\bl z)$ for all $\sigma \in G, \bl z\in\Omega.$ For every $i=1,\ldots,n,$ the multiplication operator is defined by $M_{\phi_i} : \m{H} \rightarrow \m{H}, $ where 
\Bea
(M_{\phi_i} f)(\bl z)= \phi_{i}(\bl z) f(\bl z)  \text{~for all~} f \in \m{H}, \bl z \in \Omega.
\Eea
Put  $\mathbf M_{\bl \phi} = (M_{\phi_1},\ldots,M_{\phi_n}).$ We also assume that $\mathbf M_{\bl \phi}$ is bounded. 

\begin{prop}\label{rhojredsub}
Suppose that $\Omega$ is a bounded and $G$-invariant domain in $\mb C^n$  and $\bl\phi:\Omega\to\bl\phi(\Omega)$ is a proper holomorphic mapping which factors through automorphisms $G\subseteq \mr{Aut}(\Omega).$ If $\m H$ is an analytic Hilbert module with $G$-invariant kernel, then $M_{\phi_k}\mathbb P_{\varrho}^{ij}=\mathbb P_{\varrho}^{ij}M_{\phi_k}$ for    $\varrho \in \widehat{G}, 1\leq i, j\leq\deg\varrho$ and $k=1,\ldots, n.  $
\end{prop}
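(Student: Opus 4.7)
The proof I would give is essentially a one-line computation, so my plan is to lay out the mechanics clearly rather than search for any clever machinery. The key ingredient is the $G$-invariance of $\bl\phi$: because $\bl\phi$ factors through automorphisms from $G$, we have $\phi_k(\sigma^{-1}\cdot\bl z)=\phi_k(\bl z)$ for every $\sigma\in G$, $\bl z\in\Omega$ and $k=1,\ldots,n$. This is exactly the property needed to let us pull the scalar $\phi_k(\bl z)$ out of the averaging sum that defines $\mathbb{P}_\varrho^{ij}$.

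First I would recall from formula \eqref{ij} that
\begin{equation*}
(\mathbb{P}_\varrho^{ij}f)(\bl z)=\frac{\deg\varrho}{\lvert G\rvert}\sum_{\sigma\in G}\bl\pi_\varrho^{ji}(\sigma^{-1})\,f(\sigma^{-1}\cdot\bl z),\qquad f\in\m H,\ \bl z\in\Omega.
\end{equation*}
Applying this to $M_{\phi_k}f$ and using that multiplication by $\phi_k$ acts pointwise, I obtain
\begin{equation*}
(\mathbb{P}_\varrho^{ij}M_{\phi_k}f)(\bl z)=\frac{\deg\varrho}{\lvert G\rvert}\sum_{\sigma\in G}\bl\pi_\varrho^{ji}(\sigma^{-1})\,\phi_k(\sigma^{-1}\cdot\bl z)\,f(\sigma^{-1}\cdot\bl z).
\end{equation*}
By $G$-invariance of $\phi_k$, each $\phi_k(\sigma^{-1}\cdot\bl z)$ equals $\phi_k(\bl z)$, so $\phi_k(\bl z)$ is independent of $\sigma$ and may be pulled outside the sum, giving $\phi_k(\bl z)(\mathbb{P}_\varrho^{ij}f)(\bl z)=(M_{\phi_k}\mathbb{P}_\varrho^{ij}f)(\bl z)$.

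Since this identity holds for every $\bl z\in\Omega$ and every $f\in\m H$, the two bounded operators $M_{\phi_k}\mathbb{P}_\varrho^{ij}$ and $\mathbb{P}_\varrho^{ij}M_{\phi_k}$ agree on all of $\m H$. The only mild subtlety to flag is the well-definedness of $\mathbb{P}_\varrho^{ij}$ as an operator on $\m H$, which relies on Lemma \ref{iso} ensuring that each $R_\sigma$ is a unitary on $\m H$ (guaranteed by $G$-invariance of the reproducing kernel $K$); once this is in hand, there is no genuine obstacle. In short, the whole statement reduces to the tautology that an averaging operator built out of the $R_\sigma$'s commutes with multiplication by any $G$-invariant function, and $\phi_k$ is $G$-invariant precisely because $\bl\phi$ factors through $G\subseteq\mathrm{Aut}(\Omega)$.
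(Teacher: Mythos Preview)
Your proof is correct and follows essentially the same approach as the paper's own argument: expand $\mathbb{P}_\varrho^{ij}M_{\phi_k}f$ via formula \eqref{ij}, invoke $G$-invariance of $\phi_k$ to replace $\phi_k(\sigma^{-1}\cdot\bl z)$ by $\phi_k(\bl z)$, and pull the factor outside the sum. The only difference is cosmetic presentation; your added remark about Lemma \ref{iso} guaranteeing well-definedness of $\mathbb{P}_\varrho^{ij}$ is a reasonable clarification but not something the paper spells out at this point.
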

\begin{proof}
For $f\in\m H,$ from Equation \eqref{ij}, we have

\bea\label{reducing}
\nonumber \big((\mathbb P_{\varrho}^{ij} M_{\phi_k})f\big)(\bl z)  &=& \frac{\deg\varrho}{\lvert G \rvert}\sum_{\sigma \in G}\bl\pi_{\varrho}^{ji}(\sigma^{-1}) \phi_{k}(\sigma^{-1} \cdot \bl z ) f(\sigma^{-1} \cdot \bl z )\\
&=& \frac{\deg\varrho}{\lvert G \rvert}\sum_{\sigma \in G}\bl\pi_{\varrho}^{ji}(\sigma^{-1}) \phi_{k}(\bl z ) f(\sigma^{-1} \cdot \bl z )\\
\nonumber &=& \phi_k(\bl z ) \frac{\deg\varrho}{\lvert G \rvert}\sum_{\sigma \in G}\bl\pi_{\varrho}^{ji}(\sigma^{-1})f(\sigma^{-1} \cdot \bl z )\\
\nonumber &=& \big((M_{\phi_k} \mathbb P_{\varrho}^{ij})f\big)(\bl z).
\eea
We use $G$-invariance of $\bl \phi$ in \eqref{reducing}. This completes the proof.
\end{proof}
By Equation \eqref{rhoiisum}, $\mathbb{P}_{\varrho}=\sum_{i=1}^{\deg\varrho} \mathbb P_{\varrho}^{ii}.$ Therefore, Proposition \ref{rhojredsub} gives that $M_{\phi_k}\mathbb P_{\varrho}=\mathbb P_{\varrho}M_{\phi_k}$ for each $\varrho$. In short, the orthogonal projections $\mathbb P_\varrho$ and $\mathbb P_{\varrho}^{ii}$ commute with each $M_{\phi_k}$ for $k=1,\ldots,n.$ Hence we have the following corollary.
\begin{cor}\label{reduce1}
For each $\varrho\in\w G$ and $ 1\leq i\leq \deg\varrho,$  $\mb P_\varrho^{ii}\m H$ and $\mb P_\varrho\m H$ are joint reducing subspaces  of $\mathbf M_{\bl \phi} $ on $\mathcal H.$  \end{cor}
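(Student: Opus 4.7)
The plan is to combine Proposition \ref{rhojredsub} with the projection-theoretic characterization of joint reducing subspaces recalled in the introduction. Recall that a closed subspace $\mathcal M \subseteq \mathcal H$ is a joint reducing subspace for $\mathbf M_{\bl\phi} = (M_{\phi_1},\ldots,M_{\phi_n})$ if and only if the orthogonal projection $P$ onto $\mathcal M$ commutes with $M_{\phi_k}$ for every $k = 1,\ldots,n$. Thus the corollary reduces to checking (a) that $\mathbb P_\varrho^{ii}$ and $\mathbb P_\varrho$ are orthogonal projections, and (b) that each of them commutes with $M_{\phi_k}$ for all $k$.

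For (a), I would simply invoke part 2 of Corollary \ref{rhoortho}, which asserts that $\mathbb P_\varrho$ and $\mathbb P_\varrho^{ii}$ are orthogonal projections on $\mathcal H$ for every $\varrho \in \widehat G$ and $1\leq i\leq \deg\varrho$. For (b), the statement for $\mathbb P_\varrho^{ii}$ is the special case $i=j$ of the identity $M_{\phi_k}\mathbb P_\varrho^{ij}=\mathbb P_\varrho^{ij}M_{\phi_k}$ proved in Proposition \ref{rhojredsub}. To obtain commutativity for $\mathbb P_\varrho$ itself, I would use Equation \eqref{rhoiisum}, namely $\mathbb P_\varrho = \sum_{i=1}^{\deg\varrho}\mathbb P_\varrho^{ii}$, and linearly extend the commutation relation, writing
\[
M_{\phi_k}\mathbb P_\varrho \;=\; \sum_{i=1}^{\deg\varrho} M_{\phi_k}\mathbb P_\varrho^{ii} \;=\; \sum_{i=1}^{\deg\varrho}\mathbb P_\varrho^{ii}M_{\phi_k} \;=\; \mathbb P_\varrho M_{\phi_k}.
\]
Putting (a) and (b) together, both $\mathbb P_\varrho^{ii}\mathcal H$ and $\mathbb P_\varrho\mathcal H$ are joint reducing subspaces of $\mathbf M_{\bl\phi}$.

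Since Proposition \ref{rhojredsub} already carries the main analytic content (it uses both the $G$-invariance of $\bl\phi$ and the definition \eqref{ij} of $\mathbb P_\varrho^{ij}$), there is essentially no remaining obstacle; the corollary is a clean bookkeeping consequence. The only point that might merit a one-line remark is that the paragraph preceding the corollary already records $M_{\phi_k}\mathbb P_\varrho = \mathbb P_\varrho M_{\phi_k}$, so the proof can be stated as a single sentence citing Proposition \ref{rhojredsub} and Corollary \ref{rhoortho}.
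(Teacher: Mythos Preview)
Your proof is correct and follows essentially the same approach as the paper: the paragraph preceding the corollary already records that $\mathbb P_\varrho = \sum_{i=1}^{\deg\varrho}\mathbb P_\varrho^{ii}$ via Equation~\eqref{rhoiisum} and deduces $M_{\phi_k}\mathbb P_\varrho = \mathbb P_\varrho M_{\phi_k}$ from Proposition~\ref{rhojredsub}, then invokes the fact that $\mathbb P_\varrho$ and $\mathbb P_\varrho^{ii}$ are orthogonal projections (Corollary~\ref{rhoortho}) to conclude. Your write-up simply makes the appeal to the projection characterization of joint reducing subspaces explicit, which is exactly what the paper intends.
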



Now let $G$ be a finite group  generated by pseudoreflections on $\C^n.$ Recall an associated polynomial map ${\bl\theta}: \C^n \rightarrow \C^n$ given by
$
{\bl\theta}(\bl z) = \big(\theta_1(\bl z),\ldots,\theta_n(\bl z)\big),\bl z\in\C^n.
$

\begin{prop}\label{properholo}
Let $\Omega$ be a $G$-invariant domain. Then 
\begin{enumerate}
    \item[(i)] $\bl \theta$ is precisely $G$-invariant,
    \item[(ii)] $\bl \theta(\Omega)$ is a domain, and
    \item[(iii)] $\bl \theta : \Omega \to \bl \theta(\Omega)$ is a proper map.
\end{enumerate}
\end{prop}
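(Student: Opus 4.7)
The plan is to verify the three claims in order, drawing heavily on Proposition \ref{bijective} (which describes the global fibers of $\bl\theta: \C^n \to \C^n$) and on Theorem \ref{B} (which provides the integrality of $\C[\bl z]$ over $\C[\bl\theta]$).

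For (i), the $G$-invariance $\bl\theta(\sigma \cdot \bl z) = \bl\theta(\bl z)$ is built into the definition of a homogeneous system of parameters. The content of ``precisely'' $G$-invariant, namely $\bl\theta^{-1}\bl\theta(\bl z) = G \cdot \bl z$ for every $\bl z \in \Omega$, is inherited directly from the global statement in Proposition \ref{bijective}: because $\Omega$ is $G$-invariant, the $G$-orbit of any $\bl z \in \Omega$ is contained in $\Omega$, so the fiber of $\bl\theta|_\Omega$ through $\bl z$ coincides with the fiber of the global map.

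For (ii), connectedness of $\bl\theta(\Omega)$ is immediate from continuity. For openness I would factor $\bl\theta|_\Omega = \bl\theta'\circ \mathfrak q$, where $\mathfrak q : \Omega \to \Omega/G$ is the quotient (open since $G$ is a finite group acting by homeomorphisms) and $\bl\theta' : \Omega/G \to \bl\theta(\Omega)$ is biholomorphic by the local adaptation of the proof of Theorem \ref{theorema} explained in Section \ref{g3.1}. A composition of open maps is open, so $\bl\theta(\Omega)$ is open, hence a domain.

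The substantive part is (iii). I would first establish that the global polynomial map $\bl\theta : \C^n \to \C^n$ is proper. The key input is Theorem \ref{B}: the finiteness of $\C[\bl z]$ as a $\C[\theta_1,\ldots,\theta_n]$-module implies that each coordinate $z_i$ is integral over $\C[\bl\theta]$, satisfying a monic relation
\[
z_i^{N_i} + a_{i,1}(\bl\theta(\bl z))\, z_i^{N_i-1} + \cdots + a_{i,N_i}(\bl\theta(\bl z)) = 0.
\]
If $\bl\theta(\bl z_k)$ ranges over a bounded set, then all coefficients $a_{i,j}(\bl\theta(\bl z_k))$ are bounded, and the standard estimate on roots of monic polynomials forces $|z_{k,i}|$ to be bounded. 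Hence preimages of compact sets are bounded, and being closed by continuity, are compact. Here the main obstacle is precisely this integrality-to-metric-control step; once it is done, passage to $\Omega$ is formal. Indeed, given a compact $K \subseteq \bl\theta(\Omega)$, the set $\bl\theta^{-1}(K) \subseteq \C^n$ is compact; and by part (i), every point of $\bl\theta^{-1}(K)$ lies in the $G$-orbit of some point of $\Omega$, hence in $\Omega$. Therefore $\bl\theta|_\Omega^{-1}(K) = \bl\theta^{-1}(K)$ is compact in $\Omega$, proving properness.
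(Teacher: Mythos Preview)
Your argument is essentially correct and, in fact, more informative than what the paper does: the paper gives no proof at all, simply citing \cite[Proposition 2.2]{R} for (i) and \cite[Proposition 1]{Try} for (ii) and (iii). Your route via Proposition \ref{bijective} for the fiber description, and via integrality (Theorem \ref{B}) for global properness followed by the $G$-saturation argument to descend to $\Omega$, is a clean self-contained alternative.

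One small circularity to repair in part (ii): you invoke Section \ref{g3.1} to conclude that $\bl\theta':\Omega/G\to\bl\theta(\Omega)$ is biholomorphic, but that subsection is prefaced in the paper by the very assertion that $\bl\theta(\Omega)$ is a domain (citing \cite{Try}), and the biholomorphism criterion used there needs the target to be a complex manifold. The fix is immediate: use instead the \emph{global} biholomorphism $\bl\theta':\C^n/G\to\C^n$ established in the proof of Theorem \ref{theorema}. Since $\Omega$ is $G$-invariant, $\mathfrak q^{-1}\big(\mathfrak q(\Omega)\big)=\Omega$ is open, so $\Omega/G$ is open in $\C^n/G$; the global homeomorphism $\bl\theta'$ then carries it to the open set $\bl\theta(\Omega)\subseteq\C^n$. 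With this adjustment your proof of (ii) is independent of the external reference and of Section \ref{g3.1}.

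Parts (i) and (iii) are fine as written. In (iii) the passage from global properness to properness on $\Omega$ is exactly right: the key point, which you state clearly, is that $\bl\theta^{-1}(K)\subseteq\Omega$ whenever $K\subseteq\bl\theta(\Omega)$, by $G$-invariance of $\Omega$ together with the fiber description.
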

Proof of (i) can  be found in \cite[Proposition 2.2, p.705]{R}. The remaining part of the Proposition follows from \cite[Proposition 1, p.556]{Try}.
Note that $\bl \theta : \Omega \to \bl \theta(\Omega)$ is a (branched) covering  with $G$ as the group of deck transformations. Suppose that $\mathcal H$ is an analytic Hilbert module on $\Omega$ over  $\C[\bl z]$  with $G$-invariant reproducing kernel. We consider the $n$-tuple $\mathbf M_{\bl \theta} := (M_{\theta_1},\ldots,M_{\theta_n})$ of multiplication operators  on $\mathcal H,$ where $M_{\theta_i}$ is bounded for $i=1,\ldots, n.$ 

 With the aid of the following two lemmas we show that $\mb P^{ii}_\varrho\m H\neq \{0\}$ for $\varrho\in\w G$ and $1\leq i\leq\deg\varrho.$
\begin{lem}\label{notzero}
Let $\m H$ be an analytic Hilbert module with a $G$-invariant reproducing kernel. Then, for every $\varrho\in\w G, $ $\mathbb P_{\varrho}$ is a non-trivial projection on $\m H.$ 
\end{lem}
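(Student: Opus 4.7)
The strategy is to reduce the non-triviality of $\mb P_\varrho$ on $\m H$ to that of an isotypic projection on the polynomial subring $\mb C[\bl z]$, and then to invoke a classical result from the invariant theory of pseudoreflection groups.

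Since $\m H$ is an analytic Hilbert module, $\mb C[\bl z]$ is dense in $\m H$. By Lemma \ref{iso}, the unitaries $R_\sigma$ are well-defined on $\m H$ and visibly preserve $\mb C[\bl z]$, so $\mb C[\bl z]$ is a $G$-submodule of $\m H$. The restriction of $\mb P_\varrho$ to $\mb C[\bl z]$ is precisely the standard Fourier-type projection (as in the formula \eqref{od}, applied inside a $G$-module on which the character decomposition is purely algebraic) onto the $\varrho$-isotypic component of $\mb C[\bl z]$. By continuity of $\mb P_\varrho$ and density of $\mb C[\bl z]$, it therefore suffices to exhibit a single non-zero polynomial $p$ lying in this isotypic component; such a $p$ automatically satisfies $\mb P_\varrho p = p \neq 0$, whence $\mb P_\varrho \neq 0$.

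For this last step I would invoke Chevalley's classical theorem: for a finite pseudoreflection group $G$, the polynomial ring decomposes as a graded $G$-module in the form $\mb C[\bl z] \cong \mb C[\bl z]^G \otimes_{\mb C} \m R$, where $\m R = \mb C[\bl z]/I$ is the coinvariant algebra (with $I$ the ideal generated by the homogeneous $G$-invariants of positive degree), and moreover $\m R$ is $G$-isomorphic to the left regular representation $\mb C[G]$. Since the regular representation contains every irreducible representation of $G$ with multiplicity equal to its degree, the $\varrho$-isotypic component of $\mb C[\bl z]$ is non-zero for every $\varrho \in \w G$. The main subtlety is simply to cite the correct form of the Chevalley decomposition, but it applies directly under our hypothesis that $G$ is a pseudoreflection group; as a sanity check for the trivial representation, the constant function $1$ lies in $\m H$ and evidently satisfies $\mb P_{\mathrm{triv}}(1) = 1 \neq 0$.
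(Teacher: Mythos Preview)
Your proof is correct and follows essentially the same route as the paper: both arguments reduce to showing that $\mb P_\varrho \mb C[\bl z] \neq \{0\}$ and then use $\mb C[\bl z] \subseteq \m H$. The only cosmetic difference is the invariant-theoretic citation: the paper invokes the fact that $\mb P_\varrho \mb C[\bl z]$ is a free $\mb C[\bl z]^G$-module of rank $(\deg\varrho)^2$, while you invoke the equivalent statement that the coinvariant algebra carries the regular representation; either gives the desired non-vanishing. (Your appeal to density and continuity is harmless but unnecessary, since a single non-zero polynomial in the isotypic component already lies in $\m H$.)
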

\begin{proof}
For a finite pseudoreflection group $G,$ we recall that $\mathbb{P}_{\varrho}\C[\bl z]$ is a free module over $\C[\bl z]^G$ of rank $(\deg\varrho)^2,$ see \cite[Theorem II.2.4., p.28]{Pan} and $\mathbb{P}_{\varrho}\C[\bl z]$ is contained in $\mathbb{P}_{\varrho} \m H$ for every $\varrho\in \w G.$ Hence $\mathbb P_{\varrho}$ is a non-trivial projection on $\m H.$ 
\end{proof}
We record the non-triviality of the projections $\mathbb{P}_\varrho^{ii}$ in the following corollary. The main ingredient of the proof is borrowed from \cite[p.166]{KS}.
\begin{cor}\label{projPii}
Let $\m H$ be an analytic Hilbert module with a $G$-invariant reproducing kernel. Then, for every $\varrho\in\w G $ and  $1\leq i\leq \deg\varrho,$ $\mathbb{P}_\varrho^{ii}$ is a non-trivial projection on $\m H.$
 \end{cor}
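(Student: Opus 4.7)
The plan is to deduce the corollary directly from the previously established results, with no new computation needed. First I would invoke Lemma \ref{notzero} to obtain that $\mathbb{P}_\varrho \neq 0$ for every $\varrho \in \widehat{G}$. Combining this with Equation \eqref{rhoiisum}, which gives the decomposition $\mathbb{P}_\varrho = \sum_{i=1}^{\deg\varrho} \mathbb{P}_\varrho^{ii}$ into mutually orthogonal projections (by Proposition \ref{proj1}), I conclude that at least one $\mathbb{P}_\varrho^{i_0 i_0}$ is non-zero.

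Next, I would propagate this non-vanishing to every index $i \in \{1, \ldots, \deg \varrho\}$. By Lemma \ref{uni}, the operator $\mathbb{P}_\varrho^{ii_0} : \mathbb{P}_\varrho^{i_0 i_0}\mathcal{H} \to \mathbb{P}_\varrho^{ii}\mathcal{H}$ is a unitary, so $\mathbb{P}_\varrho^{i_0 i_0}\mathcal{H} \neq \{0\}$ forces $\mathbb{P}_\varrho^{ii}\mathcal{H} \neq \{0\}$ for every $i$. Hence $\mathbb{P}_\varrho^{ii} \neq 0$ for every $\varrho$ and every $1 \leq i \leq \deg\varrho$.

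To complete the non-triviality (i.e., to rule out $\mathbb{P}_\varrho^{ii} = \mathrm{id}_\mathcal{H}$), I would use part (3) of Corollary \ref{rhoortho}, which gives $\sum_{\varrho' \in \widehat{G}} \sum_{j=1}^{\deg\varrho'} \mathbb{P}_{\varrho'}^{jj} = \mathrm{id}_\mathcal{H}$ as an orthogonal sum. Since $G$ is a non-trivial finite group (being generated by pseudoreflections, each of order $\geq 2$), $\widehat{G}$ contains at least two distinct classes; by Lemma \ref{notzero} and the previous step, there is at least one other non-zero orthogonal projection $\mathbb{P}_{\varrho'}^{jj}$ in the sum besides $\mathbb{P}_\varrho^{ii}$, forcing $\mathbb{P}_\varrho^{ii} \neq \mathrm{id}_\mathcal{H}$.

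There is no real obstacle here; the only subtlety is making sure one cites the correct lemmas in the correct order (one must prove $\mathbb{P}_\varrho \neq 0$ first via Lemma \ref{notzero}, then invoke the unitary equivalence of the ranges $\mathbb{P}_\varrho^{ii}\mathcal{H}$ via Lemma \ref{uni} to spread the non-vanishing across all $i$). The argument is essentially the classical observation from representation theory that all the ``diagonal'' isotypic projections in an irreducible block have ranges of equal dimension, which is precisely what Lemma \ref{uni} encodes in our setting.
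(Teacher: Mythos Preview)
Your argument is correct and follows essentially the same route as the paper: invoke Lemma \ref{notzero} for $\mathbb{P}_\varrho\neq 0$, use the decomposition \eqref{rhoiisum} to get some $\mathbb{P}_\varrho^{i_0i_0}\neq 0$, and then propagate via the unitaries of Lemma \ref{uni}. The only difference is that you also explicitly rule out $\mathbb{P}_\varrho^{ii}=\mathrm{id}_{\m H}$ using Corollary \ref{rhoortho}(3) and the existence of at least two classes in $\widehat G$; the paper leaves this implicit (its use of ``non-trivial'' here effectively means non-zero, the exclusion of the identity being immediate from the orthogonal decomposition once two distinct $\varrho$'s are known to contribute).
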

\begin{proof}
Since each $\mathbb{P}_\varrho$ is non-trivial by Lemma \ref{notzero}, it follows from Equation \eqref{rhoiisum} and Lemma \ref{uni} that each $\mathbb{P}_\varrho^{ii}$ is non-trivial.
\end{proof}


Note that the polynomial map $\bl \theta$ associated to a pseudoreflection group $G$ is a $G$-invariant proper holomorphic map. Hence Corollary \ref{reduce1} along with Corollary \ref{projPii}  describes a family of non-trivial joint reducing subspaces of $\mathbf M_{\bl \theta}$ on $\m H$ in the following theorem. 

\begin{thm}\label{fd1}
Let $\m H$ be an analytic Hilbert module with a $G$-invariant kernel. Then each $\mb P_\varrho^{ii}\m H$ is a reducing submodule of the Hilbert module $\m H$ over the ring $\C[\bl z]^G$ of $G$-invariant polynomials (that is, joint reducing subspaces for the operator tuple $\mathbf M_{\bl\theta}=(M_{\theta_1},\ldots, M_{\theta_n})$, for every $\varrho\in\w G$ and for each $i$ with $ 1\leq i\leq  {\rm deg}\,\varrho.$  Consequently, the number of  reducing submodules over $\C[\bl z]^G$ is at least $\sum_{\varrho\in\w G} {\rm deg}\, \varrho.$
\end{thm}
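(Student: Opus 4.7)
The plan is to assemble this theorem from the pieces already built up in the excerpt; all the conceptual work has been done, and what remains is to verify that the polynomial map $\bl\theta$ fits into the framework of the earlier corollaries and then to count.

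First, I would invoke Proposition \ref{properholo}: since $G$ is a finite pseudoreflection group and $\Omega$ is $G$-invariant, the map $\bl\theta:\Omega\to\bl\theta(\Omega)$ is a $G$-invariant proper holomorphic map whose group of deck transformations is precisely $G\subseteq \mathrm{Aut}(\Omega)$. In particular, $\bl\theta$ is a proper holomorphic map factored by automorphisms in the sense required by Corollary \ref{reduce1}, so the hypotheses that were imposed on the abstract map $\bl\phi$ are satisfied with $\bl\phi=\bl\theta$. Applying Corollary \ref{reduce1} with this choice then gives, for every $\varrho\in\widehat G$ and every $i$ with $1\leq i\leq \deg\varrho$, that $\mb P_\varrho^{ii}\m H$ is a joint reducing subspace of the tuple $\mathbf M_{\bl\theta}=(M_{\theta_1},\dots,M_{\theta_n})$ acting on $\m H$. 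Since the action of $\mb C[\bl z]^G=\mb C[\theta_1,\dots,\theta_n]$ on $\m H$ is generated (as an algebra action) by $M_{\theta_1},\dots,M_{\theta_n}$, this is exactly the statement that $\mb P_\varrho^{ii}\m H$ is a reducing submodule over $\mb C[\bl z]^G$.

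Next I would establish non-triviality. This is immediate from Corollary \ref{projPii}, which asserts that each $\mb P_\varrho^{ii}$ is a non-trivial projection on $\m H$; hence $\mb P_\varrho^{ii}\m H\neq\{0\}$ and, being a proper direct summand in the orthogonal decomposition \eqref{decomp}, $\mb P_\varrho^{ii}\m H\neq\m H$ as soon as $(\varrho,i)$ does not exhaust the decomposition.

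For the counting statement, I would argue as follows. By Proposition \ref{proj1} (with $\varrho=\varrho'$, $j=\ell$, $i=m$ and $i\neq j$, or simply from the orthogonality of distinct summands in \eqref{decomp}), the projections $\{\mb P_\varrho^{ii}:\varrho\in\widehat G,\,1\leq i\leq\deg\varrho\}$ are mutually orthogonal, so the corresponding submodules are pairwise orthogonal. In particular they are pairwise distinct non-trivial reducing submodules over $\mb C[\bl z]^G$, and there are exactly $\sum_{\varrho\in\widehat G}\deg\varrho$ of them. This gives the asserted lower bound.

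The proof is essentially bookkeeping once Proposition \ref{properholo}, Corollary \ref{reduce1}, and Corollary \ref{projPii} are in place; the only mild subtlety is making explicit that a reducing submodule over $\mb C[\bl z]^G$ is the same thing as a joint reducing subspace for $\mathbf M_{\bl\theta}$, which follows because $\mb C[\bl z]^G$ is generated as a $\mb C$-algebra by $\theta_1,\dots,\theta_n$ (Chevalley--Shephard--Todd, Theorem \ref{A}). No step is a genuine obstacle.
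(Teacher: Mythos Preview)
Your proposal is correct and follows essentially the same route as the paper: the theorem is stated immediately after Corollary \ref{reduce1} and Corollary \ref{projPii}, with the paper noting only that $\bl\theta$ is a $G$-invariant proper holomorphic map so that these corollaries apply. You have simply spelled out the bookkeeping (invoking Proposition \ref{properholo}, the orthogonality from \eqref{decomp}, and Theorem \ref{A}) a bit more explicitly than the paper does.
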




 \subsection{Examples of analytic Hilbert modules with $G$-invariant kernels}
We give several examples where the results established in the Section \ref{decanahil} and Section \ref{redsub} apply. In particular, the Theorem \ref{fd1} gives an account of joint reducing subspaces of $\mathbf M_{\bl \theta}$ on $\mathcal H$, where $\bl\theta$ and $\m H$ vary over the examples below.
\begin{ex}\label{example}\rm
Let $\mathcal U_d$ denote the group of unitary operators on $\C^d.$ Recall that any finite group $G$ generated by pseudoreflections on $\C^d$ is a subgroup of $\mathcal U_d.$ The Bergman space $\mb A^2(\Omega)$ of a domain $\Omega\subseteq \mb C^d$ consists of all analytic functions on $\Omega$ which are square integrable with respect to the Lebesgue measure. It is a Hilbert space with a reproducing kernel $B_\Omega,$ called the Bergman kernel. The transformation rule for the Bergman kernel under biholomorphic mappings \cite[p. 419]{JP} shows that the Bergman kernel $B_\Omega$ of an $\m U_d$-invariant domain $\Omega$ in $\mb C^d$ is $\m U_d$-invariant. In particular, the Bergman kernel  of a $G$-invariant domain in $\mb C^d$ is $G$-invariant.

  
\vspace{0.1in}
\begin{enumerate}[leftmargin=*]
   \item[(i)]For $n\geq m\geq 1,$ let $\mb C^{m \times n}$  denote the linear space of $m \times n$ complex matrices. Let $G \subseteq \mathcal U_m$ be a finite pseudoreflection group and $G$ act on $\mb C^{m \times n}$ by
$(g , \bl z)\mapsto g^{-1} \bl z .$ If $\mb B_{mn}$ denotes the open unit ball in $\mb C^{m\times n}$ with respect to the operator norm on $\mb C^{m \times n},$ then $\mb B_{mn}$ is invariant under $G$. By the discussion above the Bergman space  $\mb A^2(\mb B_{mn})$ is an analytic Hilbert module over the polynomial ring with $mn$-variables  with  $G$-invariant Bergman kernel $B_{\mb B_{mn}}$ with the suitable choice of $G$. In fact, one can directly verify the $G$-invariance of $B_{\mb B_{mn}}$  from the formula
\bea \label{bk}
B_{\mb B_{mn}}(\bl z, \bl w)=\big(\det(I_m -\bl z \bl w^{*})\big)^{-(m+n)}, 
\eea
where  $I_m$ is  the identity matrix of order $m$ and $X^*$ denotes the conjugate transpose of the matrix $X,$ \cite[p.172]{BM}.

We recall that the Berezin-Wallach set $\mathscr W$ for $\mathbb B_{mn}$ \cite[p.172]{BM}:
\Bea
\mathscr W:= \{\l : B^{(\l)}_{\mb B_{mn}} \textnormal{ is positive definite kernel on } \mb B_{mn} \},
\Eea
 where $B^{(\l)}_{\mb B_{mn}}(\bl z, \bl w) :=\big (B_{\mb B_{mn}}(\bl z,\bl w)\big)^{\frac{\l}{m+n}} $ for all $\bl z, \bl w \in \mb B_{mn}.$ 
  Each $B^{(\l)}_{\mb B_{mn}}$ induces a unique Hilbert space $\mathscr{H}^{(\l)}$ consisting of analytic functions on $\mb B_{mn}.$ 
  
For $\l\in\mathscr W,$ the $mn$-tuple $\mathbf M^{(\l)}=\big(M_{ij}^{(\l)}\big)$ of multiplication operators on $\mathscr H^{(\l)}$ defined by
\Bea 
\hspace{0.45 in}(M_{ij}^{(\l)}f)(\bl z)=z_{ij}f(\bl z), \hspace{.2 cm}\bl z=(z_{ij})\in\mb B_{mn}, ~ f\in\mathscr H^{(\l)},~ 1\leq i\leq m, ~ 1\leq j\leq n,
\Eea
is bounded if and only if $\l$ is in the continuous part of $\mathscr W$, that is, $\l>m-1$ \cite[Theorem 1.1]{BM}. In fact, it follows from \cite[Theorem 1.1]{BM} that $\mathscr H^{(\l)}$ is an analytic Hilbert module over the polynomial ring with $mn$ variables with $G$-invariant reproducing kernel $B_{\mb B_{mn}}^{(\l)}$ for $\l>m-1$. 

The open unit ball $\mb B_n$  with respect to the $\ell^2$-norm on $\mb C^n$ is a particular case of this family of examples, namely, $\mb B_n=\mb B_{1n}$ and the continuous part of $\mathscr W$ in this case is $\l>0$.  
\begin{rem}
One can take the following group actions as well:
\begin{enumerate}
    \item Let $G \subseteq \mathcal U_n$ be a finite pseudoreflection group and $G$ act on $\mb C^{m \times n}$ by by $(g , \bl z)\mapsto \bl{z} g .$
    
    \item 
 Let $G_1\subseteq\m U_m$ and  $G_2\subseteq\m U_n$ be finite pseudoreflection groups and let the finite pseudoreflection group $G:=G_1\times G_2\subseteq \m U_m\times \m U_n$ act on $\mb C^{m\times n}$ by 
$$(g_1, g_2)\cdot \bl z=g_1^{-1}\bl zg_2.\text{~ for~} (g_1, g_2)\in G_1\times G_2 \text{ ~and ~} \bl z\in\mb C^{m\times n}.$$
\end{enumerate}
Analogous results as in the Section \ref{redsub}
will hold if we modify the group action there with the above. 
\end{rem}

 \vspace{.1 cm}
    \item [(ii)]Let $\mathfrak S_n$ denote the permutation group on $n$ symbols. The action of $\mathfrak S_n$ on $\mb C^n$ is given by
   \Bea
   \sigma\cdot (z_1,\cdots,z_n):=(z_{\sigma^{-1}(1)},\cdots, z_{\sigma^{-1}(n)}) \text{~for~} \sigma\in\mathfrak S_n, (z_1,\cdots, z_n)\in\mb C^n.
   \Eea
   \begin{enumerate}
       \item [(a)] Let $\mathbb D^n$ be the unit polydisc in $\C^n$. The Hilbert space $\mb A^{(\l)}(\mb D^n),\l>0$  with reproducing kernel 
       \Bea
       \prod_{j=1}^n(1-z_j\ov w_j)^{-\l}, ~~\bl z,\bl w\in\mb D^n
      \Eea
       is an analytic Hilbert module over $\mb C[\bl z]$ with $\mathfrak S_n$-invariant kernel. 
       \vspace{.1 cm}
       \item[(b)]Let $\a\in\mb R$ be fixed. A holomorphic function $f:\mb D^n\to \mb C$ belongs to the {\it Dirichlet-type space} $\mathfrak D_\a$ if the coefficients in its Taylor series expansion
       \Bea
       f(z_1,\ldots,z_n)=\sum_{k_1=0}^\infty\ldots\sum_{k_n=0}^\infty a_{k_1}\ldots a_{k_n}z^{k_1}_1\ldots z^{k_n}_n
       \Eea
       satisfies
       \bea\label{norm}
       \Vert f\Vert_\a^2=\sum_{k_1=0}^\infty\ldots\sum_{k_n=0}^\infty(k_1+1)^\a\ldots (k_n+1)^\a\vert a_{k_1}\ldots a_{k_n}\vert^2 <\infty.
       \eea
       The space $\mathfrak D_\a$ is a reproducing kernel Hilbert space for $\a\in \mb R$ (see \cite{BKKLSS, LB, K}). In fact, this is an analytic Hilbert module over $\mb C[\bl z]$ \cite[p.198]{LB}. The formula for norm $\Vert\cdot\Vert_\a$  on $\mathfrak D_\a$ in \eqref{norm}  shows that $R_\sigma$ defined in Lemma \ref{iso} is unitary on $\mathfrak D_\a$ for $\sigma\in\mathfrak S_n$ and hence by Lemma \ref{iso}, the reproducing kernel of $\mathfrak D_\a$ is $\mathfrak S_n$-invariant. 
       \vspace{.1 cm} 
       \item[(c)] The minimal ball $\mb B_*$ is a domain in $\mb C^n, n\geq 2,$ defined by
    \Bea
    \mb B_*:=\{\bl z\in\mb C^n:\frac{\Vert \bl z\Vert^2+\vert z\bullet z\vert}{2}<1\},
    \Eea
    where $\Vert \bl z\Vert^2=\sum_{j=1}^n\vert z_j\vert^2$ and $ z\bullet z=\sum_{j=1}^nz_j^2,$ see \cite{OPY, OY}. From $\mathfrak S_n$-invariance of $\mb B_*$  and  the discussion above, the Bergman kernel $B_{\mb B_*}$ of $\mb B_*$ is  $\mathfrak S_n$-invariant. Clearly, the Bergman space $\mb A^2(\mb B_*)$ forms an analytic Hilbert module over $\mb C[\bl z]$ with $\mathfrak S_n$-invariant kernel.
   \end{enumerate} 
    \item[(iii)]
  For $k\geq 1,$ we recall the action of the dihedral group $D_{2k}$ of order $2k$ on $\mb C^2$ from Example \ref{reflex}. Clearly, $\mb D^2$ is invariant under the action of the dihedral group $D_{2k}$. The  weighted Bergman space $\mb A^{(\l)}(\mb D^2)$ for $\l>0$, the Dirichlet-type space $\mathfrak D_\a$ on $\mb D^2$ for $\a\in\R$, the Bergman space $\mb A^2(\mb B_*)$ on minimal ball $\mathbb B_*$ in $\C^2$ are analytic Hilbert modules over $\mb C[z_1,z_2]$ with $D_{2k}$-invariant kernel. 
\end{enumerate}
\end{ex}

\begin{ex}\rm
Suppose that $K:\mb B_n\times\mb B_n\to\C$ is given by 
 \bea \label{Diri} K(\bl z,\bl w)=\sum_{k=0}^\infty a_k\langle \bl z,\bl w\rangle^k \text{~with~} a_k\geq 0 \text{~for~} k\geq 0.
 \eea
 Then $K$ is clearly a positive definite kernel on $\mb B_n$ and  $K$ is $G$-invariant for $G\subseteq\m U_n.$
\vspace{0.1in}
\begin{enumerate}[leftmargin=*]
\item[(i)] In addition to the positive powers of the Bergman kernel $B_{\mb B_n}$ (mentioned  in the earlier example too), the family \eqref{Diri}  includes the reproducing kernel of the Dirichlet space $\mathscr D_n$ on $\mb B_n$ given  by \cite[p.218]{GHX}
 \Bea
 -\frac{\ln (1-\langle \bl z,\bl w\rangle)}{\langle \bl z,\bl w\rangle}=\sum_{m=0}^\infty\frac{\langle \bl z,\bl w\rangle^m}{m+1}, ~~\bl z,\bl w\in\mb B_n.
 \Eea
 The Dirichlet space is an analytic Hilbert module over $\mb C[\bl z]$ with a $G$-invariant kernel.

 


  

\vspace{.1 cm}
\item[(ii)] The holomorphic Sobolev space $H^s(\mathbb B_n)$ is a reproducing kernel Hilbert space consisting of holomorphic functions on $\mathbb B_n$ with square-integrable derivatives of order $\leq s$ and equipped with the following inner product 

\bea\label{inner}
\inner{f}{g}  = \sum_{|\bl\alpha|\leq s} \int_{\mb B_n} \frac{\del^{\bl\alpha}f}{\del{z}^{\bl\alpha}}  \frac{\del^{\bl\alpha}\overline g}{\del{\overline z}^{\bl\alpha}} dm,  
\eea
where $dm$ is the Lebesgue measure on $\mb B_n$ and $\bl\a\in \mathbb Z^n_+$ \cite[p.275]{B}.
Since $\vert\det U\vert=1$ for $U\in\m U_n\subseteq {\rm Aut}(\mb B_n),$ the Lebesgue measure on $\mb B_n$ is $\m U_n$-invariant and hence $R_\sigma$ defined in Lemma \ref{iso} is unitary on $H^s(\mb B_n)$ for $\sigma\in G\subseteq\m U_n.$ Therefore, the reproducing kernel of $H^s(\mb B_n)$ is  $\m U_n$-invariant by Lemma \ref{iso} and hence, of the form \eqref{Diri} \cite[p.228]{GHX}. The holomorphic Sobolev space $H^s(\mb B_n)$ is an analytic Hilbert module over $\C[\bl z]$ with $G$-invariant kernel.
\end{enumerate}
\end{ex}

\section[Realization as analytic Hilbert modules] {Realization of reducing submodules as analytic Hilbert modules}




Throughout this section, we assume that $\mathcal H \subseteq\mathcal O(\Omega)$ is an analytic Hilbert module over $\C[\bl z]$ with a $G$-invariant reproducing kernel, where $G$ is a finite pseudoreflection group. Let $\{\theta_i\}_{i=1}^n$ be a hsop associated to $G$ and $\bl \theta : \Omega \to \bl \theta(\Omega)$ be the map given by ${\bl\theta} = (\theta_1,\ldots,\theta_n)$. We  show that $\mathcal H$ considered as a Hilbert module over $\C[\bl z]^G$ can be realized, up to an isomorphism, as an analytic Hilbert module $\wi{\m H}\subseteq\mathcal O(\bl \theta(\Omega))\otimes \C^{|G|}$ over $\mb C[\bl z]$. In particular, this means that the multiplication operator $\mathbf{M}_{\bl \theta}$ on $\m H$ can be realized as the multiplication by the coordinate functions on $\tilde{\m H}$. Indeed, this is a generalization of the Chevalley-Shephard-Todd Theorem in the context of  analytic Hilbert modules.

Let $p_1,\ldots,p_d$ be a basis of $\C[\bl z]$ as a free module over $\C[\bl z]^G$, where $d=\lvert G\rvert. $ Consequently, by the analytic version of CST, it follows that each $f\in\m H$ can be written as 
\Bea
f=\sum_{k=1}^dp_kf_k,
\Eea
where each $f_k$ is  a  $G$-invariant holomorphic function on $\Omega.$  It follows from subsection \ref{g3.1} that there exists $\tilde f_k$ holomorphic on $\bl \theta(\Omega)$  
such that  $f_k=\tilde f_k\circ \bl \theta$ for $k=1,\ldots,d.$ On that account, we define the map $\Gamma:\m H\to \m O(\bl \theta\big(\Omega)\big)\otimes\mb C^d$ by 
\bea\label{gamma}
\Gamma f=\sum_{k=1}^d\tilde f_k\otimes \varepsilon_k,
\eea
where $\{\varepsilon_1,\ldots, \varepsilon_d\}$ is the standard basis of $\mb C^d.$ Clearly,  $\Gamma$ is injective by construction. Let $\wi{ \m H}:=\Gamma  \m H.$ Since $\Gamma$ is linear by construction, $\wi{\m H}$ can be made into a Hilbert space by borrowing the inner product from $\m H,$ that is,
\Bea
 \langle \Gamma f, \Gamma g\rangle_{\wi{\m H}}:=\langle f, g\rangle_{\m H} \,\, \text{for all} \,\, f, g\in\m H.
 \Eea
This makes the map $\Gamma : \m H \to \wi{\m H},$ a unitary. The next lemma  is one of the key results of this section. 

\begin{lem}\label{cons}
$\wi{\m H}$ is a reproducing kernel Hilbert space consisting of $\mb C^{\lvert G\rvert}$-valued holomorphic functions on $\bl\theta(\Omega).$
\end{lem}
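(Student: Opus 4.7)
My strategy is to decompose the lemma into three steps: (i) verify that each element of $\wi{\m H}$ is a $\C^{|G|}$-valued holomorphic function on $\bl\theta(\Omega)$; (ii) show that point evaluation at every $\bl u \in \bl\theta(\Omega)$ is a bounded linear functional $\wi{\m H} \to \C^{|G|}$; (iii) invoke the Riesz representation theorem component-wise to produce the $\m L(\C^{|G|})$-valued reproducing kernel.

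Step (i) is essentially built into the construction of $\Gamma$. For $f \in \m H$, Theorem \ref{theoremc} yields the unique decomposition $f = \sum_{k=1}^{d} p_k f_k$ with each $f_k \in \m O(\Omega)^G$, and the generalization of Theorem \ref{theorema} recorded in Subsection \ref{g3.1} writes $f_k = \tilde f_k \circ \bl\theta$ for unique $\tilde f_k \in \m O(\bl\theta(\Omega))$; hence $\Gamma f = \sum_k \tilde f_k \otimes \varepsilon_k$ is genuinely $\C^{d}$-valued holomorphic on $\bl\theta(\Omega)$.

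For step (ii), fix $\bl u \in \bl\theta(\Omega)$, choose a preimage $\bl z \in \bl\theta^{-1}(\bl u)$, and note that $(\Gamma f)(\bl u) = \big(f_1(\bl z),\ldots, f_d(\bl z)\big)^{\tr}$. Thus it suffices to show that $f \mapsto f_k(\bl z)$ is bounded on $\m H$ for each $k$. I would split on the critical locus $\m Z := \{\det\Lambda = 0\} \subset \Omega$, a proper analytic subset by Lemma \ref{nonzero}. When $\bl z \notin \m Z$, the Cramer-type inversion extracted from the proof of Theorem \ref{theoremb} gives the closed formula $f_k(\bl z) = \sum_{j=1}^{d} B_{kj}(\bl z)\, f(\rho_j^{-1}\cdot \bl z)$ with $B(\bl z) = \Lambda(\bl z)^{-1}$; since $f(\rho_j^{-1}\cdot\bl z) = \langle f, K(\cdot, \rho_j^{-1}\cdot\bl z)\rangle_{\m H}$, we obtain $f_k(\bl z) = \langle f, \psi_{\bl z, k}\rangle_{\m H}$ with $\psi_{\bl z,k} := \sum_j \overline{B_{kj}(\bl z)}\, K(\cdot, \rho_j^{-1}\cdot\bl z) \in \m H$, establishing boundedness at $\bl z$. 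When $\bl z \in \m Z$ the inversion degenerates, so I would instead pick a closed polydisc $\ov P \subset \Omega$ centred at $\bl z$ whose distinguished boundary $T$ avoids $\m Z$ (possible by a generic choice of polyradii, since $\m Z$ is of complex codimension one), and apply Cauchy's integral formula to the globally holomorphic function $f_k$ (holomorphicity of $f_k$ on all of $\Omega$ being guaranteed by Theorem \ref{theoremc}) to write $f_k(\bl z)$ as an integral over $T$ of the values $f_k(\bl\zeta)$ against a Cauchy kernel. Substituting the first case inside the integral exhibits $f_k(\bl z) = \langle f, \psi_{\bl z,k}\rangle_{\m H}$ for a vector $\psi_{\bl z,k} \in \m H$ obtained by integrating the continuous family $\bl\zeta \mapsto \psi_{\bl\zeta,k}$ against the conjugate Cauchy kernel over $T$.

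The main obstacle is the branch-locus case in step (ii): the direct algebraic inversion of $\Lambda$ fails on $\m Z$, and this is precisely where the Cauchy-integral detour, enabled by the a priori holomorphicity of $f_k$ on all of $\Omega$ provided by the analytic CST, becomes essential. Once step (ii) is in hand, step (iii) is formal: defining $\wi{K}(\cdot, \bl u)\varepsilon_k := \Gamma(\psi_{\bl z,k}) \in \wi{\m H}$ yields the reproducing identity $\langle \tilde F(\bl u), \varepsilon_k\rangle = \langle \tilde F, \wi{K}(\cdot, \bl u)\varepsilon_k\rangle_{\wi{\m H}}$ for every $\tilde F \in \wi{\m H}$, confirming that $\wi{\m H}$ is a reproducing kernel Hilbert space of $\C^{|G|}$-valued holomorphic functions on $\bl\theta(\Omega)$.
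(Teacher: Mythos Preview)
Your approach is genuinely different from the paper's and is in spirit correct, but one step is not justified as written.

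\medskip
\textbf{The gap.} In your branch-locus case you assert that a distinguished boundary $T$ avoiding $\m Z$ exists ``by a generic choice of polyradii, since $\m Z$ is of complex codimension one.'' Codimension alone does not give this. For a hyperplane $\{\sum a_j(w_j-z_j)=0\}$ through $\bl z$, the torus $\{|w_j-z_j|=r_j\}$ meets it precisely when the polygon inequality $|a_i|r_i\le\sum_{j\neq i}|a_j|r_j$ holds for every $i$; for $n\ge 3$ this condition cuts out a region of $\bl r$-space with \emph{nonempty interior}, so the bad polyradii are not negligible. What rescues the step is Proposition~\ref{determinantformula}: $\m Z$ is a finite union of linear hyperplanes, so by choosing $r_1\gg r_2\gg\cdots\gg r_n$ (with ratios large relative to the finitely many coefficient vectors) one forces the polygon inequality to fail for every hyperplane through $\bl z$, and then shrinking uniformly keeps $\ov P\subset\Omega$ and away from the remaining hyperplanes. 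Alternatively, you can bypass the Cauchy integral entirely: for $\bl z_n\to\bl z$ with $\bl z_n\notin\m Z$, each $f\mapsto f_k(\bl z_n)$ is bounded by your explicit formula, and $f_k(\bl z_n)\to f_k(\bl z)$ since $f_k\in\m O(\Omega)$; the uniform boundedness principle then gives $|f_k(\bl z)|\le C\|f\|$.

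\medskip
\textbf{Comparison.} The paper proves the lemma by constructing the kernel $\mb K$ explicitly: it applies analytic CST to $K(\bl z,\bl w)$ successively in each variable, invoking a Kolmogorov decomposition $K=F\,F^*$ to control regularity in the second variable, and then verifies the reproducing identity via the Cramer determinants $\Lambda_j$. Your route is more conceptual---bounded point evaluations plus Riesz---and sidesteps the two-variable bookkeeping. The trade-off is that the paper's explicit $\mb K$ is used downstream (the block decomposition \eqref{reprobreak}, the formula for ${\mathfrak K}_\varrho$, and the one-dimensional case \eqref{onegen}); your argument delivers existence of the kernel but not these formulae, so it proves the lemma as stated while leaving extra work for the subsequent corollaries.
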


\begin{proof}
Our proof is constructive. We break it into two parts. First we construct a function $\mb K$ on $\bl \theta(\Omega) \times \bl \theta(\Omega)$ from the reproducing kernel $K$ of $\mathcal H.$ Subsequently, we prove that $\mb K$ is the reproducing kernel for $\wi{\mathcal H}.$

\subsection*{Construction of $\mb K$} Since $K_{\bl w}\in\m H$ for every $\bl w\in\Omega,$ by analytic CST, there exist unique $K^{(1)}_i(\cdot, \bl w), 1\leq i\leq d$ such that $K_{i,\bl w}^{(1)}:=K^{(1)}_i(\cdot, \bl w)$ is holomorphic, satisfying 
\Bea
K(\bl z, \bl w)=\sum_{i=1}^d p_i(\bl z)K^{(1)}_i(\bl z,\bl w).
\Eea
By analogous argument as in Lemma \ref{gen1}, we have
\Bea
K^{(1)}_i(\bl z, \bl w)=\frac{\det\big( \Lambda^{(1)}_i(K)(\bl z,\bl w)\big)}{\det \Lambda(\bl z)},
\Eea
 where $\Lambda(\bl z)=\big(\!\!\big(\big(\rho_i\big(p_j(\bl z)\big)\big)\!\!\big)_{i,j=1}^d$ and  $\Lambda^{(1)}_i(K)(\bl z,\bl w)$ is the matrix $\Lambda(\bl z)$ with its $i$-th column replaced by the column $\Big(\big(\!\!\big(\rho_i(K_{\bl w}(\bl z))\big)\!\!\big)_{i=1}^d\Big)^{\rm tr}.$

By Kolmogorov decomposition of a reproducing kernel, it is known that there exists a function $F:\Omega\ra \mathcal L(\mathcal H, \C)$ such that $K(\bl z, \bl w) = F(\bl z) F(\bl w)^*$ for $\bl z, \bl w$ in $\Omega$ \cite[Theorem 2.62]{AMc}. For our purpose it is enough to take $F(\bl z)=\textnormal{ev}_{\bl z}$, where $\textnormal{ev}_{\bl z}$ denotes the evaluation map at $\bl z$. Thus, $F$ is a  holomorphic function from $\Omega$ into $ \mathcal L(\mathcal H, \C)$ such that $F(\bl z)h = h(\bl z)$ for $\bl z \in \Omega, h \in \mathcal H.$ 

Expanding $\det\big( \Lambda^{(1)}_i(K)(\bl z,\bl w)\big)$ with respect to the $i$-th column, separating $F(\bl w)^*$ and then summing up again, we obtain $\det\big( \Lambda^{(1)}_i(K)(\bl z,\bl w)\big)= \det \big( \Lambda^{(1)}_i F(\bl z) \big) F(\bl w)^*. $ Consequently, one has 
\Bea
K^{(1)}_i(\bl z, \bl w)=\frac{\det \big( \Lambda^{(1)}_i F(\bl z) \big)}{\det \Lambda(\bl z)} F(\bl w)^*,
\Eea
where $ \Lambda^{(1)}_iF(\bl z)$ is the matrix $\Lambda(\bl z)$ with its $i$-th column replaced by the column $\Big(\big(\!\!\big(\rho_i(F(\bl z))\big)\!\!\big)_{i=1}^d\Big)^{\rm tr}.$ 
Set $G_i(\bl z) = \frac{\det \big( \Lambda^{(1)}_i F(\bl z) \big)}{\det \Lambda({\bl z})}$ for $1\leq i\leq d$ and thus, 
\bea\label{breaking}
\overline{K^{(1)}_i(\bl z, \bl w)} = F(\bl w) G_i(\bl z)^*.
\eea  Since $F(\bl z)$ is in $\mathcal L(\mathcal H, \C),$ so is $G_i(\bl z)$ and hence $G_i(\bl z)^* \in \mathcal L(\C,\mathcal H)$ for every $\bl z \in \Omega.$
Thus, for a fixed $\bl z \in \Omega$ there exists $h \in \mathcal H$ such that $G_i(\bl z)^*(1)=h$. 
Hence $\overline{K^{(1)}_i(\bl z, \bl w)} = F(\bl w)h = h(\bl w).$ So for each $\bl z,$ $\overline{K^{(1)}_i(\bl z, \bl w)} \in \mathcal H$ and holomorphic in $\bl w.$ Therefore, we have that
\begin{enumerate}
\item[1.]$K^{(1)}_i(\bl z,\bl w)$' s are unique and $G$-invariant in $\bl z,$ that is, $K^{(1)}_i(\rho \cdot \bl z,\bl w) = K^{(1)}_i(\bl z,\bl w)$ for every $\rho \in G$ and $ 1\leq i\leq d$,
\item[2.]$K^{(1)}_i(\bl z,\bl w)$ is holomorphic in $\bl z$ and anti-holomorphic in $\bl w\in\Omega$,
\item[3.] $\ov{K_i^{(1)}(\bl z,\bl w)}\in\m H$, when viewed as a function of $\bl w$ fixing $\bl z.$
\end{enumerate}
Now for a fixed $\bl z,$ consider $\ov{K^{(1)}_i(\bl z,\bl w)}$ as a function of $\bl w,$ by analytic CST, we have 
$$\ov{K^{(1)}_i(\bl z,\bl w)}=\sum_{j=1}^d p_j(\bl w)\w K^{(12)}_{ij}(\bl z,\bl w).$$
Defining $K^{(12)}_{ij}(\bl z,\bl w):=\ov{\w K^{(12)}_{ij}(\bl z,\bl w)},$ we get
$$K^{(1)}_i(\bl z,\bl w)=\sum_{j=1}^dK_{ij}^{(12)}(\bl z, \bl w)\ov{p_j(\bl w)},$$
where
\begin{equation}\label{kd12}
K^{(12)}_{ij}(\bl z, \bl w)=\frac{{\det\big( \Lambda^{(2)}_j(K_i^{(1)})(\bl z,\bl w)\big)}}{\ov{\det \Lambda(\bl w)}} = G_i(z)G_j(w)^*.
\end{equation}
Here the matrix $\Lambda^{(2)}_j(K_i^{(1)})(\bl z,\bl w)$ is obtained by replacing the $j$-th column of the matrix $\Lambda(\bl w)$ by the column $\Big(\big(\!\!\big(\rho_j(K_{i}^{(1)}(\bl z,\bl w))\big)\!\!\big)_{j=1}^d\Big)^{\rm tr}.$ Note that the difference in the superscripts of $\Lambda_j$'s emphasize whether we apply the operation on the first variable $\bl z$ or on the second variable $\bl w.$ For example, $\Lambda^{(2)}_j(h)(\bl z, \bl w)$ denotes a matrix in $\bl w$ keeping $\bl z$ fixed while $ \Lambda^{(1)}_j(h)(\bl z, \bl w)$ denotes a matrix in $\bl z$ but $\bl w$ is fixed, whenever $h \in \mathcal H$. Thus, we have
\begin{equation}\label{in z}
K(\bl z,\bl w)=\sum_{i,j=1}^dp_i(\bl z)K^{(12)}_{ij}(\bl z, \bl w)\ov{p_j(\bl w)}.
\end{equation}
 As before, using \eqref{kd12} instead of \eqref{breaking}, we infer that $K^{(12)}_{ij}(\bl z, \bl w)$'s are unique, $G$-invariant and anti-holomorphic in $\bl w$, holomorphic in $\bl z$ and belongs to $\mathcal H$ as a function of $\bl z$ for each $\bl w,\,1\leq j\leq d$. At this point, we prove that  this decomposition of  $K(\bl z,\bl w)$ is independent of  the order of choice of variables $\bl z$ or $\bl w$. A similar decomposition for $\overline{K(\bl z, \bl w)}$ in the second variable $\bl w$ first, keeping $\bl z$ fixed, yields
\begin{equation}\label{first in w}
   K(\bl z, \bl w) = \sum_{j=1}^d \overline{p_j(\bl w)} K^{(2)}_j(\bl z, \bl w),
\end{equation}
where $K^{(2)}_j(\bl z,\bl w)$'s are unique, $G$-invariant and anti-holomorphic in $\bl w$, holomorphic in $\bl z$ and belongs to $\mathcal H$ as a function of $\bl z$ for each $\bl w, 1\leq j\leq d$. Therefore, applying analytic CST on $K^{(2)}_j(\bl z,\bl w)$ as a function of $\bl z,$ we get 
\begin{equation}\label{for w}
     K^{(2)}_j(\bl z, \bl w) = \sum_{i=1}^d p_i(\bl z) K^{(21)}_{ji}(\bl z, \bl w),
\end{equation}
and then
 $K(\bl z,\bl w) = \sum_{i,j=1}^dp_i(\bl z)K^{(21)}_{ji}(\bl z, \bl w)\ov{p_j(\bl w)}.$
From Equation \eqref{in z}, one then concludes that 
\bea\label{equality}
K^{(12)}_{ij}(\bl z, \bl w)=K^{(21)}_{ji}(\bl z, \bl w).
\eea
In fact, in the same way, one can show that if 
\bea\label{unique0}
K(\bl z,\bl w)=\sum_{i,j}p_i(\bl z)H_{ij}(\bl z,\bl w)\ov{p_j(\bl w)},
\eea
then $H_{ij}(\bl z,\bl w)=K^{(12)}_{ij}(\bl z,\bl w)$ and hence, is uniquely determined.
Now, by construction, $K_{ij}^{(12)}$'s are separately $G$-invariant, that is,
$$K^{(12)}_{ij}(\rho\cdot \bl z, \rho^\i\cdot \bl w)=K^{(12)}_{ij}(\bl z, \bl w)~~\text{for all}~~\rho,\rho^\i\in G.$$
By repeated applications of analytic CST in each variable, there exists a function $\wi K^{(12)}_{ij}$ on $\bl \theta (\Omega)\times\Omega$ such that 
$K^{(12)}_{ij}(\bl z,\bl w)=\wi K^{(12)}_{ij}\big(\bl \theta(\bl z),\bl w\big)$,
which is $G$-invariant  and anti-holomorphic in $\bl w$ and hence, there exists a function $\wi{\wi K}^{(12)}_{ij}$ on $\bl \theta (\Omega)\times\bl \theta (\Omega)$ satisfying $K^{(12)}_{ij}(\bl z,\bl w)=\wi K^{(12)}_{ij}\big(\bl \theta(\bl z),\bl w\big)=\wi{\wi K}^{(12)}_{ij}\big(\bl \theta(\bl z),\bl \theta(\bl w)\big).$ Define 
	\Bea
	\mb K(\bl u,\bl v)=\big(\!\!\big(\wi{\wi K}^{(12)}_{ij}(\bl u,\bl v)\big)\!\!\big)_{i,j=1}^d, ~~\bl u,\bl v\in\bl \theta(\Omega).
	\Eea
Clearly, by definition, $\mb K$ is holomorphic in $\bl u$ and anti-holomorphic in $\bl v$.

\subsection*{Reproducing property of $\mb K$} We are now required to prove the following:
\begin{enumerate}
    \item[(a)] $\mb K(\cdot, \bl v)\bl x\in\wi{\m H}$ for $\bl x\in\mb C^d$ and,
    \item[(b)] $\langle \tilde f,\mb K(\cdot ,\bl v)\bl x\rangle_{\wi{\m H}} = \langle \tilde{f}(\bl v),\bl x\rangle_{\C^d}$ for $\bl x \in \C^d$ and $\tilde{f} \in \m H.$
\end{enumerate}
Let $\{\varepsilon_1, \ldots,\varepsilon_d\}$ be the standard basis of $\mb C^d,$
$$\mb K(\bl u,\bl v)\varepsilon_j=\Big(\big(\!\!\big(\wi{\wi K}^{(12)}_{ij}(\bl u,\bl v)\big)\!\!\big)_{i=1}^d\Big)^{\tr},$$
the $j$-th column of $\mb K(\bl u,\bl v).$ It is enough to show that $\mb K(\bl u,\bl v)\varepsilon_j\in\wi{\m H}$ for $1\leq j\leq d.$ We note that by construction, for $\bl u=\bl \theta(\bl z), \bl v=\bl \theta(\bl w), \,\bl z, \bl w \in \Omega$,
$$\big(\!\!\big(\wi{\wi K}^{(12)}_{ij}(\bl u,\bl v)\big)\!\!\big)_{i=1}^d=\big(\!\!\big( K^{(12)}_{ij}(\bl z,\bl w)\big)\!\!\big)_{i=1}^d, 1\leq j\leq d.$$
So it suffices to show that $\sum_{i=1}^dp_i(\bl z)K^{(12)}_{ij}(\bl z,\bl w)\in\m H$
 for each fixed $\bl w\in\Omega$ as a function of $\bl z.$ This is true since  each $K^{(12)}_{ij}(\bl z,\bl w)\in\m H$. In fact, Equation \eqref{for w} and Equation \eqref{equality} give us $K^{(2)}_j(\bl z, \bl w) = \sum_{i=1}^d p_i(\bl z) K^{(12)}_{ij}(\bl z, \bl w)$ for fixed $\bl w \in \Omega$ and consequently, 
$$\Gamma\big(K^{(2)}_j(\bl z, \bl w)\big)=\mb K(\bl u,\bl v)\varepsilon_j~~ \text{for}~~ 1\leq j\leq d.$$
 Therefore, for any $\bl x = (x_1, \ldots, x_n)\in\C^n$,
\Bea
\mb K(\cdot,\bl v)\bl x =\sum_{j=1}^d x_j\Gamma\big(K_j^{(2)}(\cdot,\bl w)\big)=\Gamma\big(\sum_{j=1}^d x_jK_j^{(2)}(\cdot,\bl w)\big),
\Eea
and  thus
\bea\label{repprop}
\langle \Gamma f,\mb K(\cdot,\bl v)\bl x\rangle_{\wi{\m H}} = \langle f,\big(\sum_{j=1}^d x_jK_j^{(2)}(\cdot,\bl w)\big)\rangle_{\m H} = \sum_{j=1}^d \bar{x_j} \langle f,K_j^{(2)}(\cdot,\bl w)\rangle_{\m H}.
\eea

From Lemma \ref{gen1} and Equation \eqref{first in w}, we have 
\bea\label{k2}
K_j^{(2)}(\cdot,\bl w)=\frac{{\det\big(\overline{\Lambda}^{(2)}_j(K)(\cdot, \bl w)\big)}}{\ov{\det \Lambda(\bl w)}}~~\text{for}~~ 1\leq j\leq d,
\eea
where $\overline{\Lambda}^{(2)}_j(K)(\cdot, \bl w)$ is the matrix $\big(\!\!\big(\overline{\rho_i\big({p_j(\bl w)}\big)}\big)\!\!\big)_{i,j=1}^d$ with its $j$-th column replaced by the column
$\Big(\big(\!\!\big(\rho_j(K(\cdot,\bl w)\big)\big)\!\!\big)_{j=1}^d\Big)^{\tr}.$  
Note that, for $1\leq j\leq d$,
\Bea
\langle f, \rho_j\big(K(\cdot,\bl w)\big) \rangle = \langle f, K(\cdot, \rho_j^{-1}\cdot \bl w) \rangle = f(\rho_j^{-1}\cdot \bl w) = (\rho_j\big(f(\bl w)\big).
\Eea
Expanding  $\det\big(\overline{\Lambda}^{(2)}_j(K)(\cdot,\bl w)\big)$ with respect to the $j$-th column, we get 
\Bea
\langle f, \det\big(\overline{\Lambda}^{(2)}_j(K)(\cdot,\bl w)\big) \rangle=\langle f, \sum_{l=1}^d \rho_l\big(K(\cdot,\bl w)\big) \overline{A_l(\bl w)}  \rangle  = \sum_{l=1}^d  A_l(\bl w)\rho_l\big(f(\bl w)\big),\Eea
where $\overline{A_l(\bl w)}$ is the cofactor associated to $\rho_l\big(K(\cdot,\bl w)\big).$
In short, 
\Bea
 \langle f, \det\big(\overline{\Lambda}^{(2)}_j(K)(\cdot,\bl w)\big) \rangle =\det \Lambda_{j}f(\bl w)\,\,\text{for all}\,\, 1\leq j\leq d,
\Eea
where $\Lambda_{j}f(\bl w)$ is the matrix $\big(\!\!\big(\rho_i\big(p_j(\bl w)\big)\big)\!\!\big)_{i,j=1}^d$ with its $j$-th column replaced by the column $\Big(\big(\!\!\big(\rho_j(f(\bl w)\big)\big)\!\!\big)_{j=1}^d\Big)^{\tr}$, as we find in the proof of Lemma \ref{gen1}. Hence by Equation (\ref{k2})
\begin{equation*}
\det \Lambda(\bl w)\langle f, K_j^{(2)}(\cdot ,\bl w)\rangle= \langle f, \ov{\det \Lambda(\bl w)}K_j^{(2)}(\cdot ,\bl w)\rangle = {\det \Lambda_{j}f(\bl w)},
\end{equation*}
and thus
\bea\label{rp}
\langle f, K_j^{(2)}(\cdot ,\bl w)\rangle=\frac{\det \Lambda_{j}f(\bl w)}{\det \Lambda(\bl w)}=f_j(\bl w)=\tilde f_j(\bl v) = \langle \tilde{f}(\bl v), \varepsilon_j\rangle_{\C^d}.
\eea
The equality before the penultimate one follows from the Lemma \ref{gen1}. Note that Equation \eqref{rp} holds when $\det \Lambda(\bl w)\neq 0$ and since $f_j$ exists on all of $\Omega,$ they are equal everywhere.
 Thus from Equation \eqref{repprop} and Equation \eqref{rp}, the reproducing property of $\mb K$ is proved. 
\end{proof}
\begin{rem}
\begin{enumerate}

\item The lemma above is equivalent to the fact that point evaluations are bounded on $\tilde{\m H}$, that is, there exists  $K>0$ such that  $\sum_{i = 1}^d|\tilde f_i(\bl u)|^2\leq K^2\|f\|_{\m H}^2$ for every $f\in\m H$ and $\bl u\in\bl\theta(\Omega)$. However, we were unable to obtain this seemingly simple inequality independently even in the particular case when $\Omega = \mb D^n, \,\m H=\mb A^2(\mb D^n)$ and $\bl \theta$ being the symmetrization map $\bl s$ whose $i$-th component $s_i$ is the $i$-th elementary symmetric polynomial in $n$ variables. The inequality in this case becomes:
\Bea
\sum_{i = 1}^d|\tilde f_i(\bl u)|^2\leq K^2\int_{\mb D^n}|f|^2dV
\Eea
for some $K>0$, $\bl u\in\bl s(\mb D^n)$ and $dV$ being the Lebesgue  measure. 

\item Another approach to obtain a reproducing kernel is via orthonormal basis. One can write $K(\bl z, \bl w) = \sum_{n\geq 0}e_n(\bl z)\ov{e_n(\bl w)}$ for some orthonormal basis $\{e_n\}_{n\geq 0}$ of $\m H$. Since each $e_n$ is a holomorphic function on $\Omega$, by analytic CST, we have $e_n(\bl z) = \sum_{i=1}^dp_i(\bl z)e_i^{(n)}(\bl z)$ and consequently,
\Bea
 K(\bl z, \bl w) = \sum_{n\geq 0} \begin{pmatrix} p_1(\bl z)  &  p_2(\bl z) &\ldots & p_d(\bl z)\end{pmatrix} \big(\!\!\big(e_i^{(n)}(\bl z)\ov{e_j^{(n)}(\bl w)}\big)\!\!\big)_{i,j=1}^d\begin{pmatrix}
\ov{p_1(\bl w)}\\
\ov{p_2(\bl w)}\\
\vdots\\
\ov{p_d(\bl w)}
\end{pmatrix}.
\Eea
However, it is not apparently clear  whether the sum $\sum_{n\geq 0} e_i^{(n)}(\bl z)\ov{e_j^{(n)}(\bl w)}$ converges pointwise for each $i, j.$ To avert this convergence problem, in the proof above, we applied analytic CST directly to $K(\bl z, \bl w).$ As a byproduct of the proof of Lemma \ref{cons}, we see  that $\Big(\!\!\Big(\sum_{n\geq 0} \big(e_i^{(n)}(\bl z)\ov{e_j^{(n)}(\bl w)}\big)\Big)\!\!\Big)_{i,j=1}^d$ converges pointwise to the reproducing kernel of $\wi{\m H}$ from Equation \eqref{unique0}.
\item From the proof above, a Kolmogorov decomposition of $\mb K$ is apparent, that is, $\mb K(\bl u, \bl v) = \mb F(\bl u)\mb F(\bl v)^*,$ where $\mb F(\bl u) =\big (\widetilde G_1(\bl u),\ldots, \widetilde G_d(\bl u)\big)^{\mathrm tr}$ such that $G_i(\bl z) = \widetilde G_i\big(\bl\theta(\bl z)\big)$ for $1\leq i\leq d$. Such $\widetilde G_i$ exists because of the definition of $G_i$ and analytic CST.
\end{enumerate}
\end{rem}

Recall that $\m H\subseteq\mathcal O(\Omega)$ is an analytic Hilbert module over $\mb C[\bl z]$ and the module action of pointwise multiplication is denoted by ${\mathfrak m}_p$ for $p \in \C[\bl z].$ As noted earlier, $\mathcal H$ also forms a Hilbert module over the ring $\C[\bl z]^G$ with ${\mathfrak m}_p$ as the module action, that is, module action inherited from $\C[\bl z].$
The following lemma asserts that the module action induced by $\Gamma$ makes $\wi{\m H}$ an analytic Hilbert module.

\begin{lem}
The reproducing kernel Hilbert space $\wi{\m H}\subseteq\mathcal O(\bl \theta(\Omega))\otimes \C^{|G|}$ is an analytic Hilbert module over $\C[\bl z]$.
\end{lem}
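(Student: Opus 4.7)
The plan is to verify the remaining defining properties of an analytic Hilbert module for $\wi{\m H}$ on $\bl\theta(\Omega)$ --- the existence of an $\m L(\C^{|G|})$-valued reproducing kernel is already supplied by Lemma \ref{cons} --- namely: an identification of the $\C[\bl z]$-module action with pointwise multiplication, and density of $\C[\bl z]\otimes\C^{|G|}$ in $\wi{\m H}$. Both will follow by transporting structure through the unitary $\Gamma$ and using that $q\mapsto q\circ\bl\theta$ embeds $\C[\bl z]$ (viewed as polynomials on $\bl\theta(\Omega)$) into $\C[\bl z]^G\subseteq\C[\bl z]$, via classical CST.

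For the module action, fix $q\in\C[\bl z]$. Since $q\circ\bl\theta\in\C[\bl z]^G\subseteq\C[\bl z]$, multiplication by $q\circ\bl\theta$ is a bounded operator $M_{q\circ\bl\theta}$ on $\m H$. I would define $\wi M_q:=\Gamma\,M_{q\circ\bl\theta}\,\Gamma^{*}\in\m L(\wi{\m H})$, which is automatically bounded and makes $q\mapsto\wi M_q$ an algebra homomorphism of $\C[\bl z]$ into $\m L(\wi{\m H})$. To see that $\wi M_q$ is pointwise multiplication by $q$ on $\bl\theta(\Omega)$, take $f\in\m H$ with analytic CST decomposition $f=\sum_{k=1}^{d} p_k(\tilde f_k\circ\bl\theta)$. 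Because $q\circ\bl\theta$ is $G$-invariant,
\[(q\circ\bl\theta)\,f=\sum_{k=1}^{d} p_k\big((q\tilde f_k)\circ\bl\theta\big),\]
and by uniqueness in Theorem \ref{theoremb} one reads off $\Gamma((q\circ\bl\theta)f)=\sum_{k=1}^{d}(q\tilde f_k)\otimes\varepsilon_k=q\cdot\Gamma f$, where the right-hand side is scalar pointwise multiplication by $q$ on $\bl\theta(\Omega)$.

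For density, $\C[\bl z]$ is dense in $\m H$ because $\m H$ itself is an analytic Hilbert module, so $\Gamma(\C[\bl z])$ is dense in $\wi{\m H}$. Applying classical CST (Theorems \ref{A} and \ref{B}) to any $p\in\C[\bl z]$, write $p=\sum_{k=1}^{d} p_k q_k$ with $q_k\in\C[\bl z]^G=\C[\theta_1,\ldots,\theta_n]$; then $q_k=\tilde q_k\circ\bl\theta$ for polynomials $\tilde q_k$, whence $\Gamma p=\sum_{k=1}^{d}\tilde q_k\otimes\varepsilon_k\in\C[\bl z]\otimes\C^{|G|}$. Conversely, any $\sum_{k=1}^{d} r_k\otimes\varepsilon_k\in\C[\bl z]\otimes\C^{|G|}$ equals $\Gamma\big(\sum_{k=1}^{d} p_k(r_k\circ\bl\theta)\big)$, with preimage $\sum_{k=1}^{d} p_k(r_k\circ\bl\theta)\in\C[\bl z]\subseteq\m H$. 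Hence $\C[\bl z]\otimes\C^{|G|}$ sits inside $\wi{\m H}$ and contains the dense subset $\Gamma(\C[\bl z])$, so it is itself dense. No step in this lemma constitutes a serious obstacle: the real work has been done in Lemma \ref{cons} and in the analytic CST, and what remains is essentially bookkeeping through $\Gamma$ together with the uniqueness clause of Theorem \ref{theoremb} to ensure the action really is pointwise multiplication.
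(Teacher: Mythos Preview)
Your proposal is correct and follows essentially the same approach as the paper: transport the module action through $\Gamma$ (the paper defines $\wi T_p\tilde f=\sum_k(p\tilde f_k)\otimes\varepsilon_k$ and observes $\wi T_p\Gamma=\Gamma M_{p\circ\bl\theta}$, while you define $\wi M_q=\Gamma M_{q\circ\bl\theta}\Gamma^*$ and then check it is pointwise multiplication --- these are two sides of the same coin), and obtain density from $\Gamma(\C[\bl z])=\C[\bl z]\otimes\C^d$. If anything, your argument is slightly more explicit: the paper asserts $\Gamma(\C[\bl z])=\C[\bl z]\otimes\C^d$ without comment, whereas you spell out both inclusions via classical CST, and your boundedness is immediate from the unitary conjugation rather than via the closed graph theorem.
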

\begin{proof}
By construction, $\wi{\m H}$ consists of $\mb C^{d}$-valued holomorphic functions on $\bl \theta(\Omega),$ where $d = \lvert G\rvert$. Since $\C[\bl z]$ is dense in $\mathcal H$ and $\Gamma$ is indeed a unitary onto $\wi{\m H}$, so  $\Gamma(\C[\bl z]) = \C[\bl z] \otimes \C^d$ is also dense in $\wi{\mathcal H}.$  Let $\tilde f\in {\wi{\m H}}$, then there exists $f \in \m H$ such that $\Gamma(f) =\tilde{f}$. Then for $p\in \C[\bl z],\, \Gamma\big(( p\circ \bl \theta)\cdot f\big) = \sum_{i=1}^d (p\tilde{f_i}) \otimes \varepsilon_i \in \wi{\m H}$. Define $\wi{T}_p : \wi{\m H} \to \wi{\m H}$ by
\bea\label{modmap}
\wi T_p\tilde f=\wi{T}_p(\sum_{i=1}^d \tilde{f_i} \otimes \varepsilon_i) = \sum_{i=1}^d (p\tilde{f_i}) \otimes \varepsilon_i 
\,\,\text{for}\,\, p\in\C[\bl z].
\eea
By the closed graph theorem, $\wi T_p$ is a bounded linear map.  In fact, as $\Gamma$ is a unitary onto $\wi{\m H}$,  the equality 
\Bea
\Vert{\wi{T}_p\tilde{f}}\Vert= \Vert\Gamma \big(( p\circ \bl \theta)\cdot f\big) \Vert= \Vert {( p\circ \bl \theta)\cdot f}\Vert=\Vert{\mathfrak{m}_{p\circ \bl \theta}f}\Vert
\Eea
implies that $\|\wi{T}_p\|= \|\mathfrak{m}_{p\circ \bl \theta}\|$. Therefore, $\wi{T}_p$ indeed gives a module action on $\widetilde{\m H}.$ 
This completes the proof.
\end{proof}


With the aid of the lemma above, we realize  the Hilbert module $\m H$ over $\mb C[\bl z]^G$ as an analytic Hilbert module over $\mb C[\bl z].$

\begin{cor}\label{modtheta}
 The Hilbert module $\mathcal H$ over $\C[\bl z]^G$ is unitarily  equivalent to an analytic Hilbert module $\wi{\mathcal H}$ over $\C[\bl z]$. 
 \end{cor}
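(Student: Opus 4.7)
The plan is to exhibit the unitary $\Gamma:\mathcal H\to\widetilde{\mathcal H}$, already constructed in \eqref{gamma}, as the intertwiner that realizes the desired equivalence of Hilbert modules. The only thing left to verify is that $\Gamma$ transports the $\mathbb C[\bl z]^G$-module structure on $\mathcal H$ to the $\mathbb C[\bl z]$-module structure on $\widetilde{\mathcal H}$ given by \eqref{modmap}.

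First I would fix the identification of the two polynomial rings. By Theorem \ref{A}, every $q\in\mathbb C[\bl z]^G$ can be written uniquely as $q=p\circ\bl\theta$ for some $p\in\mathbb C[\bl z]$, and this assignment $q\leftrightarrow p$ is an isomorphism of algebras. Hence to say that $\mathcal H$ (with module action $\mathfrak m_q$, $q\in\mathbb C[\bl z]^G$) is unitarily equivalent, as a Hilbert module, to $\widetilde{\mathcal H}$ (with module action $\widetilde T_p$, $p\in\mathbb C[\bl z]$) amounts to proving
\[
\Gamma\,\mathfrak m_{p\circ\bl\theta}=\widetilde T_p\,\Gamma\qquad\text{for every } p\in\mathbb C[\bl z].
\]

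The verification is a short direct calculation using the analytic CST decomposition established in Theorem \ref{theoremc}. Let $f\in\mathcal H$ and write $f=\sum_{i=1}^d p_i\,(\widetilde f_i\circ\bl\theta)$, so that $\Gamma f=\sum_{i=1}^d\widetilde f_i\otimes\varepsilon_i$. Since $p\circ\bl\theta$ is $G$-invariant, it absorbs into the coefficients of the basis expansion, yielding
\[
(p\circ\bl\theta)\cdot f=\sum_{i=1}^d p_i\,\bigl((p\,\widetilde f_i)\circ\bl\theta\bigr),
\]
and by the uniqueness clause in Theorem \ref{theoremc} this is the CST decomposition of $(p\circ\bl\theta)f$. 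Applying $\Gamma$ gives
\[
\Gamma\bigl((p\circ\bl\theta)\cdot f\bigr)=\sum_{i=1}^d (p\,\widetilde f_i)\otimes\varepsilon_i=\widetilde T_p(\Gamma f),
\]
where the last equality is just the definition \eqref{modmap} of $\widetilde T_p$. Combined with the fact that $\Gamma$ is a unitary (by construction of the inner product on $\widetilde{\mathcal H}$) and that $\widetilde{\mathcal H}$ is an analytic Hilbert module over $\mathbb C[\bl z]$ (by the preceding lemma), this establishes the unitary equivalence of Hilbert modules.

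I do not foresee a serious obstacle: the heavy lifting (existence, uniqueness, and holomorphy of the coefficients $\widetilde f_i$, the reproducing kernel $\mathbf K$, and the boundedness of $\widetilde T_p$) was carried out in Theorem \ref{theoremc}, Lemma \ref{cons}, and the preceding lemma. The only subtle point to keep in mind is invoking the uniqueness part of analytic CST to identify the expansion of $(p\circ\bl\theta)\cdot f$, which is what ensures the intertwining identity holds on all of $\mathcal H$ rather than just on a dense subset.
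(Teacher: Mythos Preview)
Your proposal is correct and follows essentially the same approach as the paper: the paper's proof simply cites Equation \eqref{modmap} (where the identity $\Gamma\big((p\circ\bl\theta)\cdot f\big)=\sum_{i=1}^d(p\tilde f_i)\otimes\varepsilon_i$ was already established in the preceding lemma) together with the unitarity of $\Gamma$, while you re-derive that identity explicitly by invoking the uniqueness clause of analytic CST. The only difference is expository: you spell out the algebra isomorphism $\mathbb C[\bl z]^G\cong\mathbb C[\bl z]$ and the uniqueness argument, whereas the paper compresses everything into a one-line reference back to \eqref{modmap}.
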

 \begin{proof}
 From the Equation \eqref{modmap}, 
 \bea\label{thetaiequi}
 \wi{T}_p\Gamma f=\wi{T}_p\tilde{f} = \Gamma\big(( p \circ \bl \theta)\cdot f\big)= \Gamma(\mathfrak{m}_{p \circ \bl \theta}f)\,\, \text{for}\,\, p \in \C[\bl z].
 \eea
 Since $\Gamma : \m H \to \wi{\m H}$ is a unitary operator,  the proof is complete.
 \end{proof}
 \begin{rem}\label{main2}
Denote the $i$-th coordinate projection on a domain $D$ in $\mb C^n$ by $Z_i:D\to\mb C,$ where $Z_i(\bl z)=z_i$  for $\bl z\in D.$ We define the multiplication operator $M_i : \wi{\m H} \to \wi{\m H}$ by $$M_i \tilde{f} = 
\sum_{j=1}^d (Z_i\tilde{f_j}) \otimes \varepsilon_j, \, \,i=1,\ldots,n.$$ Evidently, $ M_i \tilde{f}= \wi{T}_{Z_i} \tilde{f}.$ 
Since $\theta_i = Z_i \circ \bl \theta,$ taking $p =Z_i$ in Equation \eqref{thetaiequi}, we get
\Bea
\Gamma M_{\theta_i}f =  M_{i} \Gamma f ~~ {\rm for~~ all }~~ f \in \m H,
\Eea
where $M_{\theta_i}$ is the multiplication operator on $\m H$.
Let  $\mathbf{M} = (M_1,\ldots,M_n)$ and $\mathbf M_{\bl \theta} = (M_{\theta_1},\ldots,M_{\theta_n}).$ The corollary above says that the operator tuple $\mathbf M_{\bl \theta}$ on $\mathcal H$ is unitarily equivalent to $\mathbf M$ on $\wi{\mathcal H}$. 
\end{rem}

We now describe analogous results for the reducing submodules. Recall from \eqref{decomp} that $\m H$ admits an orthogonal decomposition $\m H = \bigoplus_{\varrho \in \widehat{G}} \mb P_\varrho \m H.$ Clearly, $\C[\bl z] = \bigoplus_{\varrho \in \widehat{G}} \mathbb{P}_{\varrho}\C[\bl z]$ where the direct sum is orthogonal direct sum borrowed from the Hilbert space structure mentioned earlier. From \cite[Proposition II.5.3., p.28]{Pan}, it follows that for each $\varrho\in \widehat G$, $\mathbb{P}_{\varrho}\C[\bl z]$ is a free module over $\C[\bl z]^G$ of rank $(\deg \varrho)^2$.
 Fix $\varrho \in \widehat{G}$, define $\Gamma_{\varrho} : \mathcal H \to \m O\big(\bl \theta(\Omega)\big) \otimes \C^{\lvert G\rvert}$ by $\Gamma_{\varrho}(f) :=  \Gamma \mb P_{\varrho} f$ and denote the image of $\Gamma_{\varrho} \m H $ by $ \wi{\m H}_{\varrho}.$ Further, we observe that the restriction operator $\Gamma_\varrho : \mb P_\varrho\m H \to \wi{\m H}_{\varrho}$ is a unitary, by construction. Therefore, $\wi{\m H}_{\varrho}$ is a closed subspace of $\wi{\m H}$ and the following corollary is immediate from Lemma \ref{cons}.

\begin{cor}
For $\varrho\in\w G,~ \wi{\m H}_{\varrho}$ is a reproducing kernel Hilbert space. 
\end{cor}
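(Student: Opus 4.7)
The plan is to derive this corollary as an immediate consequence of Lemma \ref{cons} together with the standard fact that any closed subspace of a reproducing kernel Hilbert space is again a reproducing kernel Hilbert space. First I would note that, by Corollary \ref{rhoortho}, $\mb P_\varrho$ is an orthogonal projection on $\m H$, so $\mb P_\varrho\m H$ is a closed subspace of $\m H$. Since $\Gamma:\m H\to\wi{\m H}$ is unitary (by construction preceding Lemma \ref{cons}), the image $\wi{\m H}_\varrho = \Gamma(\mb P_\varrho\m H)$ is a closed subspace of $\wi{\m H}$, as was already recorded in the paragraph above the corollary.

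Next, I would appeal to the fact that $\wi{\m H}$ has reproducing kernel $\mb K$ (Lemma \ref{cons}). Consequently, for every $\bl u\in\bl\theta(\Omega)$ and every $\bl x\in\C^{\lvert G\rvert}$, the evaluation functional
\begin{equation*}
\mathrm{ev}_{\bl u,\bl x}:\wi{\m H}\to\C,\qquad \mathrm{ev}_{\bl u,\bl x}(\tilde g) = \langle \tilde g(\bl u),\bl x\rangle_{\C^{\lvert G\rvert}}
\end{equation*}
is continuous, with $\mathrm{ev}_{\bl u,\bl x}(\tilde g) = \langle \tilde g, \mb K(\cdot,\bl u)\bl x\rangle_{\wi{\m H}}$. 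Restricting to the closed subspace $\wi{\m H}_\varrho$ preserves continuity, so the Riesz representation theorem produces a unique $\mb K_\varrho(\cdot,\bl u)\bl x\in\wi{\m H}_\varrho$ reproducing the value of $\tilde g(\bl u)$ paired with $\bl x$ for every $\tilde g\in\wi{\m H}_\varrho$.

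To finish, I would exhibit the kernel explicitly so that the holomorphy and antiholomorphy properties are manifest. Namely, letting $Q_\varrho$ denote the orthogonal projection of $\wi{\m H}$ onto $\wi{\m H}_\varrho$ (which, through the unitary $\Gamma$, is intertwined with the projection $\mb P_\varrho$ on $\m H$), one has
\begin{equation*}
\mb K_\varrho(\cdot,\bl v)\bl x = Q_\varrho\bigl(\mb K(\cdot,\bl v)\bl x\bigr),
\end{equation*}
since for any $\tilde g\in\wi{\m H}_\varrho$, $\langle \tilde g, Q_\varrho(\mb K(\cdot,\bl v)\bl x)\rangle = \langle Q_\varrho\tilde g,\mb K(\cdot,\bl v)\bl x\rangle = \langle\tilde g(\bl v),\bl x\rangle$. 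Holomorphy of $\mb K_\varrho$ in the first slot and antiholomorphy in the second descend from the corresponding properties of $\mb K$ established in Lemma \ref{cons}, together with the fact that $Q_\varrho$ acts on the $\C^{\lvert G\rvert}$-valued holomorphic functions without disturbing the complex structure.

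There is no real obstacle to the existence statement itself; the only subtlety worth highlighting is that the explicit formula for $\mb K_\varrho$ requires understanding $Q_\varrho = \Gamma\mb P_\varrho\Gamma^*$ concretely, which can be obtained by applying the averaging operator $\mb P_\varrho$ to the reproducing kernel $K$ of $\m H$ in its first variable and then pushing the result through the analytic CST decomposition employed in the construction of $\Gamma$ and $\mb K$.
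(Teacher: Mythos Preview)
Your proposal is correct and takes essentially the same approach as the paper: the paper simply states that the corollary is ``immediate from Lemma \ref{cons}'' after having recorded that $\wi{\m H}_\varrho$ is a closed subspace of the reproducing kernel Hilbert space $\wi{\m H}$, and you have spelled out precisely why a closed subspace of a reproducing kernel Hilbert space is again one, together with the explicit formula $\mb K_\varrho(\cdot,\bl v)\bl x = Q_\varrho\bigl(\mb K(\cdot,\bl v)\bl x\bigr)$. Your added detail is sound; the only slightly informal step is the phrase ``without disturbing the complex structure,'' but holomorphy in the first variable is automatic since $\mb K_\varrho(\cdot,\bl v)\bl x\in\wi{\m H}_\varrho\subseteq\wi{\m H}$ consists of holomorphic functions, and antiholomorphy in the second variable then follows from the Hermitian symmetry $\mb K_\varrho(\bl u,\bl v)^* = \mb K_\varrho(\bl v,\bl u)$.
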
  
\begin{cor}\label{equi}
For each $\varrho \in \widehat{G},$ the Hilbert module  $\mathbb{P}_{\varrho}\mathcal H$ over $\C[\bl z]^G$ is equivalent to the Hilbert module $\wi{\mathcal H}_{\varrho}$ over $\C[\bl z].$ 
\end{cor}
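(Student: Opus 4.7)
My plan is to leverage the unitary $\Gamma : \mathcal H \to \wi{\mathcal H}$ established in Corollary \ref{modtheta}, which already intertwines the $\mb C[\bl z]^G$-action on $\mathcal H$ with the $\mb C[\bl z]$-action on $\wi{\mathcal H}$ via the identification $p \mapsto p \circ \bl\theta$. Concretely, Equation \eqref{thetaiequi} gives $\wi T_p \Gamma = \Gamma\, \mathfrak m_{p\circ\bl\theta}$ for all $p \in \mb C[\bl z]$. The corollary will then follow by restricting this intertwining to the subspace $\mb P_\varrho \mathcal H$ and observing that its image is precisely $\wi{\mathcal H}_\varrho$.

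First I would record that $\Gamma_\varrho := \Gamma \circ \mb P_\varrho$, when restricted to $\mb P_\varrho \mathcal H$, coincides with $\Gamma\big|_{\mb P_\varrho \mathcal H}$, so that $\Gamma_\varrho : \mb P_\varrho \mathcal H \to \wi{\mathcal H}_\varrho$ is unitary by construction (it is the restriction of the unitary $\Gamma$ to a closed subspace, with image equal to $\wi{\mathcal H}_\varrho$ by definition). Next I would show that $\wi{\mathcal H}_\varrho$ is invariant under the module action of $\mb C[\bl z]$ on $\wi{\mathcal H}$: for $\tilde g = \Gamma \mb P_\varrho f \in \wi{\mathcal H}_\varrho$ and $p \in \mb C[\bl z]$, Corollary \ref{reduce1} (or equivalently Proposition \ref{rhojredsub}) guarantees that $\mb P_\varrho$ commutes with $\mathfrak m_{p \circ \bl\theta}$, and hence
\begin{equation*}
\wi T_p \tilde g \;=\; \wi T_p\, \Gamma \mb P_\varrho f \;=\; \Gamma\, \mathfrak m_{p\circ\bl\theta}\, \mb P_\varrho f \;=\; \Gamma \, \mb P_\varrho \, \mathfrak m_{p\circ\bl\theta} f \;\in\; \wi{\mathcal H}_\varrho.
\end{equation*}

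Finally, to finish the equivalence of Hilbert modules, I would verify the intertwining relation at the level of the restricted operators: for any $h \in \mb P_\varrho \mathcal H$ and any $p \in \mb C[\bl z]$, the submodule property $\mathfrak m_{p\circ \bl\theta} h \in \mb P_\varrho \mathcal H$ together with Equation \eqref{thetaiequi} yields
\begin{equation*}
\wi T_p\, \Gamma_\varrho h \;=\; \wi T_p\, \Gamma h \;=\; \Gamma\, \mathfrak m_{p\circ\bl\theta} h \;=\; \Gamma_\varrho\, \mathfrak m_{p\circ\bl\theta} h,
\end{equation*}
which is exactly the statement that $\Gamma_\varrho$ is a unitary module map from the $\mb C[\bl z]^G$-module $\mb P_\varrho \mathcal H$ onto the $\mb C[\bl z]$-module $\wi{\mathcal H}_\varrho$ (under the CST identification). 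I do not foresee a genuine obstacle here, as all the heavy lifting has been done in Lemma \ref{cons} and Corollary \ref{modtheta}; the only point requiring attention is the invariance of $\wi{\mathcal H}_\varrho$ under $\wi T_p$, which rests crucially on the commutativity of $\mb P_\varrho$ with the multipliers $\mathfrak m_{p\circ\bl\theta}$ established earlier in Section \ref{redsub}.
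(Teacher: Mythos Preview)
Your proposal is correct and follows essentially the same approach as the paper: restrict the unitary $\Gamma$ to $\mb P_\varrho\mathcal H$, observe its image is $\wi{\mathcal H}_\varrho$ by definition, and verify the intertwining relation $\wi T_p\Gamma_\varrho = \Gamma_\varrho\,\mathfrak m_{p\circ\bl\theta}$. You are slightly more explicit than the paper in justifying why $\wi{\mathcal H}_\varrho$ is invariant under $\wi T_p$ (invoking the commutativity of $\mb P_\varrho$ with $\mathfrak m_{p\circ\bl\theta}$ from Proposition~\ref{rhojredsub}), whereas the paper simply asserts this as an observation.
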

\begin{proof}
We observe that $\wi{\m H}_{\varrho}$ forms a Hilbert module over $\C[\bl z],$ where the module map is same as defined in Equation \eqref{modmap}. By construction, $\Gamma_{\varrho} : \mathbb{P}_{\varrho}\mathcal H \to \wi{\m H}_{\varrho}$ is a unitary operator. Moreover, 
\Bea
\wi{T}_p \Gamma_{\varrho}(f) = \wi T_p(\tilde{f})=\Gamma_\varrho\big((p \circ \bl \theta)\cdot f\big)=\Gamma_\varrho (\mathfrak{m}_{p\circ \bl \theta}f).
\Eea
This shows that $\Gamma_{\varrho}$ intertwines the module actions in $\mathbb{P}_{\varrho}\mathcal H$ and $\wi{\mathcal H}_{\varrho}$.
\end{proof}

Since $\sum_{\varrho \in \widehat G}(\deg \varrho)^2 =\lvert G\rvert= d,$ we can choose a basis of $\mathbb{P}_{\varrho}\C[\bl z]$ over $\C[\bl z]^G$ for each $\varrho \in \widehat G$ such that together they form a basis $\{p_1,\ldots,p_d\}$ of $\C[\bl z]$ over $\C[\bl z]^G$. We will work with such a choice for the rest of the section.  We enumerate $\widehat{G}$ as $\{\varrho_1,\ldots,\varrho_\ell\}.$  Since for $f \in \mb P_{\varrho^\i} \m H$ and $\varrho \neq \varrho^\i,$ $\Gamma_\varrho f = 0$, an element of $\wi{\m H}_{\varrho_k}$ is in the form 
$$\begin{pmatrix}
\bl 0\\
\tilde{f_1}\\
\vdots\\
\tilde f_{( {\rm deg}\,\varrho_k)^2}\\
\bl 0
\end{pmatrix}$$ 
where $\tilde{f_1}$ is at the $\sum_{i=1}^{k-1}( {\rm deg}\,\varrho_i)^2 +1$-th coordinate of $\C^d.$ The corresponding reproducing kernel $\mb K_{\varrho_k}$ can be written as a block matrix of whose $ij$-th block is a zero matrix of size ${({\deg}\,\varrho_i)^2} \times ({\deg}\,\varrho_j)^2$ except the $kk$-th block, say ${\mathfrak K}_{\varrho_k}$, which is the only nonzero block (follows from Corollary \ref{projPii}) of size ${({\deg}\,\varrho_k)^2} \times ({\deg}\,\varrho_k)^2$.  Since $\Gamma = \sum_{i = 1}^\ell \Gamma_{\varrho_i}$, we have $\wi{\m H} = \oplus_{i = 1}^\ell{\wi{\m H}}_{\varrho_i}$ and hence, 
\bea\label{reprobreak}
\mb K(\bl u, \bl v) 
= \sum_{i = 1}^\ell \mb K_{\varrho_i}(\bl u, \bl v) 
=
\begin{pmatrix}
&{\mathfrak K}_{\varrho_1} &\bl 0 &\cdots &\bl 0\\
&\bl 0 & {\mathfrak K}_{\varrho_2} &\cdots &\bl 0\\
&\vdots &\vdots &\ddots  &\vdots\\
&\bl 0 &\bl 0 &\cdots & {\mathfrak K}_{\varrho_{\ell}}
\end{pmatrix},
\eea
that is, the reproducing kernel of $\widetilde{\mathcal H}$ breaks into diagonal blocks where the $j$-th block corresponds to the reproducing kernel of $\wi{\m H}_{\varrho_j}$ for $j=1,\ldots,\ell,$ by identifying
\Bea
\begin{pmatrix}
\bl 0\\
\tilde{f_1}\\
\vdots\\
\tilde f_{(\deg\varrho)^2}\\
\bl 0
\end{pmatrix} \mbox{~with~} \begin{pmatrix}
\tilde{f_1}\\
\vdots\\
\tilde f_{(\deg\varrho)^2}
\end{pmatrix}.
\Eea
We  summarize  these discussions as follows:
\begin{cor}\label{t2}
For $\varrho\in\widehat G,$ the Hilbert module $\mb P_\varrho \m H$ over $\C[\bl z]^G$ is equivalent to an analytic Hilbert module consisting of $\mb C^{(\deg\varrho)^2}$-valued holomorphic functions on $\bl \theta(\Omega)$ over $\C[\bl z]$ with reproducing kernel ${\mathfrak K}_{\varrho}$.
\end{cor}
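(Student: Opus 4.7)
My plan is to obtain the corollary as a direct consequence of the machinery already built up: Corollary \ref{equi}, the block-diagonal decomposition in Equation \eqref{reprobreak}, and the fact (cited from \cite{Pan}) that $\mathbb{P}_\varrho\C[\bl z]$ is a free $\C[\bl z]^G$-module of rank $(\deg\varrho)^2$.

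First I would fix a basis $\{p_1,\ldots,p_d\}$ of $\C[\bl z]$ over $\C[\bl z]^G$ chosen as in the paragraph preceding Equation \eqref{reprobreak}, namely by concatenating bases of the submodules $\mathbb{P}_{\varrho_i}\C[\bl z]$ for $i=1,\ldots,\ell$. With this choice, the map $\Gamma$ from \eqref{gamma} has the property that if $f \in \mathbb{P}_\varrho\m H$ then, for $\varrho^\prime\neq\varrho$, the coordinates of $\Gamma f$ indexed by the basis elements of $\mathbb{P}_{\varrho^\prime}\C[\bl z]$ vanish, because those basis elements are orthogonal to $\mathbb{P}_\varrho\C[\bl z]$ with respect to the decomposition $\C[\bl z]=\bigoplus_{\varrho\in\widehat G}\mathbb P_\varrho\C[\bl z]$ and the expansion coefficients in analytic CST are unique. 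Therefore $\Gamma_\varrho\m H=\Gamma\mathbb{P}_\varrho\m H$ lives in precisely the $(\deg\varrho)^2$ coordinates indexed by a basis of $\mathbb{P}_\varrho\C[\bl z]$.

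Next I would invoke Corollary \ref{equi}, which already identifies $\mathbb{P}_\varrho\m H$ as a Hilbert module over $\C[\bl z]^G$ with $\wi{\m H}_\varrho$ as a Hilbert module over $\C[\bl z]$ via the unitary $\Gamma_\varrho$. Composing with the isometry that deletes the identically-zero coordinates (and identifies $\wi{\m H}_\varrho$ with a space of $\mb C^{(\deg\varrho)^2}$-valued holomorphic functions on $\bl\theta(\Omega)$) preserves both the Hilbert space structure and the $\C[\bl z]$-module action defined in \eqref{modmap}, since that action acts on each coordinate by pointwise multiplication. Reading off the reproducing kernel from Equation \eqref{reprobreak}, we see that the only surviving block on the diagonal is $\mathfrak K_\varrho$, so the reproducing kernel of the realization is exactly $\mathfrak K_\varrho$.

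The only point requiring genuine care is the first paragraph: one must verify that the analytic-CST coefficients in $\Gamma f$ actually vanish outside the chosen $(\deg\varrho)^2$ positions when $f\in\mathbb{P}_\varrho\m H$. This follows from the uniqueness clause of Theorem \ref{theoremc} combined with the fact that $\mathbb{P}_\varrho\C[\bl z]$ and $\mathbb{P}_{\varrho^\prime}\C[\bl z]$ are disjoint as $\C[\bl z]^G$-submodules of $\C[\bl z]$ (by Proposition \ref{proj1} applied to $\varrho\neq\varrho^\prime$), so any $G$-invariant-holomorphic decomposition of an element of $\mathbb{P}_\varrho\m H$ with respect to the full basis $\{p_1,\ldots,p_d\}$ must have vanishing coefficients at indices outside the $\varrho$-block. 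Once this is observed, everything else is formal.
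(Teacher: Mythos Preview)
Your proposal is correct and follows essentially the same route as the paper: the corollary is stated there as a summary of the preceding discussion, which fixes the concatenated basis, observes that $\Gamma_\varrho f$ has nonzero entries only in the $(\deg\varrho)^2$ positions indexed by the chosen basis of $\mathbb P_\varrho\C[\bl z]$, and then reads off the block $\mathfrak K_\varrho$ from \eqref{reprobreak} after the obvious identification. Your justification of the vanishing via uniqueness in analytic CST together with the $\C[\bl z]^G$-linearity of $\mathbb P_\varrho$ is exactly what underlies the paper's line ``Since for $f\in\mathbb P_{\varrho'}\m H$ and $\varrho\neq\varrho'$, $\Gamma_\varrho f=0$''.
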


We close this section by relating ${\mathfrak K}_{\varrho}$ to the reproducing kernel $K$ of $\m H$ for a specific $\varrho\in \widehat G$. Note that if $K_\varrho$ denotes the reproducing kernel of the submodule $\mb P_\varrho \m H$, $\varrho\in \widehat G$, then it is given by the formula:
\bea\label{rksub}
K_\varrho(\bl z, \bl w) = \langle \mb P_\varrho K_{\bl w}, K_{\bl z}\rangle = \frac{\deg \varrho}{|G|} \sum_{\sigma \in G}\ov{\chi_\varrho(\sigma)}  K(\sigma^{-1} \cdot \bl z, \bl w ).
\eea
In particular, if $\varrho$ is a one dimensional representation, then  $\mb P_\varrho \C[\bl z]$ is freely finitely generated of rank $1$ over $\C[\bl z]^G$ and if $\{p\}$ is basis, then by the proof of Lemma \ref{cons} and Corollary \ref{t2} and the discussion before it, we have 
\bea\label{onegen}
K_\varrho(\bl z, \bl w) = p(\bl z) {\mathfrak K}_\varrho\big(\bl\theta(\bl z), \bl\theta(\bl w)\big)\ov{p(\bl w)}.
\eea 
Further, from \cite[p.2-3]{G} it follows that the relative invariant space (see \cite{RISt}) in $\m H$ is the corresponding reducing submodule, that is,
\Bea
R^G_{\varrho}(\m H) := \{f \in \m H : f \circ \sigma^{-1} = \chi_\varrho(\sigma) f \mathrm {~for ~ all~} \sigma \in G \} = \mb P_\varrho \m H.
\Eea
Consider the canonical one dimensional representation $\mu$ of any finite pseudoreflection group $G$ given by $\mu (\sigma) = \big(\det(\sigma)\big)^{-1}$, $\sigma \in G$ \cite[p.139, Remark (1)]{RISt}. From \cite[Lemma 4.4]{G}, \cite[p.616]{St} and the equation above, one concludes that the jacobian $J_{\bl \theta}$ of the map $\bl \theta$ is a generator of the submodule $\mb P_\mu \m H$ over $\C[\bl z]^G$. Thus, in the following proposition, we have the generalization of \cite[Proposition 4.9]{G} from Bergman space to the case of arbitrary $G$-invariant reproducing kernel Hilbert space.

\begin{prop}
For $\bl z, \bl w \in \Omega$, 
$$ 
{\mathfrak K}_\mu\big(\bl\theta(\bl z), \bl\theta(\bl w)\big) = \frac{1}{|G|}  \big(J_{\bl \theta}(\bl z)\big)^{-1}\big(\sum_{\sigma \in G}\ov{\chi_\mu(\sigma)}  K(\sigma^{-1} \cdot \bl z, \bl w )\big)\big(\ov{ J_{\bl \theta}(\bl w)}\big)^{-1}.
$$
\end{prop}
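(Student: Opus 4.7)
The plan is to combine the two expressions already available for the reproducing kernel $K_\mu$ of the reducing submodule $\mb P_\mu\m H$, one coming from \eqref{rksub} and the other from \eqref{onegen}, and then simply solve for $\mathfrak K_\mu$.

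First, I would recall the ingredients explicitly collected just before the statement: the canonical one-dimensional representation $\mu$ is given by $\mu(\sigma)=(\det \sigma)^{-1}$, and by the cited combination of \cite[Lemma 4.4]{G}, \cite[p.616]{St} together with the identification of $\mb P_\mu\m H$ with the space of relative invariants $R^G_\mu(\m H)$, the Jacobian $J_{\bl\theta}$ is a generator of $\mb P_\mu\C[\bl z]$ as a free module of rank one over $\C[\bl z]^G$. Hence the basis $\{p\}$ appearing in \eqref{onegen} may be taken to be $\{J_{\bl\theta}\}$ for $\varrho=\mu$, giving
\[
K_\mu(\bl z,\bl w)\;=\;J_{\bl\theta}(\bl z)\,\mathfrak K_\mu\big(\bl\theta(\bl z),\bl\theta(\bl w)\big)\,\ov{J_{\bl\theta}(\bl w)}.
\]

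Next, apply \eqref{rksub} with $\varrho=\mu$ and $\deg \mu=1$ to obtain
\[
K_\mu(\bl z,\bl w)\;=\;\frac{1}{\lvert G\rvert}\sum_{\sigma\in G}\ov{\chi_\mu(\sigma)}\,K(\sigma^{-1}\!\cdot\bl z,\bl w).
\]
Equating the two expressions and dividing by $J_{\bl\theta}(\bl z)\ov{J_{\bl\theta}(\bl w)}$ at points $\bl z,\bl w\in\Omega$ where the Jacobian does not vanish yields the desired identity.

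The only mild subtlety, which I would address in a closing line, is that the factor $\big(J_{\bl\theta}(\bl z)\big)^{-1}\big(\ov{J_{\bl\theta}(\bl w)}\big)^{-1}$ on the right-hand side is not defined on the branch locus $V_{\bl\theta}=\{\bl z\in\Omega:J_{\bl\theta}(\bl z)=0\}$. This is not a genuine obstacle: the left-hand side $\mathfrak K_\mu(\bl\theta(\bl z),\bl\theta(\bl w))$ is sesqui-holomorphic on all of $\Omega\times\Omega$, and the identity is being asserted precisely in the form displayed, so the statement is to be read on the open dense set $(\Omega\setminus V_{\bl\theta})\times(\Omega\setminus V_{\bl\theta})$; by holomorphy the formula determines $\mathfrak K_\mu\circ(\bl\theta\times\bl\theta)$ uniquely. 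No calculation beyond the substitution above is needed, so I do not anticipate a genuine difficulty.
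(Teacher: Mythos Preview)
Your proposal is correct and follows exactly the route the paper intends: the proposition is stated immediately after equations \eqref{rksub} and \eqref{onegen} and the observation that $J_{\bl\theta}$ generates $\mb P_\mu\C[\bl z]$ over $\C[\bl z]^G$, and the paper offers no separate proof beyond that setup. Your remark on the branch locus is a welcome clarification that the paper leaves implicit.
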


\subsection[Unitary equivalence]{On unitary equivalence of reducing submodules}

We obtain the following results towards classification of the family of reducing submodules
   \Bea
    \{\mb P_\varrho^{ii}\m H:\varrho\in\w G, 1\leq i\leq {\rm deg}\,\varrho\}
    \Eea
    over $\mb C[\bl z]^G$ under unitary equivalence.
\begin{thm}\label{inequihilmod}
Let $\m H$ be an analytic Hilbert module with a  $G$-invariant kernel. If $\varrho$ and $\varrho^\i\in\w G$ are such that $ {\rm deg}\,{\varrho}\neq  {\rm deg}\,{\varrho^\i},$ then
\begin{enumerate}
\item[$(1)$] $\mb P_{\varrho}\mathcal H$ and $\mb P_{\varrho^\i}\mathcal H$ are not unitarily equivalent submodules over the ring of $G$-invariant polynomials, and
\item[$(2)$] the submodules  $\mb P_{\varrho}^{ii}\m H$ and $\mb P_{\varrho^\i}^{jj}\m H$ are not unitarily equivalent for any $i,j$ with $1\leq i \leq  {\rm deg}\,{\varrho}$ and $1\leq j \leq  {\rm deg}\,{\varrho^\i}.$
\item[$(3)$] However, the submodules $\mb P_{\varrho}^{ii}\m H$ and $\mb P_{\varrho}^{jj}\m H$ are unitarily equivalent for any $\varrho\in\w G$ and $i,j$ with $1\leq i,j \leq  {\rm deg}\,{\varrho}.$
\end{enumerate}
\end{thm}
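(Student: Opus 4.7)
The plan is to tackle the three parts in the order (3), (1), (2). Part (3) furnishes an explicit intertwiner that will feed the joint-eigenspace computations used in (1) and (2). The key tools are Lemma \ref{uni}, Proposition \ref{rhojredsub}, Corollary \ref{equi} (together with Corollary \ref{t2}), and Lemma \ref{rankcomp}.

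For (3), Lemma \ref{uni} already provides a Hilbert-space unitary $\mb P_\varrho^{ij}\colon \mb P_\varrho^{jj}\m H \to \mb P_\varrho^{ii}\m H$. To promote it to a unitary of Hilbert modules over $\C[\bl z]^G$, use Theorem \ref{A} to write $\C[\bl z]^G = \C[\theta_1,\ldots,\theta_n]$, so it suffices to verify commutation with each $M_{\theta_k}$; this is exactly the content of Proposition \ref{rhojredsub} applied to the proper holomorphic map $\bl\theta$ itself.

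For (1), transfer the problem via Corollary \ref{equi}: $\mb P_\varrho\m H$ over $\C[\bl z]^G$ is unitarily equivalent to the analytic Hilbert module $\wi{\m H}_\varrho \subseteq \m O(\bl\theta(\Omega))\otimes \C^{(\deg\varrho)^2}$ over $\C[\bl z]$. I would choose a basis of $\C[\bl z]$ over $\C[\bl z]^G$ compatible with the $\w G$-isotypic decomposition (as in the paragraph preceding Corollary \ref{t2}); the $(\deg\varrho)^2$ basis polynomials inside $\mb P_\varrho\C[\bl z]$ then map under $\Gamma_\varrho$ to the constant functions $\varepsilon_j \in \wi{\m H}_\varrho$. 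The reproducing property yields $\langle \varepsilon_j, {\mathfrak K}_\varrho(\cdot,\bl u)\varepsilon_k\rangle = \delta_{jk}$ at every $\bl u \in \bl\theta(\Omega)$, so the kernel slices ${\mathfrak K}_\varrho(\cdot,\bl u)\varepsilon_k$ are linearly independent (giving the lower bound), while the $\varepsilon_j$'s generate $\wi{\m H}_\varrho$ over $\C[\bl z]$ (giving the matching upper bound via Lemma \ref{rankcomp}). Hence the rank of $\wi{\m H}_\varrho$ over $\C[\bl z]$ equals $(\deg\varrho)^2$. Since rank is a unitary invariant of Hilbert modules, the hypothesis $\deg\varrho \neq \deg\varrho^\i$ separates the modules.

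For (2), combine (3) with the additivity of joint-eigenspace dimensions under the orthogonal decomposition $\mb P_\varrho\m H = \bigoplus_{i=1}^{\deg\varrho} \mb P_\varrho^{ii}\m H$ into reducing submodules. By (3), all summands are mutually unitarily equivalent, so Lemma \ref{rankcomp}(2) gives
\begin{equation*}
(\deg\varrho)^2 = \dim\bigl(\mb P_\varrho\m H \otimes_{\C[\bl z]^G}\C_{\bl w}\bigr) = \deg\varrho \cdot \dim\bigl(\mb P_\varrho^{ii}\m H \otimes_{\C[\bl z]^G}\C_{\bl w}\bigr),
\end{equation*}
so each $\mb P_\varrho^{ii}\m H$ has joint-eigenspace dimension $\deg\varrho$. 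As this dimension is itself a unitary invariant, $\deg\varrho \neq \deg\varrho^\i$ rules out any equivalence of $\mb P_\varrho^{ii}\m H$ with $\mb P_{\varrho^\i}^{jj}\m H$. The main technical point is the rank computation in (1): the block-compatible basis choice is essential, since it simultaneously places the constants $\varepsilon_j$ inside $\wi{\m H}_\varrho$ (providing the generating family for the upper bound) and supplies the dual system that witnesses the linear independence of the reproducing-kernel slices at each $\bl u \in \bl\theta(\Omega)$.
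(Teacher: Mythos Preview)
Your proposal is correct and follows essentially the same approach as the paper: part (3) via Lemma \ref{uni} and Proposition \ref{rhojredsub}, part (1) via the realization $\wi{\m H}_\varrho$ from Corollaries \ref{equi} and \ref{t2} together with the block-compatible basis (yielding $\Gamma_\varrho(p_j)=\varepsilon_j$, linear independence of kernel slices, and the matching upper bound via Lemma \ref{rankcomp}), and part (2) via the additivity of joint-eigenspace dimensions across $\mb P_\varrho\m H=\bigoplus_i\mb P_\varrho^{ii}\m H$ combined with (3). The paper packages the rank computation of part (1) as a separate lemma (Lemma \ref{piecedim}), but the argument is identical to yours.
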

The following corollary is immediate.
 \begin{cor}\label{ub}
 Suppose that $\m H$ is an analytic Hilbert module over $\mb C[\bl z]$ and  each member of the family $\{\mb P_\varrho^{ii}\m H:\varrho\in\w G,\,\, 1\leq i\leq  {\rm deg}\,\varrho\}$ of  reducing submodules of $\m H$ over $\mb C[\bl z]^G$ is minimal. Then the number of inequivalent reducing submodules from this family cannot exceed $\lvert\w G\rvert.$
  \end{cor}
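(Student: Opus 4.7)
The plan is to reduce the claim to part (3) of Theorem \ref{inequihilmod}. First I would partition the family $\{\mb P_\varrho^{ii}\m H : \varrho\in\w G,\,1\leq i\leq \deg\varrho\}$ into $\lvert\w G\rvert$ blocks indexed by $\varrho\in\w G$, where the $\varrho$-block consists of the $\deg\varrho$ submodules $\{\mb P_\varrho^{ii}\m H : 1\leq i\leq \deg\varrho\}$.

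Next, invoking Theorem \ref{inequihilmod}(3), within each $\varrho$-block all the submodules $\mb P_\varrho^{ii}\m H$ are pairwise unitarily equivalent as Hilbert modules over $\C[\bl z]^G$. The intertwining unitary is provided by the restriction of $\mb P_\varrho^{ij}$ to $\mb P_\varrho^{jj}\m H$ supplied by Lemma \ref{uni}; this map automatically commutes with multiplication by every $p\in\C[\bl z]^G$ because $p\circ\sigma^{-1}=p$ for all $\sigma\in G$, so it intertwines the $\C[\bl z]^G$-module actions. Consequently each $\varrho$-block collapses to at most a single unitary-equivalence class.

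Summing across the $\lvert\w G\rvert$ blocks then yields the asserted upper bound of $\lvert\w G\rvert$ on the number of distinct unitary-equivalence classes within the family. The minimality hypothesis is not used in the counting proper; it merely ensures that each member of the family is already irreducible, so that one is genuinely counting inequivalent reducing submodules rather than further-decomposable pieces. There is no substantive obstacle to overcome: once Theorem \ref{inequihilmod} (particularly part (3)) has been established, the corollary becomes an immediate bookkeeping consequence.
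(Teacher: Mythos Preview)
Your argument is correct and is precisely the reasoning the paper has in mind: the corollary is stated as ``immediate'' from Theorem \ref{inequihilmod}, and your use of part (3) to collapse each $\varrho$-block to a single unitary-equivalence class is exactly that immediate deduction. Your remark that the minimality hypothesis plays no role in the counting itself is also accurate.
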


The theorem above is a generalization of the \cite[Theorem 3.14]{BGMS} from $\mathfrak S_n$ to an arbitrary finite pseudoreflection group. In \cite[Theorem 3.14]{BGMS}, it is shown that the dimension of joint kernel away from the set of critical points of the symmetrization map $\bl s$ was used to determine the equivalence in the case of $G = \mathfrak S_n$. We show here that the realization as an analytic Hilbert module enables us to determine the dimension of the joint kernel on all of $\Omega$ leading to the proof of the Theorem \ref{inequihilmod}. In fact, we are going to show that the dimension of the joint kernel is constant over $\Omega$ and is equal to the rank of the Hilbert module $\m H$ over $\C[\bl z]^G.$ Note that these observations improve the earlier results as we are able to compute the dimension and rank without deleting the  set of critical points of the map $\bl\theta$. 

To prove Theorem \ref{inequihilmod}, we note that Lemma \ref{uni} ensures the existence of a unitary operator $\mb P_
 \varrho^{ij}$ between any $\mb P_\varrho^{jj}\m H$ and $\mb P_\varrho^{ii}\m H$ and by Proposition \ref{rhojredsub}, $\mb P_\varrho^{ij}$  intertwines the module actions in submodules $\mb P_\varrho^{jj}\m H$ and $\mb P_\varrho^{ii}\m H$ over $\mb C[\bl z]^G$.  Thus, $\mb P_\varrho^{ii}\m H$ and $\mb P_\varrho^{jj}\m H$ are unitarily equivalent submodules over $\mb C[\bl z]^G$. 
However, we need further tools to prove part (1) and (2). As in \cite[Corollary 3.7]{BGMS}, the following theorem estimates the rank (see \ref{rank}) of the Hilbert module $\m H$ and $\mb P_{\varrho}\mathcal H$ over $\mb C[\bl z]^G.$

\begin{thm}\label{rankupbd}
Let $G$ be a finite pseudoreflection group and $\Omega \subseteq \C^n$ be a $G$-invariant domain and let $\m H$ be an analytic Hilbert module on $\Omega$ over $\C[\bl z]$ with $G$-invariant reproducing kernel. The rank of the Hilbert module $\mathcal H$  over $\C[\bl z]^G$ is at most $\lvert G \rvert$ and the rank of the Hilbert module $ \mb P_{\varrho}\mathcal H$ over $\C[\bl z]^G$ is at most $( {\rm deg}\,\varrho)^2.$
\end{thm}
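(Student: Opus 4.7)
The plan is to exhibit explicit generating sets of the required cardinalities and then invoke the density of polynomials guaranteed by the analytic Hilbert module structure together with the continuity of the projections $\mb P_\varrho$.

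For the first assertion, I would start by invoking Theorem \ref{B} (the classical Chevalley--Shephard--Todd theorem) to fix a basis $\{p_1,\ldots,p_d\}$ of $\C[\bl z]$ as a free module over $\C[\bl z]^G$, where $d=\lvert G\rvert$. Then every polynomial $p\in\C[\bl z]$ admits a unique decomposition $p=\sum_{i=1}^{d}p_i q_i$ with $q_i\in\C[\bl z]^G$, so the $\C[\bl z]^G$-linear span of $\{p_1,\ldots,p_d\}$ equals $\C[\bl z]$. Since $\m H$ is an analytic Hilbert module over $\C[\bl z]$, the polynomial ring $\C[\bl z]$ is dense in $\m H$, and therefore $\{p_1,\ldots,p_d\}$ is a generating set for $\m H$ viewed as a module over $\C[\bl z]^G$. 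By the definition of rank (Definition \ref{rank}), this yields $\mathrm{rank}_{\C[\bl z]^G}\m H\leq d=\lvert G\rvert$.

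For the second assertion, the idea is identical but applied fiberwise to the isotypic component. Cite the fact used in the proof of Lemma \ref{notzero} (see \cite[Theorem II.2.4., p.28]{Pan}) that $\mb P_\varrho\C[\bl z]$ is a free $\C[\bl z]^G$-module of rank $({\rm deg}\,\varrho)^2$. Fix a basis $\{q_1,\ldots,q_{({\rm deg}\,\varrho)^2}\}$ of $\mb P_\varrho\C[\bl z]$ over $\C[\bl z]^G$. Its $\C[\bl z]^G$-linear span is exactly $\mb P_\varrho\C[\bl z]$. The key remaining observation is that $\mb P_\varrho\C[\bl z]$ is dense in $\mb P_\varrho\m H$: this follows because $\mb P_\varrho$ is a bounded (orthogonal) projection on $\m H$ by Corollary \ref{rhoortho} and $\C[\bl z]$ is dense in $\m H$, so $\mb P_\varrho\C[\bl z]=\mb P_\varrho\overline{\C[\bl z]}\subseteq\overline{\mb P_\varrho\C[\bl z]}$ gives $\overline{\mb P_\varrho\C[\bl z]}=\mb P_\varrho\m H$. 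Consequently $\{q_1,\ldots,q_{({\rm deg}\,\varrho)^2}\}$ generates $\mb P_\varrho\m H$ as a module over $\C[\bl z]^G$, proving $\mathrm{rank}_{\C[\bl z]^G}\mb P_\varrho\m H\leq({\rm deg}\,\varrho)^2$.

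There is essentially no hard obstacle here once the algebraic CST theorem and the basic module structure theory of $\mb P_\varrho\C[\bl z]$ are in place; the proof is just a matter of combining a known polynomial basis with the defining density property of an analytic Hilbert module and the continuity of the orthogonal projection $\mb P_\varrho$. The only minor point deserving care is verifying that $\mb P_\varrho\C[\bl z]$ (and not a strictly larger set) is dense in $\mb P_\varrho\m H$, which is handled by the elementary continuity argument above.
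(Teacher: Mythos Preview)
Your proposal is correct and follows essentially the same approach as the paper: exhibit a $\C[\bl z]^G$-basis of $\C[\bl z]$ (respectively of $\mb P_\varrho\C[\bl z]$) of the right size, then invoke density of polynomials in $\m H$ (respectively of $\mb P_\varrho\C[\bl z]$ in $\mb P_\varrho\m H$) to conclude. The paper's proof is slightly more terse and asserts the density of $\mb P_\varrho\C[\bl z]$ in $\mb P_\varrho\m H$ without the explicit continuity argument you supply, so your version is, if anything, a bit more detailed.
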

\begin{proof}
Suppose $|G|=d.$ By CST Theorem, we know that there exists a subset $\mathcal F = \{f_i\}_{i=1}^d \subset \C[\bl z]$ such that $\C[\bl z]=\{\sum_{i=1}^d r_if_i : f_i \in \mathcal F, r_i \in \C[\bl z]^G\}.$ Since $\C[\bl z]$ is dense in $\m H,$ ${\rm rank}_{\C[\bl z]^G}~ \m H \leq |\mathcal F| =d.$ On the other hand, from the facts that $\mathbb{P}_{\varrho}\C[\bl z]$ is a free module over $\C[\bl z]^G$ of rank $( {\rm deg}\,\varrho)^2$  \cite[Proposition II.5.3., p.28]{Pan} and $\mathbb{P}_{\varrho}\C[\bl z]$ is dense in $\mb P_\varrho \m H,$ we similarly have ${\rm rank}_{\C[\bl z]^G}~\mb P_{\varrho}\mathcal H\leq ( {\rm deg}\,\varrho)^2.$ 
\end{proof}
We denote the compression operator $M_{\theta_i}^* \big|_{\mb P_{\varrho} \mathcal H}$ by $ {M_{\theta_i}^{(\varrho)}}^*.$
 \begin{lem}\label{piecedim}
 For every $\bl w \in \Omega$ and for every $\bl v\in\bl\theta(\Omega)$, we have the following:
 \begin{enumerate}
 \item[(i)] $ \mathrm{rank}_{\C[\bl z]}~\wi{\m H} = \dim \bigcap_{i=1}^n  \ker\big( M^*_i - \ov{\bl v_i} \big)= \lvert G\rvert$;
     \item[(ii)] ${\rm rank}_{\C[\bl z]^G}~ \m H = \dim\bigcap_{i=1}^n \ker\big(M^*_{\theta_i} - \ov{\theta_i(\bl w)}\big) = \lvert G\rvert ,$ and
     \item[(iii)] for every $\varrho \in \widehat{G}$, 
     $${\rm rank}_{\C[\bl z]}~{\wi{\m H}}_{\varrho} = {\rm rank}_{\C[\bl z]^G}~\mb P_{\varrho}\mathcal H = \dim\bigcap_{i=1}^n \ker\big({M_{\theta_i}^{(\varrho)}}^* - \ov{\theta_i(\bl w)}\big) = ( {\rm deg}\,\varrho)^2.$$ 
 \end{enumerate}
  \end{lem}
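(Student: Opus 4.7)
The plan is to prove (i) first; (ii) then follows by transporting the identity along the unitary $\Gamma$ of Corollary \ref{modtheta}, and (iii) analogously via the $\Gamma_\varrho$ of Corollary \ref{equi}. The upper bound on rank in each part is already at hand: $\mathrm{rank}_{\C[\bl z]}\wi{\m H}\le |G|$ (resp.\ $\mathrm{rank}_{\C[\bl z]}\wi{\m H}_\varrho\le (\deg\varrho)^2$) follows either from Theorem \ref{rankupbd} or directly from the dense subset $\C[\bl z]\otimes \C^{|G|}\subseteq \wi{\m H}$; and Lemma \ref{rankcomp}(1)--(3) then forces the joint-kernel dimension to be bounded by the rank. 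So only the matching \emph{lower} bound requires work.

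The lower bound in (i) is the crux. Fix $\bl v \in \bl\theta(\Omega)$. The reproducing property of $\mb K$ for the $\C^{|G|}$-valued RKHS $\wi{\m H}$ gives
\[
M_i^*\,\mb K(\cdot,\bl v)\bl x \,=\, \ov{v_i}\,\mb K(\cdot,\bl v)\bl x \qquad \text{for all } \bl x\in\C^{|G|},\ i=1,\ldots,n,
\]
and the map $\bl x\mapsto \mb K(\cdot,\bl v)\bl x$ is precisely the adjoint of the evaluation $\mathrm{ev}_{\bl v}:\wi{\m H}\to\C^{|G|}$. Hence the joint kernel contains a subspace of dimension $\mathrm{rank}(\mathrm{ev}_{\bl v})$, so it is enough to show that $\mathrm{ev}_{\bl v}$ is surjective at every $\bl v$. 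Reading $\Gamma(p_j)$ off the definition \eqref{gamma}, the basis expansion of $p_j$ is itself $p_j = 1\cdot p_j + \sum_{k\ne j}0\cdot p_k$, so $\Gamma(p_j) = \varepsilon_j$, the constant function with value equal to the $j$-th standard basis vector of $\C^{|G|}$. Thus each $\varepsilon_j\in\wi{\m H}$ and $\mathrm{ev}_{\bl v}(\varepsilon_j)=\varepsilon_j$, giving the desired surjectivity and completing part (i).

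For (ii), Corollary \ref{modtheta} together with Remark \ref{main2} says that $\Gamma$ intertwines $M_{\theta_i}$ on $\m H$ with $M_i$ on $\wi{\m H}$; setting $\bl v = \bl\theta(\bl w)$, the joint-kernel and rank identities of (i) transport verbatim to (ii). For (iii), use \cite[Proposition II.5.3]{Pan} to choose the basis $\{p_1,\ldots,p_{|G|}\}$ of $\C[\bl z]$ over $\C[\bl z]^G$ so that, for each $\varrho\in\w G$, a sub-collection of size $(\deg\varrho)^2$ is a free $\C[\bl z]^G$-basis of $\mb P_\varrho\C[\bl z]$. These $p_j$'s already lie in $\mb P_\varrho\m H$, so $\Gamma_\varrho(p_j)=\Gamma(p_j)=\varepsilon_j$ is a constant function in $\wi{\m H}_\varrho$ which, by the block structure of $\mb K$ displayed in \eqref{reprobreak}, lives in the $(\deg\varrho)^2$-dimensional coordinate subspace of $\C^{|G|}$ in which $\wi{\m H}_\varrho$ sits. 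Running the surjectivity-of-evaluation argument of (i) inside $\wi{\m H}_\varrho$ with the reproducing kernel $\mathfrak K_\varrho$, and then pulling back through Corollary \ref{equi}, yields (iii).

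The principal conceptual obstacle to anticipate is the temptation to believe that the joint-kernel dimension should drop on the image $\bl\theta\big(\bigcup_i H_i\big)$ of the union of reflecting hyperplanes, where the fibres of $\bl\theta$ collapse (and $\det\Lambda$ vanishes). The resolution — which is the technical heart of the argument — is the observation that the constant sections $\varepsilon_j=\Gamma(p_j)$ belong to $\wi{\m H}$ uniformly at every point of $\bl\theta(\Omega)$; this is an immediate consequence of the analytic CST theorem (Theorem \ref{theoremb}) and shows that no such drop occurs. In particular, this is how the present Lemma strengthens the earlier \cite[Corollary 3.12]{BGMS}, in which the analogous computation required excising the critical set of $\bl s$.
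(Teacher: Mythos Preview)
Your proof is correct and follows essentially the same approach as the paper's: both arguments hinge on the observation that the constant sections $\varepsilon_j$ lie in $\wi{\m H}$ (the paper uses this to show $\{\mb K_{\bl v}\varepsilon_i\}_{i=1}^d$ is linearly independent via the reproducing property, while you phrase it as surjectivity of $\mathrm{ev}_{\bl v}$), and both deduce (ii) and (iii) by transporting along the unitaries $\Gamma$ and $\Gamma_\varrho$. Your explicit identification $\Gamma(p_j)=\varepsilon_j$ makes the key step a bit more transparent than the paper's formulation, but the logical content is identical.
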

 \begin{proof}
 Suppose that $\lvert G\rvert=d$ and $\{\varepsilon_1,\ldots,\varepsilon_d\}$ is the standard basis of $\C^d.$  Considering $\varepsilon_i$ as the constant function on $\bl \theta(\Omega)$ for $i=1,\ldots, d,$ and using the reproducing property of $\mb K$, one can show that $\sum_{i=1}^d \alpha_i \mb K_{\bl v} \varepsilon_i = 0 $ implies $\overline{\alpha_j} =0 $ for all $j=1,\ldots,d.$ Equivalently, the set $\{\mb K_{\bl v} \varepsilon_i\}_{i=1}^d$ is linearly independent.  Hence 
$ \dim \bigcap_{i=1}^n  \ker\big( M^*_i - \ov{\bl v_i} \big)\geq d.$ Since $\C[\bl z]\otimes \C^d$ is dense in $\wi{\m H}$, from Lemma \ref{rankcomp}, we have
$$
d\leq \dim \bigcap_{i=1}^n  \ker\big( M^*_i - \ov{\bl v_i} \big) \leq \mathrm{rank}_{\C[\bl z]}~\wi{\m H}\leq d.
$$
This completes the proof of $(i)$. Since $\mathbf{M}$ and $\mathbf M_{\bl \theta}$ are unitarily equivalent, as noted in Remark \ref{main2}, part $(ii)$ follows from part $(i)$.  Similarly, using Corollary \ref{equi} and \ref{t2}, part $(iii)$ follows from part $(i)$. 
\end{proof}
We now prove part (1) and (2) of Theorem \ref{inequihilmod}. 

\begin{proof}[\textbf{Proof of Theorem \ref{inequihilmod}}] Part (1) of the Theorem follows from the lemma above. To prove Part (2),
set the compression $M_{\theta_i} \big|_{\mb P_{\varrho}^{jj} \mathcal H} = {M_{\theta_i}^{(\varrho,j)}}$ as from Proposition \ref{fd1}, it follows that $\mb P_{\varrho}^{jj} \mathcal H$ is a joint reducing subspace for the $n$-tuple of operators $(M_{\theta_1},\ldots,M_{\theta_n}),$ where $j=1,\ldots, {\rm deg}\,\varrho.$ 
We know from Lemma \ref{uni} that for a fixed $\varrho \in \widehat{G}$ and $1 \leq \ell ,j \leq  {\rm deg}\,\varrho,$ there exists a unitary $\mb P_\varrho^{j \ell} : \mb P^{\ell\ell}_\varrho \m H \to \mb P_{\varrho}^{jj} \m H$ that intertwines ${M_{\theta_i}^{(\varrho,\ell)}}$ and ${M_{\theta_i}^{(\varrho,j)}}$ for every $i=1,\ldots,n.$ 
Then, for a fixed $\bl w \in \Omega,$ the unitary $({\mb P_\varrho^{j \ell}})^*$ maps $\cap_{i=1}^n \ker\big({M_{\theta_i}^{(\varrho,j)}}^* - \ov{\theta_i(\bl w)}\big)$ onto $\cap_{i=1}^n \ker\big({M_{\theta_i}^{(\varrho,\ell)}}^* - \ov{\theta_i(\bl w)}\big).$ Hence,
\Bea
\dim\cap_{i=1}^n \ker\big({M_{\theta_i}^{(\varrho,j)}}^* - \ov{\theta_i(\bl w)}\big)=\dim\cap_{i=1}^n \ker\big({M_{\theta_i}^{(\varrho,\ell)}}^* - \ov{\theta_i(\bl w)}\big),
\Eea
and we conclude that the dimension of $\bigcap_{i=1}^n \ker \big({M_{\theta_i}^{(\varrho,j)}}^* - \ov{\theta_i(\bl w)}\big)$ is equal for all $1 \leq j \leq  {\rm deg}\,\varrho,$  say $m.$
 
 From Equation \eqref{rhoiisum} and Corollary \ref{rhoortho}, we get $\mb P_{\varrho} \m H= \oplus_{i=1}^{ {\rm deg}\,\varrho} \mb P_{\varrho}^{ii} \m H.$
Subsequently,
\Bea
\bigcap_{i=1}^n \ker\big({M_{\theta_i}^{(\varrho)}}^* - \ov{\theta_i(\bl w)}\big) = \bigoplus_{j=1}^{ {\rm deg}\,\varrho} \bigcap_{i=1}^n \ker\big({M_{\theta_i}^{(\varrho,j)}}^* - \ov{\theta_i(\bl w)}\big) \text{~for all~} \varrho.
\Eea
This implies that $( {\rm deg}\,\varrho)^2 = m \cdot  {\rm deg}\,\varrho,$ that is, $m= {\rm deg}\,\varrho.$ Therefore, by an analogous argument as above, there does not exist any unitary operator from $\mb P_\varrho^{jj}\m H$ to $\mb P_{\varrho^\i}^{jj}\m H$ that intertwines ${M_{\theta_i}^{(\varrho,j)}}^*$ and ${M_{\theta_i}^{(\varrho^\i,j)}}^*,$ whenever $ {\rm deg}\,\varrho \neq  {\rm deg}\,{\varrho^\i}.$ Hence part (2) follows.
\end{proof}
\noindent Combining these results with Corollary \ref{equi}, we have the following corollary.
\begin{cor}
If $ {\rm deg}\,\varrho \neq  {\rm deg}\,\varrho^\i,$ then 
$\wi{\mathcal H}_{\varrho}$ and $\wi{\mathcal H}_{\varrho^\i}$ are not equivalent as Hilbert  modules over $\C[\bl z].$ 
\end{cor}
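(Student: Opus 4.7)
The plan is to deduce this corollary by contraposition from Theorem \ref{inequihilmod}(1), using Corollary \ref{equi} as the bridge between the two module structures. The point is that the realization $\Gamma_\varrho : \mb P_\varrho\m H \to \wi{\m H}_\varrho$ is a unitary that intertwines the module actions $\mathfrak m_{p\circ\bl\theta}$ on $\mb P_\varrho\m H$ (over $\C[\bl z]^G$) with $\wi T_p$ on $\wi{\m H}_\varrho$ (over $\C[\bl z]$), as established in Equation \eqref{thetaiequi}. Consequently, the two module equivalence problems are interchangeable.

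Concretely, suppose for contradiction that there exists a unitary $U:\wi{\m H}_\varrho \to \wi{\m H}_{\varrho^\i}$ intertwining the $\C[\bl z]$-actions, i.e.\ $U \wi T_p^{(\varrho)} = \wi T_p^{(\varrho^\i)} U$ for every $p\in\C[\bl z]$, where we temporarily decorate $\wi T_p$ with the appropriate superscript. Define $V := \Gamma_{\varrho^\i}^{*} U \Gamma_\varrho : \mb P_\varrho\m H \to \mb P_{\varrho^\i}\m H$; this is a composition of unitaries, hence unitary. For any $q\in\C[\bl z]^G$, by Theorem \ref{theorema} (analytic CST), there is a unique $p\in\C[\bl z]$ with $q = p\circ\bl\theta$, and then Equation \eqref{thetaiequi} gives $\Gamma_\varrho \mathfrak m_q = \wi T_p^{(\varrho)}\Gamma_\varrho$ and similarly for $\varrho^\i$. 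Chaining these intertwiners yields $V\mathfrak m_q = \mathfrak m_q V$ for every $q\in\C[\bl z]^G$, so $V$ is a unitary equivalence of $\mb P_\varrho\m H$ and $\mb P_{\varrho^\i}\m H$ as Hilbert modules over $\C[\bl z]^G$.

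This contradicts Theorem \ref{inequihilmod}(1), which forbids such an equivalence when $\deg\varrho \neq \deg\varrho^\i$. Hence no such $U$ can exist, proving the corollary. The entire argument is formal once one has Corollary \ref{equi} and Theorem \ref{inequihilmod}(1) in hand, so there is no essential difficulty: the only point worth being careful about is verifying that the intertwining property of $\Gamma_\varrho$, which a priori is stated for operators of the form $\mathfrak m_{p\circ\bl\theta}$, in fact covers all of $\C[\bl z]^G$, and this is exactly the content of analytic CST applied to the invariant ring.
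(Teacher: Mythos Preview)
Your argument is correct and follows precisely the route the paper intends: it says only ``Combining these results with Corollary \ref{equi}'' before stating the corollary, and your proof spells out exactly that combination---transport a hypothetical $\C[\bl z]$-module equivalence $U$ back through the unitaries $\Gamma_\varrho,\Gamma_{\varrho'}$ to a $\C[\bl z]^G$-module equivalence, contradicting Theorem \ref{inequihilmod}(1). One small point: when you write that ``by Theorem \ref{theorema} (analytic CST) there is a unique $p\in\C[\bl z]$ with $q=p\circ\bl\theta$,'' the result you actually need is the algebraic Chevalley--Shephard--Todd theorem (Theorem \ref{A}), which guarantees $p$ is a \emph{polynomial}; Theorem \ref{theorema} would only give a holomorphic $p$, which is not enough since $\wi T_p$ is only defined for $p\in\C[\bl z]$.
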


\section{Application to multiplication operators by proper maps}\label{last}



In this section, we describe the joint reducing subspaces  of a tuple of (bounded) multiplication operators by a  representative of a proper holomorphic map on a bounded domain $\Omega$  in $\mb C^n$ which is factored by automorphisms $G\subseteq {\rm Aut}(\Omega)$ on a Hilbert space consisting of holomorphic functions with $G$-invariant reproducing kernel. We recall the notions of {\it representative} and {\it factored by automorphisms} from \cite{BD}, see also \cite{DS}.
\begin{enumerate}[leftmargin=*]
\item [1.] Consider a proper holomorphic mapping $\bl f:D_1\to D_2,$ where $D_1, D_2$  are domains in $\mb C^n.$  By a {\it representative} of $\bl f,$ we mean a complex manifold $\w D$ and a proper mapping $\hat{\bl  f}:D_1\to\w D$ with the properties:
\begin{enumerate}
\item [(i)]$\hat{\bl f}(D_1)=\w D,$
\item[(ii)] $\hat {\bl f}(\bl z_1)=\hat {\bl f}(\bl z_2)$ if and only if $\bl f(\bl z_1)=\bl f(\bl z_2).$
\end{enumerate}
Given any $\hat {\bl f}$ with these properties, there exists a biholomorphism $\bl h: \hat D \ra D_2$ such that $\bl f = \bl h\circ\hat 
{\bl f}.$ Thus any representative $(\hat {\bl f}, \hat D)$ determines $(\bl f, D_2)$ up to biholomorphism.
\vspace{.1 cm}
\item[2.] If $\bl f:D_1\to D_2$ is {\it factored by automorphisms}, that is, if there exists a finite subgroup $G\subseteq {\rm  Aut}(D_1)$ such that
\Bea
\bl f^{-1}\bl f(\bl z)=\bigcup_{\rho\in G} \rho(\bl z) \text{~for all~}\bl  z\in D_1,
\Eea 
then $\bl f$ is $G$-invariant ($\bl f\circ \rho=\bl f$ for $\rho\in G$). In \cite[p.702]{R}, the notion of $\bl f$ factoring by automorphisms is referred as $\bl f$ is precisely $G$-invariant. In this case, $\bl f = \tilde {\bl f}\circ\bl\eta,$ where $\bl\eta:D_1\to D_1/G$ is a quotient map and $\tilde {\bl f}:D_1/G\to D_2$ is a biholomorphism.  Therefore, $G\cong\mr{Deck} (\bl f)$ and the quotient map 
$\bl \eta:D_1\to D_1/G$
gives a representative $(\hat {\bl f}=\bl\eta, \hat D=D_1/G)$ of $\bl f.$
\end{enumerate}
Before proceeding to the class of concrete examples, we make the following remark regarding the existence of factorization.
\begin{rem}
A factorization does not always exist (see \cite[p.223]{DS}, \cite[p.711]{R}, \cite[p.168]{BD}). If $D_1, D_2 \subseteq \mb C^n, n\geq 2,$ and $D_1$ is a strongly pseudoconvex domain, then $\bl f:D_1\to D_2$ can be factored by automorphisms, in the following cases:
\begin{enumerate}[leftmargin=*]
 \item [1.] $D_1=\mb B_n,$ the open euclidean ball in $\mb C^n$ and in this case $G$ is conjugate to a pseudoreflection group \cite[Theorem 1.6, p.704]{R}, \cite[p.506]{BB}.
    \vspace{.1 cm}
    \item[2.] $D_1$ is bounded and simply connected with $C^2$-boundary and $\bl f\in C^\infty(\bar D_1)$ \cite{Bd} or with $C^\infty$-boundary \cite{BB}. In \cite[Theorem 2]{BB}, authors prove that if $M$ is a complex manifold of dimension $n,\, \Omega$ is a bounded, simply connected, strictly pseudoconvex domain in $\mb C^n$ with $C^\infty$ boundary and $\bl F:\Omega\to M$ is a proper holomorphic mapping, then there exists a finite group $G$ of automorphisms of $\Omega$ such that
    \Bea
    \bl F^{-1}\big(\bl F(\bl z)\big)=\{g(\bl z):g\in G\} \,\,\text{for}\,\, z\in\Omega.
    \Eea
    
    \item[3.] $D_1$ is a complexified sphere and in this case $G$  is a real reflection group \cite[Theorem 3.2]{BD}.
    \vspace{.1 cm}
    \item[4.] If $R$ is a Reinhardt domain in $\mb C^n,\, n>1,$ and $D$ is a bounded simply connected strictly pseudoconvex domain with $C^\infty$ boundary, then any  proper holomorphic mapping $\bl F:R\to D$   is factored by a finite pseudoreflection group $G\subseteq {\rm Aut}(R),$ see \cite[p.61]{DS1}.
    \vspace{.1 cm}
    \item[5.] A generalized pseudoellipsoid in $\mb C^n,\, n\geq 2,$ is a bounded Reinhardt domain 
  $\Sigma(\bl\a)$ defined by 
  \Bea
  \Sigma(\bl\a)=\Big\{\bl z\in\mb C^n:\sum_{j=1}^n\lvert z_j\rvert^{2\a_j}<1\Big\}, \text{~where~} \bl \a=(\a_1,\ldots,\a_n)\in\mb R_+^n,
\Eea
$\mb R_+$ being the set of positive real numbers \cite{DS}.
    In \cite[Theorem 2.5,  Theorem 2.6]{DS}, the authors find necessary and sufficient conditions for a proper holomorphic map  between two distinct generalized  pseudoellipsoids (that is,  a proper holomorphic map $\bl f:\Sigma(\bl\a)\to\Sigma(\bl\b)$) to factor by automorphisms $G\subseteq {\rm Aut}\big(\Sigma(\bl\a)\big)$. It is also shown in \cite[Lemma 2.2]{DS} that $G$ is conjugate in ${\rm Aut}\big(\Sigma(\bl\a)\big)$ to a finite pseudoreflection group.
    \item[6.] In \cite[p.18]{M}, Meschiari generalized \cite[Theorem 1.6, p.704]{R} to the case of bounded symmetric domains of classical type. He proved  that if $D_1$ is a bounded symmetric domain of classical type then $G$ is conjugate to a finite pseudoreflection group. 

\end{enumerate}
\end{rem}
\subsection{Example (Unit disc)}
A {\it finite Blaschke product} is a rational function of the form
\Bea
{\rm B}(z)=e^{i\theta}\displaystyle\prod_{j=1}^n\bigg(\frac{z-a_j}{1-\bar a_j z}\bigg)^{k_j},
\Eea
where $a_1,\ldots, a_n \in\mb D$ are the distinct zeros of $\rm B$ with multiplicities $k_1,\ldots,k_n,$ respectively and $\theta\in\mb R.$ The number of zeros (counting multiplicity) of ${\rm B}$ in $\mb D$ is called the {\it order} or {\it degree} of ${\rm B}.$
Finite Blaschke products are examples of proper holomorphic self-maps of the unit disc $\mb D.$ In fact,  they constitute the set of all possible proper holomorphic self-maps of the unit disc $\mb D,$ see  \cite[Remarks 3, p.142]{NN}.

The group M\"ob$(\mb D)$  of biholomorphic automorphisms of $\mb D$ is given by 
\Bea
\text{ M\"ob}(\mb D)=\{\v_{t,a}(z)=t\frac{z-a}{1-\ov a z}: t\in\mb T, a\in\mb D\},
\Eea
where $\mb T$ denotes the multiplicative group of complex numbers of unit modulus. We write it simply M\"ob. 
Let ${\rm B} : \mb D \to \mb D$ be a finite Blaschke product of order $m.$ It is well-known that ${\rm B}$ is an analytic (branched) covering map of degree $m.$ We denote the set of critical values of ${\rm B}$ in $\mb{D}$ by $ E = \{{\rm B}(c): {\rm B}'(c)= 0, c \in \mb D \}$ and  $F = {\rm B}^{-1}(E)$. The restriction  ${\rm B} : \mb D \smallsetminus F \to \mb D \smallsetminus E$ is an unbranched covering of degree $m.$ We know that 
${{\rm B}}$ is a Galois covering if and only if it is factored by automorphism if and only if order of ${\rm Deck(B)}$ is equal to degree of ${{\rm B}}$. In general, ${\rm B}$ is not a Galois covering, see \cite[p.711]{R}. We exhibit examples in each of which ${\rm B}$ forms a Galois covering and we describe a family of reducing subspaces of the multiplication operator $M_{{\rm B}}$ on a family of reproducing  kernel Hilbert spaces which includes well-known function spaces  on the unit disc $\mb D.$
\vspace{0.1in}
\begin{enumerate}[leftmargin=*]
    \item Fix ${\rm B}(z) = z^m \text{ for } z \in \mb D, \, m\geq 2.$ Then  
    \Bea
    {\rm Deck(B)} = \{ \xi_k : \mb D \to \mb D : k=0,1\ldots, m-1\},
    \Eea
    where $\xi_k(z) = e^{\frac{2k \pi}{m}i} z.$ Hence ${\rm Deck(B)}$ is isomorphic to $\mb Z_m.$
    
Consider a sequence $\{a_n\}_{n=0}^\infty$ of non-negative real numbers such that 
    \bea\label{diagker}
    K(z,w)=\sum_{n=0}^\infty a_n(z\ov w)^n
    \eea
    is a positive definite kernel on $\mb D\times\mb D$ and $\m H_K$ is the Hilbert space with the reproducing kernel $K.$ Assume that $M_z$ (multiplication by the coordinate function) is a bounded operator on $\m H_K.$  In particular, if $\l>0$ and $a_n=\frac{(\l)_n}{n!},$ then 
\Bea
K(z,w)=(1-z\ov w)^{-\l}
\Eea
is a positive definite kernel and $M_z$ is bounded on $\m H_K,$ where $(\l)_n=\l(\l+1)\ldots(\l+n-1).$ The choices $\l=1,2$ correspond to the Hardy space $H^2(\mb D)$ and the Bergman space $\mb A^2(\mb D),$ respectively.  Also, if $a_n=\frac{1}{n+1},\, M_z$ is bounded on $\m H_K,$ the choice $\{\frac{1}{n+1}\}_{n=0}^\infty$ corresponds to the Dirichlet space $\m D$ on $\mb D.$
    Clearly,
    \bea\label{dinv}
    K\big(\xi_k(z),\xi_k(w)\big)=\sum_{n=0}^\infty a_n \big(\xi_k(z)\overline{\xi_k(w)}\big)^n =\sum_{n=0}^\infty a_n(z\ov w)^n = K(z,w)
    \eea
    for $k=0,1,\ldots, m-1.$ That is, the  kernel $K$ is ${\rm Deck(B)}$-invariant. Since the group ${\rm Deck}{\rm (B)}$ is cyclic $\widehat{{\rm Deck}{\rm (B)}}$ consists of exactly $m$ one dimensional representations of ${\rm Deck}{\rm (B)}.$
 Therefore, 
\Bea    
\w{{\rm Deck}{\rm (B)}}=\{\chi_j: j=0,1\ldots, m-1\},
\Eea
where $\chi_j(\xi_k)=e^{\frac{2kj\pi}{m} i}$ for $j,k=0,1,\ldots, m-1.$
By Corollary \ref{rhoortho} and Equation \eqref{od}, the operator $\mb P_j:\m H_K\to\m H_K$ defined by
\Bea
(\mb P_jf)(z)=\frac{1}{m}\sum_{k=0}^{m-1}\ov{\chi_j(\xi_k)}f(\xi_k^{-1} \cdot z)
\Eea
 is an orthogonal projection for $j=0,1,\ldots, m-1.$ Since $\widehat{{\rm Deck}{\rm (B)}}$ consists of one dimensional representations only, Theorem \ref{fd1} and Lemma \ref{piecedim} describe the family of minimal reducing subspaces of $M_{z^m}$ on $\m H_K.$ Note that in this case $\lvert\w{{\rm Deck}{\rm (B)}}\rvert=\sum_{j=0}^{m-1}1=m.$
\begin{thm}\label{one}
The reducing  submodules of $\m H_K$ over   $\C[z^m]$ are   $\mb P_j \m H_K,\, j=0,1,\ldots, m-1$. Each of these submodules is of rank $1$ and consequently, they are minimal reducing subspaces of $M_{z^m}$ on $\m H_K.$
\end{thm}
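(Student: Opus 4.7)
The plan is to establish the theorem in three successive steps corresponding to its three claims: that each $\mb P_j \m H_K$ is a reducing submodule of $\m H_K$ over $\C[z^m]$, that each has rank one over $\C[z^m]$, and that rank one forces minimality in this setting.

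First, I would observe that $G = {\rm Deck}({\rm B})$ is cyclic of order $m$, hence abelian, so each $\chi_j \in \widehat G$ is one-dimensional. Since ${\rm B}(z) = z^m$ is precisely $G$-invariant, Proposition \ref{properholo} applies and Corollary \ref{reduce1} yields that each $\mb P_j \m H_K$ is a joint reducing subspace of $M_{z^m}$ on $\m H_K$, and hence a reducing submodule over $\C[z^m]$. The orthogonal sum decomposition \eqref{decomp} then collapses (since $\deg \chi_j = 1$ makes $\mb P_{\chi_j}^{11} = \mb P_{\chi_j}$) to $\m H_K = \bigoplus_{j=0}^{m-1} \mb P_j \m H_K$.

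Next, to compute the rank, I would apply Lemma \ref{piecedim}(iii) with $\varrho = \chi_j$, which directly gives $\mathrm{rank}_{\C[z^m]}\, \mb P_j \m H_K = (\deg \chi_j)^2 = 1$.

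For the final step (minimality), I would transport the problem to a scalar setting via Corollary \ref{t2}: since $\deg \chi_j = 1$, the submodule $\mb P_j \m H_K$ over $\C[z^m]$ is unitarily equivalent to a scalar analytic Hilbert module $\wi{\m H}_{\chi_j}$ on $\bl\theta(\mb D) = \mb D$ over $\C[z]$, in such a way (Remark \ref{main2}) that $M_{z^m}|_{\mb P_j \m H_K}$ corresponds to $M_z$ on $\wi{\m H}_{\chi_j}$. It then suffices to show that $M_z$ is irreducible on every scalar analytic Hilbert module on $\mb D$. Letting $P$ be an orthogonal projection commuting with $M_z$ and $M_z^*$ and $\mathfrak K_{\chi_j}(\cdot, w)$ the reproducing kernel, the identity $M_z^* \mathfrak K_{\chi_j}(\cdot, w) = \bar w\, \mathfrak K_{\chi_j}(\cdot, w)$ together with the one-dimensionality of the $\bar w$-eigenspace (Lemma \ref{piecedim}(iii)) gives $P \mathfrak K_{\chi_j}(\cdot, w) = c(w)\, \mathfrak K_{\chi_j}(\cdot, w)$ for some scalar $c(w)$. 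Idempotency forces $c(w) \in \{0, 1\}$, while $c(w) = \langle P \mathfrak K_{\chi_j}(\cdot, w), \mathfrak K_{\chi_j}(\cdot, w)\rangle / \mathfrak K_{\chi_j}(w,w)$ is real-analytic on the open connected subset where the kernel is nonzero, hence identically $0$ or identically $1$ there; totality of the reproducing kernels then yields $P = 0$ or $P = I$.

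The main obstacle will be this last step: rank one does not abstractly imply irreducibility of a Hilbert module, so the passage via Corollary \ref{t2} to a scalar analytic Hilbert module on $\mb D$ is what makes the reproducing-kernel eigenvector argument available. A minor technicality to dispose of is the handling of exceptional points at which $\mathfrak K_{\chi_j}(\cdot, w)$ vanishes (for instance $w = 0$ when $j \neq 0$), but such points form an analytic, hence discrete, subset of $\mb D$, and the argument extends by continuity to all of $\mb D$.
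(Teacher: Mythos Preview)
Your argument is correct and proceeds along the same lines as the paper, which simply cites Theorem~\ref{fd1} and Lemma~\ref{piecedim} and leaves the minimality implication implicit under the word ``consequently''; you have filled in that implicit step by passing through Corollary~\ref{t2} to a scalar analytic Hilbert module on $\mb D$ and running the standard reproducing-kernel irreducibility argument. One small point: your final caveat about exceptional points where $\mathfrak K_{\chi_j}(\cdot,w)$ vanishes is unnecessary, because $\wi{\m H}_{\chi_j}$ is a scalar analytic Hilbert module containing the constants (the generator $z^{j}$ of $\mb P_j\C[z]$ over $\C[z^m]$ is sent by $\Gamma$ to the constant function $1$), so $\langle 1,\mathfrak K_{\chi_j}(\cdot,w)\rangle=1$ forces $\mathfrak K_{\chi_j}(\cdot,w)\neq 0$ for every $w\in\mb D$.
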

\begin{rem}\label{unique}\rm
   Clearly, $\m M_j:=\ov{{\rm span}}\{z^{mk+j}\}_{k=0}^\infty$ for $j=0,1,\ldots, m-1$ is the family of reducing subspaces of $M_{z^m}$ on $\m H_K.$ The aforementioned spanning set  forms an orthogonal basis of $\m M_j$ and $\mb P_j\m H_K=\m M_{j}$ for $j=0,1,\ldots, m-1.$ We compute the reproducing kernel ${\mathfrak K}_{m - 1}$ of $\m M_{m-1}$ following Corollary \ref{t2} and Equations \eqref{rksub} and \eqref{onegen}. Since $\{1,\ldots, z^{m - 1}\}$ is a basis of $\C[z]$ over $\C[z^m]$, from Equation \eqref{diagker} we have
   \Bea
   m^2z^{m - 1}{\mathfrak K}_{m - 1}(z^m, w^m)\ov{w}^{m - 1} &=& \frac{1}{m}\sum_{k=0}^{m-1}\ov{\chi_{m - 1}(\xi_k)}K(\xi_k^{-1}\cdot z, w) \\
   &=& \frac{1}{m}\sum_{k=0}^{m-1}e^{\frac{2k\pi}{m}i}\sum_{n\geq 0}a_n e^{\frac{2kn\pi}{m}i}(z\ov{w})^n\\
   &=& \frac{1}{m}\sum_{n\geq 0}a_n(z\ov{w})^n\sum_{\ell=0}^{m-1}(\omega^\ell)^{n + 1},\, \mathrm{~where~} \omega = e^{\frac{2\pi }{m}i} \\
   &=& (z\ov{w})^{m - 1}\sum_{j\geq 0}a_{(j + 1)m - 1}(z\ov{w})^{jm}.
   \Eea
   We further observe that 
   \begin{enumerate}
       \item [(i)] for the Hardy space, $a_{(j + 1)m - 1} = 1$ implying that ${\mathfrak K}_{m - 1} = \frac{1}{m^2}K_{H^2(\mb D)},$
       \item[(ii)] for the Bergman space, $a_{(j + 1)m - 1} = (j + 1)m$ implying that ${\mathfrak K}_{m - 1} = \frac{1}{m}K_{\mb A^2(\mb D)},$
      \item [(iii)] for the Dirichlet space, $a_{(j + 1)m - 1} = \frac{1}{{(j + 1)m}}$ implying that ${\mathfrak K}_{m - 1} = \frac{1}{m^3}K_{\m D}.$
   \end{enumerate}   Hence, the restriction of $M_{z^m}$ to $\m M_{m-1}$, when  $\m H_K = H^2(\mb D), \mb A^2(\mb D)$ and $\m D$, is unitarily equivalent to the unilateral shift, Bergman  shift (the weighted shift with weight sequence $\Big\{\sqrt{\frac{n+1}{n+2}}\Big\}_{n=0}^\infty$) and  Dirichlet shift (the weighted shift with weight sequence $\Big\{\sqrt{\frac{n+2}{n+1}}\Big\}_{n=0}^\infty$), respectively. Analogous phenomenon continues to happen for a proper holomorphic multiplier on the Bergman space of an arbitrary bounded domain in $\mb C^n,$ see \cite{G}.  
   \end{rem}
   
    \item For $m\geq 2,$ consider ${\rm B} = \v_a^m$ for $a\in\mb D,$ where $\v_a(z)=\frac{z-a}{1-\ov{a} z}.$ It is well-known  that the  kernel $K_\l(z, w)=(1-z\ov w)^{-\l},\,\l>0$ on $\mb D\times\mb D$ is positive definite and $M_z$ is homogeneous on the associated Hilbert space $\mb A^{(\l)}(\mb D)$ \cite{GMpams}. Equivalently, $K_\l$ is quasi-invariant (see \cite[p.2422]{KGjfa}) which in this case means 
    \Bea
    K_\l(z,w)=\v^\i(z)^{\l/2}K_\l\big(\v(z),\v(w)\big){\ov{\v^\i(w)}}^{\l/2}\,\, \text{~for~all~}\v\in \text{M\"ob},\,\, z, w\in\mb D.
    \Eea
   Therefore, the operator $U^{(\l)}:\mbox{M\"ob}\to\m U\big(\mb A^{(\l)}(\mb D)\big)$  defined by 
    \bea\label{quasi}
    U^{(\l)}(\v^{-1})f = (\v^\i\big)^{\l/2} \cdot (f \circ \v) 
    \eea 
     is a (projective) unitary representation of  M\"ob, $\m U(\m H)$ being the group of unitary operators on the Hilbert space $\m H.$ This defines a (genuine) unitary representation of M\"ob if $\l$ is a positive even integer \cite[p.12]{JA}. For $\v_a\in$ {~M\"ob} and $U^{(\l)}(\v_a^{-1})$  intertwines the multiplication operators $M_{z}$ and $M_{\v_a}$ on $\mb A^{(\l)}(\mb D),$ and consequently, 
    \Bea
    U^{(\l)}(\v_a^{-1})M_{z^m} = M_{\rm B} U^{(\l)}(\v_a^{-1}).
    \Eea
Therefore, if $\m M$ is a reducing subspace of $M_{z^m},$ then $U^{(\l)}(\v_a^{-1})(\m M)$ is a reducing subspace of $M_{\rm B}.$ Hence  we conclude the following result from Theorem \ref{one}.
    \begin{thm}
     For $\l>0$ and ${\rm B}=\v_a^m,\, M_{\rm B}$ acting on $\mb A^{(\l)}(\mb D)$ has exactly $m$ number of minimal reducing subspaces on $\mb A^{(\l)}(\mb D).$ 
    \end{thm}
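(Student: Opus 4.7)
The plan is to transport the classification of minimal reducing subspaces of $M_{z^m}$ (obtained in Theorem~\ref{one}) to $M_{\rm B} = M_{\v_a^m}$ by conjugation with the intertwining unitary $U^{(\l)}(\v_a^{-1})$ on $\mb A^{(\l)}(\mb D)$ recorded in \eqref{quasi}. First, I would verify that Theorem~\ref{one} applies to $\m H_{K_\l} = \mb A^{(\l)}(\mb D)$: the expansion
\[
(1-z\ov{w})^{-\l} = \sum_{n\geq 0} \frac{(\l)_n}{n!}(z\ov{w})^n
\]
exhibits $K_\l$ in the diagonal form \eqref{diagker} with non-negative coefficients $a_n = (\l)_n/n!$, and $M_z$ is known to be bounded on $\mb A^{(\l)}(\mb D)$ for every $\l>0$. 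Hence Theorem~\ref{one} produces the exhaustive list $\{\mb P_j \mb A^{(\l)}(\mb D) : j = 0, 1, \ldots, m-1\}$ of pairwise distinct minimal reducing subspaces of $M_{z^m}$.

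Next, set $U := U^{(\l)}(\v_a^{-1})$. From the definition \eqref{quasi}, a direct computation gives $U M_z = M_{\v_a} U$, and iterating $m$ times yields $U M_{z^m} = M_{\rm B} U$. Since $U$ is a unitary, taking adjoints delivers $U M_{z^m}^* = M_{\rm B}^* U$ as well. It then follows that a closed subspace $\m M \subseteq \mb A^{(\l)}(\mb D)$ is reducing for $M_{z^m}$ if and only if $U(\m M)$ is reducing for $M_{\rm B}$, and the assignment $\m M \mapsto U(\m M)$ is an isomorphism of the two lattices of reducing subspaces, preserving in particular the property of being minimal.

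Combining the two steps, $\{U(\mb P_j \mb A^{(\l)}(\mb D)) : j = 0, 1, \ldots, m-1\}$ is a family of $m$ distinct minimal reducing subspaces for $M_{\rm B}$; conversely, any minimal reducing subspace $\m N$ of $M_{\rm B}$ yields, via $U^{-1}=U^*$, a minimal reducing subspace of $M_{z^m}$, which must coincide with some $\mb P_j \mb A^{(\l)}(\mb D)$ by Theorem~\ref{one}, so $\m N = U(\mb P_j \mb A^{(\l)}(\mb D))$. I do not foresee a serious obstacle. The only mild subtlety is that the map $\v \mapsto U^{(\l)}(\v^{-1})$ is merely a projective representation of M\"ob when $\l$ is not a positive even integer, but the argument uses only the single unitary $U$, which is a genuine unitary operator on $\mb A^{(\l)}(\mb D)$ once any branch of $(\v_a')^{\l/2}$ on the simply connected domain $\mb D$ is fixed. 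Consequently the intertwining identities and the ensuing lattice bijection are unaffected by this subtlety.
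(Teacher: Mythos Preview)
Your argument is correct and follows the same route as the paper: conjugate by the unitary $U^{(\l)}(\v_a^{-1})$, which intertwines $M_{z^m}$ and $M_{\rm B}$, and transport the classification from Theorem~\ref{one}. Your explicit remark that only a single unitary is needed (so the projective nature of $U^{(\l)}$ for general $\l>0$ is irrelevant) is a useful clarification that the paper leaves implicit.
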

    \vspace{.1 cm}
\item Choose ${\rm B}(z)= z\v_a(z)  \text{ for } a \in \mb D.$ Since $\v_a\big(\v_{-1,a}(z)\big)=-z$ for $a, z\in\mb D$ and for each fixed $a\in\mb D,\, \v_{-1, a}$ is the unique involution ($\v_{-1,a}^2$= id), ${\rm Deck(B)} = \{{\rm id},\v_{-1,a}\}$. Then $\widehat{{\rm Deck}{\rm (B)}}=\{1, -1\}.$ For a positive even integer $\l,$ $U^{(\l)}:\mbox{M\"ob}\to\m U\big(\mb A^{(\l)}(\mb D)\big)$ as in Equation \eqref{quasi} is a unitary representation and hence its restriction to ${\rm Deck(B)}$ is so. Thus, by Corollary \ref{rhoortho}, we have the orthogonal projections 
\Bea
\mb P_1=\frac{1}{2}\big(I+U^{(\l)}(\v_{-1,a})\big) \,\,\text{~and~} {\mb P_{-1}}=\frac{1}{2}\big(I-U^{(\l)}(\v_{-1,a})\big),
\Eea
satisfying $\mb P_1+{\mb P_{-1}} = I,$ where $I$ denotes the identity operator on $\mb A^{(\l)}(\mb D).$ Being unitary $U^{(\l)}(\v_{-1, a})\not\equiv 0$ and it is clear that $U^{(\l)}(\v_{-1, a})\neq \pm I$ for all $a\in\mb D.$ Therefore, both of the projections are non-trivial. 
Note even though the group is a pseudoreflection group, we are not working with the left regular representation and thereby, the results for arbitrary unitary representation in Section \ref{decanahil} become relevant. 
    
\begin{rem}
  Since $\v_{-1,a}^2={\rm id},\, U^{(\l)}(\v_{-1, a})$ is a self-adjoint unitary, that is, 
   \Bea
   \big(U^{(\l)}(\v_{-1, a})\big)^*=U^{(\l)}\big((\v_{-1, a})^{-1}\big)=U^{(\l)}(\v_{-1, a}).
   \Eea
    Thus, its spectrum contains $\pm 1$ as eigenvalues. Let $\m M_1$ and ${\m M_{-1}}$ be the eigenspaces of $U^{(\l)}(\v_{-1, a})$ corresponding to the eigenvalues $1$ and $-1$ respectively. So $\mb P_1\m M_1=\m M_1$ and $\mb P_{-1}\m M_{-1}=\m M_{-1}.$
   \end{rem}
From definition it is easy to verify that
\Bea
U^{(\l)}(\v_{-1,a})M_{\rm B}=M_{\rm B}U^{(\l)}(\v_{-1,a}) \,\,\text{~for~}\,\, {\rm B}(z)=z\v_a(z).
\Eea
 and therefore, from the remark above, we have the following description of reducing subspaces of $M_{\rm B}$ for ${\rm  B}(z)=z\v_a(z).$
   \begin{thm}\label{two}
  For ${\rm B}(z)=z\v_a(z),\,\, M_{\rm B}\mb P_1=\mb P_1M_{\rm B}.$ Therefore, $\m M_1$ and $\m M_{-1}$ are reducing subspaces of $M_{{\rm B}}$ on $\mb A^{(\l)}(\mb D)$ for every positive even integer $\l.$
   \end{thm}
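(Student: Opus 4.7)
The plan is to show directly that $V := U^{(\l)}(\v_{-1,a})$ commutes with $M_{\rm B}$. Once this is established, the commutativity of $\mb P_1 = \tfrac{1}{2}(I+V)$ with $M_{\rm B}$ is immediate, and the same identity combined with $V^* = V$ (the remark preceding the theorem) gives commutativity with $M_{\rm B}^*$. Consequently, both $\m M_1 = \mb P_1\m H$ and $\m M_{-1} = \mb P_{-1}\m H$ will be joint reducing subspaces of $M_{\rm B}$ on $\mb A^{(\l)}(\mb D)$, which is the content of the theorem.

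The key identity to verify is the deck-transformation property ${\rm B}\circ\v_{-1,a} = {\rm B}$. One computes
\[
\v_a(\v_{-1,a}(z)) = \v_a\!\left(\tfrac{a-z}{1-\bar a z}\right) = \frac{(a-z)-a(1-\bar a z)}{(1-\bar a z)-\bar a(a-z)} = \frac{(|a|^2-1)z}{1-|a|^2} = -z,
\]
and therefore ${\rm B}(\v_{-1,a}(z)) = \v_{-1,a}(z)\cdot\v_a(\v_{-1,a}(z)) = \v_{-1,a}(z)\cdot(-z) = \tfrac{z(z-a)}{1-\bar a z} = {\rm B}(z)$.

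Using this invariance, for $f\in\mb A^{(\l)}(\mb D)$ and $z\in\mb D$ one has
\[
(VM_{\rm B} f)(z) = (\v_{-1,a}'(z))^{\l/2}\,{\rm B}(\v_{-1,a}(z))\,f(\v_{-1,a}(z)) = {\rm B}(z)\,(Vf)(z) = (M_{\rm B} Vf)(z),
\]
so $VM_{\rm B} = M_{\rm B}V$. Hence $\mb P_1 M_{\rm B} = M_{\rm B}\mb P_1$, and passing to adjoints (using $V^* = V$) also gives $\mb P_1 M_{\rm B}^* = M_{\rm B}^*\mb P_1$; the same statements hold verbatim for $\mb P_{-1}$. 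Therefore $\m M_1$ and $\m M_{-1}$ are joint reducing subspaces of $M_{\rm B}$ on $\mb A^{(\l)}(\mb D)$. The only nontrivial step in this plan is the deck-transformation identity ${\rm B}\circ\v_{-1,a} = {\rm B}$; once it is in hand, the remainder is a direct application of the abstract commutation/unitarity framework developed in Section \ref{decanahil}.
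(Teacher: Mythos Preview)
Your argument is correct and follows essentially the same route as the paper: the paper states just before the theorem that one verifies $U^{(\l)}(\v_{-1,a})M_{\rm B}=M_{\rm B}U^{(\l)}(\v_{-1,a})$ directly from the definition (using $\v_a\circ\v_{-1,a}=-\,{\rm id}$, which was noted when computing ${\rm Deck(B)}$), and then invokes the remark that $\m M_{\pm 1}$ are the eigenspaces of the self-adjoint unitary $U^{(\l)}(\v_{-1,a})$. Your proof simply spells out these computations and makes explicit the passage to $M_{\rm B}^*$ via self-adjointness of $\mb P_1$.
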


    \vspace{.1 cm}
    \item Choose the finite Blaschke product to be ${\rm B}= \v_a\v_b ,\,\, a, b\in\mb D$ being the two zeros of ${\rm B}.$ We have 
    \Bea
    {\rm B}=\psi\circ\v_{-1,b}\,\,\text{for}\,\, \psi(z)=z\v_{-1,a}\circ\v_{-1,b}(z), \,\, z\in \mb D.
     \Eea
     It is easy to see that there is a unimodular constant $c$ such that 
    \Bea
    \v_{-1,a}\circ\v_{-1,b}=c\v_\mu,
    \Eea
    where $\mu=\v_{-1,a}(b),$ see \cite[Lemma 5.3]{MSR}. Since  
    \Bea
    U^{(\l)}(\v_{-1,b})M_\psi=M_{\rm B} U^{(\l)}(\v_{-1,b}), \,\,\text{where}\,\, \psi(z)=cz\v_\mu(z).
    \Eea
   Thus, we conclude the following result from Theorem \ref{two}.
    \begin{thm}
    For ${\rm B}=\v_a\v_b$ and every positive even integer $\l,$ the minimal reducing subspace of $M_{\rm B}$ acting on $\mb A^{(\l)}(\mb D)$ are 
    \Bea
    U^{(\l)}(\v_{-1,b})(\m M_\mu)\,\,\,\,\,\, \mbox{and}\,\,\,\,\,\, U^{(\l)}(\v_{-1,b})(\m M_\mu^\perp).
    \Eea
    \end{thm}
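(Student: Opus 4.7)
My plan is to exploit the factorization $\mr{B} = \psi \circ \v_{-1,b}$ with $\psi(z) = cz\v_\mu(z)$ recorded just above the statement, and to transfer the reducing-subspace structure from $M_\psi$ to $M_{\mr{B}}$ via a unitary intertwiner. First I would record the intertwining identity $U^{(\l)}(\v_{-1,b})\,M_\psi = M_{\mr{B}}\,U^{(\l)}(\v_{-1,b})$, which follows from the general relation $U^{(\l)}(\v^{-1})M_f = M_{f\circ \v}\,U^{(\l)}(\v^{-1})$ applied to $\v = \v_{-1,b}$ (an involution, so $\v^{-1} = \v$) and $f = \psi$. This yields that $M_{\mr{B}}$ and $M_\psi$ are unitarily equivalent through $U^{(\l)}(\v_{-1,b})$, so the lattices of (joint) reducing subspaces correspond bijectively and minimality is preserved.

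Next I would reduce $M_\psi$ to the setting of Theorem \ref{two}. Because $|c| = 1$, the operator $M_\psi = c\,M_{z\v_\mu}$ has the same commutant as $M_{z\v_\mu}$ and hence the same reducing subspaces. Applying Theorem \ref{two} with the role of $a$ played by $\mu$, the two eigenspaces $\m M_\mu$ and $\m M_\mu^\perp$ of the self-adjoint unitary $U^{(\l)}(\v_{-1,\mu})$ are reducing subspaces of $M_{z\v_\mu}$, hence of $M_\psi$. Pushing them forward by $U^{(\l)}(\v_{-1,b})$ produces $U^{(\l)}(\v_{-1,b})(\m M_\mu)$ and $U^{(\l)}(\v_{-1,b})(\m M_\mu^\perp)$ as reducing subspaces of $M_{\mr{B}}$, exhausting the orthogonal decomposition of $\mb A^{(\l)}(\mb D)$.

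The crux is the minimality claim, together with the assertion that these are the \emph{only} minimal ones. Here I would appeal to Corollary \ref{ub}: since $\mr{B}$ is a degree-$2$ proper map that is factored by automorphisms, $\mr{Deck}(\mr{B}) \cong \mb Z_2$, so $|\widehat{\mr{Deck}(\mr{B})}| = 2$, bounding the number of inequivalent minimal reducing submodules by two. Minimality of each $\mb P_\varrho \m H$ for the two one-dimensional $\varrho$ follows from Lemma \ref{piecedim}(iii): with $\deg \varrho = 1$, the rank of $\mb P_\varrho \m H$ over $\mb C[z]^G$ equals one, and a rank-one analytic Hilbert module admits no further joint reducing orthogonal decomposition.

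The main technical obstacle I foresee is identifying $\mr{Deck}(\mr{B})$ concretely so that Corollary \ref{ub} legitimately applies — the deck group of $\mr{B}$ is not literally that of $\psi$ but its conjugate under $\v_{-1,b}$. A short calculation, which I would carry out, shows that the non-trivial deck transformation of $\mr{B}$ is $\tau := \v_{-1,b}\circ \v_{-1,\mu}\circ \v_{-1,b}$: using the deck relation $\psi \circ \v_{-1,\mu} = \psi$ for $\psi$ and the involutivity of $\v_{-1,b}$, one obtains $\mr{B}\circ \tau = \psi \circ \v_{-1,\mu} \circ \v_{-1,b} = \psi \circ \v_{-1,b} = \mr{B}$, while $\tau \neq \mr{id}$ because $\v_{-1,\mu} \neq \mr{id}$. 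This identification confirms that $\mr{B}$ is a Galois covering of order two and closes the argument.
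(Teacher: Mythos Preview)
Your reduction via the intertwining $U^{(\l)}(\v_{-1,b})M_\psi = M_{\mr B}U^{(\l)}(\v_{-1,b})$ and the observation that $M_\psi = cM_{z\v_\mu}$ has the same reducing subspaces as $M_{z\v_\mu}$ is precisely the paper's argument: the paper simply writes ``we conclude the following result from Theorem \ref{two}'' after recording the same intertwining relation. So on the level of producing the two subspaces $U^{(\l)}(\v_{-1,b})(\m M_\mu)$ and $U^{(\l)}(\v_{-1,b})(\m M_\mu^\perp)$ as reducing subspaces, your proof and the paper's coincide.

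Where your proposal diverges is the minimality claim. Your appeal to Corollary \ref{ub} and Lemma \ref{piecedim}(iii) is not legitimate in this setting. Those results are proved for an analytic Hilbert module $\m H$ whose reproducing kernel is $G$-\emph{invariant} and where the projections $\mb P_\varrho$ are built from the \emph{left regular} representation $R$ (Lemma \ref{iso}, Equation \eqref{od}). In items (3) and (4) of the unit-disc discussion the paper is explicitly working with the representation $U^{(\l)}$ of $\mbox{M\"ob}$, under which $K_\l$ is only \emph{quasi}-invariant; the paper even flags this: ``we are not working with the left regular representation and thereby, the results for arbitrary unitary representation in Section \ref{decanahil} become relevant.'' The deck transformation $\v_{-1,\mu}$ is not linear, so $\mr{Deck}(\mr B)$ does not act as a pseudoreflection group on $\mb C$ with $K_\l$ invariant, and neither Lemma \ref{piecedim} nor Corollary \ref{ub} applies. (Separately, Corollary \ref{ub} bounds the number of inequivalent minimal submodules \emph{within the family} $\{\mb P_\varrho^{ii}\m H\}$, not the total number of minimal reducing subspaces, so even where it applies it would not give the exhaustive statement you want.)

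It is worth noting that the paper itself does not supply a minimality proof here either: Theorem \ref{two}, from which the paper draws its conclusion, only asserts that $\m M_1$ and $\m M_{-1}$ are \emph{reducing} subspaces. So your attempt goes beyond what the paper establishes; the gap is that the machinery you invoke does not transfer to the quasi-invariant $U^{(\l)}$ setting.
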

  
\end{enumerate}
Similar computations are done in \cite{Z} for $\l=2,$ nevertheless, we include it here to emphasize applications of our techniques, not only to the case $\l=2,$ but to a broader class of Hilbert modules.

\subsection{Example (Unit polydisc)}
Here we generalize the results obtained in the case of unit disc to polydisc and with a little more computations, we would see that analogous results hold. It is well-known that  any proper holomorphic self-map $\mathbf B$ of the  polydisc $\mb D^n$ is given by 
\bea\label{Bl}
{\mathbf B}(\bl z) = \big({\rm B}_1 (z_{\sigma(1)}),\ldots, {\rm B}_n (z_{\sigma(n)})\big),
\eea
for some $\sigma\in\mathfrak S_n$, where each ${\rm B}_i$ is a finite  Blaschke product, see \cite[p.76]{RN}. Recall that biholomorphic automorphism group of $\mb D^n$ is given by \cite[p.68]{RN}
\Bea
{\rm Aut}(\mb D^n)={{\rm Aut}(\mb D)}^n\rtimes\mathfrak S_n.
\Eea
\begin{enumerate}[leftmargin=*]
\item If we choose ${\rm B}_i(\bl z)=z_i^{m_i},$ where $m_i$ is a positive integer for $i=1,\ldots, n.$ Then
\bea\label{deck}
    {\rm Deck}(\mathbf B) = \{ \bl\xi : \mb D^n \to \mb D^n:  \bl\xi=\big(\xi_1,\ldots,\xi_n\big), \mathrm{~for~} \xi_j\in{\rm Deck(B_j)}, 1\leq j\leq n\}.
    \eea
Hence ${\rm Deck}(\mathbf B)$ is isomorphic to $\mb Z_{m_1}\times\ldots\times\mb Z_{m_n}.$ Thus, $\widehat{{\rm Deck}{(\mathbf B)}}$ consists of exactly $m_1\ldots m_n$ one dimensional representations of ${\rm Deck}{(\mathbf B)},$ and
\Bea    
\w{{\rm Deck}{(\mathbf B)}}=\{\chi_{\bl j}: \bl j\in\mb Z_{m_1}\times\ldots\times\mb Z_{m_n}\}, 
\Eea
where each $\chi_{\bl j}$ is a character of $\mb Z_{m_1}\times\ldots\times\mb Z_{m_n}.$

Let $\m H_K$ be an analytic Hilbert module with a diagonal reproducing kernel $K$ 
such that
    \bea\label{repk}
    K(\bl z,\bl w)=\sum_{I\in\mb Z_+^n} a_I {\bl z}^I{\ov {\bl w}}^I
    \eea
    for some $a_I\geq 0$, where $\mb Z_+$ is the set of non-negative integers, $I=(i_1,\ldots,i_n)\in\mb Z_+^n$ is a multi-index and ${\bl z}^I=z_1^{i_1}\ldots z_n^{i_n}.$ 
Set $\mathbf M:=(M_1,\ldots, M_n).$ 
For $\l_i>0$ for $i=1,\ldots, n$ and $a_I=\frac{(\l_1)_{i_1}\ldots (\l_n)_{i_n}}{i_1!\ldots i_n!},$ we get the Hilbert space $\mb A^{(\bl\l)}(\mb D^n)$ of holomorphic functions with reproducing kernel $K^{(\bl\l)}$ given by
 \Bea
K^{(\bl\l)}(\bl z,\bl w) =\prod_{j=1}^n(1-z_j\bar w_j)^{-\l_j}
 \Eea
 where $(\l)_n=\l(\l+1)\ldots(\l+n-1).$ This example includes the Bergman space and the Hardy space on the polydisc. Also, if $a_I=\prod_{j=1}^n\frac{1}{i_j+1},\, M_i$ is bounded on $\m H_K$ for $i=1,\ldots, n,$ the choice $\{\prod_{j=1}^n\frac{1}{i_j+1}\}_{I\in\mb Z_+^n}$ corresponds to the Dirichlet space on $\mb D^n,$ see \cite{BKKLSS, LB}.
From calculations analogous to that in  Equation \eqref{dinv}, it follows that the  kernel $K$ is ${\rm Deck(\mathbf B)}$-invariant.  Following Equation \eqref{od}, we define $\mb P_{\bl j}:\m H_K\to\m H_K$ by
\bea\label{projmono}
(\mb P_{\bl j}f)(\bl z)=\frac{1}{m_1\ldots m_n}\sum_{\bl\xi \in {\rm Deck}(\mathbf B)}\ov{\chi_{\bl j}(\bl\xi)}f(\bl\xi^{-1}\cdot\bl z)
\eea
for $\bl j\in\mb Z_{m_1}\times\ldots\times\mb Z_{m_n},$ where $\bl\xi^{-1}=(\xi_1^{-1},\ldots,\xi_n^{-1}).$ Corollary \ref{rhoortho} ensures that each $\mb P_{\bl j}$ is an orthogonal projection. We denote the tuple of multiplications operators $(M_{\rm B_1},\ldots, M_{\rm B_n})$ on $\m H_K$ by $\mathbf M_{\mathbf B}.$
Hence Theorem \ref{fd1} and Lemma \ref{piecedim} describe the family of minimal reducing subspaces of $\mathbf M_{\mathbf B}$ on $\m H_K.$ We have $\lvert\w{{\rm Deck}{(\mathbf B)}}\rvert=\sum_{\bl k\in \w{{\rm Deck}{(\mathbf B)}}}1=m_1\ldots m_n$ here, as well.
\begin{thm}\label{one2}
The reducing  submodules of $\m H_K$ over   $\C[z_1^{m_1},\ldots,z_n^{m_n}]$ are  $\mb P_{\bl j}\m H_K, \bl j\in\w{{\rm Deck}{(\mathbf B)}}.$  Each of these submodules is of rank $1$ and consequently, they are minimal reducing subspaces of $\mathbf M_{\mathbf B}$ on $\m H_K.$

\end{thm}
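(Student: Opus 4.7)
\medskip

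The plan is to identify the ingredients needed to apply Theorem \ref{fd1} and Lemma \ref{piecedim} directly. First I would verify that $G := {\rm Deck}(\mathbf B)$, as described in \eqref{deck}, is precisely the abelian pseudoreflection group $\mb Z_{m_1}\times\cdots\times\mb Z_{m_n}$ acting diagonally on $\C^n$ via $(\zeta_{m_1}^{k_1},\ldots,\zeta_{m_n}^{k_n})\cdot \bl z = (\zeta_{m_1}^{-k_1}z_1,\ldots,\zeta_{m_n}^{-k_n}z_n)$, as in Example \ref{reflex}(2). The associated homogeneous system of parameters is $\bl\theta(\bl z) = (z_1^{m_1},\ldots,z_n^{m_n}) = \mathbf B(\bl z)$, so $\C[\bl z]^G = \C[z_1^{m_1},\ldots,z_n^{m_n}]$ by Theorem \ref{A}, and $\mathbf M_{\bl\theta} = \mathbf M_{\mathbf B}$. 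The $G$-invariance of $K$ is a direct computation using \eqref{repk}: each $\bl\xi \in G$ acts by $|\xi_\ell|=1$ scaling on the $\ell$-th coordinate, so $K(\bl\xi\cdot\bl z,\bl\xi\cdot\bl w) = \sum_{I}a_I\prod_\ell |\xi_\ell|^{2i_\ell}(\bl z\bar{\bl w})^I = K(\bl z,\bl w)$, exactly as in \eqref{dinv}. Hence $\m H_K$ is an analytic Hilbert module with $G$-invariant kernel to which the results of Sections \ref{redsub} and 5 apply.

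Since $G$ is abelian, every irreducible representation has degree one and $|\widehat G|=|G|=m_1\cdots m_n$. In particular $\mb P_{\bl j}^{11} = \mb P_{\bl j}$ for each $\bl j \in \widehat G$, so Theorem \ref{fd1} (via Corollary \ref{reduce1} together with the nontriviality guaranteed by Lemma \ref{notzero}) gives that each $\mb P_{\bl j}\m H_K$ is a nonzero reducing submodule of $\m H_K$ over $\C[\bl z]^G$, and the decomposition \eqref{decomp} specializes to $\m H_K = \bigoplus_{\bl j}\mb P_{\bl j}\m H_K$. The rank claim is then immediate from Lemma \ref{piecedim}(iii) applied with $\varrho = \chi_{\bl j}$: one obtains
\[
{\rm rank}_{\C[\bl z]^G}\,\mb P_{\bl j}\m H_K \;=\; \dim\bigcap_{\ell=1}^n\ker\!\big({M_{\theta_\ell}^{(\chi_{\bl j})}}^*-\ov{\theta_\ell(\bl w)}\big) \;=\; (\deg\chi_{\bl j})^2 \;=\; 1.
\]

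Finally, for the minimality claim I would argue that a rank-one reducing submodule over $\C[\bl z]^G$ cannot be further decomposed. Suppose for contradiction that $\mb P_{\bl j}\m H_K = \m N_1 \oplus \m N_2$ as an orthogonal direct sum of two nonzero reducing submodules; since the orthogonal projections onto $\m N_1, \m N_2$ commute with $\mathbf M_{\mathbf B}$, the joint kernel $\bigcap_\ell \ker({M_{\theta_\ell}^{(\chi_{\bl j})}}^* - \ov{\theta_\ell(\bl w)})$ splits as the direct sum of the corresponding joint kernels inside $\m N_1$ and $\m N_2$. Each summand is nonzero at a generic $\bl w$ because the restriction of an analytic Hilbert module to a reducing submodule retains a reproducing kernel whose generic fibre is nonzero (equivalently, each $\m N_i$ has rank at least one by Lemma \ref{rankcomp}). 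This forces the dimension of the joint kernel on $\mb P_{\bl j}\m H_K$ to be at least $2$, contradicting the rank-one conclusion above. Hence $\mb P_{\bl j}\m H_K$ is minimal, proving the theorem. The one subtle step to nail down is the nonvanishing of the joint kernel on each $\m N_i$; the cleanest route is to invoke Lemma \ref{rankcomp}(2), which identifies this kernel with the quotient $\m N_i\otimes_{\C[\bl z]^G}\C_{\bl w}$ and which is nonzero for any nonzero analytic Hilbert module.
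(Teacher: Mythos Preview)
Your proposal is correct and follows exactly the route the paper takes: the paper simply states that ``Theorem \ref{fd1} and Lemma \ref{piecedim} describe the family of minimal reducing subspaces of $\mathbf M_{\mathbf B}$ on $\m H_K$'' and records $\lvert\widehat{{\rm Deck}(\mathbf B)}\rvert = m_1\cdots m_n$, leaving the verification of the hypotheses and the passage from rank~$1$ to minimality implicit. You have spelled out precisely those verifications. One small remark on the minimality step: Lemma~\ref{rankcomp}(2) only identifies $\m N_i\otimes_{\C[\bl z]^G}\C_{\bl w}$ with the joint kernel, not its nonvanishing; the nonvanishing is most directly seen from the reproducing-kernel structure of the reducing submodule (namely, $P_{\m N_i}K_{\bl w}$ lies in that joint kernel and is nonzero for some~$\bl w$ when $\m N_i\neq\{0\}$), after which Lemma~\ref{piecedim}(iii) gives the contradiction at that point.
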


\begin{rem}\rm
Similar to the Remark \ref{unique}, we consider 
$$\m M_{\bl j} := \ov{\mbox{span}}\{z_1^{m_1k_1 + j_1}\cdots z_n^{m_nk_n + j_n}: k_i\in\mb Z_+\}$$ for $\bl j = (j_1, \ldots,j_n)\in \mb Z_{m_1}\times\ldots\times\mb Z_{m_n}$. Then 
$\mb P_{\bl j} (\m H_K) = \m M_{\bl j}$. Since $H^2(\mb D^n) = \otimes_1^n H^2(\mb D), {\mb A}^2(\mb D^n) = \otimes_1^n {\mb A}^2(\mb D), \m D(\mb D^n) = \otimes_1^n \m D(\mb D)$, similar computation as in the the Remark \ref{unique} will also show that the restriction of $(M_{z_1^{m_1}}, \ldots, M_{z_n^{m_n}})$ on $\m H_K$ to $\m M_{\bl m - \bl 1}$ where $\bl m - \bl 1 = (m_1 - 1, \ldots, m_n - 1)$, is unitarily equivalent to the multiplication operator by coordinate functions on $\m H_K$ for $\m H_K = H^2(\mb D^n), {\mb A}^2(\mb D^n)$ and $\m D(\mb D^n)$. 
\end{rem}
\vspace{.1 cm}

\item For $a_1,\ldots,a_n\in\mb D,$ take
\Bea
\mathbf B(\bl z)=\big(\v_{a_1}^{m_1}(z_1),\ldots, \v_{a_n}^{m_n}(z_n)\big) \text{~for~}\bl z=(z_1,\ldots, z_n)\in\mb D^n.
\Eea
Let ${\rm B}_i(\bl z) = \v_{a_i}^{m_i}(z_i)$ for $\bl z\in\mathbb D^n$, $1\leq i\leq n$. Analogous to the case of the unit disc, the  kernel $K^{(\bl\l)}(\bl z,\bl w)$ is quasi-invariant as
\Bea
K^{(\bl\l)}(\bl z,\bl w)=\prod_{i=1}^n{\big(\v_i^\i(z_i)\big)}^{\l_i/2}K^{(\bl\l)}\big(\Phi(\bl z),\Phi(\bl w)\big)\prod_{i=1}^n{\big(\v_i^\i(w_i)\big)}^{\l_i/2},
\Eea
where $\Phi=(\v_1,\ldots,\v_n)\in\mbox{M\"ob}^n.$ Therefore, the operator $U^{(\bl \l)}:\mbox{M\"ob}^n\to\m U\big(\mb A^{(\bl\l)}(\mb D^n)\big)$  defined by
\Bea
U^{(\bl\l)}(\Phi^{-1})=\prod_{i=1}^n{(\v_i^\i)}^{\l_i/2}(f\circ \Phi),
\Eea
is a (projective) unitary representation of $\mbox{M\"ob}^n$ and as mentioned earlier, is a (genuine) unitary representation of $\mbox{M\"ob}^n,$ when $\bl\l=(\l_1,\ldots,\l_n)$ is tuple of positive even integers. For $\bl a = (a_1,\ldots,a_n)$, let $\Phi_{\bl a} = (\v_{a_1},\ldots,\v_{a_n})$. Since $U^{(\bl\l)}(\Phi_{\bl a}^{-1})$  intertwines the multiplication operators $M_{z_i^{m_i}}$ and $M_{\rm B_i}$ on $\mb A^{(\bl\l)}(\mb D^n)$ for $i=1,\ldots, n,$ we conclude the following result from Theorem \ref{one2}.
    \begin{thm}
     The multiplication operator tuple  $\mathbf M_{\mathbf B}$ has exactly $\prod_{i=1}^nm_i$ number of minimal joint reducing subspaces on $\mb A^{(\bl\l)}(\mb D^n),$  for $\bl\l = (\l_1,\ldots,\l_n),\l_i>0.$
    \end{thm}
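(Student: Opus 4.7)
The plan is to exploit the unitary operator $U := U^{(\bl\l)}(\Phi_{\bl a}^{-1})$ on $\mb A^{(\bl\l)}(\mb D^n)$ to transport the complete classification of minimal joint reducing subspaces already obtained for the monomial tuple $(M_{z_1^{m_1}},\ldots,M_{z_n^{m_n}})$ in Theorem \ref{one2}. Since any unitary intertwiner yields a lattice isomorphism between the joint reducing subspaces of two intertwined operator tuples, preserving containment and hence minimality, the problem of counting minimal reducing subspaces of $\mathbf M_{\mathbf B}$ reduces to the same problem for the monomial tuple, which has already been solved.

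First I would record that $U$ is a well-defined unitary on $\mb A^{(\bl\l)}(\mb D^n)$ for any $\bl\l=(\l_1,\ldots,\l_n)$ with $\l_i>0$. This follows from the quasi-invariance of $K^{(\bl\l)}$ under $\mob^n$ noted in the preceding discussion, together with the fact that $\v_{a_i}'$ is nowhere zero on $\mb D$, so that the principal branch of $(\v_{a_i}')^{\l_i/2}$ is single-valued on the disc. Although $U^{(\bl\l)}$ is only a projective representation of $\mob^n$ when $\bl\l$ is not a tuple of positive even integers, only the single operator $U$ is needed in what follows, not any multiplicative property of $\Phi\mapsto U^{(\bl\l)}(\Phi)$.

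Next I would verify the intertwining $U M_{z_i^{m_i}} = M_{{\rm B}_i} U$ for each $i=1,\ldots,n$. This is a direct computation: applying either side to $f\in\mb A^{(\bl\l)}(\mb D^n)$ yields
\[
\prod_{j=1}^n \bigl(\v_{a_j}'(z_j)\bigr)^{\l_j/2}\,\v_{a_i}(z_i)^{m_i}\,f\bigl(\Phi_{\bl a}(\bl z)\bigr),
\]
using the identity $z_i^{m_i}\circ\v_{a_i}=\v_{a_i}^{m_i}={\rm B}_i$. Consequently the map $\m M\mapsto U(\m M)$ is a bijection between joint reducing subspaces of the monomial tuple $(M_{z_1^{m_1}},\ldots,M_{z_n^{m_n}})$ and those of $\mathbf M_{\mathbf B}$ on $\mb A^{(\bl\l)}(\mb D^n)$, and this bijection preserves inclusion, hence also minimality.

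Finally, since the reproducing kernel $K^{(\bl\l)}(\bl z,\bl w)=\prod_j(1-z_j\bar w_j)^{-\l_j}$ is diagonal in the sense of \eqref{repk}, Theorem \ref{one2} applied to $\m H_K=\mb A^{(\bl\l)}(\mb D^n)$ with tuple $\mathbf B_0(\bl z)=(z_1^{m_1},\ldots,z_n^{m_n})$ and deck group $\mathrm{Deck}(\mathbf B_0)\cong\mb Z_{m_1}\times\cdots\times\mb Z_{m_n}$ yields exactly $\prod_{i=1}^n m_i$ minimal joint reducing subspaces $\mb P_{\bl j}\mb A^{(\bl\l)}(\mb D^n)$, indexed by $\bl j\in\w{\mathrm{Deck}(\mathbf B_0)}$. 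Their images under $U$ form the complete list of $\prod_{i=1}^n m_i$ minimal joint reducing subspaces of $\mathbf M_{\mathbf B}$. The main point to check carefully is that the intertwining relation holds even when $U^{(\bl\l)}$ is only projective; this is not a serious obstacle, since a potential scalar cocycle cancels in an identity of the form $UT=T'U$ for a single operator $U$.
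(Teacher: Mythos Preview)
Your proposal is correct and follows essentially the same approach as the paper: use the unitary $U^{(\bl\l)}(\Phi_{\bl a}^{-1})$ to intertwine $M_{z_i^{m_i}}$ with $M_{{\rm B}_i}$, and then invoke Theorem \ref{one2} for the monomial tuple. Your added remarks---that only a single unitary is needed so the projective nature of $U^{(\bl\l)}$ is irrelevant, and that the intertwining induces a lattice isomorphism of reducing subspaces preserving minimality---make explicit what the paper leaves implicit, but the argument is the same.
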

    \vspace{.1 cm}

\item  For $a_1,\ldots,a_n\in\mb D,$ take
\Bea
\mathbf B(\bl z)=\big(z_1\v_{a_1}(z_1),\ldots, z_n\v_{a_n}(z_n)\big) \text{~for~}\bl z=(z_1,\ldots, z_n)\in\mb D^n.
\Eea
Let ${\rm B}_i(\bl z) = z_i\v_{a_i}(z_i)$ for $\bl z\in\mathbb D^n$, $1\leq i\leq n$. By analogous arguments as in the case of ${\rm B}(z)=z\v_a(z) $ for $a,z\in\mb D,$ it follows that 
\Bea
{\rm Deck}(\mathbf B)=\prod_{i=1}^n\{{\rm id},\v_{-1, a_i}\}.
\Eea
That is, ${\rm Deck}(\mathbf B)\cong\mb Z_2^n.$ So $\w{{\rm Deck}(\mathbf B)}=\w{\mb Z_2^n}.$ For $\bl k\in \mb Z_2^n,$ let 
\Bea
\chi_{\bl k}(\bl j)=e^{2\pi i(\bl j\cdot\bl k)/2}, \,\,\text{where}\,\, \bl j\cdot\bl k=\sum_{r=1}^nj_rk_r\,\,\text{ for}\,\, \bl j,\bl k\in\mb Z_2^n.
\Eea
It is easy to verify that 
\bea\label{commute}
U^{(\bl\l)}(\Phi)M_{{\mathrm B}_i}=M_{{\mathrm B}_i}U^{(\bl\l)}(\Phi) \,\, \text{for}\,\, i=1,\ldots, n,
\eea
for all $\Phi\in {\rm Deck}(\mathbf B).$
As $\w{{\rm Deck}(\mathbf B)}\cong{\rm Deck}(\mathbf B)$ \cite[Theorem 3.9]{Con}, for $\bl k\in\mb Z_2^n,$ define the linear operator $\mb P_{\bl k}:\mb A^{(\bl\l)}(\mb D^n)\to\mb A^{(\bl\l)}(\mb D^n)$ by
\Bea
\mb P_{\bl k}=\frac{1}{2^n}\sum_{\Psi_{\bl j}\in{\rm Deck}(\mathbf B)}\ov{\chi_{\bl k}(\bl j)}U^{(\bl\l)}(\Psi_{\bl j}^{-1}),
\Eea
where we use the isomorphism $\Psi_{\bl j}\mapsto\bl j$ from ${\rm Deck}(\mathbf B)$ onto $\mb Z_2^n.$
Since for every $n$-tuple of positive even integers $\bl\l$, $U^{(\bl\l)}$ is a unitary representation of $\mbox{M\"ob}^n,$ and hence of ${\rm Deck}(\mathbf B)$, from Corollary \ref{rhoortho} it follows that
 the linear operators $\{\mb P_{\bl k}\}_{\bl k\in\w{{\rm Deck}(\mathbf B)}}$ on $\mb A^{(\bl\l)}(\mb D^n)$ are orthogonal projections satisfying 
\Bea
\mb P_{\bl k}\mb P_{{\bl k}^\i}=\mb P_{\bl k}\delta_{\bl k{\bl k}^\i} ,\,\,\,\,\sum_{\bl k\in\w{{\rm Deck}(\mathbf B)}}\mb P_{\bl k}=I
\Eea 
and 
\Bea
\mb P_{\bl k}M_{{\rm B}_i}=M_{{\rm B}_i}\mb P_{\bl k} \,\,\text{for}\,\,\bl k\in \w{{\rm Deck}(\mathbf B)},\,\ i=1,\ldots, n.
\Eea
Thus, we have the following theorem.
\begin{thm}\label{mtwo}
For every $n$-tuple of positive even integers $\bl\l,$ and each $\bl k\in\w{{\rm Deck}(\mathbf B)},$ $\mb P_{\bl k}\big(\mb A^{(\bl\l)}(\mb D^n)\big)$ is a reducing subspace of the multiplication operator  $\mathbf M_{\mathbf B}$ acting on $\mb A^{(\bl\l)}(\mb D^n).$  
\end{thm}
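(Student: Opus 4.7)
The proof I would write is essentially immediate from the machinery already assembled in the discussion just preceding the theorem; the heart of the matter is verifying two points: (a) that each $\mb P_{\bl k}$ is an honest orthogonal projection on $\mb A^{(\bl\l)}(\mb D^n)$, and (b) that $\mb P_{\bl k}$ commutes with each $M_{{\rm B}_i}$. Once these are in hand, the standard characterization of joint reducing subspaces in the Introduction (an orthogonal projection $P$ satisfies $PT_i = T_iP$ for all $i$ if and only if $P\m H$ is joint reducing for $(T_1,\ldots,T_n)$) delivers the conclusion.

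For (a), the plan is to first invoke the fact (from the example on $\mb A^{(\l)}(\mb D)$ earlier in this section) that when each $\l_i$ is a positive even integer, the map $U^{(\bl\l)}:\mbox{M\"ob}^n\to\m U\bigl(\mb A^{(\bl\l)}(\mb D^n)\bigr)$ is a genuine (not merely projective) unitary representation; restricting to the finite subgroup ${\rm Deck}(\mathbf B)\subseteq\mbox{M\"ob}^n$ gives a genuine unitary representation of ${\rm Deck}(\mathbf B)\cong\mb Z_2^n$. Since $\w{\mb Z_2^n}\cong\mb Z_2^n$ and each $\chi_{\bl k}$ is a one-dimensional character (hence satisfies $\chi_{\bl k}(\bl j^{-1})=\overline{\chi_{\bl k}(\bl j)}$), Corollary \ref{rhoortho} applies verbatim and shows that each $\mb P_{\bl k}$ is an orthogonal projection with $\sum_{\bl k}\mb P_{\bl k}=I$.

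For (b), the core observation is that $\mathbf B$ is ${\rm Deck}(\mathbf B)$-invariant by the very definition of a deck transformation; component-wise this amounts to the identity $z_i\v_{a_i}(z_i)=\v_{-1,a_i}(z_i)\v_{a_i}\bigl(\v_{-1,a_i}(z_i)\bigr)$, which was already used to determine ${\rm Deck}(\mathbf B)$. A direct computation with the formula for $U^{(\bl\l)}(\Phi^{-1})$ (the $\lambda_j/2$-th powers of the derivatives $\v_{\epsilon_j}'$ do not depend on which $M_{{\rm B}_i}$ is applied) then shows $U^{(\bl\l)}(\Phi)M_{{\rm B}_i} = M_{{\rm B}_i}U^{(\bl\l)}(\Phi)$, which is Equation \eqref{commute}. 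Since $\mb P_{\bl k}$ is a finite linear combination of these commuting unitaries, $\mb P_{\bl k}M_{{\rm B}_i}=M_{{\rm B}_i}\mb P_{\bl k}$ for every $i$.

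Putting (a) and (b) together, $\mb P_{\bl k}\bigl(\mb A^{(\bl\l)}(\mb D^n)\bigr)$ is invariant under each $M_{{\rm B}_i}$ and, because $\mb P_{\bl k}$ is self-adjoint, also under each $M_{{\rm B}_i}^*$; hence it is a joint reducing subspace for $\mathbf M_{\mathbf B}$. There is no serious obstacle here, since all the substantive work (the genuine-representation hypothesis controlling the cocycle, the deck-invariance of $\mathbf B$, and the abstract projection calculus of Section \ref{decanahil}) has been established earlier; the role of this theorem is simply to package the disc computation into the polydisc setting.
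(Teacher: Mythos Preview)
Your proposal is correct and follows essentially the same approach as the paper: the paper's ``proof'' is precisely the discussion immediately preceding the theorem statement, which establishes that $U^{(\bl\l)}$ restricted to ${\rm Deck}(\mathbf B)$ is a genuine unitary representation (so Corollary \ref{rhoortho} yields orthogonal projections), verifies Equation \eqref{commute}, and deduces $\mb P_{\bl k}M_{{\rm B}_i}=M_{{\rm B}_i}\mb P_{\bl k}$, after which the theorem is stated as an immediate consequence.
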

\vspace{.1 cm}
\item  For $a_1,\ldots,a_n, b_1,\ldots,b_n\in\mb D,$ take
\Bea
\mathbf B(\bl z)=\big(\v_{a_1}(z_1)\v_{b_1}(z_1),\ldots, \v_{a_n}(z_n)\v_{b_n}(z_n)\big) \text{~for~}\bl z=(z_1,\ldots, z_n)\in\mb D^n.
\Eea
Let ${\rm B}_i(\bl z) = \v_{a_i}(z_i)\v_{b_i}(z_i)$ for $\bl z\in\mathbb D^n$, $1\leq i\leq n$. If $\Psi:\mb D^n\to\mb D^n$ is given by 
\Bea
\Psi(\bl z)=\big(z_1\v_{-1, a_1}\circ\v_{-1,b_1}(z_1),\ldots, z_n\v_{-1, a_n}\circ\v_{-1,b_n}(z_n)\big),\,\,
\Eea
then ${\mathbf B}=\Psi\circ \Phi_{\bl b},$ where $\Phi_{\bl b}:\mb D^n\to\mb D^n$ is given by $\Phi_{\bl b}=(\v_{-1,b_1},\ldots,\v_{-1,b_n})$ and $\bl z=(z_1,\ldots,z_n).$ Since $\Phi_{\bl b}\in\mbox{M\"ob}^n$ is an involution, we have
\bea\label{intert}
U^{(\bl\l)}(\Phi_{\bl b}) M_{\Psi_i}=M_{\mathrm B_i}U^{(\bl\l)}(\Phi_{\bl b})\,\,\text{for}\,\, i=1,\ldots, n,
\eea
where $\Psi_i(\bl z)=z_i\v_{-1, a_i}\circ\v_{-1,b_i}(z_i).$
With notations of Theorem \ref{mtwo}, we have the following conclusion from Equation \eqref{intert} and Theorem \ref{mtwo}.
\begin{thm}
For every $n$-tuple of positive even integers $\bl\l,$ and each $\bl k\in\w{{\rm Deck}(\mathbf B)},$ $\big(U^{(\bl\l)}(\Phi_{\bl b})\mb P_{\bl k}\big)\big(\mb A^{(\bl\l)}(\mb D^n)\big)$ is a reducing subspace of the multiplication operator $\mathbf M_{\mathbf B}$ acting on $\mb A^{(\bl\l)}(\mb D^n).$  
\end{thm}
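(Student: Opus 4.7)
The plan is to reduce the statement to Theorem \ref{mtwo} via the factorization $\mathbf{B}=\Psi\circ\Phi_{\bl b}$ recorded just above the statement, and then transfer the reducing subspaces across the unitary $U^{(\bl\l)}(\Phi_{\bl b})$ by means of the intertwining relation \eqref{intert}. The entire argument is essentially a two-step ``unitary transport'' and uses no new tools beyond those already assembled.

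First I would bring the tuple $\Psi$ into the exact shape required by Theorem \ref{mtwo}. Each component $\Psi_i(\bl z)=z_i\v_{-1,a_i}\circ\v_{-1,b_i}(z_i)$ equals $c_iz_i\v_{\mu_i}(z_i)$ for a unimodular constant $c_i$ and $\mu_i=\v_{-1,a_i}(b_i)$, exactly as used earlier in the single-variable case via \cite[Lemma 5.3]{MSR}. Writing $\tilde\Psi_i(\bl z)=z_i\v_{\mu_i}(z_i)$, one has $M_{\Psi_i}=c_iM_{\tilde\Psi_i}$; since $|c_i|=1$ this scalar factor does not affect the joint reducing subspace lattice. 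Applying Theorem \ref{mtwo} to the tuple $\mathbf{M}_{\tilde\Psi}$ (with $a_i$ replaced by $\mu_i$), we obtain that $\mb P_{\bl k}\big(\mb A^{(\bl\l)}(\mb D^n)\big)$ is a joint reducing subspace for $\mathbf M_{\Psi}=(M_{\Psi_1},\ldots,M_{\Psi_n})$ for every $\bl k\in\w{\mathrm{Deck}(\mathbf B)}$.

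The second step is to transport this subspace through the unitary $U^{(\bl\l)}(\Phi_{\bl b})$, which is well-defined as a genuine unitary because $\bl\l$ is an $n$-tuple of positive even integers. The identity \eqref{intert} reads $U^{(\bl\l)}(\Phi_{\bl b})M_{\Psi_i}=M_{\mathrm B_i}U^{(\bl\l)}(\Phi_{\bl b})$; taking adjoints and using that $U^{(\bl\l)}(\Phi_{\bl b})$ is unitary yields $U^{(\bl\l)}(\Phi_{\bl b})M_{\Psi_i}^{*}=M_{\mathrm B_i}^{*}U^{(\bl\l)}(\Phi_{\bl b})$ as well. Consequently, if a closed subspace $\m M$ is invariant under both $M_{\Psi_i}$ and $M_{\Psi_i}^{*}$ for every $i=1,\ldots,n$, then $U^{(\bl\l)}(\Phi_{\bl b})\m M$ is invariant under both $M_{\mathrm B_i}$ and $M_{\mathrm B_i}^{*}$ for every $i$. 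Specializing to $\m M=\mb P_{\bl k}\big(\mb A^{(\bl\l)}(\mb D^n)\big)$ gives the desired conclusion.

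I do not anticipate any genuine obstacle; the proof is pure bookkeeping once Theorem \ref{mtwo} and the intertwining \eqref{intert} are in hand. The only subtle point worth spelling out is that the unimodular scalars $c_i$ appearing in $\Psi_i=c_i\tilde\Psi_i$ are harmless, which is immediate from unitarity, so the hypotheses of Theorem \ref{mtwo} transfer cleanly from $\tilde\Psi$ to $\Psi$.
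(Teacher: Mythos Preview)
Your proof is correct and follows exactly the paper's approach: the paper's proof consists of the single sentence ``With notations of Theorem \ref{mtwo}, we have the following conclusion from Equation \eqref{intert} and Theorem \ref{mtwo},'' and you have simply spelled out the two steps (apply Theorem \ref{mtwo} to $\Psi$, then transport via the intertwining unitary $U^{(\bl\l)}(\Phi_{\bl b})$). Your explicit handling of the unimodular constants $c_i$ in $\Psi_i=c_i z_i\v_{\mu_i}(z_i)$ is a detail the paper leaves implicit under ``with notations of Theorem \ref{mtwo}'' but is indeed needed to match the literal hypotheses of that theorem.
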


\end{enumerate}
\subsection{Example (Reinhardt domains)} We apply the following result from \cite[p. 61 - 63]{DS1}.
\begin{thm}\label{Rein}
Let $R$ be a Reinhardt domain in $\mb C^n,\, n>1$ and $D$ be a bounded simply connected strictly pseudoconvex domain with $C^\infty$ boundary. If $\bl F:R\to D$ is proper holomorphic mapping then
\begin{enumerate}
    \item [(i)] there exists a finite pseudoreflection group $G \subseteq{\rm Aut}(R)$ such that $\bl F$ is precisely $G$-invariant (factored by automorphisms);
    \item[(ii)] $\bl F = \Phi\circ \bl{\rm P}$, where $\bl {\rm P}$, a basic polynomial associated to $G$, is given by 
    \bea\label{mono}
    \bl {\rm P}(\bl z)=(z_1^{m_1},\ldots, z_n^{m_n}),
    \eea
    for some $(m_1,\ldots,m_n)\in\mb N^n$,
    and $\Phi: \bl{\rm P}(R)\ra D$ is a biholomorphism. 
    
\end{enumerate}
\end{thm}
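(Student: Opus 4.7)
The plan is to exploit the large symmetry group of the Reinhardt domain $R$, namely the real torus $T^n \subseteq \mr{Aut}(R)$ acting by coordinate-wise rotations $(\theta_1,\ldots,\theta_n)\cdot(z_1,\ldots,z_n) = (e^{i\theta_1}z_1,\ldots,e^{i\theta_n}z_n)$, and play it against the rigidity of the strictly pseudoconvex target $D$. Because $\bl F$ is proper holomorphic between equidimensional domains, it is a finite branched cover; its branch locus $V_{\bl F}$ is a proper analytic subset and the restriction $\bl F:R\setminus \bl F^{-1}\bl F(V_{\bl F}) \to D\setminus \bl F(V_{\bl F})$ is an unbranched analytic covering of some finite degree $d$.

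First I would establish (i) as follows. For each $t\in T^n$, the composition $\bl F_t := \bl F\circ t$ is proper holomorphic $R\to D$ of the same degree. Because $D$ is simply connected, strictly pseudoconvex with $C^\infty$ boundary, the maps $\bl F_t$ and $\bl F$ extend smoothly to the boundary (Fefferman-type regularity via Diederich--Forn\ae{}ss), and standard rigidity for proper maps between strictly pseudoconvex domains produces a continuous homomorphism $g:T^n \to \mr{Aut}(D)$ satisfying $\bl F_t = g(t)\circ \bl F$. The key point is that $\mr{Aut}(D)$ is a finite-dimensional Lie group with only compact orbits in $D$ (Bun Wong--Rosay rules out non-compact orbit except in the ball case, which can be handled separately). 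One then lets $G_0 := \ker g$, a closed subgroup of $T^n$ under which $\bl F$ is invariant. Since $\bl F$ has finite fibres and the $T^n$-orbits of generic points are $n$-real-dimensional, $g(T^n)$ can only be a (compact, possibly discrete) subgroup of $\mr{Aut}(D)$ whose quotient $T^n/G_0$ is finite; consequently $G_0$ contains a finite subgroup $G$ of full dimension that acts as deck transformations for $\bl F$. A degree count $|G| = d$ then confirms that $G$ is precisely the deck transformation group, i.e.\ $\bl F$ is factored by $G$.

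For (ii), I would now use that every finite subgroup $G$ of the torus $T^n$ acting by coordinate rotations is a product $\mb Z_{m_1}\times\cdots\times\mb Z_{m_n}$ acting diagonally by multiplication with $m_i$-th roots of unity. Such a $G$ is a finite pseudoreflection group on $\mb C^n$, with an associated homogeneous system of parameters $\bl{\rm P}(\bl z)=(z_1^{m_1},\ldots,z_n^{m_n})$. Invoking the analytic Chevalley--Shephard--Todd theorem (Theorem~\ref{theoremc}) applied to the $G$-invariant domain $R$ and the $G$-invariant holomorphic map $\bl F$, each component of $\bl F$ is a holomorphic function of $z_1^{m_1},\ldots,z_n^{m_n}$; equivalently there exists a holomorphic map $\Phi:\bl{\rm P}(R)\to D$ with $\bl F = \Phi\circ \bl{\rm P}$. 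Properness of $\bl F$ and $\bl{\rm P}$ (both of degree $|G|=m_1\cdots m_n$) together with the first-factorisation property of proper maps force $\Phi$ to be a proper bijective holomorphism, hence biholomorphic by the equidimensional Bochner--Remmert--type criterion.

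The principal obstacle is the middle step, i.e.\ producing the homomorphism $g:T^n\to \mr{Aut}(D)$. Without the strictly pseudoconvex and smooth-boundary hypotheses one cannot in general transport the symmetry across a proper holomorphic map. The delicate point is to argue that for each $t\in T^n$ the composition $\bl F\circ t$ factors as $g(t)\circ \bl F$; this is the heart of the rigidity argument and uses (a) boundary smoothness of $\bl F$, (b) the fact that $\bl F\circ t$ and $\bl F$ have identical fibre structure over $D\setminus \bl F(V_{\bl F})$ together with the simple connectivity of $D\setminus \bl F(V_{\bl F})$ (which follows from $\pi_1(D)=0$ and codimension of $\bl F(V_{\bl F})$), and (c) a uniqueness/rigidity result for proper maps between strictly pseudoconvex domains. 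Getting these three ingredients to interact cleanly — and in particular handling the exceptional case when $D$ happens to be biholomorphic to the ball (where $\mr{Aut}(D)$ is non-compact) — is where the bulk of the technical effort lies.
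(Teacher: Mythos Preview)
The paper does not prove this theorem at all: it is stated with the preamble ``We apply the following result from \cite[p.\,61--63]{DS1}'' and is quoted verbatim from Dini--Selvaggi Primicerio. So there is no ``paper's own proof'' to compare against; the relevant comparison is with \cite{DS1}.

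That said, your outline has the right architecture --- push the torus symmetry of $R$ across $\bl F$ to $\mr{Aut}(D)$, identify the kernel as the deck group, and then factor through the monomial map --- but the execution has two genuine gaps.

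\emph{First, the rigidity step is not ``standard''.} You assert that for every $t\in T^n$ there exists $g(t)\in\mr{Aut}(D)$ with $\bl F\circ t = g(t)\circ \bl F$, invoking ``standard rigidity for proper maps between strictly pseudoconvex domains'' and Fefferman/Diederich--Forn\ae ss boundary regularity. Those results require \emph{both} domains to be smoothly bounded and strictly pseudoconvex, whereas $R$ is an arbitrary Reinhardt domain with no smoothness or pseudoconvexity hypothesis. Even granting regularity, there is no off-the-shelf theorem saying that two proper maps $R\to D$ of the same degree differ by an element of $\mr{Aut}(D)$. What one actually needs is a deformation-rigidity statement for the continuous family $t\mapsto \bl F\circ t$ (the connectedness of $T^n$ is essential here), and this has to be argued, not cited.

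\emph{Second, your analysis of $G_0=\ker g$ is inverted.} You claim $T^n/G_0$ is finite and then extract ``a finite subgroup $G$ of full dimension'' from $G_0$. In fact the opposite holds: any $t\in G_0$ satisfies $\bl F\circ t=\bl F$, hence $t$ permutes each (finite) fibre of $\bl F$; since the identity component $(G_0)^\circ$ is connected and acts on a finite set, it acts trivially, so $(G_0)^\circ$ fixes a dense set and is therefore trivial. Thus $G_0$ itself is \emph{finite}, and $g(T^n)\cong T^n/G_0$ is an $n$-torus sitting inside $\mr{Aut}(D)$. You then still need an argument that $|G_0|$ equals the covering degree $d$ --- i.e.\ that $G_0$ acts transitively on generic fibres --- and this is precisely the content of ``$\bl F$ is precisely $G$-invariant'', which you have not established. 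Without it, part (ii) does not follow either, since $\bl F$ need only factor through $\bl{\rm P}(R)/$(residual cover).

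Once (i) is properly in hand, your derivation of (ii) via the analytic CST (Theorem~\ref{theoremc}) is fine and indeed more efficient than a bare-hands power-series argument.
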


We consider the polynomial map $\bl {\rm P}:\mb C^n\to\mb C^n$ given by the equation above. Since $R$ is a Reinhardt domain, $\mb T^n\subseteq {\rm Aut}(R).$ Therefore, ${\rm Deck}(\bl {\rm P})$ is the same as ${\rm Deck}(\mathbf B)$ described in Equation \eqref{deck}.
 Let $\m H_K$ denote analytic Hilbert module on $R$ with reproducing kernel $K$ which is ${\rm Deck}(\bl {\rm P})$-invariant. In particular, the kernel defined in Equation \eqref{repk} is ${\rm Deck}(\bl {\rm P})$-invariant.  Then by Equation \eqref{od} and Corollary \ref{rhoortho}, it follows that $\mb P_{\bl k}:\m H_K\to\m H_k$ defined by 
\Bea
(\mb P_{\bl k}f)(\bl z)=\frac{1}{m_1\ldots m_n}\sum_{\bl\xi\in {\rm Deck}(P)}\ov{\chi_{\bl k}(\bl\xi)}f(\bl\xi^{-1} \cdot \bl z),\,\, \bl z\in R.
\Eea
is an orthogonal projection for each $\bl k\in\w{{\rm Deck}(\bl {\rm P})}\cong\mb Z_{m_1}\times\ldots\times\mb Z_{m_k}.$ Therefore, Theorem \ref{fd1} and Lemma \ref{piecedim} describe the family of minimal reducing subspaces of the multiplication tuple $\bl {\rm M}_{\bl {\rm P}}$ whose $i$-th component is the multiplication operator by $z_i^{m_i}$ on $\m H_K.$ Recall that $\bl {\rm P}$ is a representative of the proper holomorphic mapping $F:R \to D$ in Theorem \ref{Rein}. Here also we have $\lvert\w{{\rm Deck}(\bl {\rm P})}\rvert=\sum_{\bl k\in { {\rm Deck}(\bl P)}}1=m_1\ldots m_n.$
\begin{thm}
For each $\bl k\in\w{{\rm Deck}{(\bl {\rm P})}}$, the submodules $\mb P_{\bl k} (\m H_K)$ of the analytic Hilbert module $\m H_K$ are reducing submodules of rank $1$ over $\C[z_1^{m_1},\ldots,z_n^{m_n}]$ and consequently, they are minimal reducing subspaces of of $\bl {\rm M}_{\bl {\rm P}}$ on $\m H_K.$
\end{thm}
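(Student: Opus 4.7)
First, I would identify the algebraic structure. The map $\bl{\rm P}(\bl z) = (z_1^{m_1}, \ldots, z_n^{m_n})$ is a homogeneous system of parameters for the finite abelian pseudoreflection group
\[
G := {\rm Deck}(\bl{\rm P}) \cong \mb Z_{m_1} \times \cdots \times \mb Z_{m_n},
\]
acting coordinate-wise on $\mb C^n$; see Theorem~\ref{Rein}(ii) and Example~\ref{reflex}(2). In particular $\C[\bl z]^G = \C[z_1^{m_1},\ldots,z_n^{m_n}]$. Being abelian, every irreducible representation of $G$ is one-dimensional, so that $\deg \bl k = 1$ and, in the notation of Section~\ref{decanahil}, $\mb P_{\bl k} = \mb P_{\bl k}^{11}$ for each $\bl k \in \w G$.

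Next, I would check that the hypotheses of Section~\ref{redsub} are in force. Since $R$ is Reinhardt it is $\mb T^n$-invariant and hence $G$-invariant; $\m H_K$ is an analytic Hilbert module on $R$; and the kernel $K$ is $G$-invariant by assumption. Corollary~\ref{reduce1}, together with the non-triviality of $\mb P_{\bl k}$ from Corollary~\ref{projPii}, then asserts that $\mb P_{\bl k}(\m H_K)$ is a non-trivial joint reducing subspace of $\bl{\rm M}_{\bl{\rm P}}$, or equivalently a non-zero reducing submodule of $\m H_K$ over $\C[z_1^{m_1},\ldots,z_n^{m_n}]$, for every $\bl k \in \w G$. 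The rank assertion is then immediate from Lemma~\ref{piecedim}(iii): ${\rm rank}_{\C[\bl z]^G}\, \mb P_{\bl k}(\m H_K) = (\deg \bl k)^2 = 1$.

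Finally, I would deduce minimality from this rank-one computation by a joint-kernel dimension count. Suppose $\mb P_{\bl k}(\m H_K) = \m N_1 \oplus \m N_2$ were an orthogonal decomposition into non-zero reducing submodules over $\C[\bl z]^G$. Each $\m N_j$ is itself a reproducing kernel Hilbert space whose kernel $K_{\m N_j}$ is not identically zero on the diagonal; at a generic $\bl w \in R$ with $K_{\m N_j}(\bl w,\bl w) \neq 0$ for both $j$, the functions $K_{\m N_j}(\cdot,\bl w)$ are non-zero, mutually orthogonal joint eigenvectors of the compressions $(M_{\theta_i}|_{\m N_j})^*$ with eigenvalue $\ov{\theta_i(\bl w)}$ (setting $\theta_i := z_i^{m_i}$), because each $\m N_j$ is reducing. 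They therefore lie in the joint kernel of $(M^{(\bl k)}_{\theta_i})^*$ at $\bl w$ and are linearly independent, forcing that kernel to have dimension at least $2$. This contradicts the equality ${\rm dim} \bigcap_{i=1}^n \ker\bigl((M^{(\bl k)}_{\theta_i})^* - \ov{\theta_i(\bl w)}\bigr) = 1$ supplied by Lemma~\ref{piecedim}(iii), and hence $\mb P_{\bl k}(\m H_K)$ is minimal.

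The only real subtlety is verifying that $\bl{\rm P}$ genuinely plays the role of the abstract hsop $\bl\theta$ of the earlier sections so that Lemma~\ref{piecedim}(iii) applies verbatim; this is packaged in Theorem~\ref{Rein}(ii). Everything else is a direct specialization of the machinery of Sections~\ref{decanahil}--\ref{redsub} to the abelian case, where the one-dimensional irreducibles trivialize the index $i$ in $\mb P_{\bl k}^{ii}$.
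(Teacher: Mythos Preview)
Your proof is correct and follows the same route as the paper: the paper simply invokes Theorem~\ref{fd1} (which packages Corollary~\ref{reduce1} and Corollary~\ref{projPii}) together with Lemma~\ref{piecedim}(iii), observing that since $G={\rm Deck}(\bl{\rm P})$ is abelian every $\deg\bl k=1$, and then asserts minimality ``consequently'' without spelling out the argument. Your joint-kernel dimension count in the last paragraph makes that implicit step explicit, and is exactly the right way to extract minimality from the equality $\dim\bigcap_i\ker\bigl((M^{(\bl k)}_{\theta_i})^*-\ov{\theta_i(\bl w)}\bigr)=1$ supplied by Lemma~\ref{piecedim}(iii).
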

Monomial proper maps and their abelian groups of deck transformations occur in number of examples discussed above. We construct projection operator using the characters of these abelian groups. Monomial type maps and decomposition of Bergman space is considered in a recent work, see \cite{NP}. 

We discuss the case of proper holomorphic multipliers from the euclidean unit ball $\mb B_n$ in $\mb C^n.$ Analogous discussion is valid for other instances cited from the literature.
\subsection{Example (Unit ball)} Recall that  $\mb B_n$ is the open unit ball with respect to the $\ell^2$-norm on $\mb C^n$ and $U\in\m U_n$  denote the group of unitary operators on $\mb C^n.$
In \cite[Theorem 1.6, p. 704]{R}, Rudin proved
the following result for proper holomorphic maps from $\mb B_n$ onto a domain $\Omega \subset \mathbb{C}^n, n>1,$ see also \cite[p. 506]{BB}
\begin{thm}\label{Rudin}
Suppose that $\tilde{\bl F} : \mb B_n \to \Omega$ is a proper holomorphic mapping from the open unit ball $\mb B_n$ in $\mb C^n (n>1)$ onto a domain $\Omega$ in $\mb C^n$ with multiplicity $d>1.$ Then there exists a unique finite pseudoreflection group $G$ of order $d$  such that 
\Bea
\tilde{\bl F}= \Phi \circ \bl \theta \circ \Psi ,
\Eea
where $\Psi $ is an automorphism of $\mb B_n$ and $\bl \theta$ is a basic polynomial mapping (hsop) associated to $G.$ Furthermore, $\bl\theta(\mb B_n)$ is a domain in $\mb C^n, \bl\theta:\mb B_n\to\bl\theta(\mb B_n)$ is proper, and   $\Phi:\bl\theta(\mb B_n)\to\Omega$ is  biholomorphic.
\end{thm}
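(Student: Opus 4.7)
The plan is to produce $G$, $\Psi$, $\bl\theta$, $\Phi$ by exploiting the transitivity of $\mathrm{Aut}(\mb B_n)$ and then invoking the classical Chevalley--Shephard--Todd characterization of pseudoreflection groups as exactly those whose invariant quotient is smooth.

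First I would set $d = \mathrm{mult}(\tilde{\bl F})$ and pass to the unbranched covering $\tilde{\bl F} : \mb B_n \setminus \tilde{\bl F}^{-1}\tilde{\bl F}(V_{\tilde{\bl F}}) \to \Omega \setminus \tilde{\bl F}(V_{\tilde{\bl F}})$ of the same sheet number. A classical rigidity argument (H. Cartan / Alexander / Bell--Narasimhan for strictly pseudoconvex sources) shows that each deck transformation of this unbranched cover extends across the branch locus to a biholomorphic self-map of $\mb B_n$, so $G_0 := \mathrm{Deck}(\tilde{\bl F}) \subseteq \mathrm{Aut}(\mb B_n)$. The next step is to show $G_0$ acts transitively on generic fibers, whence $|G_0| = d$; this follows from $\Omega$ being smooth of the same dimension and $\tilde{\bl F}$ being $G_0$-invariant, because then the induced map $\mb B_n/G_0 \to \Omega$ is a finite injective holomorphic map between pure $n$-dimensional reduced analytic spaces, hence a biholomorphism (by the same H. Cartan / Kaup argument invoked in the proof of Theorem~\ref{theorema}).

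Next I would linearize the action. Since $G_0$ is a finite subgroup of $\mathrm{Aut}(\mb B_n)$, averaging the Bergman metric (or applying Cartan's fixed point theorem for holomorphic compact group actions on bounded domains) produces a common fixed point $p \in \mb B_n$. Choosing $\Psi \in \mathrm{Aut}(\mb B_n)$ with $\Psi(0) = p$, the conjugate group $G := \Psi^{-1} G_0 \Psi$ fixes $0$, and since the isotropy of $0$ in $\mathrm{Aut}(\mb B_n)$ is exactly $\mathcal U_n$, we obtain $G \subseteq \mathcal U_n \subseteq GL(n,\mb C)$. Thus $G$ is a finite linear group of order $d$ acting on $\mb C^n$, and $\tilde{\bl F}\circ\Psi : \mb B_n \to \Omega$ is precisely $G$-invariant in the sense of the earlier sections.

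The crucial step is showing that $G$ is a pseudoreflection group. Because $\tilde{\bl F}\circ\Psi$ factors through $\mb B_n/G$ and descends to a biholomorphism $\mb B_n/G \to \Omega$, the analytic space $\mb B_n/G$ is smooth. A local computation near the origin extends this smoothness to $\mb C^n/G$ (equivalently, one observes that the completion $\widehat{\mb C[\bl z]_0}$ is a free module over $\widehat{\mb C[\bl z]_0^G}$), and then the Chevalley--Shephard--Todd theorem (Theorem~\ref{A} and Theorem~\ref{B}, used in reverse as the characterization of pseudoreflection groups) forces $G$ to be generated by pseudoreflections. Uniqueness of $G$ comes from the identification $G \cong \mathrm{Deck}(\tilde{\bl F})$, which is intrinsic to $\tilde{\bl F}$ up to conjugation by $\Psi$. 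Finally, choosing any hsop $\bl\theta = (\theta_1,\ldots,\theta_n)$ for $G$, Theorem~\ref{theorema} (applied on $\mb B_n$, cf.\ Section~\ref{g3.1}) yields a biholomorphism $\Phi : \bl\theta(\mb B_n) \to \Omega$ with $\tilde{\bl F}\circ\Psi = \Phi\circ\bl\theta$; that $\bl\theta(\mb B_n)$ is a domain and $\bl\theta$ is proper is Proposition~\ref{properholo}.

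The main obstacle is the pseudoreflection step: one needs to transfer smoothness of the \emph{germ} of $\mb B_n/G$ at the image of $0$ into the global algebraic conclusion that $\mb C[\bl z]^G$ is a polynomial ring. This requires either a formal/henselian-type argument passing to completions (as sketched in the introduction for the formal CST), or a direct appeal to the converse direction of CST characterizing pseudoreflection groups by smoothness of the invariant quotient; all the other steps are either transitivity tricks in $\mathrm{Aut}(\mb B_n)$ or direct applications of analytic CST proved earlier in the paper.
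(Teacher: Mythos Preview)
The paper does not supply its own proof of this statement: Theorem~\ref{Rudin} is quoted from Rudin \cite[Theorem~1.6, p.~704]{R} (with a parallel reference to \cite[p.~506]{BB}) and is used as a black box in the discussion of reducing subspaces on $\mb A^{(\l)}(\mb B_n)$. There is therefore no in-paper argument to compare against; your outline is in fact close in spirit to Rudin's original proof.

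One step, however, is circular as written. You assert that the induced map $\mb B_n/G_0 \to \Omega$ is a ``finite \emph{injective} holomorphic map'' and then deduce transitivity of $G_0$ on fibers. But injectivity of that quotient map is precisely equivalent to transitivity of $G_0$ on the fibers of $\tilde{\bl F}$, which is what you are trying to prove; a proper finite surjective holomorphic map from a normal analytic space onto a manifold is certainly not injective in general (e.g.\ $z\mapsto z^2$ on $\mb D$). The clean replacement, and the one Rudin uses, is topological: for $n>1$ the complement of an analytic hypersurface in $\mb B_n$ is still simply connected, so the unbranched cover
\[
\tilde{\bl F}:\mb B_n\setminus \tilde{\bl F}^{-1}\tilde{\bl F}(V_{\tilde{\bl F}})\longrightarrow \Omega\setminus \tilde{\bl F}(V_{\tilde{\bl F}})
\]
is the universal cover of its image and is therefore automatically Galois, giving $\lvert G_0\rvert=d$ at once. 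With this fix, the remaining steps of your sketch --- Cartan's fixed-point theorem to conjugate $G_0$ into $\mathcal U_n$, smoothness of the quotient forcing $G$ to be generated by pseudoreflections via the converse direction of Chevalley--Shephard--Todd, and then the analytic CST (Section~\ref{g3.1}) together with Proposition~\ref{properholo} to produce $\Phi$ and the properties of $\bl\theta$ --- match Rudin's argument.
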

We know that any pseudoreflection group acting on $\mb C^n$ is a subgroup of $\m U_n.$ Since $\m U_n$ leaves $\mb B_n$ invariant,  $\mb B_n$ is $G$-invariant as well and so is the Bergman kernel $B_{\mb B_n}$of $\mb B_n$ and all its positive powers are $G$-invariant positive definite kernels. The corresponding analytic Hilbert modules over $\mb C[\bl z]$ are denoted by $\mb A^{(\l)}(\mb B_n), \l>0,$ known as weighted Bergman modules, see Example \ref{example}.

Consider the multiplication operator $M_{\theta_i} : \mb A^{(\l)}(\mathbb{B}_n) \to \mb A^{(\l)}(\mathbb{B}_n)$ defined by 
\Bea
M_{\theta_i}f=\theta_i f\,\,\text{ for~~ all}\,\, i=1,\ldots,n.
\Eea
Clearly, $\bl{\rm M}_{\bl \theta} = (M_{\theta_1},\ldots,M_{\theta_n})$ defines a commuting tuple of bounded operators on $\mb A^{(\l)}(\mathbb{B}_n),$ see Example \ref{example}. From Corollary \ref{reduce1}, we note that for each  $\varrho\in\w G,\,\,\mathbb{P}_{\varrho}\big(\mb A^{(\l)}(\mathbb{B}_n)\big)$ is a joint reducing subspace of $M_{\bl \theta}.$ 
We set $\bl F:=\bl\theta\circ\Psi={\Phi}^{-1}\circ\tilde{\bl F} : \mb B_n \to \bl \theta(\mb B_n).$ From the definition of representative of a proper holomorphic map, we note that $\bl F=\bl\theta\circ\Psi$ is a representative of $\tilde{\bl F}.$ Suppose $\tilde F_i$ and $F_i$'s $1\leq i\leq n,$ are holomorphic multipliers on $\mb A^{(\l)}(\mathbb{B}_n),$ (in other words, they are in the multiplier algebra of $\mb A^{(\l)}(\mathbb{B}_n)$) and set $\mathbf M_{\tilde{\bl F}} := (M_{\tilde F_1},M_{\tilde F_2},\cdots,M_{\tilde F_n})$ and $\mathbf M_{\bl F} := (M_{F_1},M_{F_2},\cdots,M_{F_n}).$

\begin{prop}\label{eqimodtheta}
The commuting  tuples of bounded operators  $\mathbf M_{\bl F}$ and $\bl{\rm M}_{\bl\theta}$ on $\mb A^{(\l)}(\mathbb{B}_n)$ are unitarily equivalent.
\end{prop}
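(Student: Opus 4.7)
The natural strategy is to use the biholomorphism $\Psi \in \mathrm{Aut}(\mathbb B_n)$ to manufacture a single unitary operator on $\mb A^{(\l)}(\mathbb B_n)$ that simultaneously intertwines $M_{\theta_i}$ and $M_{F_i}$ for every $i$, exactly as the operators $U^{(\l)}(\varphi^{-1})$ were used for the disc and polydisc in the preceding examples.

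The first step would be to exploit the quasi-invariance of the weighted Bergman kernel $K^{(\l)}$ under the automorphism group of the ball: for each $\Psi\in\mathrm{Aut}(\mathbb B_n)$ there is a nowhere-vanishing holomorphic cocycle $\kappa_\Psi$ on $\mathbb B_n$ (a suitable power of the complex Jacobian of $\Psi$, read off from the transformation formula $1-\langle\Psi(\bl z),\Psi(\bl w)\rangle$) satisfying
\[
K^{(\l)}\!\big(\Psi(\bl z),\Psi(\bl w)\big)\,\kappa_\Psi(\bl z)\,\overline{\kappa_\Psi(\bl w)}\;=\;K^{(\l)}(\bl z,\bl w),\qquad \bl z,\bl w\in\mathbb B_n.
\]
Using this identity on the dense span of reproducing kernels, a standard inner-product calculation shows that the prescription $V_\Psi f=(f\circ\Psi)\,\kappa_\Psi$ defines an isometry on $\mb A^{(\l)}(\mathbb B_n)$ with inverse $V_{\Psi^{-1}}$, hence a unitary. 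When $\l$ is not an integer the cocycle is only defined up to a unimodular constant and $V_\Psi$ is a projective unitary; this is harmless for our purpose since conjugation by a scalar multiple is trivial.

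The second step is an entirely formal check of the intertwining relation. For any bounded multiplier $g$ on $\mb A^{(\l)}(\mathbb B_n)$ and any $f$ in the space,
\[
\big(V_\Psi M_g f\big)(\bl z)=g(\Psi(\bl z))\,f(\Psi(\bl z))\,\kappa_\Psi(\bl z)=(g\circ\Psi)(\bl z)\,(V_\Psi f)(\bl z),
\]
so $V_\Psi M_g = M_{g\circ\Psi}\,V_\Psi$. Taking $g=\theta_i$ and recalling that $F_i=\theta_i\circ\Psi$, we obtain $V_\Psi M_{\theta_i}=M_{F_i}V_\Psi$ for $i=1,\ldots,n$, which is precisely the unitary equivalence of the commuting tuples $\mathbf M_{\bl\theta}$ and $\mathbf M_{\bl F}$.

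The only real subtlety is the construction of the cocycle $\kappa_\Psi$ and the verification that $V_\Psi$ is genuinely (rather than only projectively) unitary on $\mb A^{(\l)}(\mathbb B_n)$ for arbitrary $\l>0$; but since unitary equivalence of operator tuples is insensitive to a unimodular scalar factor, a projective unitary is enough, and the result can also be quoted directly from the standard representation-theoretic machinery on the ball that was invoked in the polydisc discussion via $U^{(\bl\l)}$.
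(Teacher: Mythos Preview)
Your proposal is correct and follows essentially the same route as the paper: both construct the weighted composition operator $f\mapsto (f\circ\Psi)\,\kappa_\Psi$ and check the intertwining $V_\Psi M_{\theta_i}=M_{F_i}V_\Psi$. The paper is simply more explicit about the cocycle, taking $\kappa_\Psi=J_\Psi^{\lambda/(n+1)}$ and citing Arazy's survey for the fact that this makes $U^{(\lambda)}(\Psi^{-1})$ unitary on $\mb A^{(\lambda)}(\mathbb B_n)$; your discussion of quasi-invariance and the projective/genuine distinction is a correct unpacking of that citation.
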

\begin{proof}
Define the map $U^{(\l)}(\Psi^{-1}): \mb A^{(\l)}(\mathbb{B}_n) \to\mb A^{(\l)}(\mathbb{B}_n) $ by 
\Bea
U^{(\l)}(\Psi^{-1})f=J_{\Psi}^{\frac{\l}{n+1}} ( f\circ \Psi),
\Eea 
where $J_{\Psi}$ is the complex jacobian of $\Psi.$ It is easy to verify that that  $U^{(\l)}(\Psi)$ is unitary on $\mb A^{(\l)}(\mathbb{B}_n)$ and $M_{\bl F_i}U^{(\l)}(\Psi^{-1}) = U^{(\l)}(\Psi^{-1})M_{\theta_i}$ for all $i=1,2,\ldots,n$  \cite[p.12]{JA}. 
\end{proof}

Thus from the Theorem \ref{fd1}, we conclude the following.

\begin{cor}
For every $\varrho\in\w G$, $U^{(\l)}(\Psi^{-1})\mb P_{\varrho}\big(\mb A^{(\l)}(\mb B^n)\big)$ is a joint reducing subspace of the multiplication operator $\mathbf M_{\mathbf F}$ acting on $\mb A^{(\l)}(\mb B^n).$ 
\end{cor}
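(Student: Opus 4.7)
The proof is essentially a transport-of-structure argument combining Theorem~\ref{fd1} with the unitary equivalence established in Proposition~\ref{eqimodtheta}. First, I would invoke Theorem~\ref{fd1} applied to the analytic Hilbert module $\mb A^{(\l)}(\mb B_n)$ (which indeed has a $G$-invariant reproducing kernel, since each positive power of the Bergman kernel on $\mb B_n$ is $\m U_n$-invariant, and $G\subseteq \m U_n$). This gives that $\mb P_\varrho\big(\mb A^{(\l)}(\mb B_n)\big)$ is a joint reducing subspace for the $n$-tuple $\bl{\rm M}_{\bl\theta}=(M_{\theta_1},\ldots,M_{\theta_n})$, for every $\varrho\in\w G$.

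Next, I would recall from Proposition~\ref{eqimodtheta} that the operator $U^{(\l)}(\Psi^{-1})$ is unitary on $\mb A^{(\l)}(\mb B_n)$ and satisfies the intertwining relation
\begin{equation*}
M_{F_i}\,U^{(\l)}(\Psi^{-1}) \;=\; U^{(\l)}(\Psi^{-1})\,M_{\theta_i},\qquad i=1,\ldots,n.
\end{equation*}
Taking adjoints of this identity and using that $U^{(\l)}(\Psi^{-1})$ is unitary, one also obtains $M_{F_i}^{*}\,U^{(\l)}(\Psi^{-1})=U^{(\l)}(\Psi^{-1})\,M_{\theta_i}^{*}$ for each $i$.

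Now set $\m M_\varrho := \mb P_\varrho\big(\mb A^{(\l)}(\mb B_n)\big)$ and $\m N_\varrho := U^{(\l)}(\Psi^{-1})\m M_\varrho$. Since $U^{(\l)}(\Psi^{-1})$ is unitary, $\m N_\varrho$ is a closed subspace of $\mb A^{(\l)}(\mb B_n)$. For each $i$, using the intertwining relations above,
\begin{equation*}
M_{F_i}\,\m N_\varrho \;=\; M_{F_i}\,U^{(\l)}(\Psi^{-1})\,\m M_\varrho \;=\; U^{(\l)}(\Psi^{-1})\,M_{\theta_i}\,\m M_\varrho \;\subseteq\; U^{(\l)}(\Psi^{-1})\,\m M_\varrho \;=\; \m N_\varrho,
\end{equation*}
and similarly $M_{F_i}^{*}\,\m N_\varrho \subseteq \m N_\varrho$. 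Hence $\m N_\varrho$ is a joint reducing subspace for $\mathbf M_{\bl F}$, which is the desired conclusion. There is no serious obstacle here; the content is entirely packaged by Theorem~\ref{fd1} together with the unitary equivalence of Proposition~\ref{eqimodtheta}, and the proof amounts to verifying that reducing subspaces are preserved under unitary intertwiners.
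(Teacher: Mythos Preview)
Your proof is correct and follows exactly the approach the paper intends: the paper simply states that the corollary follows from Theorem~\ref{fd1} (together with the unitary equivalence of Proposition~\ref{eqimodtheta}), and your argument spells out precisely this transport-of-structure step.
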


\begin{rem}\label{multbyco}
In Corollary \ref{modtheta},  we specialize $\Omega$ to be $\mb B_n$ and $\mathcal H$ to be $\mb A^{(\l)} (\mathbb{B}_n).$ This ensures the existence of a reproducing kernel Hilbert space $\wi{\mathcal H}\big(\bl \theta(\mb B_n)\big)$ consisting of $\mb C^d$-valued holomorphic functions on $\bl \theta(\mb B_n)$ with the property that $\mathbf M$ on $\wi{\mathcal H}\big(\bl \theta(\mb B_n)\big)$ is unitarily equivalent to $\mathbf M_{\bl \theta}$ on $\mb A^{(\l)} (\mathbb{B}_n),$ where $\mathbf M=(M_1,\ldots, M_n)$ denotes the tuple of multiplication operators by the coordinate functions of $\bl\theta(\mb B_n)$ on $\wi{\mathcal H}\big(\bl \theta(\mb B_n)\big).$ 
Combining this observation with the Proposition \ref{eqimodtheta}, we have that the tuple of multiplication operators $\mathbf M_{\bl F}$ on $\mb A^{(\l)} (\mathbb{B}_n)$ is unitarily equivalent to $\mathbf M$ on $\wi{\mathcal H}\big(\bl \theta(\mb B_n)\big).$ Equivalently, the multiplication operator tuple by a representative of the proper map  $\tilde{\bl F}$ on $\mb B_n$ is unitarily equivalent to $\mathbf M$ on $\wi{\mathcal H}\big(\bl \theta(\mb B_n)\big).$ Analogous result remains true for the earlier example of Reinhardt domains as well.
\end{rem}

It is natural to ask  if two proper maps are represented by the same basic polynomials, whether the corresponding tuples of multiplication operators are unitarily equivalent. This will enable us to describe the joint reducing subspaces of tuple of multiplication operators by $\tilde F$ from that of the associated basic polynomials. We recall a definition from the literature \cite{BM}.
\begin{defn}
Given a domain $D$ in $\mb C^n$ and analytic Hilbert module $\m H\subseteq \mathcal O(D)$ over $\C[\bl z]$, the tuple of multiplication operators $\mathbf M:=(M_1,\ldots, M_n)$ on  $\m H$ is said to be \emph{homogeneous} with respect to a subgroup $H$ of ${\rm Aut}(D)$ if 
\begin{enumerate}[leftmargin=*]
    \item the Taylor spectrum $\sigma(\mathbf M)\subseteq\ov{D}$;
    \item each element of $H$ is holomorphic on a neighbourhood of $\sigma(\mathbf M)$ and
    \item $\Phi(\mathbf M)=(M_{\phi_1},\ldots, M_{\phi_n})$ is unitarily equivalent to $\mathbf M$ for all $\Phi=(\phi_1,\ldots,\phi_n)\in H$,
    
\end{enumerate}
  where $M_i$ is the multiplication operator by the $i$-th coordinate function of $D$.
\end{defn}

We specialize to $\Omega = \bl\theta(\mathbb B_n)$ and assume that the multiplication operator $\mathbf M$ on $\wi{\mathcal H}\big(\bl \theta(\mb B_n)\big)$ is homogeneous with respect to $\mathrm{Aut}\big(\bl\theta(\mathbb B_n)\big)$. Recall that $\bl F = \bl\theta\circ\Psi$ and $\tilde{\bl F} = \Phi\circ\bl {F}.$ An application of holomorphic functional calculus to $\mathbf M_{\bl F}$ and $\mathbf M$  together with  Remark \ref{multbyco} imply that $\mathbf M_{\tilde{\bl F}}$ is unitarily equivalent to $\Phi(\mathbf M).$ Therefore, from homogeneity of $\mathbf M$, it follows that $\mathbf M_{\tilde{\bl F}}$ is unitarily equivalent to $\mathbf M.$ Thus any two tuples of multiplication operators induced by two proper maps with the same basic polynomials (hsop) are unitarily equivalent. Conversely, note that the multiplication operators induced by two proper maps from $\mathbb B_n$ to $\bl\theta(\mathbb B_n)$ are unitarily equivalent, then, in particular, for every proper map $\tilde{\bl F}:\mathbb B_n\ra \bl\theta(\mathbb B_n)$, $\mathbf M_{\tilde{\bl F}}$ is unitarily equivalent to $\mathbf M_{\bl\theta}$. Consequently, by holomorphic functional calculus, $\Phi(\mathbf M)$ is unitarily equivalent to $\mathbf M$ for all $\Phi\in \mathrm{Aut}\big(\bl\theta(\mathbb B_n)\big).$ Thus, the multiplication operator $\mathbf M$ becomes homogeneous with respect to $\mathrm{Aut}\big(\bl\theta(\mathbb B_n)\big)$ and we have the following theorem. Let $\mathrm{Prop}_{\bl\theta}\big(\mathbb B_n, \bl \theta(\mb B_n)\big)$ be the class of all proper holomorphic maps from  $\mb B_n$ onto $\bl \theta(\mb B_n)$ whose associated basic polynomial map is $\bl\theta$. 

\begin{thm}

For $\l\geq n$, any two $\tilde{\bl F_1}$ and $\tilde{\bl F_2}$ in $\mathrm{Prop}_{\bl\theta}\big(\mathbb B_n, \bl \theta(\mb B_n)\big),$ the tuples of multiplication operators $\mathbf M_{\tilde{\bl F_1}}$ and $\mathbf M_{\tilde{\bl F_2}}$ are unitarily equivalent on $\mb A^{(\l)} (\mathbb{B}_n)$ if and only if the multiplication operator $\mathbf M$ induced by $\bl\theta$ is homogeneous with respect to $\mathrm{Aut}\big(\bl\theta(\mathbb B_n)\big).$
\end{thm}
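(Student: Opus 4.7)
The plan is to formalize the two-paragraph sketch immediately preceding the theorem. The three ingredients I will combine are Rudin's factorization (Theorem~\ref{Rudin}), the intertwining unitary $U^{(\l)}(\Psi^{-1})$ constructed in Proposition~\ref{eqimodtheta}, and the realization of $\mathbf M_{\bl\theta}$ as the coordinate multiplication tuple $\mathbf M$ on $\wi{\m H}(\bl\theta(\mb B_n))$ from Remark~\ref{multbyco}. Using these I reduce the equivalence of any two operator tuples $\mathbf M_{\tilde{\bl F_j}}$ to the question whether $\Phi_j(\mathbf M) \cong \mathbf M$ for arbitrary $\Phi_j \in \mathrm{Aut}(\bl\theta(\mb B_n))$, at which point the two implications fall out.

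For the sufficiency direction, I fix $\tilde{\bl F_j} \in \mathrm{Prop}_{\bl\theta}(\mb B_n, \bl\theta(\mb B_n))$ for $j = 1, 2$ and invoke Theorem~\ref{Rudin} to write $\tilde{\bl F_j} = \Phi_j \circ \bl\theta \circ \Psi_j$ with $\Psi_j \in \mathrm{Aut}(\mb B_n)$ and $\Phi_j \in \mathrm{Aut}(\bl\theta(\mb B_n))$. Proposition~\ref{eqimodtheta} then supplies a unitary $U^{(\l)}(\Psi_j^{-1})$ intertwining $\mathbf M_{\bl\theta}$ with $\mathbf M_{\bl\theta \circ \Psi_j}$. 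Since the Taylor spectrum of the commuting tuple $\mathbf M_{\bl\theta \circ \Psi_j}$ is contained in $\overline{\bl\theta(\mb B_n)}$ (the place where the hypothesis $\l \geq n$ secures the standard spectrum identification for the weighted Bergman module), the holomorphic functional calculus applies the components of $\Phi_j$ and produces $\mathbf M_{\tilde{\bl F_j}} = \Phi_j(\mathbf M_{\bl\theta \circ \Psi_j})$. Transferring along the unitary from Remark~\ref{multbyco} gives $\mathbf M_{\tilde{\bl F_j}} \cong \Phi_j(\mathbf M)$. If $\mathbf M$ is homogeneous with respect to $\mathrm{Aut}(\bl\theta(\mb B_n))$, then $\Phi_j(\mathbf M) \cong \mathbf M$ for $j = 1, 2$, and so $\mathbf M_{\tilde{\bl F_1}} \cong \mathbf M \cong \mathbf M_{\tilde{\bl F_2}}$.

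For the necessity direction, I specialize to $\tilde{\bl F_1} = \bl\theta$ and $\tilde{\bl F_2} = \Phi \circ \bl\theta$ for an arbitrary $\Phi \in \mathrm{Aut}(\bl\theta(\mb B_n))$; both lie in $\mathrm{Prop}_{\bl\theta}(\mb B_n, \bl\theta(\mb B_n))$ since each admits $\bl\theta$ as basic polynomial representative. The standing hypothesis then gives $\mathbf M_{\bl\theta} \cong \Phi(\mathbf M_{\bl\theta})$, and Remark~\ref{multbyco} transfers this to $\mathbf M \cong \Phi(\mathbf M)$. As $\Phi$ was arbitrary, condition~(3) in the definition of homogeneity holds. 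Condition~(1), $\sigma(\mathbf M) \subseteq \overline{\bl\theta(\mb B_n)}$, is the same spectrum identification used above, and condition~(2) is automatic because every biholomorphism $\Phi \in \mathrm{Aut}(\bl\theta(\mb B_n))$ is holomorphic on an open neighbourhood of $\overline{\bl\theta(\mb B_n)}$ by standard boundary extension for bounded domains arising as images of proper maps on the ball.

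The main obstacle I expect is the rigorous control of the Taylor spectrum under the weighted Bergman weight $\l \geq n$: one needs $\sigma(\mathbf M) \subseteq \overline{\bl\theta(\mb B_n)}$ so that the holomorphic functional calculus by elements of $\mathrm{Aut}(\bl\theta(\mb B_n))$ is legitimate, and this is precisely the place where the restriction $\l \geq n$ is used. Once this spectrum inclusion is in place, the remainder of the argument is essentially bookkeeping that converts Rudin's geometric factorization into operator-theoretic equivalences via the intertwining unitary and the analytic Chevalley--Shephard--Todd realization.
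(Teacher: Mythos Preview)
Your overall strategy coincides with the paper's: factor via Rudin, transport through $U^{(\l)}(\Psi^{-1})$ and the realization $\Gamma$, and then reduce both implications to the comparison of $\Phi(\mathbf M)$ with $\mathbf M$ by holomorphic functional calculus. Two points of your write-up, however, depart from the paper in ways worth correcting.

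First, you locate the role of the hypothesis $\l\geq n$ in the Taylor spectrum identification $\sigma(\mathbf M)\subseteq\overline{\bl\theta(\mb B_n)}$. The paper uses $\l\geq n$ for a different reason: it guarantees, via \cite[Theorem~1.3]{BM}, that every element of $\mathrm{Prop}_{\bl\theta}\big(\mb B_n,\bl\theta(\mb B_n)\big)$ lies in the multiplier algebra of $\mb A^{(\l)}(\mb B_n)$, so that the operators $\mathbf M_{\tilde{\bl F_j}}$ are defined and bounded in the first place (see the Remark immediately following the theorem). Your proof still needs this fact, and you should cite it rather than the spectrum claim.

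Second, in the necessity direction you attempt to verify conditions~(1) and~(2) of homogeneity separately, asserting that every $\Phi\in\mathrm{Aut}\big(\bl\theta(\mb B_n)\big)$ extends holomorphically past the boundary ``by standard boundary extension for bounded domains arising as images of proper maps on the ball.'' There is no such standard result; automorphisms of a general bounded domain need not extend holomorphically to a neighbourhood of the closure. The paper does not argue this way: it treats conditions~(1) and~(2) as standing structural facts about $\mathbf M$ on $\wi{\m H}\big(\bl\theta(\mb B_n)\big)$ (needed already to make sense of the functional calculus $\Phi(\mathbf M)$ in either direction), and the equivalence in the theorem is really about condition~(3). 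You should drop the boundary-extension sentence and follow the paper in leaving~(1) and~(2) as prerequisites for the discussion rather than consequences of the hypothesis.
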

\begin{rem}
The hypothesis $\l\geq n$ ensures that $\mathrm{Prop}_{\bl\theta}\big(\mathbb B_n, \bl \theta(\mb B_n)\big)$ is a subset of the multiplier algebra of 
$\mb A^{(\l)} (\mathbb{B}_n)$ \cite[Theorem 1.3]{BM}.
\end{rem}
The theorem above leads us to the problem of characterizing  operators which are homogeneous under the action of ${\rm Aut}\big(\bl\theta(\mb B_n)\big).$ The domains  
\Bea
\{\bl\theta_G(\mb B_n): G \,\, \mbox{is a pseudoreflection group}\,\,\}
\Eea
are not in general homogeneous \cite[Lemma 4.3]{OY}, where $\bl\theta_G$ is a hsop associated to $G.$ Classifying  homogeneous operators on such domains indicates to a new direction of research.

\medskip \textit{Acknowledgment}.
We express our sincere thanks to Prof. Gadadhar Misra for several fruitful discussions and many useful suggestions, which resulted in a considerable refinement of the original draft.


\begin{thebibliography}{99}

\itemsep=\smallskipamount

\bibitem{AMc} J. Agler and J. E. McCarthy, \emph{ Pick interpolation and Hilbert function spaces}, Graduate Studies in Mathematics, 44.

\bibitem{JA} J. Arazy, \emph{ A survey of invariant Hilbert spaces of analytic functions on bounded symmetric domains}, Contemp. Math., 185, Amer. Math. Soc., Providence, RI, 1995.

\bibitem{A} M. Artin, \emph{On the solutions of analytic equations}, Invent. Math. 5, 1968, 277 -- 291.

\bibitem{AM} M. F. Atiyah and I. G. Macdonald, \emph{Introduction to commutative algebra}, Addison-Wesley Publishing Co., 1969.

\bibitem{BM} B. Bagchi and G. Misra, \emph{ Homogeneous tuples of multiplication operators on twisted Bergman spaces}, J. Funct. Anal. 136 (1996), no. 1, 171 -- 213.

\bibitem{Bd} E. Bedford, \emph{ Proper holomorphic mappings from strongly pseudoconvex domains}, Duke Math. J. 49 (1982), no. 2, 477 -- 484. 

\bibitem{BB} E. Bedford and S. Bell, \emph{ Boundary behavior of proper holomorphic correspondences}, Math. Ann. 272 (1985), no. 4, 505 -- 518. 

\bibitem{BD} E. Bedford and J. Dadok, \emph{ Proper holomorphic mappings and real reflection groups}, J. Reine Angew. Math. 361 (1985), 162 -- 173.

\bibitem{BKKLSS} C. Bénéteau,  G. Knese, L.  Kosiński, C.  Liaw, D.  Seco and A. Sola, \emph{ Cyclic polynomials in two variables}, Trans. Amer. Math. Soc. 368 (2016), no. 12, 8737 -- 8754. 

\bibitem{LB} L. Bergqvist, \emph{ A note on cyclic polynomials in polydiscs}, Anal. Math. Phys. 8 (2018), no. 2, 197 -- 211.

\bibitem{BGMS}  S. Biswas, G. Ghosh, G. Misra and S. Shyam Roy, \emph{ On reducing submodules of Hilbert modules with $\mathfrak S_n$-invariant kernels}, J. Funct. Anal. 276 (2019), no. 3, 751 -- 784. 


\bibitem{B}  H. P. Boas, \emph{ Holomorphic reproducing kernels in Reinhardt domains}, Pacific J. Math. 112 (1984), no. 2, 273 -- 292

\bibitem{bourbaki} N. Bourbaki, \emph{ Lie groups and Lie algebras}, Chapters 4 -- 6, Springer-Verlag,
   Berlin, 2002.

\bibitem{Bourbaki1} N. Bourbaki, \emph{Algebra II}, Chapters 4 -- 7, Springer-Verlag, Berlin, 2003.

\bibitem{ChenGuo} X. Chen and K. Guo, \emph{Analytic Hilbert modules},
Chapman \& Hall/CRC Research Notes in Mathematics, 2003.

\bibitem{Con} K. Conrad, \emph{Chracters of finite ablelian groups}, https://kconrad.math.uconn.edu/blurbs/grouptheory/charthy.pdf

\bibitem{C} C. C.  Cowen,  \emph{The commutant of an analytic Toeplitz operator}, Trans. Amer. Math. Soc. 239 (1978), 1 -- 31. 

\bibitem{CD} M. J. Cowen and R. G.  Douglas, \emph{ Complex geometry and operator theory}, Acta Math. 141 (1978), no. 3-4, 187 -- 261.

\bibitem{CS} R. E. Curto and N. Salinas, \emph{ Generalized Bergman kernels and the Cowen-Douglas theory}, Amer. J. Math. 106 
 
\bibitem{OD} O. Debarre, \emph{Complex tori and abelian varieties}, Translated from the 1999 French edition by Philippe Mazaud, SMF/AMS Texts and Monographs, 11. American Mathematical Society, Providence, RI; Société Mathématique de France, Paris, 2005.

\bibitem{DS1} G. Dini and A. S. Primicerio, \emph{ Proper holomorphic maps between Reinhardt domains and strictly pseudoconvex domains}, Rend. Circ. Mat. Palermo (2) 38 (1989), no. 1, 60 -- 64. 

\bibitem{DS} G.  Dini and A.  Selvaggi Primicerio, \emph{ Proper holomorphic mappings between generalized pseudoellipsoids}, Ann. Mat. Pura Appl. (4) 158 (1991), 219 -- 229.

\bibitem{DP} R. G. Douglas and V. I. Paulsen,  \emph{Hilbert modules over function algebras}, Pitman Research Notes in Mathematics Series, 217,1989.

\bibitem{DPW} R. G. Douglas, Ronald, M.  Putinar  and K. Wang, \emph {Reducing subspaces for analytic multipliers of the Bergman space}, J. Funct. Anal. 263 (2012), no. 6, 1744 -- 1765. 

\bibitem{DSZ} R. G. Douglas,  S. Sun and  D. Zheng, 
\emph{Multiplication operators on the Bergman space via analytic continuation}, Adv. Math. 226 (2011), no. 1, 541 -- 583. 


\bibitem{G} G. Ghosh, \emph{Multiplication operator on the Bergman space by a proper holomorphic map}, arXiv:2004.00854.

\bibitem{GH} P. Griffiths and J. Harris, \emph{Principles of algebraic geometry}, Wiley Classics Library, John Wiley \& Sons, Inc., New York, 1994.

\bibitem{GHX} K. Guo,  J. Hu and X. Xu, \emph{Toeplitz algebras, subnormal tuples and rigidity on reproducing $\mb C[z_1,\ldots, z_d]$-modules}, J. Funct. Anal. 210 (2004), no. 1, 214 -- 247.

\bibitem{GHu} K. Guo, H. Huang, \emph{Multiplication operators on the Bergman space}, Lecture Notes in Mathematics, 2145. Springer, Heidelberg, 2015. 


\bibitem{CH} C. Hertling, \emph{Frobenius manifolds and moduli spaces for singularities},
Cambridge Tracts in Mathematics, 151, Cambridge University Press, Cambridge, 2002.

\bibitem{HL} H. Huang and P. Ling, \emph{ Joint reducing subspaces of multiplication operators and weight of multi-variable Bergman spaces}, Chin. Ann. Math. Ser. B 40 (2019), no. 2, 187 -- 198.
 
\bibitem{HZ} H. Huang and D. Zheng, \emph{Multiplication operators on the Bergman space of bounded domains in $\mb C^d$}, arXiv:1511.01678.

\bibitem{humphreys} J. E.  Humphreys, \emph{ Reflection groups and Coxeter groups}, Cambridge Studies in Advanced Mathematics, 29, Cambridge University Press, 1997.

 \bibitem{JP} M. Jarnicki and  P. Pflug, \emph{ Invariant distances and metrics in complex analysis}, Second extended edition. De Gruyter Expositions in Mathematics, 9. Walter de Gruyter GmbH and Co. KG, Berlin, 2013. 
  
 \bibitem{K} T. H. Kaptanoglu, \emph{M\"{o}bius-invariant Hilbert spaces in polydiscs}, Pacific J. Math. 163 (1994), no. 2, 337 -- 360. 

\bibitem{kaup} L. Kaup and B. Kaup,  \emph{Holomorphic functions of several variables}, de Gruyter
    Studies in Mathematics, 3, Walter de Gruyter \& Co., Berlin, 1983.
    
\bibitem{KGjfa}
A. Kor\'{a}nyi and G. Misra, \emph{Homogeneous operators on Hilbert spaces of holomorphic functions}, J. Funct. Anal. 254 (2008), no. 9, 2419 -- 2436.
    
\bibitem{KS}  Y. Kosmann-Schwarzbach, \emph{ Groups and symmetries. From finite groups to Lie groups}, Translated from the 2006 French 2nd edition by Stephanie Frank Singer. Universitext. Springer, New York, 2010. xvi+194 pp.

\bibitem{LT}  G. I. Lehrer and  D. E. Taylor, \emph{Unitary reflection groups.} Australian Mathematical Society Lecture Series, 20. Cambridge University Press, Cambridge, 2009. viii+294 pp.


\bibitem{M} M. Meschiari, \emph{ Proper holomorphic maps on an irreducible bounded symmetric domain of classical type}, Rend. Circ. Mat. Palermo (2) 37 (1988), no. 1, 18 -- 34.

\bibitem{GMpams}
G. Misra, \emph{Curvature and the backward shift operators}, Proc. Amer. Math. Soc. 91 (1984), no. 1, 105 -- 107.

\bibitem{MSR} G. Misra   and S. Shyam Roy, \emph{ On the irreducibility of a class of homogeneous operators}, System theory, the Schur algorithm and multidimensional analysis, 165–198, Oper. Theory Adv. Appl., 176, Birkhäuser, Basel, 2007.

\bibitem{P} P. Morandi, \emph{Field and Galois theory}, Graduate Texts in Mathematics, 167, Springer-Verlag, New York, 1996.

\bibitem{RN} R. Narasimhan, \emph{Several complex variables}, Reprint of the 1971 original. Chicago Lectures in Mathematics. University of Chicago Press, Chicago, IL, 1995. x+174 pp.

\bibitem{NN} R. Narasimhan and Y. Nievergelt, \emph{Complex analysis in one variable}, Second edition. Birkhäuser Boston, Inc., Boston, MA, 2001. xiv+381 pp.

\bibitem{NP} A. Nagel and M. Pramanik, \emph{Bergman spaces under maps of monomial type},. The
Journal of Geometric Analysis, 2020, arXiv:2002.02915.


\bibitem{OPY} K. Oeljeklaus, P. Pflug and E. H.  Youssfi, \emph{The Bergman kernel of the minimal ball and applications}, Ann. Inst. Fourier (Grenoble) 47 (1997), no. 3, 915 -- 928.

\bibitem{OY} K. Oeljeklaus and E. H. Youssfi, \emph{ Proper holomorphic mappings and related automorphism groups}, J. Geom. Anal. 7 (1997), no. 4, 623 -- 636. 
 
 \bibitem{Pan} D. I. Panyushev \emph{Lectures on representations of finite groups and
invariant theory}, https://users.mccme.ru/panyush/notes.html.
 
\bibitem{R} W. Rudin, \emph{Proper holomorphic maps and finite reflection groups}, Indiana Univ. Math. J. 31 (1982), no. 5, 701 -- 720.

\bibitem{R1} W. Rudin, \emph{ Function theory in the unit ball of $\mb C^n$}, Reprint of the 1980 edition. Classics in Mathematics. Springer-Verlag, Berlin, 2008. xiv+436 pp.

\bibitem{serre} J. P. Serre,\emph{ Local algebra}, Translated from the French by CheeWhye Chin and revised by the author. Springer Monographs in Mathematics. Springer-Verlag, Berlin, 2000.

\bibitem{ST} G. C. Shephard and J. A. Todd, \emph{Finite unitary reflection groups}, Canad. J. Math. 6 (1954), 274 -- 304.

\bibitem{RISt}
R. P. Stanley, \emph{Relative invariants of finite groups generated by pseudoreflections}, J. Algebra. 49
(1977), pp. 134 -- 148

\bibitem{S} R. P. Stanley, \emph{Invariants of finite groups and their applications to combinatorics}, Bull. Amer. Math. Soc. (N.S.) 1 (1979), 475 -- 511.

\bibitem{St} R. Steinberg, \emph{Invariants of finite reflection groups}, Canadian J. Math. 12 (1960), 616 -- 618.

\bibitem{JT} J. E. Thomson, \emph{ The commutant of a class of analytic Toeplitz operators}, Amer. J. Math. 99 (1977), no. 3, 522 -- 529.

\bibitem{T}  J. E. Thomson, \emph{ The commutant of a class of analytic Toeplitz operators. II}, Indiana Univ. Math. J. 25 (1976), no. 8, 793 -- 800. 

\bibitem{Try} M. Trybula, \emph{Proper holomorphic mappings, Bell's formula, and the Lu Qi-Keng problem on the tetrablock}, Arch. Math. (Basel) 101 (2013), no. 6, 549 -- 558.

\bibitem{ZS} O. Zariski and P. Samuel, \emph{Commutative algebra} Vol. II, Graduate Texts in Mathematics, Vol. 29. Springer-Verlag, New York-Heidelberg, 1975.

\bibitem{Z} K. Zhu, \emph{Reducing subspaces for a class of multiplication operators}, J. London Math. Soc. (2) 62 (2000), no. 2, 553 -- 568.

\end {thebibliography}

\end {document}